\definecolor{green}{rgb}{0,0.5,0}
\makeatletter \@addtoreset{equation}{section}
\newtheorem{theorem}{Theorem}[section]
\newtheorem{assumption}[theorem]{Assumption}
\newtheorem{corollary}[theorem]{Corollary}
\newtheorem{example}[theorem]{Example}
\newtheorem{lemma}[theorem]{Lemma}
\newtheorem{proposition}[theorem]{Proposition}
\newtheorem{definition}[theorem]{Definition}
\newtheorem{remark}[theorem]{Remark}
\def \B{\mathbb{B}}
\def \D{\mathbb{D}}
\def \E{\mathbb{E}}
\def \F{\mathbb{F}}
\def \G{\mathbb{G}}
\def \L{\mathbb{L}}
\def \N{\mathbb{N}}
\def \P{\mathbb{P}}
\def \Q{\mathbb{Q}}
\def \R{\mathbb{R}}
\def \S{\mathbb{S}}
\def \X{\mathbb{X}}
\def\Ac{{\cal A}}
\def\Bc{{\cal B}}
\def\Cc{{\cal C}}
\def\Fc{{\cal F}}
\def\Gc{{\cal G}}
\def\Lc{{\cal L}}
\def\Pc{{\cal P}}
\def\Rc{{\cal R}}
\def\Sc{{\cal S}}
\def\Uc{{\cal U}}
\def\Vc{{\cal V}}
\def\Wc{{\cal W}}
\def\Ah{{\widehat A}}
\def\Fb{{\bar F}}
\def\Gb{{\overline \G}}
\def\Gcb{\overline \Gc}
\def\Pb{{\overline \P}}
\def\Qb{{\overline \Q}}
\def\Sb{{\overline S}}
\def\Ub{{\overline U}}
\def\x{\times}
\def\eps{\varepsilon}
\def\Om{\Omega}
\def\Omh{\widetilde{\Omega}}
\def\Omh{\widehat{\Omega}}
\def\om{\omega}
\def \w{\mathsf{w}}
\def \wb{ \bar \w}
\def\Omb{\overline{\Om}}
\def\omb{\bar \om}
\def\Fb{\overline{\F}}
\def\Gcb{\overline{\Gc}}
\def\Fcb{\overline{\Fc}}
\def\Bt{\widetilde{B}}
\def\Gct{\widetilde{\Gc}}
\def\Fct{\widetilde{\Fc}}
\def\Ft{\widetilde{\F}}
\def\Gt{\widetilde{\G}}
\def \bb{\mathbf{b}}
\def \xb{\mathbf{x}}
\def \yb{\mathbf{y}}
\def \wb{\mathbf{w}}
\def \mub{\overline{\mu}}
\def \muh{\widehat{\mu}}
\def \nub{\bar{\nu}}
\def \nuh{\widehat{\nu}}
\def\Pch{\widehat \Pc}
\def\alphah{\widehat \alpha}
\def\Xh{\widehat X}
\def\Wh{\widehat W}
\def\Bh{\widehat B}
\def\alphab{\overline \alpha}
\def\Pcb{\overline \Pc}
\def\Omt{\widetilde{\Om}}
\def\Pt{\widetilde{\P}}
\def\taub{\bar \tau}
\title{McKean--Vlasov optimal control: the dynamic programming principle\footnote{Mao Fabrice Djete gratefully acknowledges support from the r\'egion \^Ile--de--France. This work also benefited from support of the ANR project PACMAN ANR--16--CE05--0027.}}
\author{
	Mao Fabrice {\sc Djete}\footnote{Universit\'e Paris--Dauphine, PSL University, CNRS, CEREMADE, 75016 Paris, France, djete@ceremade.dauphine.fr} 
	\and 
	Dylan {\sc Possama\"{i}} \footnote{Columbia University, Industrial Engineering \& Operations Research, 500 W 120th Street, New York, NY, 10027, dp2917@columbia.edu}
	\and 
	Xiaolu {\sc Tan}\footnote{Department of Mathematics, The Chinese University of Hong Kong. xiaolu.tan@cuhk.edu.hk}
}
	\date{\today}
\begin{document}

\maketitle
 
\begin{abstract}
We study the McKean--Vlasov optimal control problem with common noise in various formulations, namely the strong and weak formulation, as well as the Markovian and non--Markovian formulations, and allowing for the law of the control process to appear in the state dynamics. By interpreting the controls as probability measures on an appropriate canonical space with two filtrations, we then develop the classical measurable selection, conditioning and concatenation arguments in this new context, and establish the dynamic programming principle under general conditions.
\end{abstract}

\section{Introduction}

	We propose in this paper to study the problem of optimal control of mean--field stochastic differential equations, also called Mckean--Vlasov stochastic differential equations in the literature. This problem is a stochastic control problem where the state process is governed by a stochastic differential equation (SDE for short), which has coefficients depending on the current time, the paths of the state process, but also its distribution (or conditional distribution in the case with common noise). Similarly, the reward functionals are allowed to be impacted by the distribution of the state process.

\medskip
The pioneering work on McKean--Vlasov equations is due to \citeauthor*{mckean1969propagation} \cite{mckean1969propagation} and \citeauthor*{kac1956foundations} \cite{kac1956foundations}, who were interested in studying uncontrolled SDEs, and in establishing general propagation of chaos results. Let us also mention the illuminating notes of \citeauthor*{snitzman1991topics} \cite{snitzman1991topics}, which give a precise and pedagogical insight into this specific equation. Though many authors have worked on this equation following these initial papers, there has been a drastic surge of interest in the topic in the past decade, due to the connection that it shares with the so--called mean--field game (MFG for short) theory, introduced independently and simultaneously on the one hand by \citeauthor*{lasry2006jeux} in \cite{lasry2006jeux,lasry2006jeux2,lasry2007mean} and on the other hand by \citeauthor*{huang2003individual} \cite{huang2003individual,huang2006large,huang2007invariance,huang2007large,huang2007nash}. Indeed, the McKean--Vlasov equation naturally appears when one tries to describe the behaviour of many agents, which interact through the empirical distribution of their states, and who seek a Nash equilibrium (this is the competitive equilibrium case, leading to the MFG theory, see \citeauthor*{cardaliaguet2015master} \cite{cardaliaguet2015master}), or a Pareto equilibrium (this is the cooperative equilibrium case, associated to the optimal control of Mckean--Vlasov stochastic equations, see \citeauthor*{lacker2017limit} \cite{lacker2017limit}). Though related, these two problems have some subtle differences which were investigated thoroughly by \citeauthor*{carmona2013control} \cite{carmona2013control}.

\medskip
Our interest in this paper, as already mentioned, is in the optimal control of McKean--Vlasov stochastic equations, and more precisely in the rigorous establishment of the dynamic programming principle (DPP for short), under conditions as general as possible. The optimal control of McKean--Vlasov dynamics is a rather recent problem in the literature. The first approach to tackle the problem relied on the use of the celebrated Pontryagin stochastic maximum principle. This strategy allows to derive necessary and/or sufficient conditions characterising the optimal solution of the control problem, through a pair of processes $(Y,Z)$ satisfying a backward stochastic differential equation (BSDE for short), also called adjoint equation in this case, coupled with a forward SDE, corresponding to the optimal path. \citeauthor*{andersson2011maximum} \cite{andersson2011maximum} and \citeauthor*{buckdahn2011general} \cite{buckdahn2011general} use this approach for a specific case of optimal control of a McKean--Vlasov equation, corresponding to the case where the coefficients of the equation and the reward functions only depend on some moments of the law. We refer to \citeauthor*{carmona2015forward} \cite{carmona2015forward} for an analysis in a more general context thanks to the notion of differentiability in the space of probability measure introduced by \citeauthor*{lions2007theorie} in his Coll\`ege de France course \cite{lions2007theorie} (see also the lecture notes of \citeauthor*{cardaliaguet2010notes} \cite{cardaliaguet2010notes}). Related results were also obtained by \citeauthor*{acciaio2018generalized} \cite{acciaio2018generalized} for so--called generalised McKean--Vlasov control problems involving the law of the controls, and were a link with causal optimal transport was also highlighted. The stochastic maximum principle approach was also used in a context involving conditional distributions in \citeauthor*{buckdahn2017mean} \cite{buckdahn2017mean} and \citeauthor*{carmona2014probabilistic} \cite{carmona2014probabilistic}. Related results have been obtained in the context of a relaxed formulation of the control problem, allowing in addition to obtain existence results by \citeauthor*{chala2014relaxed} \cite{chala2014relaxed}, which were then revisited by \citeauthor*{lacker2017limit} \cite{lacker2017limit}, and \citeauthor*{bahlali2017existence} \cite{bahlali2014existence,bahlali2017existence,bahlali2018relaxed}
	
\medskip
Readers familiar with the classical theory of stochastic control know that another popular approach to the problem is to use Bellman's optimality principle, to obtain the so--called dynamic programming principle. In a nutshell, the idea behind the DPP is that the global optimisation problem can be solved by a recursive resolution of successive local optimisation problems. This fact is an intuitive result, which is often used as some sort of meta--theorem, but is not so easy to prove rigorously in general. Note also that, in contrast to the Pontryagin maximum principle approach, this approach in general requires fewer assumptions, though it can be applied in less situations. Notwithstanding these advantages, the DPP approach has long been unexplored for the control of McKean--Vlasov equations. One of the main reasons is actually a very bleak one for us: due to the non--linear dependency with respect to the law of process, the problem is actually a time inconsistent control problem (like the classical mean--variance optimisation problem in finance, see the recent papers by \citeauthor*{bjork2014theory} \cite{bjork2014theory},  \citeauthor*{bjork2017time} \cite{bjork2017time}, and \cite{hernandez2019me} for a more thorough discussion of this topic), and Bellman's optimality principle does not hold in this case. However, though the problem itself is time--inconsistent, one can recover some form of the DPP by extending the state space of the problem. This was first achieved by \citeauthor*{lauriere2014dynamic} \cite{lauriere2014dynamic}, and later by \citeauthor*{bensoussan2015master} \cite{bensoussan2013mean,bensoussan2015master,bensoussan2017interpretation}, who assumed the existence at all times of a density for the marginal distribution of the state process, and reformulated the problem as a deterministic density control problem, with a family of deterministic control terms. Under this reformulation, they managed to prove a DPP and deduce a dynamic programming equation in the space of density functions. Following similar ideas, but without the assumptions of the existence of density, and allowing the coefficients and reward functions to not only depend on the distribution of the state, but to the joint distribution of the state and the control, \citeauthor*{pham2018bellman} \cite{pham2018bellman} also deduced a DPP by looking at a set of closed loop (or feedback) controls, in a non common noise context. They then extended this strategy to a common noise setting (where the control process is adapted to common noise filtration) in \cite{pham2016dynamic}. Specialisations to linear--quadratic settings were also explored by \citeauthor*{pham2016linear} \cite{pham2016linear}, \citeauthor*{li2016mean} \cite{li2016mean}, \citeauthor*{li2019linear} \cite{li2019linear}, \citeauthor*{huang2015linear} \cite{huang2015linear}, \citeauthor*{yong2013linear} \cite{yong2013linear}, and \citeauthor*{basei2019weak} \cite{basei2019weak}.

\medskip
Concerning the (general) DPP approach to solve problems involving McKean--Vlasov stochastic equations, let us mention \citeauthor*{bouchard2017quenched} \cite{bouchard2017quenched}, who study a stochastic target problem for McKean--Vlasov stochastic equations. Roughly speaking, this means that they are interested in optimally controlling a McKean--Vlasov equation over the interval $[0,T]$, under the target constraint that the marginal law of the controlled process at time $T$ belongs to some Borel subset of the space of probability measures. They establish a general geometric dynamic programming, thus extending the seminal results of \citeauthor*{soner2002dynamic} \cite{soner2002dynamic} corresponding to the non--McKean--Vlasov case. Another important contribution is due to \citeauthor*{bayraktar2016randomized} \cite{bayraktar2016randomized}. There, the authors build upon the results of \citeauthor*{fuhrman2015randomized} \cite{fuhrman2015randomized} and \citeauthor*{bandini2015randomization} \cite{bandini2015randomization} to obtain a randomised control problem by controlling the intensity a Poisson random measure. This enables them to allow for general open--loop controls, unlike in \cite{pham2016dynamic,pham2018bellman}, though the setting is still Markovian and does not permit to consider common noise. 
	
\medskip
Our approach to obtaining the DPP is very different. One common drawback of all the results we mentioned above, is that they generically require some Markovian\footnote{An exception is the work of \citeauthor*{djehiche2018optimal} \cite{djehiche2018optimal}, which considers optimal control (and also a zero--sum game) of a non--Markovian McKean--Vlasov equation, and obtains both a characterisation of the value function and the optimal control using BSDE techniques, reminiscent of the classical results of \citeauthor*{hamadene1995backward} \cite{hamadene1995backward} and \citeauthor*{el1995dynamic} \cite{el1991programmation,el1995dynamic} for the non--McKean--Vlasov case. However, their approach does not allow for common noise, and is limited to control on the drift of the state process only.} property of the system or its distribution, as well as strong regularity assumptions on coefficient and reward functions considered. This should appear somehow surprising to people familiar with the classical DDP theory. Indeed, for stochastic control problems, it is possible to use measurable selection arguments to obtain the DPP, in settings requiring nothing beyond mild measurability assumptions. As a rule of thumb, one needs two essential ingredients to prove the dynamic programming principle: first ensuring the stability of the controls with respect to conditioning and concatenation, and second the measurability of the associated value function. The use of measurable selection argument makes it possible to provide an adequate framework for verifying the conditioning, the concatenation and the measurability requirements of the associated value function without strong assumptions. This technique was followed by \citeauthor*{dellacherie1985quelques} \cite{dellacherie1985quelques}, by \citeauthor*{bertsekas1978stochastic} in \cite{bertsekas1978stochastic,shreve1978alternative,shreve1978dynamic,shreve1979universally}, and by \citeauthor*{shreve1979resolution} \cite{shreve1977dynamic,shreve1978probability,shreve1979resolution} for discrete--time stochastic control problems. Later, \citeauthor*{el1987compactification} in \cite{el1987compactification} presented a framework for stochastic control problem in continuous time (accommodating general Markovian processes). Thanks to the notion of relaxed control, that is to say the interpretation of a control as a probability measure on some canonical space, and thanks to the use of the notion of martingale problems, they proved a DPP by simple and clear arguments. \citeauthor*{karoui2013capacities} \cite{karoui2013capacities,karoui2013capacities2} extended this approach to the non--Markovian case. Similar results were obtained by several authors, among which we mention \citeauthor*{nutz2012superhedging} \cite{nutz2012superhedging}, \citeauthor*{neufeld2013superreplication} \cite{neufeld2013superreplication,neufeld2016nonlinear}, \citeauthor*{nutz2013constructing} \cite{nutz2013constructing}, \citeauthor*{zitkovic2014dynamic} \cite{zitkovic2014dynamic}, and \citeauthor*{possamai2015stochastic} \cite{possamai2015stochastic}.

\medskip
Following the framework in \cite{karoui2013capacities, el1987compactification}, we develop in this paper a general analysis based upon the measurable selection argument for the non--Markovian optimal control of McKean--Vlasov equations with common noise. In particular, we investigate the case where the drift and diffusion coefficients, as well as the reward functions, are allowed to depend on the joint conditional distribution of the path of the state process and of the control, see \cite{pham2018bellman} for the case of the joint distribution of the state process and of feedback controls (see also \citeauthor*{yong2013linear} \cite{yong2013linear} for a more specific situation) in a non--common noise case.

\medskip	
Motivated by the notion of weak solution of classical SDEs, and similarly to the ideas used by \citeauthor*{karoui2013capacities} \cite{karoui2013capacities}, and \citeauthor*{carmona2014mean} \cite{carmona2014mean} in a mean--field games context, our first task is to provide an appropriate `relaxation` of the problem. We therefore introduce a notion of weak solution of controlled McKean--Vlasov equation with common noise. Notice that this is by no means a straightforward task. In standard McKean--Vlasov stochastic control problems, the controls (open loop in that case) are adapted with respect to the filtration generated by both the Brownian motion $(W,B)$ (with $B$ being the common noise) and the initial random variable $\xi$ (serving as an initial condition for the problem). Then, the conditional distributions considered are associated to the filtration of $B$, in other words the `common noise` filtration, that is, $\Lc(X_{t \wedge \cdot}, \alpha_t | B)$, where $X$ is the state and $\alpha$ the control. We call this the strong formulation. 
	{\color{black}  The strong formulation does not enjoy a good stability condition. To see this, it is enough to notice that the conditional distribution is not continuous with respect to the joint distribution $\big($for instance the function $\Lc(X_t,B)\longmapsto \E \big[ \big| \E \big[ X_t \big| B\big] \big|^2 \big]$ is not continuous$\big)$.
	To overcome this difficulty, we introduce a notion of weak solution by considering a more general filtration $\F$ describing the adaptability of the controls, and an extended common noise filtration $\G$ as in \cite{carmona2014mean}.
	Nevertheless, more conditions on $\F$ and $\G$ are needed to ensure that the formulation remains first compatible with the notion of strong solutions,
	then enjoys good stability properties for fixed control processes,
	and finally ensures that weak controls can be approximated sufficiently well by strong controls.
}

\medskip
	With the help of this notion, we can then provide a weak formulation for McKean--Vlasov control problems with common noise. 
	By interpreting controls as probability measures on an appropriate canonical space, 
	and using measurable selection arguments as in \cite{karoui2013capacities, karoui2013capacities2},
	we then move on to prove the universal measurability of the associated value function,
	and derive the stability of controls with respect to conditioning and concatenation, and finally deduce the DPP for the weak formulation under very general assumptions.
	Our next result addresses the DPP for the classical strong formulation.
	Using the DPP in weak formulation, and by adding standard Lipschitz conditions on the drift and diffusion coefficients, as in \cite{pham2016dynamic}, but without any regularity assumptions on reward functions, and in a non--Markovian context, we obtain the DPP for the strong formulation of McKean--Vlasov control problems with common noise, where the control is adapted to the `common noise` filtration ($B$ in this case of strong formulation). 
	Also, for general strong formulation, where the control is adapted to both $\xi,$ $W$ and $B$, we obtain the DPP under some additional regularity conditions on the reward functions.
	These regularity conditions may seem unexpected at first sight, but they seem unavoidable due to the non--linear dependency of the drift and volatility coefficients with respect to the conditional distribution of $X$ (see \Cref{rem:ddp_under_regularity} for a more thorough discussion).
{\color{black}
	Finally, the DPP results in the general non--Markovian context induces the same results in the Markovian one.
}

	\medskip

	{\color{black}
		We stress again that our results and techniques are not easy extensions of those in the existing literature, 
		although the very starting ideas may seem so.
		Our DPP results are much more general than those in the literature, and, as far as we can tell, are the most general.
		To interpret a control as a probability measure on the canonical space,
		one needs to formulate an adequate notion of weak solutions which should enjoy more stability properties than the strong solutions, and at the same time be able to approximate these solutions by strong ones.
		Finally, because of the presence of two filtrations (a general filtration and a common noise filtration) on the canonical space and the implicit conditions on the two filtrations,
		it become much more delicate to develop the classical measurable selection, conditioning and concatenation arguments in this new context.
	}

	\medskip

	The rest of the paper is organised as follows. After recalling briefly some notations and introducing the probabilistic structure to give an adequate and precise definition of the tools that are used throughout the paper, we introduce in \Cref{sec:Formulation} several notions of weak and strong formulation (in a fixed probability space or in the canonical space) for the McKean--Vlasov stochastic control problem with common noise in a non--Markovian framework, and prove some equivalence results. Next, in \Cref{sec:DPP}, we present the main result of this paper, the DPP for three formulations: weak formulation, strong formulation, and a {\color{black} $\B$--strong formulation} where the control is adapted with respect to the `common noise` filtration. We first provide all our results in the non--Markovian setting, and then in a Markovian framework.
	Finally, \Cref{sec:4} is devoted to the proof of our main results.

\medskip

  	{\bf Notations}.
	$(i)$
	Given a metric space $(E,\Delta)$ and $p \ge 0$, we denote by $\Pc(E)$ the collection of all Borel probability measures on $E$,
	and by $\Pc_p(E)$ the subset of Borel probability measures $\mu$ 
	such that $\int_E \Delta(e, e_0)^p  \mu(\mathrm{d}e) < \infty$ for some $e_0 \in E$.
	When $p \ge 1$, the space $\Pc_p(E)$ is equipped with the Wasserstein distance $\Wc_p$ defined by
	\[
		\Wc_p(\mu , \mu^\prime) 
		:=
		\bigg(
			\inf_{\lambda \in \Lambda(\mu, \mu^\prime)}  \int_{E \times E} \Delta(e, e^\prime)^p \lambda(\mathrm{d}e,\mathrm{d}e^\prime) 
		\bigg)^{1/p},
	\]
	where $\Lambda(\mu, \mu^\prime)$ is the collection of all Borel probability measures $\lambda$ on $E \times E$ 
	such that $\lambda(\mathrm{d}e, E) = \mu(\mathrm{d}e)$ and $\lambda(E, \mathrm{d}e^\prime) = \mu^\prime(\mathrm{d}e^\prime)$. 
	When $E$ is a Polish space, $(\Pc_p(E), \Wc_p)$ is a Polish space (see \citeauthor*{villani2008optimal} \cite[Theorem 6.16]{villani2008optimal}). 
	Given another metric space $(E^\prime,\Delta^\prime)$, we denote by $\mu \otimes \mu^\prime \in \Pc(E \x E')$ the product probability of any $(\mu,\mu^\prime) \in \Pc(E) \x \Pc(E^\prime)$.

\medskip
	
	\noindent $(ii)$
	Given a measurable space $(\Om,\Fc)$, we denote by $\Pc(\Om)$ the collection of all probability measures on $(\Om, \Fc)$.
	For any probability measure $\P \in \Pc(\Om)$, we denote by $\Fc^\P$ the $\P$--completion of the $\sigma$--field $\Fc$,
	and by $\Fc^U := \bigcap_{\P \in \Pc(\Om)} \Fc^\P$ the universal completion of $\Fc$.
	Let $\xi: \Om \longrightarrow \R \cup \{ -\infty,+\infty\}$ be a random variable and $\P \in \Pc(\Om)$, 
	we define
	\[
		\E^{\P}\big[ \xi \big] :=\E^{\P}[\xi_+] -\E^{\P}[\xi_-],
		\;\mbox{where}\;\xi_+ := \xi \vee 0,\; \xi_- := (-\xi) \vee 0,
		\;\mbox{with the convention}\; \infty - \infty:=-\infty.
	\]
We also use the following notation to denote the expectation of $\xi$ under $\P$
		\[
			\E^{\P}[\xi] 
			= \langle \P,\xi \rangle =\langle \xi, \P \rangle.
		\]
	When $\Om$ is a Polish space, a subset $A \subseteq \Om$ is called an analytic set if there is another Polish space $E$, and a Borel subset $B \subseteq \Om \x E$ such that $A = \{ \om \in \Om : \exists e \in E,\; (\om, e) \in B \}$.
	A function $f: \Om \longrightarrow \R \cup \{- \infty, \infty \}$ is called upper semi--analytic (u.s.a. for short) if $\{\om \in \Om : f(\om) > c \}$ is analytic for every $c \in \R$. Any upper semi--analytic function is universally measurable (see e.g. \cite[Chapter 7]{bertsekas1978stochastic}).
	
	\medskip

	\noindent $(iii)$	
	Let $\Om$ be a metric space, $\Fc$ its Borel $\sigma$--field and $\Gc \subset \Fc$ be a sub--$\sigma$--field which is countably generated.
	Following \cite{stroock2007multidimensional}, we say that $(\P^{\Gc}_{\om})_{\om \in \Om}$ is a family of r.c.p.d. (regular conditional probability distributions) of $\P$ knowing $\Gc$ if it satisfies 
	\begin{itemize}
		\item the map $\om \longmapsto \P^{\Gc}_{\om}$ is $\Gc-$measurable,
		and for all $A \in \Fc$ and $B \in \Gc$, one has $\P [A \cap B]=\int_B \P^{\Gc}_{\om}[A] \P(\mathrm{d}\om)$;
		\item $\P^{\Gc}_{\om} \big[ [\om]_{\Gc} \big] = 1$  for all $\om \in \Om$, 
		where $[\om]_{\Gc} := \bigcap \big\{ A\in\Fc:A \in \Gc\; \mbox{and}\; \om \in A\big\}$.
	\end{itemize}
	Let $(\Om, \Fc, \P, \G = (\Gc_t)_{t \in [0,T]})$ be a filtered probability space, $\Gc \subset \Fc$ be a sub--$\sigma$--field, and $E$ a metric space. Then given a random element $\xi: \Om \longrightarrow E$, we use both the notations $\Lc^{\P}( \xi | \Gc)(\om)$ and $\P^{\Gc}_{\om} \circ (\xi)^{-1}$ to denote the conditional distribution of $\xi$ knowing $\Gc$ under $\P$.
	Moreover, given a measurable process $X: [0,T] \x \Om \longrightarrow E$, 
	we can always define $\mu_t := \Lc^{\P}(X_t | \Gc_t)$ to be a $\Pc(E)$--valued $\G$--optional process (see for instance \Cref{lemm:Cond_Law}).

	\medskip
	
	\noindent $(iv)$
	Let $(E,\Delta)$ and $(E',\Delta')$ be two Polish spaces, we shall refer to $C_b(E,E')$ to designate the set of continuous functions $f:E \to E'$ such that $\sup_{e \in E} \Delta'(f(e),e'_0) < \infty$ for some $e'_0 \in E'$.
	Let $\N$ denote the set of non--negative integers.
	For $(k,n) \in \N^2$, we denote by $C^n_b(\R^k;\R)$ the set of maps bounded continuous maps $f: \R^k \longrightarrow \R$ possessing bounded continuous derivatives up to order $n$, 
	and by $\partial_i f$ (resp $\partial^2_{i,j}$) the partial derivative (resp. crossed second partial derivative) with respect to $x^i$ (resp $(x^i,x^j)$) for $(i,j) \in \{1, \dots, k\} \x \{ 1, \dots, k \}$, and $x:=(x^1,...,x^k) \in \R^k$.
	Given $(k,q)\in\N \times\N$, we denote by $\S^{k \x q}$ the collection of all $k \x q$--dimensional matrices with real entries, equipped with the standard Euclidean norm that we will denote $|\cdot|$ when there are no ambiguity. Let us denote by $\mathbf{0}_k,$ $\mathbf{0}_{k \times q}$ and $\mathrm{I}_k$ the null matrix of dimension $k \x k,$ the null matrix of dimension $k \x q$, and the identity matrix of dimension $k \x k$.
	Let $T > 0$, and $(E, \Delta)$ be a Polish space, we denote by $C([0,T], E)$ the space of all continuous functions on $[0,T]$ taking values in $E$, which is also a Polish space under the uniform convergence topology.
	When $E=\R^n$ for some $n\in\N$, and $\Delta$ is the usual Euclidean norm on $\R^n$, we simply write $\Cc^n := C([0,T], \R^n)$ and denote by $\|\cdot\|$ the uniform norm.
	We also denote by $\Cc^{n}_{s,t} :=C([s,t];\R^n)$ the space of all $\R^n$--valued continuous functions on $[s,t]$, for $0 \le s \le t \le T$.
	When $n = 0$, the space $\Cc^n$ and $\Cc^n_{s,t}$ both degenerate to a singleton.

	\medskip

	\noindent $(v)$
	Throughout the paper, we fix a constant $p \ge 0$,
	a nonempty Polish space $(U,\rho)$ and a point $u_0 \in U$.
	Notice that a Polish space is always isomorphic to a {\color{black}Borel subset of $[0,1]$. }
	Let us denote by $\pi$ such one (isomorphic) bijection between $U$ and {\color{black}$\pi(U) \subseteq [0,1]$}.
	We further extend the definition of $\pi^{-1}$ to $\R \cup \{-\infty, \infty\}$ by setting $\pi^{-1}(x) := \partial$ for all $x \notin \pi(U)$ and let $\Ub := U \cup \{\partial\}$, where $\partial$ is the usual cemetery point.
	Let $\nu \in \Pc(\Cc^k)$ (resp. $\nub \in \Pc(\Cc^k \x U)$) be a Borel probability measure on 
	the canonical space $\Cc^k$ (resp. $\Cc^k \x U$) equipped with the canonical process $X$ (resp. $(X, \alpha)$).
	We denote, for each $t \in [0,T]$,
	\[
		\nu(t) := \nu \circ X_{t \wedge \cdot}^{-1}
		\; \big(\mbox{resp.}\; \nub(t) := \nub \circ (X_{t \wedge \cdot}, \alpha)^{-1} \big).
	\]

\section{Weak and strong formulations of the McKean--Vlasov control problem}
\label{sec:Formulation}

	The main objective of this paper is to study the following (non--Markovian) McKean--Vlasov control problem, 
	in both strong and weak formulations, of the form
	\[
		`\sup_{\alpha} 
		\E \bigg[ 
			\int_0^T L \big(t, X^{\alpha}_{t \wedge \cdot}, \Lc\big(X^{\alpha}_{t \wedge \cdot}, \alpha_t \big| \Gc_t \big), \alpha_t \big) \mathrm{d}t 
			+ 
			g \big( X^{\alpha}_{\cdot}, \Lc \big(X^{\alpha}_{\cdot} \big| \Gc_T\big) \big) \bigg],`
	\]
	where $\G := (\Gc_t)_{0 \le t \le T}$ is a filtration modelling the commun noise, supporting a Brownian motion $B$, 
	$\Lc\big(X^{\alpha}_{t \wedge \cdot}, \alpha_t \big| \Gc_t \big)$ denotes the joint conditional distribution of $(X^{\alpha}_{t \wedge \cdot}, \alpha_t )$ knowing $\Gc_t$,
	and $(X^{\alpha}_t)_{t \in [0,T]}$ is a McKean--Vlasov type process, controlled by $\alpha = (\alpha_t)_{0 \le t \le T}$ 
	and generated by $W$ together with an independent Brownian motion $B$
	\begin{equation} \label{eq:MKV_general}
		`\mathrm{d}X^{\alpha}_t 
		=
		b \big(t, X^{\alpha}_{t \wedge \cdot}, \Lc\big(\big(X^{\alpha}_{t \wedge \cdot}, \alpha_t\big) \big| \Gc_t \big), \alpha_t \big) 	\mathrm{d}t
		+
		\sigma \big(t, X^{\alpha}_{t \wedge \cdot}, \Lc\big(\big(X^{\alpha}_{t \wedge \cdot}, \alpha_t\big) \big| \Gc_t \big), \alpha_t \big)  	\mathrm{d}W_t 
		+
		\sigma_0 \big(t, X^{\alpha}_{t \wedge \cdot}, \Lc\big(\big(X^{\alpha}_{t \wedge \cdot}, \alpha_t\big)\big| \Gc_t \big), \alpha_t \big)  	\mathrm{d}B_t.`
	\end{equation}

	\vspace{0.5em}

	We will provide in the following a precise definition to the above controlled SDE, depending on the strong/weak formulation considered. 
	Let us first specify the dimensions and some basic conditions on the coefficient functions.	
	Let $(n,\ell, d) \in \N\times\N\times\N$. The coefficient functions
	\[
		b:[0,T] \x \Cc^n   \x \Pc(\Cc^n \x U)  \x U \longrightarrow \R^n,\; 
		\sigma:[0,T] \x \Cc^n    \x \Pc(\Cc^n \x U) \x U \longrightarrow  \S^{n \x d} ,
	\]
	\[
		\sigma_0:[0,T] \x \Cc^n  \x \Pc(\Cc^n \x U)  \x U  \longrightarrow\S^{n\times\ell},\; 
		L:[0,T] \x \Cc^n  \x \Pc(\Cc^n \x U) \x U \longrightarrow \R,\; 
		g: \Cc^n \x \Pc(\Cc^n) \longrightarrow \R,
		\]
	are all assumed to be Borel measurable, and non--anticipative in the sense that
	\[
		\big(b, \sigma, \sigma_0,L \big) (t,\xb,  \nub, u) 
		=
		\big(b, \sigma, \sigma_0,L \big) (t,\xb_{t \wedge \cdot}, \nub(t), u),
		\;\mbox{for all}\;
		(t, \xb, \nub, u) \in [0,T] \x \Cc^n \x \Pc(\Cc^n \x U) \x U .
	\]

\subsection{A weak formulation}

	A weak formulation of the control problem is obtained by considering all weak solutions of the controlled McKean--Vlasov SDE \eqref{eq:MKV_general}. 
	Here the word `weak` refers to the fact that the probability space, as well as the equipped Brownian motion, is not assumed to be fixed, but is a part of the solution itself.
	This is of course consistent with the notion of the weak solution in the classical SDE theory.

	\begin{definition} \label{def:weak_control}
		Let $(t, \nu) \in [0,T] \x \Pc(\Cc^n)$. We say that a term
		\[
			\gamma 
			:= 
			\big( \Om^{\gamma}, \Fc^{\gamma},\P^{\gamma},  \F^{\gamma} = (\Fc^{\gamma}_s)_{0 \le s \le T}, \G^{\gamma} = (\Gc^{\gamma}_s)_{0 \le s \le T}, X^{\gamma}, W^{\gamma}, B^{\gamma}, \mub^{\gamma}, \mu^{\gamma}, \alpha^{\gamma} \big),
		\]
		is a weak control associated with the initial condition $(t, \nu) $ if the following conditions are satisfied:
	\begin{enumerate}[label={$(\roman*)$},leftmargin=*]
		\item $(\Om^{\gamma}, \Fc^{\gamma},\P^{\gamma})$ is a probability space, 
		equipped with two filtrations $ \F^{\gamma} $ and $\G^{\gamma}$ such that, for all $s \in [0,T]$,
		\begin{equation} \label{eq:H_Hypo}
			\Gc_s^{\gamma} \subseteq  \Fc_s^{\gamma},
			~\mbox{\rm and}~
			\E^{\P^{\gamma}}\big[ \mathbf{1}_D \big| \Gc^{\gamma}_s \big] = \E^{\P^{\gamma}}\big[ \mathbf{1}_D \big| \Gc^{\gamma}_T \big],~\P^{\gamma}\mbox{\rm--a.s.,}
			~\mbox{\rm for all}~ D \in \Fc^{\gamma}_s \vee \sigma(W^{\gamma});
		\end{equation}

		\item 
		$X^{\gamma} = (X^{\gamma}_s)_{s \in [0, T]}$ is an $\R^n$--valued, $\F^{\gamma}$--adapted continuous process,
		$\alpha^{\gamma} := (\alpha^{\gamma}_s)_{t \le s \le T}$ is an $U$--valued, $\F^{\gamma}$--predictable process,
		and with the fixed constant $p \ge 0$, one has
		\begin{equation} \label{eq:cond_integ}
			\E^{\P^{\gamma}} \big[ \| X^{\gamma} \|^p \big]
			+
			\E^{\P^{\gamma}} \bigg[ \int_t^T \big( \rho( \alpha^{\gamma}_s, u_0) \big)^p \mathrm{d}s \bigg] < \infty;
		\end{equation}
		
		\item $(W^{\gamma}, B^{\gamma})$ is an $\R^d \x \R^{\ell}$--valued, $\F^{\gamma}$--adapted continuous process; $(W^{\gamma, t}, B^{\gamma, t} ) := \big((W^{\gamma,t}_s)_{0 \le s \le T}, (B^{\gamma,t}_s)_{0 \le s \le T} \big)$, defined by $W^{\gamma,t}_{s} := W^{\gamma}_{s \vee t} - W^{\gamma}_t$, and $B^{\gamma,t}_s := B^{\gamma}_{s \vee t} - B^{\gamma}_t$, $s\in[t,T]$, is a standard $(\F^{\gamma}, \P^\gamma)$--Brownian motion on $[t,T]$;
		$B^{\gamma, t}$ is $\G^{\gamma}$--adapted;
		$\Fc^{\gamma}_t \vee \sigma(W^{\gamma})$ is $\P^\gamma$--independent of $\Gc^\gamma_T$,
		and $\mu^{\gamma} = (\mu^{\gamma}_s)_{t \le s \le T}$ $($resp. $\mub^{\gamma} = (\mub^{\gamma}_s)_{t \le s \le T})$ is a $\G^{\gamma}$--predictable $\Pc(\Cc^n)$--valued $($resp. $\Pc(\Cc^n \x U)$--valued$)$ process satisfying
		\[
			\mu^{\gamma}_s = \Lc^{\P^{\gamma}} \big( X^{\gamma}_{s \wedge \cdot} \big| \Gc^{\gamma}_s \big)\;
			\big(\mbox{\rm resp.}\; 
			\mub^{\gamma}_s
			=
			\Lc^{\P^{\gamma}} \big( \big(X^{\gamma}_{s \wedge \cdot}, \alpha^{\gamma}_{s} \big) \big| \Gc^{\gamma}_s \big) 
			\big),\; \text{\rm for}\; \mathrm{d}\P^\gamma\otimes\mathrm{d}s \mbox{\rm--a.e.}\; (s,\omega) \in [t,T] \x \Om^\gamma;
		\]		

		\item  $X^{\gamma}$ satisfies $\P^{\gamma} \circ (X^{\gamma}_{t \wedge \cdot})^{-1} = \nu(t)$ and
		\begin{equation} \label{eq:MKV_SDE_weak}
			X^{\gamma}_s 
			=
			X^{\gamma}_t + \int_t^s b(r, X^{\gamma}_{\cdot}, \mub^{\gamma}_r, \alpha^{\gamma}_r) \mathrm{d}r 
				+ \int_t^s \sigma(r, X^{\gamma}_{\cdot},  \mub^{\gamma}_r, \alpha^{\gamma}_r) \mathrm{d}W^{\gamma}_r 
				+ \int_t^s \sigma_0(r, X^{\gamma}_{\cdot},  \mub^{\gamma}_r, \alpha^{\gamma}_r) \mathrm{d}B^{\gamma}_r, 
			~s \in [t,T],
			\; \P^\gamma \mbox{\rm--a.s.,}
		\end{equation}
		where the integrals are implicitly assumed to be well--defined.
	\end{enumerate}
	\end{definition}
	For all $(t, \nu) \in [0,T] \x \Pc(\Cc^n)$, let us denote
	\[
		\Gamma_W (t, \nu)
		:=
		\big\{
			\mbox{All weak controls with  initial condition}~
			(t, \nu)
		\big\}.
	\]
	Then with the reward functions $L:[0,T] \x \Cc^n \x \Pc(\Cc^n \x U)\x U \longrightarrow \R$ and $g: \Cc^n \x \Pc(\Cc^n) \longrightarrow \R$,
	we introduce the value function of our McKean--Vlasov optimal control problem by
	\begin{equation} \label{eq:def_VW}
		V_W(t, \nu) 
		:=
		\sup_{\gamma \in \Gamma_W (t, \nu)} J(t, \gamma),
		\;\mbox{where}\;
		J(t, \gamma)
		:=
		\E^{\P^{\gamma}} \bigg[
			\int_t^T L(s, X^{\gamma}_{s \wedge \cdot}, \mub^{\gamma}_s, \alpha^{\gamma}_s) \mathrm{d}s 
			+ 
			g(X^{\gamma}_{T\wedge\cdot}, \mu^{\gamma}_T) 
		\bigg].
	\end{equation}

	\begin{remark}\label{remark:def}
		In a weak control $\gamma$, the filtration $\G^{\gamma}$ is used to model the common noise.
		In particular, $B^{\gamma, t}$ is adapted to $\G^{\gamma}$, and $W^{\gamma, t}$ is independent of $\G^{\gamma}$.
		In the classical strong formulation, $\G^{\gamma}$ is fixed as the filtration $\F^{B^{\gamma,t}}$ generated by $B^{\gamma, t}$,
		but for a general weak control, $\G^{\gamma}$ may be larger than $\F^{B^{\gamma,t}}$. This will be the main difference between the strong and weak formulations in our approach.
		Meanwhile, $(\G^{\gamma}, \F^{\gamma})$ satisfies a $(H)$--hypothesis type condition in \eqref{eq:H_Hypo},
		which is consistent with the classical strong formulation $($see {\rm \Cref{subsec:strong}} below$)$.
		This property will be crucial in our proof of the {\rm DPP} result for the strong formulation of the control problem,
		as well as in the limit theory of the McKean--Vlasov control problem in our accompanying paper {\rm\cite{djete2019general}}.
	\end{remark}

	\begin{remark}			
		\noindent $(i)$
		At this stage, the integrability condition \eqref{eq:cond_integ} could be construed as artificial.
		Depending on more concrete property of the coefficient functions $(b, \sigma, \sigma_0)$,
		it would play the role of an admissibility condition for the control process, and ensure that the stochastic integrals in \eqref{eq:MKV_SDE_weak} are well--defined.
		For a more concrete example, consider the case where $U = \R$ and $u_0 = 0$.
		When $b$, $\sigma$, and $\sigma_0$ are all uniformly bounded,
		one can choose $p=0$ so that all $\R$--valued predictable processes would then be admissible.
		When $\sigma(t,x,u, \nub) = u$, one may choose $p=2$ to ensure that the stochastic integral $\int_t^T \alpha^{\gamma}_s \mathrm{d}W^{\gamma}_s$ is well--defined and is a square--integrable martingale. It is also possible to consider more general types of integrability conditions, such as
		\[
			\E^{\P^{\gamma}} \bigg[ \Phi\bigg(\int_t^T \Psi\big( \rho(u_0,  \alpha^{\gamma}_s ) \big) \mathrm{d}s\bigg)\bigg] < \infty,
		\]
		for given maps $\Phi:[0,\infty)\longrightarrow [0,\infty)$ and $\Psi:[0,\infty)\longrightarrow [0,\infty)$. This would for instance allow to consider exponential integrability requirements. 
		For the sake of simplicity, we have chosen the condition in \eqref{eq:cond_integ}, but insist that it plays no essential role in the proof of the dynamic programming principle.

		\medskip
		
		\noindent $(ii)$	The set $\Gamma_W(t, \nu)$ could be empty, in which case $V_W(t, \nu) = - \infty$ by convention.
		For example, when $\int_{\Cc^n} \| \xb \|^p \nu(d \xb) = \infty$,
		then $\Gamma_W(t,\nu) = \emptyset$, since \eqref{eq:cond_integ} cannot be satisfied.
		Nevertheless, $\Gamma_W(t,\nu)$ is non--empty under either one of the following conditions $($see for instance {\rm \citeauthor*{djete2019general} \cite[Theorem A.2]{djete2019general}} for a brief proof$)$
		\begin{itemize}
			\item
			$(b, \sigma, \sigma_0)$ are bounded and continuous in $(\xb,\nub, u)$ and $\nu \in \Pc(\Cc^n);$
		
			\item
			$(b, \sigma, \sigma_0)$ are continuous in $(\xb,\nub, u)$, 
			and  with the fixed constant $p$ and other positive constants $C$, $p^\prime$, $\hat p$ 
			such that $p^\prime> p \ge 1 \vee \hat p $, and $p^\prime \ge 2 \ge \hat p \ge 0$, 
			we have $\nu \in \Pc_{p'}(\Cc^n)$ and
			\[
				\big| b(t,\xb,\nub, u)\big|
				\le
				C \bigg ( 1+ \| \xb \| + \bigg( \int_{\Cc^n \x U}\big( \| \xb^\prime\|^p + \rho(u_0,u^\prime)^p\big)\nub(\mathrm{d}\xb^\prime,\mathrm{d}u^\prime) \bigg)^{\frac{1}{p}} +\rho(u_0,u) \bigg ),
			\]
			\[
				\big| (\sigma,\sigma_0)(t,\xb, \nub, u)\big|^2
				\le 
				C \bigg ( 1+ \| \xb \|^{\hat p} + \bigg( \int_{\Cc^n \x U}\big( \| \xb^\prime\|^p + \rho(u_0,u')^p \big)\nub(\mathrm{d}\xb^\prime,\mathrm{d}u^\prime) \bigg)^{\frac{\hat p}{p}} + \rho(u_0,u)^{\hat p} \bigg ),
			\] 
			 for all $(t,\xb,\nub,u) \in [0,T] \x \Cc^n \x \Pc(\Cc^n \x U) \x U$.
		\end{itemize}
	\end{remark}

	\begin{remark}
		\label{remark:def2}
		For technical reasons, we introduce an $\R$--valued, $\F^{\gamma}$--adapted continuous process $A^{\gamma} := (A^{\gamma}_s)_{0 \le s \le T}$ by
		$($recall that $\pi: U \to [0,1]$ is a fixed isomorphic bijection between $U$ and $\pi(U))$
		\begin{equation} \label{eq:def_A_gamma}
			A^{\gamma}_s  :=  \int_t^{s \vee t} \pi(\alpha^{\gamma}_r) \mathrm{d}r,~ s \in [0,T],
			~\mbox{\rm so that}~
			\pi \big(\alpha^\gamma_s \big)=\underset{n\rightarrow\infty}{\overline{\lim}}n\big(A^\gamma_s-A^\gamma_{(s-1/n)\vee 0}\big),~\mathrm{d}\P^{\gamma} \otimes \mathrm{d}s \mbox{\rm --a.e. on}~ \Om^{\gamma} \x [t,T].
		\end{equation}
		We further define a $\Pc(\Cc^n \x \Cc \x \Cc^d \x \Cc^{\ell})$--valued process $\muh^{\gamma} = (\muh^{\gamma}_s)_{0 \le s \le T}$ by
		\begin{equation} \label{eq:def_muh}
			\muh^{\gamma}_s 
			:=
			\Lc^{\P^{\gamma}} \big(X^{\gamma}_{s \wedge \cdot}, A^{\gamma}_{s \wedge \cdot}, W^{\gamma},B^{\gamma}_{s \wedge \cdot} \big) \mathbf{1}_{\{ s \in [0,t] \}}
			+
			\Lc^{\P^{\gamma}} \big( \big(X^{\gamma}_{s \wedge \cdot}, A^{\gamma}_{s \wedge \cdot},W^{\gamma},B^{\gamma}_{s \wedge \cdot} \big) \big| \Gc^{\gamma}_s \big)
			\mathbf{1}_{\{ s \in (t,T]\}}.
		\end{equation}
		The process $\muh^{\gamma}$ can be defined to be a $\G^{\gamma}$--adapted and $\P^\gamma$--{\rm a.s.} continuous process $($equipping $\Pc(\Cc^n \x \Cc \x \Cc^d \x \Cc^{\ell})$ with the weak convergence topology$)$.
		Indeed, by \eqref{eq:H_Hypo}, we have 
		\[
			\muh^{\gamma}_s 
			=
			\Lc^{\P^{\gamma}} \big( \big(X^{\gamma}_{s \wedge \cdot}, A^{\gamma}_{s \wedge \cdot},W^{\gamma},B^{\gamma}_{s \wedge \cdot} \big) \big| \Gc^{\gamma}_T \big), 
			\; \P^\gamma\mbox{\rm--a.e.},
			\; \mbox{\rm for all}\; s \in (t, T].
		\]
		Then by {\rm \Cref{lemm:Cond_Law}}, $\muh^{\gamma}$ can be defined to be  $\P^\gamma$--{\rm a.s.} continuous on both $[0,t]$ and $(t,T]$.
		Moreover, using the independence property between $\Fc^{\gamma}_t \vee \sigma(W^{\gamma})$ and $\Gc^{\gamma}_T$, we have,  $\P^\gamma$--{\rm a.s.}
		\begin{align*}
			\lim_{r \searrow t} \muh^{\gamma}_r
			=
			\lim_{r \searrow t} \Lc^{\P^{\gamma}}\! \big( \big(X^{\gamma}_{r \wedge \cdot}, A^{\gamma}_{r \wedge \cdot}, W^{\gamma},B^{\gamma}_{r \wedge \cdot} \big) \big| \Gc^{\gamma}_T \big)
			&=
			\Lc^{\P^{\gamma}}\! \big( \big(X^{\gamma}_{t \wedge \cdot}, A^{\gamma}_{t \wedge \cdot}, W^{\gamma},B^{\gamma}_{t \wedge \cdot} \big) \big| \Gc^{\gamma}_T \big)
			=
			\Lc^{\P^{\gamma}}\! \big(X^{\gamma}_{t \wedge \cdot}, A^{\gamma}_{t \wedge \cdot}, W^{\gamma},B^{\gamma}_{t \wedge \cdot} \big)=
			\muh^{\gamma}_t.
		\end{align*}
	\end{remark}

	\begin{remark}
		It is perfectly possible for us to consider a slightly more general class of control problems allowing for exponential discounting. 
		More precisely, we could have an additional Borel map $k:[0,T] \x \Cc^n \x \Pc(\Cc^n \x U) \x U \longrightarrow \R$ and consider, for fixed $(t,\nu)\in [0,T]\times \Pc(\Cc^n)$, 
		the problem of maximising over $\gamma\in\Gamma_W(t,\nu)$ the functional
		\[
			\tilde J(t,\gamma):=\E^{\P^{\gamma}} \bigg[
			\int_t^T \mathrm{e}^{-\int_t^sk(u, X^{\gamma}_{u \wedge \cdot}, \mub^{\gamma}_u, \alpha^{\gamma}_u)\mathrm{d}u}L(s, X^{\gamma}_{s \wedge \cdot}, \mub^{\gamma}_s, \alpha^{\gamma}_s) \mathrm{d}s 
			+ 
			\mathrm{e}^{-\int_t^Tk(u, X^{\gamma}_{u \wedge \cdot}, \mub^{\gamma}_u, \alpha^{\gamma}_u)\mathrm{d}u}g(X^{\gamma}_{T\wedge\cdot}, \mu^{\gamma}_T) 
			\bigg].
		\]
		We refrained from working at that level of generality for notational simplicity, but our results extend directly to this context.
	\end{remark}

\subsection{A strong formulation }\label{subsec:strong}

	To obtain a strong formulation of the control problem, 
	the usual approach is to consider a fixed probability space, equipped with fixed Brownian motions and fixed Brownian filtrations.
	In fact, this is equivalent to fix the filtrations, in the weak control $\gamma$, to be the Brownian filtrations (see \Cref{prop:strong_ctrl_fixed_space} below). We will therefore present the two equivalent definitions one after the other.
	
	\subsubsection{Strong formulation as a special case of weak formulation}
	Let us start with the main definition.
	\begin{definition} \label{def:strong_ctrl}
		Let $(t, \nu) \in [0,T] \x \Pc(\Cc^n)$. A term
		$
			\gamma = \big( \Om^{\gamma}, \Fc^{\gamma}, \F^{\gamma}, \P^{\gamma}, \G^{\gamma}, X^{\gamma}, W^{\gamma}, B^{\gamma}, \mub^{\gamma}, \mu^{\gamma}, \alpha^{\gamma} \big)
		$
		is called a strong control associated with the initial condition $(t, \nu)$, if
		$\gamma \in \Gamma_W(t, \nu)$
		and the filtrations $\G^{\gamma}$ and $\F^{\gamma}$ are $\P^{\gamma}$-augmented filtrations of
		$\G^{\gamma, \circ} := (\Gc^{\gamma, \circ}_s)_{s \in [0,T]}$ and $\F^{\gamma, \circ} := (\Fc^{\gamma, \circ}_s)_{s \in [0,T]}$, which are defined by
			\[
				\Gc^{\gamma, \circ}_s  := 
				\begin{cases}
					\{ \emptyset, \Om^{\gamma} \},\; \mbox{\rm if}\;  0\leq s < t,\\
					\sigma \big (B^{\gamma,t}_r:r \in [t, s] \big), \; \mbox{\rm if}\;  0\leq t \le s \le T,
				\end{cases}
				\; \mbox{\rm and}\; 
				\Fc^{\gamma, \circ}_s :=
				\begin{cases}
					\sigma \big(X^{\gamma}_{s \wedge \cdot}\big), \; \mbox{\rm if} \;  0\leq s < t,\\
					\sigma\big( (X^{\gamma}_{t \wedge \cdot}, B^{\gamma,t}_r, W^{\gamma, t}_r): r \in [t,s] \big),
					\; \mbox{\rm if}\;  0\leq t \le s \le T.
				\end{cases}
			\]
		If, in addition, the control process $\alpha^{\gamma}$ is $\G^{\gamma}$--predictable,
		then $\gamma$ is called a $\B$--strong control.
	\end{definition}
	Let us denote by $\Gamma_S (t, \nu)$ the collection of all strong controls  with initial condition $(t, \nu)$,
	and by $\Gamma_S^\B (t, \nu)$ the collection of all $\B$--strong controls with initial condition $(t, \nu)$, i.e.
	\[
		\Gamma_S^\B (t, \nu)
		:=
		\big\{
		    \gamma \in \Gamma_S (t, \nu)
		    :
		    \alpha^\gamma\; \mbox{is}\; \G^\gamma\mbox{--predictable}
		\big\}.
	\]

	\begin{remark}
		For a strong control $\gamma \in \Gamma_S(t, \nu)$,
		the filtration $\G^{\gamma}$ is generated by $B^{\gamma, t}$,
		and $\F^{\gamma}$ is generated by the initial variable $X^{\gamma}_{t \wedge \cdot}$ and the Brownian motion $W^{\gamma, t}_{\cdot}$ and $B^{\gamma, t}_{\cdot}$.
		Consequently, the control process $\alpha^{\gamma}$ is adapted to the filtration generated by $(X^{\gamma}_{t \wedge \cdot}, W^{\gamma, t}_{\cdot}, B^{\gamma, t}_{\cdot})$, and the common noise comes only from $B^{\gamma, t}$.
		We will show in {\rm \Cref{prop:strong_ctrl_fixed_space}} that this is equivalent to the case with a fixed probability space equipped with fixed Brownian motions and the initial random variable.
		We here define the strong control rules as special cases of weak control rules in order to avoid repeating all the technical conditions in {\rm\Cref{def:weak_control}}.
	\end{remark}

	Our proof of the dynamic programming principle for the strong formulation of the McKean--Vlasov problem relies essentially on its equivalence to the weak formulation, 
	which requires the following standard Lipschitz condition on the coefficient functions.
	Moreover, this condition ensures the existence and uniqueness of the solution to SDE \eqref{eq:MKV_SDE_weak} (see e.g. \Cref{theorem_Existence/uniqueness-SDE}).
	
	\begin{assumption} \label{assum:Lip}
		Let the constant in \eqref{eq:cond_integ} be $p=2$. 
		There exists a constant $C > 0$ such that, for all $(t, \xb, \xb^\prime, \nub, \nub^\prime, u) \in [0,T] \x \Cc^n \x \Cc^n \x \Pc_2(\Cc^n \x U)\x \Pc_2(\Cc^n \x U) \x U$, one has
		\begin{equation} \label{eq:Lip_cond}
			\big\| \big(b, \sigma, \sigma_0 \big) (t,\xb, \nub, u) - \big(b, \sigma, \sigma_0 \big) (t,\xb', \nub^\prime, u) \big\| 
			\le C \big(\| \xb - \xb'\| + \Wc_2(\nub, \nub') \big),
		\end{equation}
		\[
			\big\| (b, \sigma, \sigma_0) (t,\xb, \nub, u) \big\|^2 
			\leq
			C \bigg( 1+ \| \xb \|^2 + \int_{\Cc^n \x U} \big(\| \yb\|^2 + \rho(\hat u,u_0)^2\big) \nub(\mathrm{d}\yb, \mathrm{d}\hat u)  + \rho(u,u_0)^2 \bigg).
	    \]
	\end{assumption}

	Under \Cref{assum:Lip}, the set $\Gamma_S(t,\nu)$ is nonempty for all $(t,\nu) \in [0, T] \x \Pc_2(\Cc^n)$ (see e.g. \Cref{theorem_Existence/uniqueness-SDE}).
	We then introduce the following strong formulation (resp. $\B$--strong formulation) of the McKean--Vlasov control problem
	\begin{equation}
	\label{eq:Strong_Value_function-McK}
		V_S(t,\nu)
		:=
	    	\sup_{\gamma \in \Gamma_S (t, \nu)} 
		J(t, \gamma),
		~\mbox{\rm and}~
		V_S^\B(t,\nu)
		:=
	    	\sup_{\gamma \in \Gamma_S^\B (t, \nu)} 
		J(t, \gamma).
	\end{equation}

	\begin{remark}[The case without common noise: $\ell = 0$]
		In the literature, the McKean--Vlasov control problem without common noise has also largely been studied. This contained as a special case in our setting. Indeed, when $\ell = 0$, the process $B^{\gamma,t}$ degenerates to be a singleton and hence 
		$ \Gc^{\gamma}_s = \{ \emptyset, \Om^{\gamma} \}$ for all $s \in [0,T]$.
		It follows that $\mub^{\gamma}$ appearing in \eqref{eq:MKV_SDE_weak} turns out to satisfy
		\[
			\mub^{\gamma}_s = \Lc^{\P^{\gamma}} \big(X^{\gamma}_{s \wedge \cdot}, \alpha^{\gamma}_s),\; \text{\rm for}\; \mathrm{d}\P^\gamma\otimes\mathrm{d}t \mbox{\rm--a.e.}\; (s,\omega) \in [t,T] \x \Om^\gamma,
		\]
		and the value function $V_S(t,\nu)$ in \eqref{eq:Strong_Value_function-McK} is the standard formulation of the control problem without common noise {\color{black} $($see e.g. {\rm\cite{carmona2018probabilisticI}}$)$.}
	\end{remark}

\subsubsection{Strong formulation on a fixed probability space}
\label{subsubsec:strong_form}

	For $0 \le s \le t \le T$, let $\Cc^n_{s,t} := C([s,t], \R^n)$ denote the space of all $\R^n$--valued continuous paths on $[s,t]$,
	we then introduce a first canonical space, for every $t \in [0,T]$
	\begin{equation} \label{eq:Omt}
		\Om^t
		:=
		\Cc^n_{0,t} \x \Cc^d_{t, T} \x \Cc^{\ell}_{t,T},
		~
		\Fc^t := \Bc(\Om^t),
	\end{equation}
	with corresponding canonical processes $\zeta := (\zeta_s)_{0 \le s \le t}$, $W := (W_s)_{t \le s \le T}$, and $B := (B_s)_{t \le s \le T}$.
	Let $W^t_s := W_{s \vee t} - W_t$ and $B^t_s := B_{s \vee t} - B_t$ for all $s \in [0, T]$, and
	define $\F^{t, \circ} = (\Fc^{t,\circ}_s)_{0 \le s \le T}$ and $\G^{t,\circ} = (\Gc^{t,\circ}_s)_{0 \le s \le T}$ by
	\[
		\Fc^{t,\circ}_s :=
		\begin{cases}
			\sigma \big(\zeta_{s \wedge \cdot}\big), \; \mbox{\rm if}\; 0\leq s < t,\\
			\sigma \big((\zeta_{t \wedge \cdot}, W^t_r, B^t_r):r \in [t,s] \big),\; \mbox{\rm if}\; 0\leq t \le s \le T,
		\end{cases}
		\; \mbox{\rm and}\;
		\Gc^{t, \circ}_s := 
		\begin{cases}
			\{\emptyset, \Om^t\}, \; \mbox{\rm if}\; 0\leq s < t, \\
			\sigma \big( B^t_r:r \in [t,s] \big),\; \mbox{\rm if}\;  0\leq t \le s \le T.
		\end{cases}
	\]
	Let $(t,\nu) \in [0,T] \x \Pc_2(\Cc^n)$, we fix a probability measure $\P^t_{\nu}$ on $\Om^t$, such that $\Lc^{\P^t_{\nu}}\big(\zeta_{t \wedge \cdot}\big)= \nu(t)$, 
	and $(W^t, B^t)$ are standard Brownian motions on $[t,T]$, independent of $\zeta$.
	Let $\F^t = (\Fc^t_s)_{0 \le s \le T}$ and $\G^t = (\Gc^t_s)_{0 \le s \le T}$ be the $\P^t_{\nu}$--augmented filtration of $\F^{t,\circ}$ and $\G^{t,\circ}$,
	we denote by $\Ac_2(t,\nu)$ (resp. $\Ac_2^\B(t,\nu)$) the collection of all $U$--valued processes $\alpha = (\alpha_s)_{t \le s \le T}$ which are $\F^t$--predictable (resp. $\G^t$--predictable) and such that
	\[
		\E^{\P^t_{\nu}} \bigg[ \int_t^T \big( \rho(u_0, \alpha_s) \big)^2 \mathrm{d}s \bigg] 
		< \infty.
	\]
	Then, given $\alpha \in \Ac_2(t, \nu)$, let $X^{\alpha}$ be the unique strong solution (in sense of \Cref{def:strong_sol}) of the SDE, 
	with initial condition $X^{\alpha}_{t \wedge \cdot}= \zeta_{t \wedge \cdot}$,
	\begin{equation} \label{eq:MKV_strong}
		X^{\alpha}_s
		= 
		X^{\alpha}_t 
		+
		\int_t^s b \big(r, X^{\alpha}_{r \wedge \cdot}, \mub^{\alpha}_r, \alpha_r \big) \mathrm{d}r
		+
		\int_t^s \sigma\big(r, X^{\alpha}_{r \wedge \cdot}, \mub^{\alpha}_r, \alpha_r \big) \mathrm{d} W^t_r
		+ 
		\int_t^s \sigma_0 \big(r, X^{\alpha}_{r \wedge \cdot}, \mub^{\alpha}_r, \alpha_r \big) \mathrm{d} B^t_r,\; t\leq s\leq T, \; \P^t_{\nu} \mbox{--a.s.},
	\end{equation}
	where $\mub^{\alpha}_r =  \Lc^{\P^t_{\nu}}\big((X^{\alpha}_{r \wedge \cdot}, \alpha_r )\big| \Gc^t_r \big)$, $\mathrm{d} \P^t_{\nu} \x \mathrm{d}r$--a.e. on $\Om^t \x [t, T]$.
	Notice that the existence and uniqueness of a solution to SDE \eqref{eq:MKV_strong} is ensured by \Cref{assum:Lip} (see \Cref{theorem_Existence/uniqueness-SDE}).
	Finally, we denote for any $\alpha\in \Ac_2(t, \nu)$, $\mu^{\alpha}_s :=  \Lc^{\P^t_{\nu}}\big(X^{\alpha}_{s \wedge \cdot} \big| \Gc^t_s \big)$,  $s\in[t,T]$.
	
	\vspace{0.5em}
	
	We next show that the above strong formulation of the control problem with fixed probability space is equivalent to that in \Cref{def:strong_ctrl} as a special case of the weak control rules.

	\begin{proposition} \label{prop:strong_ctrl_fixed_space}
		Let {\rm\Cref{assum:Lip}} hold true.
		Then for all $(t,\nu) \in [0,T] \x \Pc_2(\Cc^n)$, one has
		\begin{equation} \label{eq:Strong_Value_function-McK_equiv}
			V_S(t,\nu)
			=
			\sup_{\alpha \in \Ac_2(t, \nu)} 
			J(t,\nu, \alpha),
			~\mbox{\rm and}~
			V_S^\B(t,\nu)
			=
			\sup_{\alpha \in \Ac_2^\B(t, \nu)} 
			J(t,\nu, \alpha),
		\end{equation}
		where
		\[
			J(t,\nu, \alpha)
			:=
			\E^{\P^t_{\nu}} \bigg[
				\int_t^T L(s, X^{\alpha}_{s \wedge \cdot}, \mub^{\alpha}_s, \alpha_s) \mathrm{d}s 
				+ 
				g(X^{\alpha}_{\cdot}, \mu^{\alpha}_T) 
			\bigg].
		\]

	\end{proposition}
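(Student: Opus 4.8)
The plan is to prove each of the two identities in \eqref{eq:Strong_Value_function-McK_equiv} by a double inequality; I only detail the first one, the $\B$--strong case being obtained verbatim after replacing $\F$--predictability of the control by $\G$--predictability throughout.

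\emph{The inequality $\sup_{\alpha\in\Ac_2(t,\nu)}J(t,\nu,\alpha)\leq V_S(t,\nu)$.} Given $\alpha\in\Ac_2(t,\nu)$, I would exhibit an explicit strong control realising the same reward: take $\Om^\gamma:=\Om^t$, $\Fc^\gamma:=\Fc^t$, $\P^\gamma:=\P^t_\nu$, $X^\gamma:=X^\alpha$ (the strong solution of \eqref{eq:MKV_strong}), $W^\gamma_s:=W^t_{s\vee t}$, $B^\gamma_s:=B^t_{s\vee t}$ for $s\in[0,T]$, $\alpha^\gamma:=\alpha$, $\mu^\gamma:=\mu^\alpha$, $\mub^\gamma:=\mub^\alpha$, and $(\F^\gamma,\G^\gamma)$ the $\P^t_\nu$--augmentations of $(\F^{t,\circ},\G^{t,\circ})$. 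All conditions of \Cref{def:weak_control} and \Cref{def:strong_ctrl} then follow from the defining properties of $\P^t_\nu$: since $(W^t,B^t)$ are independent Brownian motions on $[t,T]$, jointly independent of $\zeta$, the filtration $\G^{t,\circ}$ carries $B^{\gamma,t}$, the process $W^{\gamma,t}$ is independent of $\Gc^\gamma_T$, and the $(H)$--hypothesis \eqref{eq:H_Hypo} holds; \eqref{eq:cond_integ} (with $p=2$) is precisely the admissibility of $\alpha\in\Ac_2(t,\nu)$; and \eqref{eq:MKV_SDE_weak} coincides with \eqref{eq:MKV_strong}. Since $J(t,\gamma)=J(t,\nu,\alpha)$ by construction, the claim follows.

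\emph{The inequality $V_S(t,\nu)\leq\sup_{\alpha\in\Ac_2(t,\nu)}J(t,\nu,\alpha)$.} Let $\gamma=(\Om^\gamma,\Fc^\gamma,\F^\gamma,\P^\gamma,\G^\gamma,X^\gamma,W^\gamma,B^\gamma,\mub^\gamma,\mu^\gamma,\alpha^\gamma)\in\Gamma_S(t,\nu)$ and introduce the measurable map $\Phi:=(X^\gamma_{t\wedge\cdot},W^{\gamma,t},B^{\gamma,t}):\Om^\gamma\to\Om^t$. By items $(iii)$ and $(iv)$ of \Cref{def:weak_control}, $\P^\gamma\circ\Phi^{-1}$ satisfies the constraints characterising $\P^t_\nu$, and $\Phi$ transports $\F^{\gamma,\circ}$ and $\G^{\gamma,\circ}$ onto $\F^{t,\circ}$ and $\G^{t,\circ}$ respectively. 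Since by \Cref{def:strong_ctrl} the filtration $\F^\gamma$ is the $\P^\gamma$--augmentation of $\F^{\gamma,\circ}$, a standard functional-representation result for processes predictable with respect to the augmented filtration generated by a canonical process produces an $\F^t$--predictable $U$--valued process $\alpha$ on $\Om^t$ with $\alpha^\gamma_s=\alpha_s\circ\Phi$, $\mathrm{d}\P^\gamma\otimes\mathrm{d}s$--a.e.; pushing \eqref{eq:cond_integ} through $\Phi$ gives $\alpha\in\Ac_2(t,\nu)$, and $\alpha\in\Ac_2^\B(t,\nu)$ whenever $\alpha^\gamma$ is $\G^\gamma$--predictable. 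Now $X^\gamma$ is $\F^\gamma$--adapted and solves \eqref{eq:MKV_SDE_weak}, hence is a strong solution (in the sense of \Cref{def:strong_sol}) of \eqref{eq:MKV_strong} for the control $\alpha^\gamma$, initial segment $X^\gamma_{t\wedge\cdot}$ and drivers $(W^{\gamma,t},B^{\gamma,t})$; \Cref{assum:Lip} and \Cref{theorem_Existence/uniqueness-SDE} then provide the strong solution $X^\alpha$ on $(\Om^t,\P^t_\nu)$ and force $X^\gamma=X^\alpha\circ\Phi$, $\P^\gamma$--a.s., since strong uniqueness means both are the same measurable functional of their driving data. Conditioning on $B^{\gamma,t}$, resp. $B^t$, the conditional laws $\mu^\gamma,\mub^\gamma$ are likewise the $\Phi$--pushforwards of $\mu^\alpha,\mub^\alpha$, so $(X^\gamma,\mub^\gamma,\mu^\gamma,\alpha^\gamma)$ under $\P^\gamma$ has the same law as $(X^\alpha,\mub^\alpha,\mu^\alpha,\alpha)$ under $\P^t_\nu$, whence $J(t,\gamma)=J(t,\nu,\alpha)$.

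\emph{Main obstacle.} The two genuinely technical points both lie in the second inequality. The first is the functional representation of the control: one must produce an $\F^t$--predictable version $\alpha$ of $\alpha^\gamma$ despite the $\P^\gamma$--null sets added when augmenting $\F^{\gamma,\circ}$, i.e.\ argue that the $\F^\gamma$--predictable $\sigma$--field is generated, modulo $\P^\gamma$--negligible sets, by the raw data $(X^\gamma_{t\wedge\cdot},W^{\gamma,t},B^{\gamma,t})$. The second is the rigorous transfer of the conditional laws, which requires checking that taking $\P^\gamma$--conditional expectations given $\Gc^\gamma_s$ commutes with the pushforward by $\Phi$ — this is where the structure of $\G^\gamma$ as the augmentation of the filtration generated by $B^{\gamma,t}$, together with \eqref{eq:H_Hypo}, is used. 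Everything else is bookkeeping, and the crucial external input is strong existence and uniqueness under \Cref{assum:Lip}, without which a weak control need not be realisable on the prescribed canonical space $\Om^t$.
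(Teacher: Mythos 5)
Your argument is correct and takes essentially the same route as the paper's: the first inequality coincides with the paper's, and for the second you replace the paper's direct appeal to \cite[Proposition 10]{claisse2016pseudo} for a joint functional representation of $(X^\gamma,\alpha^\gamma)$ and $(\mu^\gamma,\mub^\gamma)$ by a representation of $\alpha^\gamma$ alone, recovering $X^\gamma$ via strong uniqueness (\Cref{theorem_Existence/uniqueness-SDE}) and then transferring the conditional laws through the induced law identity on $\Om^t$. Both bookkeepings rest on the same two ingredients, namely the functional-representation lemma and strong well-posedness under \Cref{assum:Lip}, so the reorganisation is cosmetic.
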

	\begin{proof}
		We will only consider the case of $V_S$, since the arguments for the case of $V^{\B}_S$ are exactly the same. First, given $\alpha \in \Ac_2(t,\nu)$, let us define 
	        \[
			\gamma
			:=
			\big( \Om^t, \Fc^t, \P^t_{\nu}, \F^t,  \G^t, X^\alpha, W^t, B^t, \mub^\alpha, \mu^\alpha, \alpha \big).
		\]
		Then it is straightforward to check that $ \gamma$ is a strong control rule
		(i.e. $ \gamma \in \Gamma_S(t, \nu)$) such that $J(t,  \gamma) = J(t, \nu, \alpha)$.
		
\medskip

		Next, let $\gamma \in \Gamma _S(t, \nu)$.
		Notice that $(X^{\gamma}, \alpha^\gamma)$ is $\F^\gamma$--predictable, and $(\mu^{\gamma}, \mub^{\gamma})$ is $\G^{\gamma}$--predictable.
		Using for instance \citeauthor*{claisse2016pseudo} \cite[Proposition 10]{claisse2016pseudo} (with a slight extension consisting simply in having a larger $\Fc_0$),
		there exists two Borel measurable functions
		$\Psi_1: [0,T] \x \Om^t \longrightarrow \R^n \x U$ and $\Psi_2: [0,T] \x \Cc^{\ell} \longrightarrow \Pc(\Cc^n) \x \Pc(\Cc^n \x U)$ such that
		\[
			\big(X^{\gamma}_{s}, \alpha^\gamma_s\big)
			=
			\Psi_1 \big(s, X^\gamma_{t \wedge \cdot}, W^{\gamma,t}_{s \wedge \cdot}, B^{\gamma,t}_{s \wedge \cdot} \big),
			~
			\big(\mu^{\gamma}_s, \mub^{\gamma}_s \big)
			=
			\Psi_2(s, B^{\gamma,t}_{s \wedge \cdot}),
			\; s \in [0,T],
			~\P^{\gamma}\mbox{\rm --a.s.}
		\]
		Then, on $\Om^t$,
		let us define $( X^{\star}_s, \alpha^{\star}_s) := \Psi_1(s, \zeta{t \wedge \cdot}, W^{t}_{s \wedge \cdot}, B^{t}_{s \wedge \cdot})$ 
		and $(\mu^{\star}_s, \mub^{\star}_s) := \Psi_2(s, B^t_{s \wedge \cdot})$,
		so that
		\[
			 \alpha^{\star} \in \Ac_2(t,\nu),
			 ~\mbox{\rm and}~
			\P^t_{\nu} \circ
			\big(  X^{\star}, W^{t}, B^{t}, \alpha^{\star}, \mub^{\star}, \mu^{\star}
			\big)^{-1}
			=
			\P^\gamma \circ
			\big( X^\gamma, W^{\gamma,t},B^{\gamma,t},  \alpha^\gamma, \mub^\gamma, \mu^\gamma
			\big)^{-1}.
		\]
		This implies that $X^{\star}$ is the unique strong solutions to SDE \eqref{eq:MKV_strong} with control $\alpha^{\star}$,
		such that 
		$\mu^{\star}_s = \Lc^{\P^t_{\nu}}(X^{\star}_{s \wedge \cdot} | \Gc^t_s)$, $\mub^{\star}_s = \Lc^{\P^t_{\nu}}((X^{\star}_{s \wedge \cdot}, \alpha^{\star}_s) | \Gc^t_s)$
		and
		$J(t, \nu, \alpha^{\star}) = J(t, \gamma)$.
	\end{proof}

\section{The dynamic programming principle}

\label{sec:DPP}

	The main results of our paper consist in the dynamic programming principle (DPP) for the previously introduced formulations of the McKean--Vlasov control problem.
	We will first prove the DPP for the general strong and weak control problems introduced in \Cref{sec:Formulation},
	and then show how they naturally induce the associated results in the Markovian case.
	Finally, we also discuss heuristically the Hamilton--Jacobi--Bellman (HJB for short) equations which can be deduced for each formulation.

\subsection{The dynamic programming principle in the general case}

\subsubsection{Dynamic programming principle for the weak control problem}

	To provide the dynamic programming principle of the McKean--Vlasov control problem \eqref{eq:def_VW},
	let us introduce another canonical space 
	\[
		\Om^{\star} := \Cc^{\ell} \x C\big([0,T], \Pc(\Cc^n \x \Cc \x \Cc^d \x \Cc^{\ell})\big),
		~\mbox{with canonical process}~(B^{\star}, \muh^{\star}) := (B^{\star}_s, \muh^{\star}_s)_{s \in [0,T]},
	\]
	and canonical filtration $\G^{\star} := (\Gc^{\star}_s)_{0 \le s \le T}$ defined by $\Gc^{\star}_s := \sigma \big \{(\muh^{\star}_r, B^{\star}_r):r \in [0, s] \big\}$, $s\in[0,T]$.
	Then, for every $\G^{\star}$--stopping time $\tau^{\star}$ (which can then be written as a function of $B$ and $\muh$), 
	for all $(t,\nu) \in [0,T]\times\Pc(\Cc^n)$ and $\gamma\in\Gamma_W(t,\nu)$, we define (recall that $\muh^{\gamma}$ is defined by \eqref{eq:def_muh})
	\begin{equation} \label{eq:tau_from_taut}
		\tau^{\gamma} 
		:=
		\tau^{\star} \big(B^{\gamma, t}_{\cdot}, \muh^{\gamma}_\cdot \big).
	\end{equation}
	
	\begin{theorem} \label{thm:WeakDPP}
		The value function $V_W:[0,T] \x \Pc(\Cc^n) \longrightarrow \R \cup\{-\infty, \infty\}$
		of the weak McKean--Vlasov control problem \eqref{eq:def_VW} is upper semi--analytic.
		Moreover, let $(t, \nu) \in [0,T] \x \Pc(\Cc^n)$, $\tau^{\star}$ be a $\G^{\star}$--stopping time taking values in $[t, T]$,
		and $\tau^{\gamma}$ be defined in \eqref{eq:tau_from_taut},
		one has
		\begin{equation} \label{eq:WeakDPP}
			V_W(t, \nu) 
			=
			\sup_{\gamma \in \Gamma_W(t, \nu)} 
			\E^{\P{^\gamma}} \bigg[
				\int_t^{\tau^{\gamma}} L(s, X^{\gamma}_{s \wedge \cdot}, \mub^{\gamma}_s, \alpha^{\gamma}_s) \mathrm{d}s 
				+ 
				V_W\big(\tau^{\gamma}, \mu^{\gamma}_{\tau^{\gamma}} \big) 
			\bigg].
		\end{equation}
	\end{theorem}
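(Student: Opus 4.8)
The plan is to follow the classical measurable-selection route to the DPP, as developed by El Karoui--Tan and El Karoui--Huang--Jeanblanc, but carefully adapted to the two-filtration setup with the conditional-law processes $\mu^{\gamma}, \mub^{\gamma}, \muh^{\gamma}$ living on the canonical space. I would split the argument into three conceptual blocks: (1) reformulate the weak control problem as an optimization over probability measures on a fixed canonical space, so that $\Gamma_W(t,\nu)$ becomes a set $\Pc_W(t,\nu)$ of measures with a measurable (in fact analytic) graph in $(t,\nu)$; (2) establish upper semi-analyticity of $V_W$ via the projection theorem applied to this graph together with measurability of the reward functional $J$; (3) prove the two inequalities in \eqref{eq:WeakDPP} using, respectively, conditioning (the ``$\le$'' or ``$\ge$'' coming from restricting/disintegrating a control at $\tau^{\gamma}$) and concatenation (gluing a near-optimal family of controls after $\tau^{\gamma}$ onto an arbitrary control before $\tau^{\gamma}$), the latter requiring a measurable selection of near-optimal controls.

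For block (1), I would introduce a canonical space carrying $(X, A, W, B, \muh)$ — matching \Cref{remark:def2} — and rephrase \Cref{def:weak_control} as a list of constraints on $\P \in \Pc$ of that space: the ``initial law'' constraint $\P \circ X_{t\wedge\cdot}^{-1} = \nu(t)$, the martingale-problem formulation of SDE \eqref{eq:MKV_SDE_weak} (so that the stochastic integrals become an algebraic/martingale condition testable against a countable family of bounded smooth functions), the Brownian and independence conditions in \eqref{eq:H_Hypo} and item $(iii)$, and the consistency conditions $\mu^{\gamma}_s = \Lc(X_{s\wedge\cdot}|\Gc_s)$ etc. written via the canonical process $\muh^{\star}$. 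Each of these is a Borel (or at worst analytic) condition on $\P$, so $\{(t,\nu,\P)\}$ is an analytic subset of $[0,T]\times\Pc(\Cc^n)\times\Pc(\Om)$; I would record this as a lemma. Then $J$ is upper semi-analytic in $(t,\nu,\P)$ because $L$ and $g$ are Borel and the conditional-law arguments are continuous functionals of $\muh^{\star}$ (the bounded case; the integrability condition \eqref{eq:cond_integ} handles the general case by truncation). Applying \cite[Chapter 7]{bertsekas1978stochastic} — specifically the fact that the value of an u.s.a. function over an analytic-graph correspondence is u.s.a. — yields the first assertion, that $V_W$ is upper semi-analytic.

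For block (3), the inequality ``$\ge$'' (super-solution side) is the conditioning step: given $\gamma \in \Gamma_W(t,\nu)$, I would take regular conditional probability distributions $\P^{\gamma,\Gc^{\gamma}_{\tau^{\gamma}}}_{\om}$ of $\P^{\gamma}$ knowing $\Gc^{\gamma}_{\tau^{\gamma}}$ (using part $(iii)$ of the Notations, extended to stopping times via the usual Galmarino-type argument), check that for $\P^{\gamma}$-a.e.\ $\om$ the conditioned term is again a weak control started from $(\tau^{\gamma}(\om), \mu^{\gamma}_{\tau^{\gamma}}(\om))$ — here the $(H)$-hypothesis \eqref{eq:H_Hypo} is exactly what guarantees that the conditional laws used to define $\mub^{\gamma}$ after $\tau^{\gamma}$ are the ``correct'' ones and that the independence/Brownian properties survive conditioning — and then use the tower property plus $J(\tau^{\gamma}(\om),\gamma_\om)\le V_W(\tau^{\gamma}(\om),\mu^{\gamma}_{\tau^{\gamma}}(\om))$. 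The reverse inequality ``$\le$'' is the concatenation step: fix $\gamma$ on $[t,\tau^{\gamma}]$, and for $\eps>0$ choose, via a measurable selection theorem (Jankov--von Neumann, applicable since the graph of $(t',\nu')\mapsto \Pc_W(t',\nu')$ is analytic and $J$ is u.s.a.), a universally measurable map $(t',\nu')\mapsto \widehat{\P}(t',\nu')$ with $J \ge V_W - \eps$; then glue $\widehat{\P}(\tau^{\gamma}(\om),\mu^{\gamma}_{\tau^{\gamma}}(\om))$ onto $\P^{\gamma}$ at time $\tau^{\gamma}$, verify the result is a weak control (again the compatibility of the two filtrations and of the $\muh$-consistency conditions across the gluing time is the delicate point), take expectations, and let $\eps \downarrow 0$.

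The main obstacle I anticipate is the handling of the conditional-law processes and the extended common-noise filtration $\G^{\gamma}$ under both conditioning and concatenation. In the classical (non-McKean--Vlasov) setting one only needs to track a single filtration and the martingale property, but here the state dynamics depend on $\mub^{\gamma}_s = \Lc^{\P^{\gamma}}(X_{s\wedge\cdot},\alpha_s|\Gc^{\gamma}_s)$, which is a global object built from the whole probability measure; after conditioning on $\Gc^{\gamma}_{\tau^{\gamma}}$ one must argue that $\Lc^{\P^{\gamma}_\om}(X_{s\wedge\cdot},\alpha_s|\Gc^{\gamma}_s) = \mub^{\gamma}_s(\om')$ for the conditioned measure — i.e.\ that the ``frozen'' conditional law from the original problem is consistent with the conditional law computed inside the r.c.p.d. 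This is precisely where the $(H)$-hypothesis \eqref{eq:H_Hypo}, the $\P^{\gamma}$-independence of $\Fc^{\gamma}_t\vee\sigma(W^{\gamma})$ from $\Gc^{\gamma}_T$, and the careful choice of the canonical variable $\muh^{\star}$ (so that the conditional laws are encoded as a continuous adapted process on the canonical space, cf.\ \Cref{remark:def2}) all have to be used in concert; I would expect this verification to be the technical heart of the proof, and would isolate it into one or two lemmas (``conditioning preserves weak controls'' and ``concatenation of weak controls is a weak control'') proved before attacking \eqref{eq:WeakDPP} itself.
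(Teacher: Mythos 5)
Your plan follows the same route as the paper: reformulate the weak control problem as a set $\Pcb_W(t,\nu)$ of probability measures on the canonical space $\Omb = \Cc^n\times\Cc\times\Cc^d\times\Cc^\ell\times C([0,T],\Pc(\Omh))$ carrying $(X,A,W,B,\muh)$, encode the SDE as a controlled martingale problem so that the graph $\{(t,\nu,\Pb):\Pb\in\Pcb_W(t,\nu)\}$ is analytic, deduce upper semi-analyticity of $V_W$, and then prove the two inequalities via a conditioning lemma and a concatenation lemma supported by Jankov--von Neumann selection. You also correctly single out the $(H)$-hypothesis and the continuity/adaptedness of $\muh$ as the place where the two-filtration structure must be exploited. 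This is exactly the skeleton of the paper's Sections~4.1--4.3.

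There are, however, two points where the concatenation step as you sketch it would not quite close, and both are handled in the paper by explicit auxiliary devices. First, you propose selecting $\eps$-optimal measures $\widehat\P(\tau^\gamma(\omega),\mu^\gamma_{\tau^\gamma}(\omega))$ parameterized by the $\Pc(\Cc^n)$-valued initial condition $\mu^\gamma_{\tau^\gamma}$. But the r.c.p.d. of $\Pb$ knowing $\Gcb^t_{\taub}$ has initial data the full $\muh_{\taub(\omb)}(\omb)\in\Pc(\Omh)$, i.e.\ the joint law of $(X_{\taub\wedge\cdot},A_{\taub\wedge\cdot},W,B_{\taub\wedge\cdot})$, not just the $X$-marginal. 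If the selected after-$\taub$ measure only matches the $X$-marginal, the glued measure generically fails the $\muh$-consistency condition \eqref{eq:property_muh} (the $\muh$-process would jump at $\taub$). The paper therefore works with $\Pch_W(t,\nuh)$ indexed by $\nuh\in\Pc(\Omh)$ and inserts Lemma~\ref{lemm:equiv_nuh_nu} to show that the value is unchanged under re-parameterization by any $\nuh$ lying above $\nu$; your selection must be built on $\Pc(\Omh)$ for the gluing to be well defined. Second, you cannot simply let $\eps\downarrow 0$ after gluing: the selected measures $\widehat\P(\cdot,\cdot)$ individually satisfy \eqref{eq:cond_integ}, but the glued measure $\Pb^{\eps}$ need not, since $\int_{\Omb}\E^{\widehat\P(\taub(\omb),\cdot)}[\|X\|^p+\int\rho^p]\,\Pb(\mathrm{d}\omb)$ may be infinite. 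The paper resolves this by introducing the truncated classes $\Pcb^M_W$, selecting $\eps$-optimizers within $\Pch^M_W$, and choosing $M$ along the path as $M+\widehat M(\omb)$ with $\widehat M$ a $\Gcb^t_{\taub}$-measurable, $\Pb$-integrable majorant of the conditional moments; only then does the glued measure land back in $\Pcb_W(t,\nu)$, and the DPP follows by letting $M\to\infty$ then $\eps\to 0$. Neither point is a fatal objection to your strategy --- they refine it --- but without them the concatenation lemma you are proposing would not produce an admissible weak control.
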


\subsubsection{Dynamic programming for the strong control problems}

	We now consider the two strong formulations of the control problems introduced in \eqref{eq:Strong_Value_function-McK}, or equivalently in \eqref{eq:Strong_Value_function-McK_equiv}.
	To formulate the DPP results, we will rather use the fixed probability space context in \eqref{eq:Strong_Value_function-McK_equiv}.
	Recall that, given initial condition $(t, \nu) \in [0,T] \x \Pc_2(\Cc^n)$, a fixed probability space $(\Om^t, \Fc^t, \P^t_{\nu})$ is defined in and below \eqref{eq:Omt}.
	Let us first consider the strong control problem $V^{\mathbb{B}}_S$.

	\begin{theorem} \label{thm:B-StrongDPP}
		Let {\rm\Cref{assum:Lip}} hold. Then the value function $V^\B_S:[0,T] \x \Pc_2(\Cc^n) \longrightarrow \R \cup\{-\infty, \infty\}$ is upper semi--analytic.
		Moreover, let $(t, \nu) \in [0,T] \x \Pc_2(\Cc^n)$, and $\tau$ be a $\G^{t, \circ}$--stopping time on $(\Om^t, \Fc^t, \P^t_{\nu})$, taking values in $[t, T]$, one has
		\begin{equation} \label{eq:B-strongDPP}
			V_S^\B(t, \nu) 
			=
			\sup_{\alpha \in \Ac_2^\B(t, \nu)} 
			\E^{\P^t_{\nu}} \bigg[
				\int_t^{\tau}  L(s, X^{\alpha}_{s \wedge \cdot}, \mub^{\alpha}_s, \alpha_s) \mathrm{d}s 
				+ 
				V_S^\B\big(\tau, \mu^{\alpha}_{\tau} \big) 
			\bigg].
		\end{equation}
	\end{theorem}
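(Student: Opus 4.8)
The plan is to deduce the $\B$--strong DPP from the weak DPP of \Cref{thm:WeakDPP} by showing that, for the $\B$--strong formulation, the weak value function and the strong value function coincide on $[0,T]\times\Pc_2(\Cc^n)$, and moreover that this coincidence is stable under conditioning at a $\G^{t,\circ}$--stopping time. First I would introduce the weak value function $V^{\B}_W$ associated with the subclass $\Gamma^{\B}_W(t,\nu)$ of weak controls whose control process $\alpha^\gamma$ is $\G^\gamma$--predictable, and check that \Cref{thm:WeakDPP} and its proof go through verbatim for this subclass: the key point is that the measurable selection, conditioning and concatenation arguments only use stability of the control class, and $\G^\gamma$--predictability is preserved under all these operations (conditioning restricts to $[\tau,T]$, concatenation glues a $\G$--predictable piece onto a $\G$--predictable piece). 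So one gets $V^{\B}_W$ u.s.a. and satisfying \eqref{eq:WeakDPP} with $V_W$ replaced by $V^{\B}_W$ and $\Gamma_W$ by $\Gamma^{\B}_W$.

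Next I would prove $V^{\B}_S = V^{\B}_W$ under \Cref{assum:Lip}. The inclusion $V^{\B}_S\le V^{\B}_W$ is immediate since a $\B$--strong control is in particular a $\B$--weak control. For the reverse inequality, the idea is the usual approximation-of-weak-controls-by-strong-controls argument: given $\gamma\in\Gamma^{\B}_W(t,\nu)$, one uses that $\alpha^\gamma$ is adapted to the extended common noise filtration $\G^\gamma$, which by the $(H)$--hypothesis \eqref{eq:H_Hypo} and the independence of $\Fc^\gamma_t\vee\sigma(W^\gamma)$ from $\Gc^\gamma_T$ can be realised, up to the law of all relevant processes, on the canonical space $\Om^t$ with $\alpha^\gamma$ approximated by a sequence of $\G^t$--predictable (hence $\B$--strong) controls; here the Lipschitz \Cref{assum:Lip} guarantees that the corresponding solutions $X^\alpha$ of the McKean--Vlasov SDE \eqref{eq:MKV_strong}, together with their conditional laws $\mub^\alpha,\mu^\alpha$, converge, so that $J(t,\nu,\cdot)$ is continuous along the approximation and $J(t,\gamma)$ is approached. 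This is precisely where one invokes \Cref{theorem_Existence/uniqueness-SDE} (stability of the SDE) and the fact, established via \cite{carmona2014mean}-type arguments in \Cref{sec:Formulation}, that the extended common-noise filtration makes the conditional-law map sufficiently well-behaved. I would phrase this as a self-contained approximation lemma: $\overline{\{J(t,\nu,\alpha):\alpha\in\Ac^{\B}_2(t,\nu)\}}\supseteq\{J(t,\gamma):\gamma\in\Gamma^{\B}_W(t,\nu)\}$, whence $V^{\B}_S(t,\nu)=V^{\B}_W(t,\nu)$ for all $(t,\nu)$.

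With $V^{\B}_S\equiv V^{\B}_W$ in hand, upper semi--analyticity of $V^{\B}_S$ is inherited from that of $V^{\B}_W$. For the DPP identity \eqref{eq:B-strongDPP}, I would take a $\G^{t,\circ}$--stopping time $\tau$ on $\Om^t$; since $\G^{t,\circ}$ is generated by $B^t$, such a $\tau$ is of the form $\tau^\star(B^{\cdot})$ for a suitable $\G^\star$--stopping time $\tau^\star$ (ignoring the $\muh$ argument), so it falls within the scope of \eqref{eq:tau_from_taut}. Then apply the $\B$--weak DPP at $\tau$:
\[
V^{\B}_W(t,\nu)=\sup_{\gamma\in\Gamma^{\B}_W(t,\nu)}\E^{\P^\gamma}\!\Big[\int_t^{\tau^\gamma}L(s,X^\gamma_{s\wedge\cdot},\mub^\gamma_s,\alpha^\gamma_s)\,\mathrm{d}s+V^{\B}_W(\tau^\gamma,\mu^\gamma_{\tau^\gamma})\Big],
\]
and translate the right-hand side back to the fixed space via \Cref{prop:strong_ctrl_fixed_space} and the identity $V^{\B}_W=V^{\B}_S$ applied both at $(t,\nu)$ and at the random initial datum $(\tau,\mu^\alpha_\tau)$ — this last substitution requires knowing $V^{\B}_W=V^{\B}_S$ \emph{as functions}, i.e. pointwise on all of $\Pc_2(\Cc^n)$, which is exactly what the previous step delivers.

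\textbf{Main obstacle.} The delicate step is the approximation $V^{\B}_W\le V^{\B}_S$: one must approximate an arbitrary $\G^\gamma$--predictable control on an abstract space by $\G^t$--predictable controls on the canonical space, \emph{while controlling the conditional laws} $\mub^\gamma=\Lc^{\P^\gamma}((X^\gamma,\alpha^\gamma)\,|\,\Gc^\gamma)$ that feed back nonlinearly into the dynamics. The $(H)$--hypothesis \eqref{eq:H_Hypo} and the independence $\Fc^\gamma_t\vee\sigma(W^\gamma)\perp\Gc^\gamma_T$ are what make the conditional law depend continuously (in $\Wc_2$) on the data along the approximating sequence; without the \emph{extended} common-noise filtration this would fail, exactly as flagged in the introduction. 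I expect this to consume most of the technical work, the rest being bookkeeping that mirrors the classical strong-versus-weak arguments in \cite{karoui2013capacities}.
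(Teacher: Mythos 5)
Your plan diverges from the paper's at the foundational level and hinges on an unestablished equivalence. You propose to introduce a ``$\B$--weak'' value $V^\B_W$ over the class $\Gamma^\B_W(t,\nu)$ of weak controls with $\alpha^\gamma$ $\G^\gamma$--predictable, prove the DPP for $V^\B_W$ by re-running the proof of \Cref{thm:WeakDPP}, and then deduce \eqref{eq:B-strongDPP} from $V^\B_S = V^\B_W$. The problematic step is the inequality $V^\B_W\le V^\B_S$, which you sketch via an approximation of an arbitrary $\G^\gamma$--predictable control by $B^t$--feedback controls, invoking \Cref{assum:Lip} for continuity of $J$. This does not work as stated: when you shrink the conditioning $\sigma$--field from $\G^\gamma$ down to $\sigma(B^t)$, the conditional laws $\mub^\alpha$ that feed back nonlinearly into the dynamics can change discontinuously, which is exactly the pathology the authors flag in the Introduction (the map $\Lc(X_t,B)\mapsto\E\big[|\E[X_t\,|\,B]|^2\big]$ is not continuous in the joint law). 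The paper's analogous weak--strong equivalence $V_S=V_W$ (\Cref{thm:StrongDPP}) requires the \emph{additional} regularity \Cref{assum:Growth} and is proved in a companion paper, whereas \Cref{thm:B-StrongDPP} is stated under \Cref{assum:Lip} alone; this is strong evidence that the route through such an equivalence is the wrong one here.

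The paper's actual proof avoids any weak--strong equivalence and instead verifies the three DPP ingredients directly for the $\B$--strong class $\Pcb^\B_S$ on the canonical space. Measurability of $V^\B_S$ is obtained by rewriting it as a supremum over the auxiliary subclass $\Pcb^\star_S(t,\nu)$, cut out by the Borel conditions $\Pb\circ(B,A)^{-1}\in\Upsilon(\Uc)$ (the continuous injective image of a Polish space, hence Borel) and independence of $B_{t\wedge\cdot}$ from $(B^t,A)$ (\Cref{lemm:VS_star}, \Cref{lemm:measurability_B-Strong}). Conditioning and concatenation stability are reproven specifically for $\Pcb^\B_S$ in \Cref{lemm:conditioning-concatenation_StrongFormulation}: the r.c.p.d.\ of a $\B$--strong rule at a $\G^{t,\circ}$--stopping time is again $\B$--strong because the feedback $\phi(s,B^t_{s\wedge\cdot})$ decomposes path-by-path, and the concatenated measure $\Pb^{M,\eps}$ is shown by a bare-hands computation to satisfy $A_s=\E^{\Pb^{M,\eps}}[A_s\,|\,B^t_{s\wedge\cdot}]$, hence its control is again a $B^t$--feedback. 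This stability-of-the-class analysis is the real content of the theorem and is precisely what your placeholder approximation step would need to replace.
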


	For the strong control problem $V_S$, we need some additional regularity conditions on the coefficient functions.
	
	\begin{assumption} \label{assum:Growth}
		For all $t \in [0,T]$, the functions
		\[
			(b,\sigma,\sigma_0):(\xb, \nub, u) \in \Cc^n  \x \Pc(\Cc^n \x U) \x U
			\longmapsto 
			(b,\sigma,\sigma_0)(t,\xb, \nub, u)
			\in \R^n \x \S^{n\x d} \x \S^{n \x \ell},
		\] 
		are continuous, and there exists a constant $C > 0$ such that, for all $(t,\xb, u, \nub) \in [0,T] \x \Cc^n \x U \x \Pc(\Cc^n \x U)$,
		\[
			\big| (L, g) (t,\xb, \nub, u) \big|^2 
			\leq
			C \bigg ( 1+ \| \xb \|^2 + \int_{\Cc^n \x U} \big(\| \yb\|^2 + \rho( u',u_0)^2\big) \nub(\mathrm{d}\yb, \mathrm{d} u')  + \rho(u,u_0)^2 \bigg ).
	    \]
Moreover, the map
	    \[
	        (\xb, \nub, u) \in \Cc^n  \x \Pc_2(\Cc^n \x U) \x U
		\longmapsto
	        (L,g)(t,\xb, \nub, u) \in \R \x \R,
	    \]
	    is lower semi--continuous for all $t \in [0,T]$.
	\end{assumption}

	\begin{theorem} \label{thm:StrongDPP}
		Let {\rm \Cref{assum:Lip}} and {\rm \Cref{assum:Growth}} hold true.
		Let  $(t, \nu) \in [0,T] \x \Pc_2(\Cc^n)$, 
		and $\tau$ be a $\G^{t, \circ}$--stopping time on $(\Om^t, \Fc^t, \P^t_{\nu})$ taking values in $[t, T]$. 
		Then
		\[
			V_S(t, \nu)
			=
			V_W(t,\nu), 
		\]
		so that the value function $V_S:[0,T] \x \Pc_2(\Cc^n) \longrightarrow \R \cup\{-\infty, \infty\}$ is upper semi--analytic,
		and one has
		\begin{equation} \label{eq:strongDPP}
			V_S(t, \nu) 
			=
			\sup_{\alpha \in \Ac_2(t, \nu)} 
			\E^{\P^t_{\nu}} \bigg[
				\int_t^{\tau}  L(s, X^{\alpha}_{s \wedge \cdot}, \mub^{\alpha}_s, \alpha_s) \mathrm{d}s 
				+ 
				V_S\big(\tau, \mu^{\alpha}_{\tau} \big) 
			\bigg].
		\end{equation}
	\end{theorem}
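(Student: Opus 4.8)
\emph{Strategy.} I would derive the whole statement from the single identity $V_S = V_W$ on $[0,T]\x\Pc_2(\Cc^n)$. Once this is available, the upper semi--analyticity of $V_S$ is inherited from \Cref{thm:WeakDPP}, and the dynamic programming equation \eqref{eq:strongDPP} follows from its weak counterpart \eqref{eq:WeakDPP}. The inequality $V_S\le V_W$ is immediate, since $\Gamma_S(t,\nu)\subseteq\Gamma_W(t,\nu)$ for all $(t,\nu)$ and $J(t,\cdot)$ is the same functional on both sets; so everything hinges on $V_W\le V_S$.

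\emph{The inequality $V_W\le V_S$.} Fix $(t,\nu)\in[0,T]\x\Pc_2(\Cc^n)$, $\gamma\in\Gamma_W(t,\nu)$ and $\eps>0$; using \Cref{prop:strong_ctrl_fixed_space} I want to build $\alpha\in\Ac_2(t,\nu)$ with $J(t,\nu,\alpha)\ge J(t,\gamma)-\eps$, by approximating $\gamma$ with strong controls. The approximation has three ingredients: \emph{(i)} a conditioning/averaging argument, exploiting the independence of $\Fc^\gamma_t\vee\sigma(W^\gamma)$ and $\Gc^\gamma_T$ in \eqref{eq:H_Hypo}, reducing to weak controls whose common--noise filtration $\G^\gamma$ carries no randomness beyond $B^{\gamma,t}$; \emph{(ii)} a time--discretisation of $\alpha^\gamma$ along a grid $t=t_0<\dots<t_N=T$ into a control that is piecewise constant with $\Fc^\gamma_{t_i}$--measurable, finitely--many--valued values, the perturbation of the state path being controlled via the Lipschitz and linear growth bounds of \Cref{assum:Lip} (and the stability estimate behind \Cref{theorem_Existence/uniqueness-SDE}); and \emph{(iii)} a realisation of each such value as a regular conditional distribution sampled, on the Wiener space $\Om^t$, from increments of the independent Brownian motion $W^t$, which produces an $\F^t$--predictable control whose conditional laws $\Lc^{\P^t_\nu}((X^\alpha_{s\wedge\cdot},\alpha_s)|\Gc^t_s)$ and $\Lc^{\P^t_\nu}(X^\alpha_{s\wedge\cdot}|\Gc^t_s)$ reproduce $\mub^\gamma$ and $\mu^\gamma$, by strong uniqueness for \eqref{eq:MKV_strong} (\Cref{theorem_Existence/uniqueness-SDE}). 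Throughout, the change of payoff along the approximation does not cause a loss in the limit because $L$ and $g$ are lower semi--continuous and of quadratic growth (\Cref{assum:Growth}): this is a Fatou argument, the lower bound on the sign--changing running cost $L$ being supplied by the quadratic growth estimate together with the uniform $\Pc_2$--moment bounds that \Cref{assum:Lip} yields along the sequence. Refining the grid and letting $\eps\to0$ gives $V_S(t,\nu)\ge J(t,\gamma)$, hence $V_S\ge V_W$.

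\emph{Deducing the DPP.} Granting $V_S=V_W$, let $\tau$ be a $\G^{t,\circ}$--stopping time on $(\Om^t,\Fc^t,\P^t_\nu)$ with values in $[t,T]$; since for a strong control $\muh^\gamma$ is a Borel functional of $B^{\gamma,t}$ (by \eqref{eq:def_muh} and strong uniqueness), $\tau$ is of the form $\tau^\star(B^{\gamma,t},\muh^\gamma)$ for a $\G^\star$--stopping time $\tau^\star$ as in \eqref{eq:tau_from_taut}, so \eqref{eq:WeakDPP} applies to every $\gamma\in\Gamma_S(t,\nu)$. Combining, for a near--optimal strong control, the conditioning inequality $J(t,\gamma)\le\E^{\P^\gamma}\big[\int_t^{\tau^\gamma}L(s,X^\gamma_{s\wedge\cdot},\mub^\gamma_s,\alpha^\gamma_s)\,\mathrm{d}s+V_W(\tau^\gamma,\mu^\gamma_{\tau^\gamma})\big]$ (part of the proof of \Cref{thm:WeakDPP}) with $V_W=V_S$ and \Cref{prop:strong_ctrl_fixed_space}, and taking the supremum over $\alpha\in\Ac_2(t,\nu)$, gives ``$\le$'' in \eqref{eq:strongDPP}; the reverse inequality is immediate from \eqref{eq:WeakDPP}, which forces $\E^{\P^\gamma}\big[\int_t^{\tau^\gamma}L(s,X^\gamma_{s\wedge\cdot},\mub^\gamma_s,\alpha^\gamma_s)\,\mathrm{d}s+V_S(\tau^\gamma,\mu^\gamma_{\tau^\gamma})\big]\le V_W(t,\nu)=V_S(t,\nu)$ for every strong control, hence after taking the supremum.

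\emph{Main obstacle.} The whole weight of the proof is on $V_W\le V_S$. The delicate point is that the coefficients depend on the \emph{conditional} law of $(X,\alpha)$ given the common--noise filtration, so one cannot simply transpose the classical pathwise ``weak $=$ strong'' argument for SDEs: the two filtrations $\G^\gamma\subseteq\F^\gamma$ and the constraints \eqref{eq:H_Hypo} binding them must be preserved under every step of the approximation while keeping the McKean--Vlasov consistency of the conditional laws intact, and it is precisely for this that the lower semi--continuity and growth hypotheses of \Cref{assum:Growth} are needed — as discussed in \Cref{rem:ddp_under_regularity}, they seem essentially unavoidable because of the nonlinear dependence on the conditional law of $X$.
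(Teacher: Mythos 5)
Your deduction of \eqref{eq:strongDPP} from the identity $V_S=V_W$ is exactly the route the paper takes: the inequality ``$\leq$'' comes from conditioning (\Cref{lemm:conditioning}) applied to a strong control together with $V_W=V_S$, and the inequality ``$\geq$'' comes from restricting the weak DPP \eqref{eq:WeakDPP} to the subclass $\Gamma_S(t,\nu)\subseteq\Gamma_W(t,\nu)$ and again invoking $V_W=V_S$; likewise the upper semi--analyticity of $V_S$ is simply transferred from \Cref{graph:weak-formulation}. Where you depart from the paper is in the treatment of $V_W\le V_S$: the paper does \emph{not} re-prove this here but cites \cite[Theorem 3.1]{djete2019general} (with $\hat p=p=2$), whereas you sketch the approximation argument behind it — conditioning/averaging to strip the extra common-noise randomness, a piecewise-constant discretisation of the control controlled by the Lipschitz bounds of \Cref{assum:Lip}, realisation of the extra randomness via increments of the independent $W^t$, and a Fatou-type passage to the limit relying on the lower semi--continuity and quadratic growth of $(L,g)$ in \Cref{assum:Growth}. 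This is consistent in spirit with the external result being invoked and with the discussion in \Cref{rem:ddp_under_regularity}, but within the scope of the present paper the equivalence $V_S=V_W$ is taken as a black box, so your proposal proves more than what the paper's own argument establishes; conversely, your sketch is only a sketch, and the uniform moment bounds and convergence of the conditional laws $\mub^\gamma$ that make the Fatou step go through are precisely what the cited companion paper supplies in detail.
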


	\begin{remark}
		$(i)$ Our results for the dynamic programming for $V_W$ and $V_S$ in {\rm\Cref{thm:WeakDPP}} and {\rm\Cref{thm:StrongDPP}} are new in this general framework. 
		For the result in {\rm\Cref{thm:B-StrongDPP}}, where the control is adapted to the common noise $B$,
		the same {\rm DPP} result has been obtained in {\rm \citeauthor*{pham2016dynamic} \cite[Proposition $3.1$]{pham2016dynamic}.}
However, our result is more general for two reasons.
		First, we do not require any regularity conditions on the reward functions $L$ and $g$, thanks to our use of measurable selection arguments.
		Second, we are able to stay in a generic non--Markovian framework with interaction terms given by the law of both control and controlled processes,
		while the results of {\rm \cite{pham2016dynamic}} are given in a Markovian context with interaction terms given by the law of controlled process.

\medskip		
		$(ii)$ From our point of view, the formulations $V_W$ and $V_S$ in \eqref{eq:def_VW} and \eqref{eq:Strong_Value_function-McK} seem to be more natural, because they should be the ones arising naturally as limit of finite population control problems $($see {\rm \citeauthor*{lacker2017limit} \cite{lacker2017limit}} for the case without common noise, and {\rm \citeauthor*{djete2019general} \cite{djete2019general}} for the context with common noise$)$.
		Indeed, for the problem with a finite population $N$, when the controller observes the evolution of the empirical distribution of $(X^1, \dots, X^N)$, it is more reasonable to assume that he/she uses the information generated by both $(X_{t \wedge \cdot},W,B)$ $($as in the definition of $V_S)$, rather than just the information from $B$ $($as in the definition $V_S^{\mathbb{B}})$, to control the system. In this sense, the formulation $V_S^\B$ may not be the most natural strong formulation for McKean--Vlasov control problems with common noise. 
	\end{remark}

	\begin{remark} \label{rem:ddp_under_regularity}
		The {\rm DPP} result for $V_S$ in {\rm \Cref{thm:StrongDPP}} has been proved under additional regularity conditions, namely the ones given in {\rm \Cref{assum:Growth}}. This should appear as a surprise to readers familiar with the measurable selection approach to the {\rm DPP} for classical stochastic control problems. 	We will try here to give some intuition on why, at least if one uses our method of proof, there does not seem to be any way to make do without these aditional assumptions.

\medskip		
Let us consider the classical conditioning argument in the proof of the {\rm DPP}. Given a control process $\alpha := (\alpha_s)_{s \in [t,T]} \in \Ac_2(t,\nu)$, which is adapted to the filtration generated by $(X_{t \wedge \cdot},W^t_s, B^t_s )_{s\in[t,T] }$,
		we consider some time $t_o \in( t,T]$, and the filtration $\Gt := (\Gct_s)_{s \in [t,T]}$, generated by $B^t$.
		Then, under the {\rm r.c.p.d.} of $\P^t_{\nu}$ knowing $\Gct_{t_o}$, the process $(\alpha_s )_{s \in [t_o, T]}$ will be adapted to the filtration generated by $(X_{t_o \wedge \cdot}, W^{t_o}_s,B^{t_o}_s)_{ s\in[t_o,T]}$ together with $(W^t_s)_{s \in [t, t_o]}$.
		Because of the randomness of $(W^t_s )_{s \in [t, t_o]}$, we cannot consider $(\alpha_s )_{s \in [t_o, T]}$ as a `strong` control process under the {\rm r.c.p.d.} of $\P^t_{\nu}$ knowing $\Gct_{t_o}$.
		
\medskip		
		To bypass this difficulty, we will need to use the equivalence result $V_S=V_W$ together with the {\rm DPP} results for $V_W$ given by {\rm\Cref{thm:WeakDPP}}.
		The equivalence result will be proved in our accompanying paper {\rm\cite{djete2019general}} under the integrability and regularity conditions in {\rm\Cref{assum:Lip}} and {\rm\Cref{assum:Growth}}.
	\end{remark}

\subsection{Dynamic programming principle in the Markovian case}

	With the DPP results in the general non--Markovian context of \Cref{thm:WeakDPP}, \Cref{thm:B-StrongDPP} and \Cref{thm:StrongDPP},
	we can easily establish the DPP results for the control problems in the Markovian setting.
	In fact, we will consider a framework which is slightly more general than the classical Markovian formulation, by considering the so--called updating functions, as in \citeauthor*{brunick2013mimicking} \cite{brunick2013mimicking}.

\medskip
Let $E$ be a non--empty Polish space. A Borel measurable function $\Phi: \Cc^n \longrightarrow C([0,T], E)$ is called an updating function if it satisfies
	\[
		\Phi_t(\xb) = \Phi_t(\xb(t \wedge \cdot)),
		\; \mbox{\rm for all}\; (t, \xb) \in [0,T] \x \Cc^n,
	\]
	and for all $0 \le s \le t \le T$
	\[
		\big( \Phi_r (\xb) \big)_{r \in [s,t]} = \big( \Phi_r(\xb^\prime) \big)_{r \in [s,t]},
		\; \mbox{\rm whenever}\; \Phi_s(\xb) = \Phi_s(\xb^\prime),
		\;\mbox{\rm and}\;\big(\xb(r) - \xb(s) \big)_{r \in [s,t]} = \big(\xb^\prime(r) - \xb^\prime(s) \big)_{r \in [s,t]}.
	\]
	The intuition of the updating function $\Phi$ is the following:
	the value of $\Phi_t(\xb)$ depends only on the path of $\xb$ up to time $t$,
	and for $0\leq s < t$, $\Phi_t(\xb)$ depends only on $\Phi_s(\xb)$ and the increments of $\xb$ between $s$ and $t$.
	On the canonical space $\Cc^n$, let $X := (X_t)_{t \in [0,T]}$ be the canonical process. We also define a new process $Z_t := \Phi_t(X)$, $t\in[0,T]$.
	Let us borrow some examples of updating functions from \cite{brunick2013mimicking}.
	
	\begin{example}
		$(i)$ The most simple updating function is the running process itself, that is
		\[
			\Phi_t(\xb) := \xb(t), \; \mbox{\rm with}\; E = \R^n.
		\]

		$(ii)$ Let $M^i_t(\xb) := \max_{0 \le s \le t} \xb^i(s)$ for $i=1, \cdots, n$, $t\in[0,T]$, and 
		$A_t(\xb) := \int_0^t \xb(s) \mathrm{d}s$, $t\in[0,T]$. 
		Then the running process, together with the running maximum and running average process, is also an example of updating functions
		\[
			\Phi_t(\xb) := \big( \xb(t), M_t(\xb), A_t(\xb) \big),\; \mbox{\rm with}\; E = \R^n \x \R^n \x \R^n.
		\]
	\end{example}

	Throughout this subsection, we fix an update function $\Phi$. In this context, one can in fact define the value function on $[0,T] \x \Pc(E)$ under some additional conditions.
	Given $\nub \in \Pc(\Cc^n \x U)$ (resp. $\nu \in \Pc(\Cc^n)$), 
	let us consider $X$ (resp. $(X, \alpha)$) as canonical element on the canonical space $\Cc^n$ (resp. $\Cc^n \x U$),
	and then define
	\[
		[\nub]^{\circ}_t 
		:=
		\nub \circ (\Phi_t(X), \alpha)^{-1} \in \Pc(E\x U)
		\; \big(\mbox{resp.}\; [\nu]^{\circ}_t := \nu \circ (\Phi_t(X))^{-1} \in \Pc(E) \big),\; t\in[0,T].
	\]

	\begin{assumption} \label{assum:Markov}
		For a fixed updating function $\Phi: \Cc^n \longrightarrow C([0,T], E)$,
		there exist Borel measurable functions 
		$(b^{\circ}, \sigma^{\circ}, \sigma^{\circ}_0, L^{\circ}, g^{\circ}): [0,T] \x E \x U \x \Pc(E \x U) \longrightarrow \R^n \x \S^{n \x d} \x \S^{n \x \ell} \x \R \x \R,$ such that
		\[
			\big( b,\sigma,\sigma_0, L, g \big)(t,\xb, \nub, u)
			=
			\big(b^{\circ}, \sigma^{\circ}, \sigma^{\circ}_0, L^{\circ}, g^{\circ} \big) (t,\Phi_t(\xb), [\nub]^{\circ}_t, u ),\; \mbox{\rm for all}\; (t,\xb,u, \nub)\in [0,T] \x \Cc^n \x U \x \Pc(\Cc^n \x U).
		\]
	\end{assumption}
	
	Let $t \in [0,T]$, and $\nu^\circ \in \Pc(E)$, we define first the following sets
	\[
		\Vc(t,\nu^\circ):=\big\{\nu \in \Pc(\Cc^n) : [\nu]^\circ_t = \nu^{\circ}\big\}, 
	\]
	\[
		\Gamma^{\circ}_W(t, \nu^{\circ})
		:=
		\bigcup_{\nu \in \Vc(t,\nu^\circ)} \Gamma_W(t, \nu),
		~
		\Gamma^{\circ}_S(t, \nu^{\circ})
		:=
		\bigcup_{\nu \in \Vc(t,\nu^\circ)} \Gamma_S(t, \nu),
		~
		\Gamma^{\B,\circ}_S(t, \nu^{\circ})
		:=
		\bigcup_{\nu \in \Vc(t,\nu^\circ)} \Gamma_S^\B(t, \nu),
	\]
	as well as the value functions, with $J(t, \gamma)$ defined in \eqref{eq:def_VW},
	\[
		V_W^{\circ} (t,\nu^{\circ})
		:=
		\sup_{\gamma \in \Gamma_W^{\circ} (t, \nu^{\circ})} J(t,\gamma),
		~
		V_S^{\circ} (t,\nu^{\circ})
		:=
		\sup_{\gamma \in \Gamma_S^{\circ} (t, \nu^{\circ})} J(t,\gamma)
		~\text{\rm and}~
		V_S^{\B,\circ} (t,\nu^{\circ})
		:=
		\sup_{\gamma \in \Gamma_S^{\B,\circ} (t, \nu^{\circ})} J(t,\gamma).
	\]

	\begin{remark}
		When the updating function is the running process given by $\Phi_t(\xb) := \xb(t)$, 
		the problems $V_W^{\circ}$, $V_S^{\circ}$ and $V_S^{\B,\circ}$ are of course exactly the classical Markovian formulation of the control problems.
	\end{remark}

	\begin{lemma} \label{lemm:equality_Markov}
		Let {\rm \Cref{assum:Markov}} hold true, and fix some $t\in[0,T]$. 
		Then, for any $(\nu_1, \nu_2) \in \Pc(\Cc^n)\times \Pc(\Cc^n)$
		such that $[\nu_1]^{\circ}_t = [\nu_2]^{\circ}_t$,
		one has
		\[
			V_W(t,\nu_1) =V_W(t,\nu_2),
			~
			V_S(t,\nu_1) = V_S(t,\nu_2)
			~\mbox{\rm and}~
			V_S^\B(t,\nu_1) = V_S^\B(t,\nu_2).
		\]
		Consequently, for all $\nu \in \Pc(\Cc^n)$, one has
		\[
			V_W(t,\nu) = V_W^{\circ} (t, [\nu]^{\circ}_t ),
			~
			V_S(t,\nu) = V_S^{\circ} (t, [\nu]^{\circ}_t ),
			~\mbox{\rm and}~
			V_S^\B(t,\nu) = V_S^{\B, \circ} (t, [\nu]^{\circ}_t).
		\]
	\end{lemma}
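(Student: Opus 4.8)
The plan is to prove each of the three equalities by transporting admissible controls between the initial data $(t,\nu_1)$ and $(t,\nu_2)$ without affecting the associated cost; since the hypothesis $[\nu_1]^{\circ}_t=[\nu_2]^{\circ}_t$ is symmetric in $\nu_1$ and $\nu_2$, one inequality in each direction yields the equalities, and the displayed consequences $V_W(t,\nu)=V_W^{\circ}(t,[\nu]^{\circ}_t)$, $V_S(t,\nu)=V_S^{\circ}(t,[\nu]^{\circ}_t)$, $V_S^\B(t,\nu)=V_S^{\B,\circ}(t,[\nu]^{\circ}_t)$ then follow immediately from the definitions of $\Gamma^{\circ}_W$, $\Gamma^{\circ}_S$, $\Gamma^{\B,\circ}_S$. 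I note that in \Cref{def:weak_control} the measure $\nu$ enters only through $\nu(t)$, so the statement really asserts that $\nu(t)\mapsto V_\bullet(t,\nu(t))$ factors through the pushforward $\nu(t)\circ\Phi_t^{-1}$.

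The transport rests on the disintegration of $\nu_i(t)$ along $\Phi_t$. Since $\Cc^n$ and $E$ are Polish, I would write $\nu_i(t)(\mathrm{d}\xb)=\int_E q_i(z,\mathrm{d}\xb)\,\lambda(\mathrm{d}z)$, where $\lambda:=[\nu_1]^{\circ}_t=[\nu_2]^{\circ}_t$ and $q_i(z,\cdot)$ is carried by $\{\xb:\Phi_t(\xb)=z\}$. The structural fact that makes everything work is the defining property of the updating function: if $\xb,\xb'$ satisfy $\Phi_t(\xb)=\Phi_t(\xb')$ and have the same increments on $[t,T]$, then $\Phi_s(\xb)=\Phi_s(\xb')$ for all $s\in[t,T]$. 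Hence, given a weak control $\gamma$ for $(t,\nu_1)$, if I replace the initial segment $X^\gamma_{t\wedge\cdot}$ by any $\eta_{t\wedge\cdot}$ with $\Phi_t(\eta_{t\wedge\cdot})=\Phi_t(X^\gamma_{t\wedge\cdot})$, extended with the same increments on $[t,T]$ (that is, $\tilde X_s:=\eta_s$ for $s\le t$ and $\tilde X_s:=\eta_t+X^\gamma_s-X^\gamma_t$ for $s\in[t,T]$), the process $\Phi_\cdot(\tilde X)$ coincides with $\Phi_\cdot(X^\gamma)$. Combined with \Cref{assum:Markov} and the non-anticipativity of the coefficients, this shows that $b,\sigma,\sigma_0,L,g$ evaluated along $(\tilde X,\alpha^\gamma)$, as well as the projected conditional flows $[\mub^\gamma_s]^{\circ}_s$ and $[\mu^\gamma_T]^{\circ}_T$, are unchanged; so $\tilde X$ still solves \eqref{eq:MKV_SDE_weak} and the cost is preserved.

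For the weak formulation I would carry this out directly: enlarge $(\Om^\gamma,\Fc^\gamma,\P^\gamma)$ by an independent uniform variable and a measurable kernel realization to obtain $\eta_{t\wedge\cdot}$ whose conditional law given $\Phi_t(X^\gamma_{t\wedge\cdot})$ is $q_2(\Phi_t(X^\gamma_{t\wedge\cdot}),\cdot)$ and which is conditionally independent of $\Fc^\gamma_T\vee\sigma(W^\gamma)\vee\Gc^\gamma_T$ given $\Phi_t(X^\gamma_{t\wedge\cdot})$, so that $\Lc(\eta_{t\wedge\cdot})=\nu_2(t)$; then define $\tilde X$ as above, enlarge $\F^\gamma$ by $\sigma(\eta_{t\wedge\cdot})$, keep $\G^\gamma,W^\gamma,B^\gamma,\alpha^\gamma$ and the corresponding conditional laws $\tilde\mu,\tilde\mub$. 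One checks $\tilde\gamma\in\Gamma_W(t,\nu_2)$: the $(H)$-hypothesis \eqref{eq:H_Hypo} and the independence of $\Fc^{\tilde\gamma}_t\vee\sigma(W^\gamma)$ from $\Gc^\gamma_T$ are inherited, precisely because $\sigma(\eta_{t\wedge\cdot})$ is conditionally independent of the common noise given the $\Fc^\gamma_t$-measurable quantity $\Phi_t(X^\gamma_{t\wedge\cdot})$, and \eqref{eq:cond_integ} transfers via $\|\tilde X\|\le\|\eta_{t\wedge\cdot}\|+2\|X^\gamma\|$ whenever $\nu_2(t)$ satisfies the corresponding moment bound (automatic, e.g., for $p=0$). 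By the previous paragraph $J(t,\tilde\gamma)=J(t,\gamma)$, hence $V_W(t,\nu_2)\ge V_W(t,\nu_1)$, and symmetry gives equality (with the convention that both sides are $-\infty$ when a control set is empty).

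For $V_S$ and $V_S^\B$ I would use the same replacement, but must in addition make sure the new term is a strong (resp. $\B$-strong) control, i.e. that its filtration is the augmented filtration generated by the new initial segment together with $W^{\gamma,t},B^{\gamma,t}$ only, without any extra randomness; to this end I would move to the fixed canonical space of \Cref{subsubsec:strong_form} via \Cref{prop:strong_ctrl_fixed_space} and transport the control by precomposing its defining functional with a measurable map effecting the change of fibre-disintegration kernel along $\Phi_t$. The main obstacle — the part that is genuinely more than a transcription of the weak case — will be to perform this transport \emph{inside} the rigid strong filtration: one has to show that the portion of the initial path not seen by $\Phi_t$ is irrelevant to the value, so that every strong control can be matched, at the same cost, by a strong control whose dependence on the initial condition is only through $\Phi_t(X^\gamma_{t\wedge\cdot})$; this is exactly where \Cref{assum:Markov} is used decisively. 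Granting this in both directions yields $V_S(t,\nu_1)=V_S(t,\nu_2)$ and $V_S^\B(t,\nu_1)=V_S^\B(t,\nu_2)$, and the lemma's consequences are then read off directly.
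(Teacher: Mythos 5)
Your construction for $V_W$ is, in substance, the one in the paper: disintegrate $\nu_2(t)$ along $\Phi_t$, enlarge the probability space by an independent uniform, resample the initial segment of the state through the kernel so that $\Phi_t$ is preserved, extend by the same increments on $[t,T]$, and invoke the updating property plus \Cref{assum:Markov} to see that $\Phi_\cdot(X)$, the projected conditional laws $[\mub]^{\circ}$, $[\mu]^{\circ}$, and hence the dynamics and the cost are unchanged. Your kernel $q_2$ is exactly the paper's Borel representation $\psi:E\times[0,1]\to\Cc^n$, and the observation that the $(H)$--hypothesis and the independence of $\Fc^{\gamma}_t\vee\sigma(W^\gamma)$ from $\Gc^\gamma_T$ survive the enlargement (because the new randomness is conditionally independent given the $\Fc^\gamma_t$--measurable $\Phi_t(X^\gamma_{t\wedge\cdot})$) is the right bookkeeping. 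Your integrability caveat is also correct: one needs $\nu_2(t)$ to carry a finite $p$--th moment, otherwise $\Gamma_W(t,\nu_2)=\emptyset$ and both sides are $-\infty$ by convention. For $V_S^{\B}$ the plan goes through with one small repair you should make explicit: you must \emph{shrink} the filtration of the transported term to the $\P$--augmentation of $\sigma(X^2_{t\wedge\cdot},W^{\gamma,t},B^{\gamma,t})$ rather than keep the enlarged one; since a $\B$--strong control is $\G$--predictable, i.e.\ adapted to the $B$--filtration alone, the same $\alpha^\gamma$ remains admissible after this shrinking, and the conditional-law conditions still hold because $\G$ is unchanged.

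For the general strong case $V_S$, however, you have correctly put your finger on a genuine gap, and ``granting this in both directions'' is not something you are entitled to do. After resampling, the required strong filtration for the new term is the $\P$--augmentation of $\sigma(X^2_{t\wedge\cdot},W^{\gamma,t},B^{\gamma,t})$, and the original control $\alpha^\gamma$ is predictable only for $\sigma(X^1_{t\wedge\cdot},W^{\gamma,t},B^{\gamma,t})$: since $X^2_{t\wedge\cdot}=\psi(\Phi_t(X^1_{t\wedge\cdot}),\eta)$ generally determines only $\Phi_t(X^1_{t\wedge\cdot})$ and not $X^1_{t\wedge\cdot}$ itself, the transported control is not adapted to the canonical strong filtration, so the transported term is a weak control rule, not a strong one. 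The reduction you invoke — that every strong control can be matched at the same cost by one depending on the initial path only through $\Phi_t$ — would indeed close the argument, but it is precisely the nontrivial claim and you do not prove it; \Cref{assum:Markov} controls the coefficients and rewards, not the information structure of the controller, and nothing in the hypotheses of the lemma rules out the controller profitably using fibre information unseen by $\Phi_t$. (The paper's own proof asserts ``the arguments for $V_S$ and $V_S^\B$ will be the same,'' which glosses over exactly this point; your reading has surfaced a real soft spot. A complete argument for $V_S$ likely needs either the additional hypotheses under which $V_S=V_W$, so that the $V_W$ identity transfers, or a separate randomization/approximation argument.)
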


	\begin{proof}
		We will only consider the equality for $V_W$, the arguments for $V_S$ and $V_S^\B$ will be the same.
		First, we can consider $\nu_2$ as a probability measure defined on the canonical space $\Cc^n$ with canonical process $X$, and containing the random variable $Z_t := \Phi_t(X)$.
		Then, on (a possible enlarged) probability space $(\Cc^n, \Bc(\Cc^n), \nu_2)$,
		there exists a Borel measurable function $\psi: E \x [0,1] \longrightarrow \Cc^n$, together with a random variable $\eta$ with uniform distribution on $[0,1]$, which is independent of $Z_t$, such that $
			\nu_2 \circ \big( Z_t , X_{\cdot} \big)^{-1} 
		=
			\nu_2 \circ \big( Z_t, \psi(Z_t, \eta) \big)^{-1}.$ Next, consider an arbitrary $
		    \gamma_1 := \big( \Om^1, \Fc^1, \P^1, \F^1,  \G^1, X^1, W^1, B^1,  \mub^1, \mu^1, \alpha^1 \big) \in \Gamma_W(t,\nu_1).$	Without loss of generality (that is up to enlargement of the space), we assume that there exists a random variable $\eta$ with uniform distribution on $[0,1]$ in the probability space $ ( \Om^1, \Fc^\mathbf{1}_0, \P^1 )$,
		and which is independent of the random variables $(X^1, W^1, B^1, \mub^1, \mu^1, \alpha^1)$.
		
		\medskip
		We then define $\gamma^2$ as follows. Let $Z^\mathbf{1}_s := \Phi_s(X^1)$, for all $s \in [0,T]$, so that, by definition, $\P^1 \circ \big(Z^\mathbf{1}_t \big)^{-1} =[\nu_1]^{\circ}_t = [\nu_2]^{\circ}_t$. Next, let
		\[
			X^2_s
			:=
			\begin{cases}
				\psi_s \big(Z^\mathbf{1}_t, \eta \big), \; \mbox{\rm if}\; s \in[0, t],\\[0.5em]
				X^2_t + X^\mathbf{1}_{s} - X^\mathbf{1}_t, \;\mbox{if}\; s \in (t,T].
			\end{cases}
		\]
		It follows by the properties of $\psi$ and those of the updating function $\Phi$ that
		\begin{equation} \label{eq:equiv_g1g2}
			\P^1 \circ \big(X^2_{t \wedge \cdot} \big) = \nu_2(t),
			\; \mbox{\rm and}\;
			\Phi_s(X^2) = \Phi_s(X^1),\; s \in [t,T].
		\end{equation}
		Let $\mub^2_s := \Lc^{\P^1} \big((X^2_{s \wedge \cdot}, \alpha^\mathbf{1}_s)| \Gc^\mathbf{1}_s \big)$, $\mu^2_s := \Lc^{\P^1} \big(X^2_{s \wedge \cdot}| \Gc^\mathbf{1}_s \big)$, for $s \in [t,T]$, and $\gamma_2 
		    :=
		    \big( \Om^1, \Fc^1, \F^1, \P^1, \G^1, X^2, W^1, B^1, \mub^2, \mu^2, \alpha^1 \big).$
		Using \Cref{assum:Markov} and \eqref{eq:equiv_g1g2},
		we have $\gamma_2 \in \Gamma_W(t, \nu_2)$
		and $J(t, \gamma_2) = J(t, \gamma_1)$,
		 implying $V_W(t, \nu_1) = V_W(t, \nu_2)$.
	\end{proof}

	Now we provide the dynamic programming principle for the Markovian control problem under \Cref{assum:Markov}.

	\begin{corollary}
		Let {\rm \Cref{assum:Markov}} hold true, $t \in [0,T]$ and $\nu^{\circ} \in \Pc(E)$. 
		Let $ \tau^{\star}$ be a $\G^{\star}$--stopping time taking values in $[t, T]$ on $\Om^{\circ}$
		and $(\tau^{\gamma})_{\gamma \in \Gamma^{\circ}_W(t, \nu^{\circ})}$ be defined from $\tau^{\star}$ as in \eqref{eq:tau_from_taut},
		and $\tau$ be a $\G^{t, \circ}$--stopping time taking values in $[t, T]$ on $\Om^t$.
		Then one has the following dynamic programming results.
		
\medskip
		
		$(i)$
		The function $V_W^{\circ}:[0,T] \x \Pc(E) \longrightarrow \R \cup\{-\infty, \infty\}$ is upper semi--analytic and, with $Z^{\gamma}_s := \Phi_s(X^{\gamma}_{\cdot})$,
		\begin{equation} \label{eq:WeakDPP_Markov}
			V_W^{\circ}(t, \nu^{\circ}) 
			=
			\sup_{\gamma \in \Gamma_W^{\circ}(t, \nu^{\circ})} 
			\E^{\P^\gamma} \bigg[
				\int_t^{\tau^{\gamma}} L^{\circ}\big(s, Z^{\gamma}_s, [\mub^{\gamma}]^{\circ}_s , \alpha^{\gamma}_s\big) \mathrm{d}s 
				+ 
				V_W^{\circ}\big(\tau^{\gamma}, [\mu^{\gamma}]^{\circ}_{\tau^{\gamma}} \big) 
			\bigg].
		\end{equation}

		$(ii)$
		Let {\rm\Cref{assum:Lip}} hold true, then $V^{\B,\circ}_S:[0,T] \x \Pc(E) \longrightarrow \R \cup\{-\infty, \infty\}$ is upper semi--analytic,
		and with $Z^{\alpha}_s := \Phi_s(X^{\alpha}_{\cdot})$, one has
		\begin{equation} \label{eq:B-StrongDPP_Markov}
			V^{\B, \circ}_S(t, \nu^{\circ}) 
			=
			\sup_{\alpha \in \Ac^\B_2(t, \nu^{\circ})} 
			\E^{\P^t_{\nu}} \bigg[
				\int_t^{\tau}  L\big(s, Z^{\alpha}_s, [\mub^{\alpha}]^{\circ}_s, \alpha_s \big) \mathrm{d}s 
				+ 
				V^{\B, \circ}_S\big(\tau, [\mu^{\alpha}]^{\circ}_{\tau} \big) 
			\bigg].
		\end{equation}
		
		$(iii)$ 
		Let {\rm \Cref{assum:Lip} }and {\rm\Cref{assum:Growth}} hold,
		then
		$V^{\circ}_S(t, \nu^{\circ}) = V^{\circ}_W(t, \nu^{\circ})$, and with $Z^{\alpha}_s := \Phi_s(X^{\alpha}_{\cdot})$,
		\begin{equation} \label{eq:StrongDPP_Markov}
			V^{\circ}_S(t, \nu^{\circ}) 
			=
			\sup_{\alpha \in \Ac_2(t, \nu^{\circ})} 
			\E \bigg[
				\int_t^{\tau}  L\big(s, Z^{\alpha}_s, [\mub^{\alpha}]^{\circ}_s, \alpha_s\big) \mathrm{d}s 
				+ 
				V^{\circ}_S\big(\tau, [\mu^{\alpha}]^{\circ}_{\tau} \big) 
			\bigg].
		\end{equation}		
	\end{corollary}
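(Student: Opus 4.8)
The strategy is to transfer the non-Markovian DPP results (Theorems~\ref{thm:WeakDPP}, \ref{thm:B-StrongDPP}, \ref{thm:StrongDPP}) to the Markovian setting using \Cref{lemm:equality_Markov} as the bridge. The key observation is that under \Cref{assum:Markov}, the value functions $V_W, V_S, V_S^\B$ factor through the updating function: $V_W(t,\nu) = V_W^\circ(t, [\nu]^\circ_t)$, and likewise for the strong versions. So all three identities in the corollary follow by rewriting the corresponding non-Markovian DPP, then re-expressing every term through $[\cdot]^\circ$.

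\medskip

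\textbf{Step 1 (upper semi-analyticity).} For part~$(i)$, I would first note that $V_W^\circ(t,\nu^\circ) = V_W(t,\nu)$ for any $\nu$ with $[\nu]^\circ_t = \nu^\circ$, by \Cref{lemm:equality_Markov}. The map $\nu^\circ \longmapsto$ (some measurable selection of) $\nu$ with $[\nu]^\circ_t = \nu^\circ$ should be realized by a Borel (or universally measurable) right inverse of $\nu \longmapsto [\nu]^\circ_t$; composing with the u.s.a. function $V_W$ from \Cref{thm:WeakDPP} gives that $V_W^\circ$ is u.s.a. Alternatively, and more cleanly, one observes $\Gamma_W^\circ(t,\nu^\circ) = \bigcup_{\nu \in \Vc(t,\nu^\circ)} \Gamma_W(t,\nu)$ is itself the image of an analytic-set construction, so the sup over it of the u.s.a. integrand $J(t,\cdot)$ is again u.s.a. by the standard projection/measurable-selection machinery used to prove \Cref{thm:WeakDPP}. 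The same argument handles $V_S^{\B,\circ}$ in $(ii)$ and $V_S^\circ$ in $(iii)$, using \Cref{thm:B-StrongDPP} and \Cref{thm:StrongDPP} respectively; for $(iii)$ one also needs $V_S = V_W$, whence $V_S^\circ = V_W^\circ$ follows immediately from \Cref{lemm:equality_Markov}.

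\medskip

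\textbf{Step 2 (rewriting the DPP).} For $(i)$: start from \eqref{eq:WeakDPP} applied at $(t,\nu)$ with $[\nu]^\circ_t = \nu^\circ$. Inside the expectation, \Cref{assum:Markov} gives $L(s, X^\gamma_{s\wedge\cdot}, \mub^\gamma_s, \alpha^\gamma_s) = L^\circ(s, \Phi_s(X^\gamma), [\mub^\gamma_s]^\circ_s, \alpha^\gamma_s) = L^\circ(s, Z^\gamma_s, [\mub^\gamma]^\circ_s, \alpha^\gamma_s)$, using that $\mub^\gamma_s$ is supported on paths and $[\mub^\gamma_s]^\circ_s$ only sees $\Phi_s$. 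For the terminal term, $V_W(\tau^\gamma, \mu^\gamma_{\tau^\gamma}) = V_W^\circ(\tau^\gamma, [\mu^\gamma_{\tau^\gamma}]^\circ_{\tau^\gamma})$ by \Cref{lemm:equality_Markov} (noting $\mu^\gamma_{\tau^\gamma} \in \Pc(\Cc^n)$ and the identity holds $\nu$-pointwise hence along random $\tau^\gamma$). The key point, requiring a small check, is that $\tau^\gamma$ defined via \eqref{eq:tau_from_taut} from a $\G^\star$-stopping time is unchanged: since $\muh^\gamma$ and $B^{\gamma,t}$ generate the relevant information and $\tau^\star$ depends only on them, the value $\tau^\gamma$ is the same whether we index by $\gamma \in \Gamma_W(t,\nu)$ or view it inside $\Gamma_W^\circ(t,\nu^\circ)$. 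Finally, take the supremum over $\nu \in \Vc(t,\nu^\circ)$ on both sides; the left side becomes $V_W^\circ(t,\nu^\circ)$ by definition, and the right side, since $\Gamma_W^\circ = \bigcup_\nu \Gamma_W(t,\nu)$, becomes the supremum over $\gamma \in \Gamma_W^\circ(t,\nu^\circ)$, giving \eqref{eq:WeakDPP_Markov}. Parts $(ii)$ and $(iii)$ are identical in structure, starting from \eqref{eq:B-strongDPP} and \eqref{eq:strongDPP} respectively, with $\Ac_2^\B(t,\nu^\circ)$ and $\Ac_2(t,\nu^\circ)$ understood as the corresponding unions over $\Vc(t,\nu^\circ)$; in $(iii)$ one additionally invokes \Cref{assum:Growth} only insofar as it is already needed for \Cref{thm:StrongDPP}.

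\medskip

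\textbf{Main obstacle.} I expect the delicate point to be the interchange of the supremum over $\nu \in \Vc(t,\nu^\circ)$ with the supremum over controls, together with the measurability bookkeeping in Step~1 — specifically, verifying that $\Gamma_W^\circ(t,\nu^\circ)$ (and its strong analogues) is parametrized in a way that keeps the value function u.s.a. This is really a matter of checking that the union over $\nu$ of the analytic graphs used in the proof of \Cref{thm:WeakDPP} is still analytic, which follows because $\nu \longmapsto [\nu]^\circ_t$ is Borel and hence $\{(\nu^\circ,\gamma) : \gamma \in \Gamma_W(t,\nu),\, [\nu]^\circ_t=\nu^\circ\}$ is analytic; but it must be stated carefully. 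A secondary subtlety is ensuring that in $(iii)$, where $X^\alpha$ is the strong solution on the fixed space $(\Om^t,\Fc^t,\P^t_\nu)$, the choice of representative $\nu \in \Vc(t,\nu^\circ)$ does not affect $V_S^\circ$ — but this is exactly the content of \Cref{lemm:equality_Markov}, so no new work is needed there.
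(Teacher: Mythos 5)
Your proposal is correct and takes essentially the same approach as the paper: define the Borel graph $\llbracket \Vc \rrbracket = \{(t,\nu,\nu^\circ) : [\nu]^\circ_t = \nu^\circ\}$, apply the measurable-selection/projection theorem to conclude $V_W^\circ$ is upper semi-analytic, then rewrite the non-Markovian DPP of Theorems~\ref{thm:WeakDPP}--\ref{thm:StrongDPP} term by term via \Cref{assum:Markov} and \Cref{lemm:equality_Markov}, and finally merge the double supremum over $\nu \in \Vc(t,\nu^\circ)$ and $\gamma \in \Gamma_W(t,\nu)$ into a single supremum over $\Gamma_W^\circ(t,\nu^\circ)$. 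One small caveat: the first alternative you float in Step~1 (composing $V_W$ with a right inverse of $\nu \mapsto [\nu]^\circ_t$) does not quite work as stated, since such a selector is in general only universally measurable rather than Borel, and composing an upper semi-analytic function with a universally measurable map need not preserve upper semi-analyticity; the second alternative you prefer, which is the one the paper uses, avoids this and is the right route.
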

	\begin{proof}
		We will only consider the case $V_W$, the arguments for $V_S$ and $V_S^\B$ will be the same.

\medskip
		
		Let $\llbracket \Vc \rrbracket:= \{ (t,\nu,\nu^\circ) \in [0,T] \x \Pc(\Cc^n) \x \Pc(E):[\nu]^{\circ}_{t}=\nu^\circ \}$.
		Notice that $\Phi: \Cc^n \longrightarrow C([0,T], E)$ is Borel, then $(t, \nu) \longmapsto [\nu]^{\circ}_{t}$ is also Borel,
		and hence $\llbracket \Vc \rrbracket$ is a Borel subset of $[0,T] \x \Pc(\Cc^n) \x \Pc(E)$. 
		Further, one has $V_W^\circ (t,\nu^\circ)=\sup_{(t,\nu,\nu^\circ) \in \llbracket \Vc \rrbracket} V_W(t,\nu)$ from \Cref{lemm:equality_Markov},
		and $V_W$ is upper semi--analytic by \Cref{thm:WeakDPP}.
		It follows by the measurable selection theorem (e.g. \cite[Proposition 2.17]{karoui2013capacities}) that $V_W^{\circ}:(t,\nu^\circ) \in [0,T] \x \Pc(E) \longrightarrow V_W^{\circ}(t,\nu^\circ) \in \R \cup \{-\infty, \infty \}$  is also upper semi--analytic.
		Finally, using the DPP results in \Cref{thm:WeakDPP}, it follows that
		\begin{align*}
			V_W^{\circ}(t, \nu^{\circ})
			=
			\sup_{	\nu\in \Vc(t,\nu^\circ)}
			V_W(t, \nu) 
			&=
			\sup_{\nu\in \Vc(t,\nu^\circ)}
			\sup_{\gamma \in \Gamma_W(t, \nu)} 
			\E^{\P^\gamma} \bigg[
				\int_t^{\tau^{\gamma}} L\big(s, X^\gamma_{s \wedge \cdot}, \mub^{\gamma}_s, \alpha^{\gamma}_s \big) \mathrm{d}s 
				+ 
				V_W\big(\tau^{\gamma}, \mu^{\gamma}_{\tau^{\gamma}} \big) 
			\bigg]
			\\
			&=
			\sup_{\nu\in \Vc(t,\nu^\circ)}
			\sup_{\gamma \in \Gamma_W(t, \nu)} 
			\E^{\P^\gamma}  \bigg[
				\int_t^{\tau^{\gamma}} L^{\circ}\big(s, Z^{\gamma}_s, [\mub^{\gamma}]^{\circ}_s, \alpha^{\gamma}_s\big) \mathrm{d}s 
				+ 
				V_W^{\circ}\big(\tau^{\gamma}, [\mu^{\gamma}]^{\circ}_{\tau^{\gamma}} \big) 
			\bigg]
			\\
			&=
			\sup_{\gamma \in \Gamma^\circ_W(t, \nu)} 
			\E^{\P^\gamma}  \bigg[
				\int_t^{\tau^{\gamma}} L^{\circ}\big(s, Z^{\gamma}_s, [\mub^{\gamma}]^{\circ}_s, \alpha^{\gamma}_s\big) \mathrm{d}s 
				+ 
				V_W^{\circ}\big(\tau^{\gamma}, [\mu^{\gamma}]^{\circ}_{\tau^{\gamma}} \big) 
			\bigg].
		\end{align*}
	\end{proof}
	
\subsection{Discussion: from dynamic programming to the HJB equation}
One of the classical applications of the DPP consists in giving some local characterisation of the value function, such as in proving that it is the viscosity solution of an HJB equation.
	This was achieved in \citeauthor*{pham2018bellman} \cite{pham2018bellman} for the control problem $V^\circ_S$ in the setting with $\sigma_0 \equiv 0$),
	and in \citeauthor*{pham2016dynamic} \cite{pham2016dynamic} for the control problem $V^{\circ,\B}_S$ ({\color{black} for $\Phi_t(\xb) := \xb(t), \; \mbox{\rm with}\; E = \R^n$}).
	It relies essentially on the notion of differentiability with respect to probability measures due to \citeauthor*{lions2007theorie} (see e.g. \cite{lions2007theorie} and \citeauthor*{cardaliaguet2010notes}'s notes \cite[Section $6$]{cardaliaguet2010notes}), 
	and It\=o's formula along a measure (see e.g. \citeauthor*{carmona2014master} \cite[Proposition $6.5$ and Proposition $6.3$]{carmona2014master}).
	We will now provide some heuristic arguments to derive the HJB equation from our DPP results for both $V^{\B,\circ}_S$ and $V_S^{\circ}$, with updating function $\Phi_t(\xb) = \xb(t)$.

\medskip	
	Let us first recall briefly the notion of the derivative, in sense of Fr\'echet, $\partial_{\nu} V(\nu)$ for a function $V: \Pc_2(\R^n) \longrightarrow \R$.
	Consider a probability space $(\Om,\Fc,\P)$ rich enough so that, for any $\nu \in \Pc_2(\R^n)$, there exists a random variable $Z: \Om \longrightarrow \R^n$ such that $\Lc^\P(Z)=\nu$. We denote by $\Lc^2(\Omega,\Fc,\P)$ the space of square--integrable random variables on $(\Om,\Fc,\P)$. Let $V: \Pc_2(\R^n) \longrightarrow \R$, we consider $\widetilde{V}: \Lc^2(\Om,\Fc,\P) \longrightarrow \R^n$, the lifted version of $V$, defined by $\widetilde{V}(X):= V(\Lc^\P(X))$.
	Recall that $\widetilde{V}$ is said to be continuously Fr\'echet differentiable, if there exists a unique continuous application $D\widetilde{V}: \Lc^2(\Om,\Fc,\P) \longrightarrow \Lc^2(\Om,\Fc,\P)$, 
	such that, for all $Z \in \Lc^2(\Om,\Fc,\P)$,
	\[
	    \lim_{\|Y\|_2 \to 0}
	    \frac{\big|\widetilde{V}(Z+Y)-\widetilde{V}(Z)-\E\big[Y^\top D\widetilde{V}(Z) \big] \big|}{\|Y\|_2},
	\]
	where $\|Y\|_2:=\E[|Y|^2]^{1/2}$ for any $Y\in\Lc^2(\Om,\Fc,\P)$.
	We say that $V$ is of class $C^1$ if $\widetilde{V}$ is continuously Fr\'echet differentiable,
	and denote for any $\nu\in \Pc_2(\R^n)$, $
	    \partial_{\nu}V(\nu)(Z):=D\widetilde{V}(Z),\;
	    \P\mbox{\rm --a.s.}, \; \mbox{\rm for any $Z\in\Lc^2(\Omega,\Fc,\P)$ such that} \;\Lc^\P(Z)=\nu.$
	Notice that one has $\partial_{\nu}V(\nu): \R^n\ni y  \longmapsto\partial_{\nu}V(\nu)(y) \in \R^n$ and this function belongs to 
	$\Lc^2(\R^n,\Bc(\R^n),\nu).$ Besides, the law of $D\widetilde{V}(Z)$ is independent of the choice of $Z$.
	Similarly, we also define the derivatives $\Pc_2(\R^n) \x \R^n\ni (\nu,y) \longmapsto \partial_y \partial_{\nu}V(\nu)(y) \in \S^n$ and
	\[
	 \Pc_2(\R^n) \x \R^n \x \R^n \ni	(\nu,y,y^\prime)
		\longmapsto
		\partial_{\nu}^2 V(\nu)(y,y^\prime):=\partial_{\nu}\big[\partial_{\nu}V(\nu)(y)\big](y^\prime) \in \S^n.
	\]
	In the following, we say $V$ is a `smooth function`, if all the above Fr\'echet derivatives are well defined and are continuous.
	
\subsubsection{HJB equation for the common noise strong formulation}	
	Let us consider the control problem $V^{\B,\circ}_S$ and repeat the arguments in \cite{pham2016dynamic} in a heuristic way.
	Given a ``smooth function'' $V: [0,T] \x \Pc_2(\R^n) \longrightarrow \R$, $(t,\nu) \in [0,T] \x \Pc_2(\R^n)$, and $\gamma \in \Gamma^{\B}_S(t,\nu),$
	it follows from It\^o's formula that, for $s\in[t,T]$,
	\begin{align}  \label{eq:itoFormula-V^B_S}
		V\big(s,\mu^{\gamma}_s \big)
		=&\
		V\big(t,\nu \big)
		+
		\int_t^s  \int_{\R^n} \Big(  \partial_t V\big(r,\mu^{\gamma}_r \big) + \partial_{\nu} V\big(r,\mu^{\gamma}_r \big)(y) \cdot b\big(r,y,\mu^{\gamma}_r \otimes \delta_{\alpha^\gamma_r},\alpha^\gamma_r \big)  \Big)\mu^{\gamma}_r(\mathrm{d}y)\mathrm{d}r\nonumber
		\\
		&+
		\frac{1}{2}   \int_t^s  \int_{\R^n}\mbox{Tr} \big[ \partial_x \partial_{\nu} V\big(r,\mu^{\gamma}_r \big)(y) \big( \sigma ^\top\sigma + \sigma_0^\top\sigma_0 \big)(r,y,\mu^{\gamma}_r \otimes \delta_{\alpha^\gamma_r},\alpha^\gamma_r) \big] \mu^{\gamma}_r(\mathrm{d}y)\mathrm{d}r \nonumber
		\\
		&+
		\frac{1}{2}   \int_t^s  \int_{(\R^n)^2}\mbox{Tr} \big[ \partial^2_{\nu}V\big(r,\mu^{\gamma}_r \big)(y,y^{\prime}) \sigma_0^\top(r,y,\mu^{\gamma}_r \otimes \delta_{\alpha^\gamma_r},\alpha^\gamma_r) \sigma_0(r,y^\prime,\mu^{\gamma}_r \otimes \delta_{\alpha^\gamma_r},\alpha^\gamma_r )  \big]
		\mu^{\gamma}_r(\mathrm{d}y) \mu^{\gamma}_r(\mathrm{d}y^\prime) \mathrm{d}r \nonumber
		\\
		&+
		\int_t^s  \int_{\R^n\times U}\partial_{\nu} V\big(r, \mu^{\gamma}_r \big)(x)\cdot \sigma_0(r,y,\mu^{\gamma}_r \otimes \delta_{\alpha^\gamma_r},\alpha^\gamma_r) \mu^{\gamma}_r(\mathrm{d}y)\mathrm{d}B_r.
	\end{align}
	As $\gamma \in \Gamma^{\circ,\B}_S(t,\nu)$, for Lebesgue--almost every $r\in[t,T]$, $\alpha^{\gamma}_r$ is a measurable function of $(B_u - B_t)_{u \in [t,r]}$.
	Considering piecewise constant control process, $\alpha^{\gamma}$ would be a deterministic constant on a small time horizon $[t, t+\eps]$.
	By replacing $V$ in \eqref{eq:itoFormula-V^B_S} by $V^{\B,\circ}_S$ 
	and taking supremum as in DDP \eqref{eq:B-strongDPP} (but over constant control processes), 
	this leads to the Hamiltonian
	\begin{align*} 
		H^\B[V] \big(t,\nu \big)
		:=
		\sup_{u \in U} \bigg\{&\int_{\R^n} \Big(\big(L+ [V]^1\big)\big(t,y,\nu \otimes \delta_{u}, u \big) \Big)\nu(\mathrm{d}y)+
		\int_{(\R^n)^2} [V]^2 \big(t,y,u,y^\prime, \nu \otimes \delta_{u}, u \big)\nu(\mathrm{d}y)\nu(\mathrm{d}y^\prime)
		\bigg\},
	\end{align*}
	where for any $(r,y,u,y^\prime,u^\prime,\nub)\in[0,T]\times\R^n\times U\times\R^n\times U\times \Pc(\R^n\times U)$
	\[
		[V]^1(r,y,\nub, u)
		:=
		\partial_{\nu} V(r,\nu )(y) \cdot b(r,y,\nub, u)
		+
		\frac{1}{2} \mbox{Tr} \big[ \partial_x \partial_{\nu} V(r,\nu)(y) ( \sigma ^\top\sigma + \sigma_0^\top\sigma_0 )(r,y,\nub, u)\big],
	\]
	and
	\[
		[V]^2(r,y,u,y^\prime,u^\prime,\nub)
		:=
		\frac{1}{2} \mbox{Tr} \big[ \partial^2_{\nu}V\big(r,\nu \big)(y,y^{\prime}) \sigma_0^\top(r,y,\nub, u){\sigma}_0(r,y^\prime,\nub,u^\prime)\big].
	\]
	Heuristically, $V^{\B, \circ}$ should satisfy the HJB equation
	\[
		- \partial_t V^{\B,\circ}_S(t, \nu) - H^\B \big[V^{\B,\circ}_S \big] \big(t,\nu \big) = 0,\; (t,\nu)\in[0,T)\times\Pc_2(\R^n),\; V^{\B,\circ}_S(T, \cdot)=g(\cdot).
	\]
	We refer to \cite{pham2016dynamic} for a detailed rigorous proof of the fact that $V^{\B, \circ}$ is a viscosity solution of the above HJB equation under some technical regularity conditions.

\subsubsection{HJB equation for the general strong formulation}	

	Similarly, for the control problem $V^\circ_S$, we consider a strong control rule $\gamma \in \Gamma^\circ_S(t,\nu)$,
	where for Lebesgue--almost every $r\in[t,T]$, the control process $\alpha^{\gamma}_r$ is a measurable function of both $(B_u - B_t)_{u \in [t,r]},$ $(W_u - W_t)_{u \in [t,r]}$ and $X_{t \wedge \cdot}$.
	As the control process $\alpha^{\gamma}$ is adapted to the filtration generated by $(X_{t \wedge \cdot}, W^{\gamma,t}, B^{\gamma,t})$,
	by considering adapted piecewise constant control processes, 
	the control process on the first small interval $[t, t+\eps)$ should be a measurable function of $X_{t \wedge \cdot}$.
	Similarly to \citeauthor*{pham2018bellman} \cite{pham2018bellman} in a non--common noise setting and by considering the It\^o formula \eqref{eq:itoFormula-V^B_S}, this would formally lead to the Hamiltonian
	\begin{align*} 
		H[V] \big(t,\nu \big)
		:=
		\sup_{a \in \L^2_\nu} \bigg\{ \int_{\R^n}\big( L+ [V]^1\big)\big(t,y,\nu \circ (\hat a)^{-1},a(y) \big)\nu(\mathrm{d}y) +
		\int_{(\R^n)^2} [V]^2 \big(t,y,a(y),y^\prime,a(y^\prime),\nu \circ (\hat a)^{-1} \big)\nu(\mathrm{d}y)\nu(\mathrm{d}y^\prime) \bigg\},
	\end{align*}
	where $\hat a: \R^n\ni x  \longmapsto (x,a(x)) \in \R^n \x U$,
	and $\L^2_\nu$ of all $\nu$--square integrable functions $a: (\R^n, \Bc(\R^n), \nu) \longrightarrow U$.
	Heuristically, $V^\circ_S$ should be a solution of the HJB equation
	\[
		- \partial_t V^\circ_S(t, \nu) - H[V^\circ_S] \big(t,\nu \big) = 0 ,\; (t,\nu)\in[0,T)\times\Pc_2(\R^n),\; V^{\B,\circ}_S(T, \cdot)=g(\cdot).
	\]

	As explained above, the difference between the HJB equations for $V^{\B, \circ}_S$ and $V^\circ_S$
	comes mainly from the fact that the control process $\alpha^{\gamma}$, for $\gamma \in \Gamma^\circ_S(t,\nu)$,
	depends on the initial random variable condition, which in turn modifies the Hamiltonian function which appears in the PDE.
	Finally, we also refer to \citeauthor*{wu2018viscosity} \cite{wu2018viscosity} for a discussion of the McKean--Vlasov control problem in a non--Markovian framework without common noise.

\section{Proofs of the main results}\label{sec:4}

	We now provide the proofs of our main DPP results in Theorems \ref{thm:WeakDPP}, \ref{thm:B-StrongDPP} and \ref{thm:StrongDPP},
	{\color{black} where a key ingredient is the measurable selection argument.}
	We will first reformulate the control problems on an appropriate canonical space in \Cref{subsec:Reform_canon},
	and then provide some technical lemmata for the problems formulated on the canonical space in \Cref{subsec:TechLemma},
	and finally give the proofs of the main results themselves in \Cref{subsec:Proofs}.

\subsection{Reformulation of the control problems on the canonical space}
\label{subsec:Reform_canon}

\subsubsection{Canonical space}
	In order to prove the dynamic programming results in \Cref{sec:DPP}, 
	we first reformulate the controlled McKean--Vlasov SDE problems on an appropriate canonical space. This is going to be achieved by the usual way, that is to say by considering appropriately defined controlled martingale problems.
	Recall that $n,$ $d$ and $\ell$ are the dimensions of the spaces in which $X$, $W$ and $B$ take values,
	$\Ub = U \cup \{\partial\}$ and that $\pi^{-1}$ maps $\R \cup \{\infty, -\infty\}$ to $\Ub$.
	Let us introduce a first canonical space by
	\[
		\Omh := \Cc^n \x \Cc \x \Cc^d \x \Cc^\ell
		~\mbox{with canonical process}~
		(\Xh, \Ah, \Wh, \Bh),
		~\mbox{and}~
		\alphah_t := \pi^{-1} \Big(\underset{n\rightarrow+\infty}{ \overline{\lim}} n \big(\Ah_t - \Ah_{0 \vee (t-1/n)}\big) \Big),
		~t \in [0,T].
	\]
	Denote by $C( [0,T], \Pc(\Omh))$ be the space of all continuous paths on $[0,T]$ taking values in $\Pc(\Omh)$, which is a Polish space for the uniform topology (see e.g. \cite[Lemmata 3.97, 3.98, and 3.99]{aliprantis2006infinite}), 
	we introduce a second canonical space by
	\[
		\Omb
		:=
		\Omh \x C \big([0,T], \Pc(\Omh) \big),
		~\mbox{with canonical process}~
		(X, A, W, B, \muh)
		~\mbox{and canonical filtration}~\Fb = (\Fcb_t)_{0 \le t \le T}.
	\]
	Let $\Fcb := \Bc(\Omb)$ be the Borel $\sigma$--field on $\Omb$.
	Notice that for any $t\in[0,T]$, $\muh_t$ is a probability measure on $\Omh$, 
	we then define two processes $\mu = (\mu_t)_{0 \le t \le T}$ and $\mub = (\mub_t)_{0 \le t \le T}$ on $\Omb$ by
	\[
		\mub_t := \muh_t \circ \big(\Xh_{t \wedge \cdot}, \alphah_t \big)^{-1},
		~
		\mu_t := (\muh_t) \circ \big(\Xh_{t \wedge \cdot} \big)^{-1}.	
	\]
	Define also the $\Ub$--valued process $\alphab = (\alphab_t)_{0 \le t \le T}$ on $\Omb$ by
	\[
		\alphab_t := \pi^{-1} \Big( \underset{n\rightarrow+\infty}{ \overline{\lim}} n \big(A_t - A_{0 \vee (t-1/n)}\big) \Big),\; t\in[0,T].
	\]
	Finally, for any $t \in [0,T]$, we introduce the processes $W^t := (W^t_s)_{s \in [0,T]}$ and $B^t := (B^t_s)_{s \in [0,T]}$ by
	\[
		B^t_s := B_{s \vee t} - B_t,
		~\mbox{\rm and}~
		W^t_s := W_{s \vee t} - W_t,\; s\in[0,T],
	\]
	and the filtration $\Gb^t := (\Gcb^t_s)_{0 \le s \le T}$ by
	\begin{equation} \label{eq:def_Gt}
		\Gcb^t_s := 
		\begin{cases} 
			\{\emptyset, \Om \}, \; \mbox{\rm if}\; s \in [0,t),\\
			\sigma \big((B^t_r, \muh_r): r \in [0, s] \big),  \; \mbox{\rm if}\; s \in [t,T].
		\end{cases}
	\end{equation}

\subsubsection{Controlled martingale problems on the canonical space}
	We now reformulate the strong/weak control problem as a controlled martingale problem on the canonical space $\Omb$,
	where a control (term) can be considered as a probability measure on $\Omb$.
	To this end, let us first introduce the corresponding generator.
	Given the coefficient functions $b,$ $\sigma$ and $\sigma_0$,
	for all $(t, \xb,\wb,\bb, \nub, u) \in [0,T] \x \Cc^n \x \Cc^d \x \Cc^{\ell} \x \Pc(\Cc^n \x U) \x U$, let
	\begin{equation} \label{cond:coeff-b}
		\bar b \big(t, (\xb,\wb,\bb), \nub, u \big)
		:=
		\big(b,0_d,0_\ell \big)(t,\xb, \nub, u),
	\end{equation}
	and
	\begin{equation} \label{cond:coeff-a}
		\bar a \big(t,(\xb,\wb,\bb), \nub, u \big)
		:=
		\begin{pmatrix} 
			\sigma & \sigma_0 \\ 
			I_{d \x d} & 0_{d \x \ell} \\ 
			0_{\ell \x d} & I_{\ell \x \ell} 
		\end{pmatrix}
		\begin{pmatrix} 
			\sigma & \sigma_0 \\ 
			I_{d \x d} & 0_{d \x \ell} \\ 
			0_{\ell \x d} & I_{\ell \x \ell}
		\end{pmatrix}^{\top}
		(t,\xb,  \nub, u),
	\end{equation}
	and then introduce the generator $\Lc$, for all $\varphi \in C^2_b(\R^{n+d+\ell})$,
	\[
		\Lc_t \varphi \big( \xb,\wb,\bb,  \nub, u \big)
		:=
		\sum_{i=1}^{n+d+\ell} \bar b_{i}(t,(\xb,\wb,\bb), \nub, u) \partial_{i} \varphi(\xb_t,\wb_t,\bb_t)
		+ 
		\frac{1}{2}\sum_{i,j=1}^{n+d+\ell} \bar a_{i,j}(t,(\xb,\wb,\bb), \nub, u) \partial^2_{i,j} \varphi(\xb_t,\wb_t,\bb_t).
	\]
	We next define a process $|\Sb| = \big(|\Sb|_t\big)_{0 \le t \le T}$ by
	\[
		|\Sb|_t := \int_0^t \big( |\bar b| + | \bar a| \big) (s, X, W, B, \mub_s, \alphab_s) \mathrm{d}s,
	\]
	and then, for all $\varphi \in C^2_b(\R^{n+d+\ell})$,
	let $\Sb^{\varphi} = (\Sb^{\varphi}_t)_{t \in [0,T]}$ be defined by
	\begin{equation} \label{eq:associate-martingale}
		\Sb^{\varphi}_t
		:=
		\varphi(X_t, W_t, B_t) 
		-
		\int_0^t  \Lc_s \varphi \big(X, W, B,  \mub_s, \alphab_s \big) \mathrm{d}s,~ t\in[0,T],
	\end{equation}
	where for $\phi: [0,T] \to \R$, $\int_0^t \phi(s) \mathrm{d}s := \int_0^t \phi^+(s) \mathrm{d}s - \int_0^t \phi^-(s) \mathrm{d}s$ with the convention $\infty - \infty = - \infty$.
	Notice that on $\{|\Sb|_T < \infty\}$, the process $\Sb^{\varphi}$ is $\R$--valued.
	To localise the process $\Sb^{\varphi}$, we also introduce, for each $m \ge 1$,
	\begin{equation} \label{eq:def_tau_m}
		\tau_m := \inf \big\{ t : |\Sb|_t \ge m \big\},
		~\mbox{and}~
		S^{\varphi, m}_t := \Sb^{\varphi}_{t \wedge \tau_m} = \Sb^{\varphi}_t \mathbf{1}_{\{\tau_m \ge t\}} + \Sb^{\varphi}_{\tau_m} \mathbf{1}_{\{\tau_m < t\}},
		~t \in [0,T].
	\end{equation}
	Notice that the process $|\Sb|$ is left--continuous, $\tau_m$ is a $\F^+$--stopping time on $\Omb$, and $S^{\varphi, m}$ is an $\F$--adapted uniformly bounded process.

	\begin{definition} \label{def:weak_rule}
		Let  $(t, \nuh) \in [0,T] \x \Pc(\Omh)$. 
		A probability $\Pb$ on $(\Omb, \Fcb)$ is called a weak control rule with initial condition $(t,\nuh)$ if
		\begin{enumerate}[label={$(\roman*)$}]
		
			\item the process $\alphab = (\alphab_s)_{t \le s \le T}$ satisfies
			\[
				\Pb \big[ \alphab_s \in U \big] = 1, \; \mbox{\rm for Lebesgue--a.e.}\; s \in [t,T],
				\; \mbox{\rm and}\;
				\E^{\Pb} \bigg[ \int_t^T \big( \rho(u_0, \alphab_s) \big)^p \mathrm{d}s \bigg] < \infty;
			\]
			\item the process $\muh = (\muh_s)_{0 \le s \le T}$ satisfies
			\begin{equation} \label{eq:property_muh}
				\muh_s 
				=
				\Pb  \circ (X_{s \wedge \cdot} , A_{s \wedge \cdot}, W, B_{s \wedge \cdot} )^{-1} \mathbf{1}_{\{s \in [0,t]\}} 
				+ 
				\Pb^{\Gcb_T^t} \circ (X_{s \wedge \cdot} , A_{s \wedge \cdot}, W,B_{s \wedge \cdot} )^{-1} \mathbf{1}_{\{s \in (t, T]\}},\; \Pb-{\rm a.s.}
			\end{equation}
			with $\Pb  \circ (X_{t \wedge \cdot} , A_{t \wedge \cdot}, W_{t \wedge \cdot}, B_{t \wedge \cdot} )^{-1} = \nuh(t);$

			\item $\E^{\Pb}[ \|X\|^p] < \infty$, $\Pb[|\Sb|_T < \infty] = 1$,
			the process $\Sb^{\varphi}$ is an $(\Fb, \Pb)$--local martingale on $[t,T]$, for all $\varphi \in C^2_b \big(\R^n \x \R^d \x \R^{\ell} \big)$.

		\end{enumerate}
	\end{definition}
	Given $\nu \in \Pc(\Cc^n)$, we denote by $\Vc(\nu)$ the collection of all probability measures $\nuh \in \Pc(\Omh)$ such that $\nuh \circ \Xh^{-1} = \nu$,
	and let
	\[
		{\widehat{\Pc}}_W(t, \nuh)
		:=
		\big\{ \mbox{All weak control rules}\; \P \;\mbox{with initial condition}\; (t, \nuh) \big\},\; \mbox{\rm and}\;
		\Pcb_W(t, \nu)
		:=
		\bigcup_{\nuh\in \Vc(\nu)}
		\widehat{\Pc}_W(t, \nuh).
	\]

	\begin{remark} \label{rem:def_weak_rule}
		Let $\Pb \in \Pcb_W(t, \nu)$ for some $t \in [0,T]$ and $\nu \in \Pc(\Cc^n)$.
		Notice that for $s \in (t,T]$, $\muh_s$ is $\Gcb^t_s$--measurable, then by \eqref{eq:property_muh}, one has
		\[
			\muh_s =\Pb^{\Gcb_s^t} \circ (X_{s \wedge \cdot} , A_{s \wedge \cdot}, W,B_{s \wedge \cdot} )^{-1},~\Pb \mbox{\rm--a.s.}
		\]
		Further, as the canonical process $(\muh_s)_{s \in [0,T]}$ is continuous, it follows that
		\[
			\Lc^{\Pb} \big(X_{t \wedge \cdot}, A_{t \wedge \cdot}, W,B_{t \wedge \cdot} \big)
			=
			\muh_t
			=
			\lim_{s \searrow t} \muh_s
			=
			\lim_{s \searrow t} \Lc^{\Pb} \big( \big(X_{s \wedge \cdot}, A_{s \wedge \cdot}, W,B_{s \wedge \cdot} \big) \big| \Gcb^t_T \big)
			=
			\Lc^{\Pb} \big( \big(X_{t \wedge \cdot}, A_{t \wedge \cdot}, W,B_{t \wedge \cdot} \big) \big| \Gcb^t_T \big),
			~\Pb \mbox{\rm --a.s.}
		\]
		This implies that $\Fc_t \vee \sigma(W) = \sigma(X_{t \wedge \cdot},A_{t \wedge \cdot}, W,B_{t \wedge \cdot})$ is independent of $\Gcb^t_T$, which is consistent with the conditions in \Cref{def:weak_control}.
		
\medskip
		
		Finally, under $\Pb$, $(\muh_s)_{s \in [0,t]}$ is completely determined by $\nuh(t)$. More precisely, one has
		\[
			\muh_t(\mathrm{d}\xb,\mathrm{d}a,\mathrm{d}\wb,\mathrm{d}\bb)
			=
			\int_{\Omh \x \Cc^d} \delta_{(\xb^\prime,a^\prime,\wb^\prime \oplus_t \wb^\star,\bb^\prime)}(\mathrm{d}\xb,\mathrm{d}a,\mathrm{d}\wb,\mathrm{d}\bb)
			\;\nuh(t)(\mathrm{d}\xb^\prime,\mathrm{d}a^\prime,\mathrm{d}\wb^\prime,\mathrm{d}\bb^\prime)  \Lc^{\Pb} \big(W^{t} \big)(\mathrm{d}\wb^\star),
			~\Pb \mbox{\rm--a.s.}
		\]
		where $W^t$ is a $(\Fb, \Pb)$--Brownian motion on $[t,T]$, by the martingale problem property in \Cref{def:weak_rule}.
	\end{remark}

	\begin{definition} \label{def:PS}
		Let $(t, \nu) \in [0,T] \x \Pc_2(\Cc^n)$. 
		A probability $\Pb$ on $(\Omb, \Fcb)$ is called a strong control rule $($resp. $\B$--strong control rule$)$ with initial condition $(t,\nu)$, 
		if $\Pb  \in \Pcb_W(t,\nu)$ and moreover there exists some Borel measurable function $\phi: [0,T] \x \Om^t \longrightarrow U$ $\big($resp. $\phi: [0,T] \x \Cc^\ell_{t,T} \longrightarrow U\big)$ such that
		\[
			\alphab_s 
			=
			\phi\big(s, X_{t \wedge \cdot},  W^t_{s \wedge \cdot}, B^t_{s \wedge \cdot}\big)
			\;
			\big(\mbox{\rm resp.}\; \phi\big(s,B^t_{s \wedge \cdot}\big)\big), \; 
			\Pb\mbox{\rm--a.s., for all}\; s \in [t,T].
		\]
	\end{definition}
	Let us then denote by $\Pcb_S(t,\nu)$ (resp. $\Pcb_S^\B(t,\nu)$) the collection of all strong (resp. $\B$--strong) control rules with initial condition $(t,\nu)$.

\subsubsection{Equivalence of the reformulation}

	We now show that every strong/weak control (term) induces a strong/weak control rule on the canonical space,
	and any strong/weak control rule on the canonical space can be induced by a strong/weak control (term).

	\begin{lemma}\label{lemma:equivalence}
		$(i)$ Let $t \in [0,T]$ and $\nu \in \Pc(\Cc^{n})$.
		Then for every $\gamma \in \Gamma_W(t,\nu)$, one has
		\begin{equation} \label{eq:ctrl_repres}
			\Pb^\gamma
			:=
			\P^{\gamma}
			\circ 
			\big( X^{\gamma}, A^{\gamma}, W^{\gamma}, B^{\gamma}, \muh^{\gamma} \big)^{-1}
			\in
			\Pcb_W(t, \nu).
		\end{equation}
		Conversely, given $\Pb \in \Pcb_W(t, \nu)$, 
		there exists some $\gamma \in \Gamma_W(t,\nu)$
		such that $\P^{\gamma} \circ \big( X^{\gamma}, A^{\gamma}, W^{\gamma}, B^{\gamma}, \muh^{\gamma} \big)^{-1} = \Pb$.
		
	\medskip
		
		$(ii)$ Let $t \in [0,T]$, $\nu \in \Pc_2(\Cc^n)$, and {\rm\Cref{assum:Lip}} hold true.
		Then for every $\gamma \in \Gamma_S(t,\nu)$ $\big($resp. $ \Gamma^\B_S(t,\nu)\big)$, one has
		\begin{equation} 
			\Pb^\gamma
			:=
			\P^{\gamma}
			\circ 
			\big( X^{\gamma}, A^{\gamma}, W^{\gamma}, B^{\gamma}, \muh^{\gamma} \big)^{-1}
			\in
			\Pcb_S(t, \nu)\; 
			\big(\mbox{\rm resp.}\; 
				 \Pcb^{\B}_S(t,\nu)
			\big).
		\end{equation}
		Conversely, given $\Pb \in \Pcb_S(t, \nu)$ 
		$\big(\mbox{\rm resp.}\; \Pcb^{\B}_S(t,\nu) \big)$, 
		there exists some $\gamma \in \Gamma_S(t,\nu)$ $\big( \mbox{\rm resp.}\; \Gamma^\B_S(t,\nu)\big)$
		such that $\P^{\gamma} \circ \big( X^{\gamma}, A^{\gamma}, W^{\gamma}, B^{\gamma}, \muh^{\gamma} \big)^{-1} = \Pb$.
	\end{lemma}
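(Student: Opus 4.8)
The plan is to prove both equivalences by the same two-step pattern, treating the ``term $\Rightarrow$ rule'' direction by pushing forward and checking the defining properties, and the ``rule $\Rightarrow$ term'' direction by taking the canonical space $\Omb$ itself, equipped with $\Pb$, as the term and extracting the SDE \eqref{eq:MKV_SDE_weak} from the martingale problem. For the forward direction of $(i)$, given $\gamma\in\Gamma_W(t,\nu)$ I would set $\Pb^\gamma:=\P^\gamma\circ(X^\gamma,A^\gamma,W^\gamma,B^\gamma,\muh^\gamma)^{-1}$ and verify the three items of \Cref{def:weak_rule}. Item $(i)$ is inherited from \eqref{eq:cond_integ}, since under $\Pb^\gamma$ the canonical $\alphab$ agrees $\mathrm{d}\Pb^\gamma\otimes\mathrm{d}s$-a.e. with the pushforward of $\alpha^\gamma$ (both recovered from the same $\limsup$-of-increments formula applied to $A^\gamma$, cf. \eqref{eq:def_A_gamma}). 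Item $(ii)$ uses \Cref{remark:def2}: on $(t,T]$ one has $\muh^\gamma_s=\Lc^{\P^\gamma}(X^\gamma_{s\wedge\cdot},A^\gamma_{s\wedge\cdot},W^\gamma,B^\gamma_{s\wedge\cdot}\,|\,\Gc^\gamma_s)$, and since $\muh^\gamma_s$ is measurable with respect to the sub-$\sigma$-field that pulls back $\Gcb^t_s=\sigma((B^t_r,\muh_r):r\le s)$, combining the tower property with \eqref{eq:H_Hypo} shows this conditional law is unchanged when one conditions on that smaller field, which is exactly \eqref{eq:property_muh}; the initial-time identity is \Cref{def:weak_control}$(iv)$. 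Item $(iii)$: $\E^{\Pb^\gamma}[\|X\|^p]<\infty$ and $\Pb^\gamma[|\Sb|_T<\infty]=1$ follow from \eqref{eq:cond_integ} and the well-posedness of the integrals in \eqref{eq:MKV_SDE_weak}, while the $(\Fb,\Pb^\gamma)$-local-martingale property of $\Sb^\varphi$ is obtained by applying It\^o's formula to $\varphi(X^\gamma,W^\gamma,B^\gamma)$ along \eqref{eq:MKV_SDE_weak} and localising with $\tau_m$.

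For the converse of $(i)$, given $\Pb\in\Pcb_W(t,\nu)$ I would take $\Om^\gamma=\Omb$, $\P^\gamma=\Pb$, $(X^\gamma,A^\gamma,W^\gamma,B^\gamma)=(X,A,W,B)$, let $\F^\gamma$ be the $\Pb$-augmentation of $\Fb$ and $\G^\gamma$ the $\Pb$-augmentation of $\Gb^t$, and define $\muh^\gamma$ by \eqref{eq:def_muh}. The key point is that no enlargement of the space is needed to recover the driving noise: because the matrix $\bar a$ in \eqref{cond:coeff-a} carries identity blocks on the $W$- and $B$-coordinates and off-diagonal blocks $\sigma,\sigma_0$, the martingale problem forces $\langle W^t\rangle_s=\langle B^t\rangle_s=(s-t)\mathrm{I}$ and $\langle W^t,B^t\rangle\equiv 0$ on $[t,T]$, so L\'evy's characterisation makes $(W^t,B^t)$ an $(\Fb,\Pb)$-Brownian motion, and a direct computation of the quadratic variation shows that $M:=X_\cdot-X_t-\int_t^\cdot b\,\mathrm{d}r-\int_t^\cdot\sigma\,\mathrm{d}W^t-\int_t^\cdot\sigma_0\,\mathrm{d}B^t$ is a continuous local martingale with $\langle M\rangle\equiv 0$, hence $M\equiv 0$ and \eqref{eq:MKV_SDE_weak} holds. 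The conditions on $\mu^\gamma,\mub^\gamma$ then follow by projecting \eqref{eq:property_muh}; the $(H)$-hypothesis \eqref{eq:H_Hypo} follows from the independence of $\Fcb_t\vee\sigma(W)$ and $\Gcb^t_T$ recorded in \Cref{rem:def_weak_rule} together with the fact that $W^t$ is a Brownian motion independent of $\Gcb^t_T$; and $\muh^\gamma$ built via \eqref{eq:def_muh} coincides $\Pb$-a.s. with the canonical $\muh$, again by \Cref{rem:def_weak_rule}, so $\P^\gamma\circ(X^\gamma,A^\gamma,W^\gamma,B^\gamma,\muh^\gamma)^{-1}=\Pb$.

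For part $(ii)$, the forward direction reduces to a functional representation: if $\gamma\in\Gamma_S(t,\nu)$ (resp. $\Gamma^\B_S(t,\nu)$) then $\alpha^\gamma$ is predictable with respect to the filtration generated by $(X^\gamma_{t\wedge\cdot},W^{\gamma,t},B^{\gamma,t})$ (resp. by $B^{\gamma,t}$), so, as in the proof of \Cref{prop:strong_ctrl_fixed_space} via \cite[Proposition 10]{claisse2016pseudo}, there is a Borel map $\phi$ with $\alpha^\gamma_s=\phi(s,X^\gamma_{t\wedge\cdot},W^{\gamma,t}_{s\wedge\cdot},B^{\gamma,t}_{s\wedge\cdot})$ (resp. $\phi(s,B^{\gamma,t}_{s\wedge\cdot})$); pushing forward yields the defining identity of a strong (resp. $\B$--strong) control rule. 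For the converse, I would start from the weak control term built above; under \Cref{assum:Lip} the McKean--Vlasov SDE \eqref{eq:MKV_SDE_weak} driven by the feedback control $\alphab_s=\phi(s,X_{t\wedge\cdot},W^t_{s\wedge\cdot},B^t_{s\wedge\cdot})$ has a unique strong solution (\Cref{theorem_Existence/uniqueness-SDE}), so $X$ is adapted to the $\Pb$-augmentation of the filtration generated by $(X_{t\wedge\cdot},W^t,B^t)$; taking $\F^\gamma$ and $\G^\gamma$ to be exactly the $\Pb$-augmentations of $\F^{\gamma,\circ}$ and $\G^{\gamma,\circ}$ of \Cref{def:strong_ctrl} then makes $\gamma$ a strong control rule, and the $\B$--case is identical since there $\phi$ does not depend on $(X_{t\wedge\cdot},W^t)$.

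The step I expect to be most delicate is the bookkeeping around the two filtrations: one must make sure that $\Gb^t$, built only from $(B^t,\muh)$, is simultaneously rich enough to represent all the conditional laws appearing in \eqref{eq:property_muh} and coarse enough to satisfy the immersion-type identity \eqref{eq:H_Hypo} jointly with $\F^\gamma$, and that these properties are stable under the passages to and from the canonical space. The second delicate point is the converse of $(ii)$, where pathwise uniqueness from \Cref{theorem_Existence/uniqueness-SDE} is genuinely used to upgrade a weak solution into one adapted to the Brownian filtrations, the McKean--Vlasov dependence of the coefficients on $\mub^\gamma$ being the reason one cannot simply invoke the classical Yamada--Watanabe argument without care. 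By contrast, the martingale-problem-to-SDE identification is essentially routine here, precisely because the identity blocks built into $\bar a$ exhibit $(W,B)$ as the driving Brownian motion on the canonical space.
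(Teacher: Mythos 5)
Your overall strategy matches the paper's: push a control term forward and verify the three items of \Cref{def:weak_rule} (the martingale property via It\^o, the $\muh$-identity via the tower property and the $(H)$-hypothesis, integrability inherited from \eqref{eq:cond_integ}); for the converse, take the canonical space itself as the term, identify $(W^t,B^t)$ as Brownian motions from the martingale problem, recover the SDE, and take $\Fb$ and $\Gb^t$ as the filtrations. The forward direction of part $(ii)$ via the functional representation of \cite[Proposition 10]{claisse2016pseudo} also matches; where the paper cites Stroock--Varadhan's Theorem 4.5.2 you re-derive the same content through L\'evy's characterisation and a quadratic-variation computation, which is fine.

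However, one step in the converse of $(ii)$ is under-specified in a way that matters. Having produced the weak control term on $\Omb$ and the feedback form $\alphab_s=\phi(s,X_{t\wedge\cdot},W^t_{s\wedge\cdot},B^t_{s\wedge\cdot})$, you write that uniqueness of the strong solution from \Cref{theorem_Existence/uniqueness-SDE} implies $X$ is adapted to the $\Pb$-augmentation of $\sigma(X_{t\wedge\cdot},W^t,B^t)$, and hence that one gets $\gamma\in\Gamma_S(t,\nu)$. But for $\Pb\in\Pcb_S(t,\nu)$ the canonical $X$ solves \eqref{eq:MKV_SDE_weak} with $\mub_r=\Lc^{\Pb}\big((X_{r\wedge\cdot},\alphab_r)\,\big|\,\Gcb^t_r\big)$, and $\Gcb^t_r$ is generated jointly by $B^t_{r\wedge\cdot}$ \emph{and} $\muh_{r\wedge\cdot}$, a priori a strictly larger $\sigma$-field than the one generated by $B^t$ alone. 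Uniqueness of the solution with that conditioning filtration does not by itself yield adaptedness to the smaller Brownian filtration: one must first show that conditioning on $(B^t,\muh)$ agrees with conditioning on $B^t$ alone. This is exactly the self-consistency assertion of \Cref{corollary_doubleSDE}, which rests on the specific conclusion of \Cref{theorem_Existence/uniqueness-SDE}$(ii)$ (not merely the well-posedness of part $(i)$): under the feedback form of $\alphab$ and the independence of $(X_{t\wedge\cdot},W,B_{t\wedge\cdot})$ from $(B^t,\muh)$, the unique solution of the ``double'' system in which $\muh$ is defined self-referentially through conditioning on $(B^t,\muh)$ in fact has $\muh$ measurable with respect to $B^t$. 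You rightly flag that ``one cannot simply invoke the classical Yamada--Watanabe argument without care,'' but the proposal stops there; the missing step is precisely the content of \Cref{corollary_doubleSDE}, and without it the passage from $\Pb\in\Pcb_S(t,\nu)$ to $\gamma\in\Gamma_S(t,\nu)$ is incomplete.
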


\begin{proof}
		$(i)$
		First, let $\gamma \in \Gamma_W(t,\nu)$
		and $\Pb^{\gamma}:=\P^{\gamma}\circ \big( X^{\gamma}, A^{\gamma},W^{\gamma}, B^{\gamma}, \muh^{\gamma}\big)^{-1}$. 
		First,  it is straightforward to check that
		\[
			\Pb^{\gamma} [\alphab_s \in U]=1,\;
			\mbox{\rm for}\; \mathrm{d}t\mbox{\rm --a.e.}\; s \in [t,T],
			~\E^{\Pb^{\gamma}} \big[ \|X\|^p \big] < \infty
			~\mbox{\rm and}~
			\E^{\Pb^{\gamma}} \bigg[\int_t^T (\rho(u_0,\alphab_s))^p \mathrm{d}s \bigg] < \infty.
		\]
		Further, as the integrals in \eqref{eq:MKV_SDE_weak} are well defined, one has $|\Sb|_T < \infty$, $\Pb^{\gamma}$--a.s.
		Moreover, by It\^o's formula, the process $\big( \Sb^{\varphi}_s \big)_{s \in [t,T]}$ defined in \eqref{eq:associate-martingale} is an $(\Fb,\Pb^{\gamma})$--local martingale, for every $\varphi \in C^2_b \big(\R^n \x \R^d \x \R^{\ell} \big)$.
		 
		 \vspace{0.5em}
		 
		Next, notice that $B^{t,\gamma}$ and $\muh^\gamma$ are adapted to $\G^\gamma$, one has, for all $(s,\beta,\psi) \in (t,T]\times C_b\big(\Omh \big)\times C_b\big(\Cc^\ell \x C([0,T]; \Pc(\Omh))\big)$,
		\begin{align*}
		    \E^{\Pb^{\gamma}} \big[\langle \beta,\muh_s \rangle \psi \big(B^{t}_{T \wedge \cdot},\muh_{T \wedge \cdot} \big) \big] 
		    &=
		    \E^{\P^\gamma} \big[\langle \beta,\muh^\gamma_s \rangle \psi \big(B^{\gamma,t}_{T \wedge \cdot},\muh^\gamma_{T \wedge \cdot} \big) \big]
		    =
		    \E^{\P^\gamma} \big[\langle \beta,\Lc^{\P^\gamma} \big( \big( X^\gamma_{s \wedge \cdot}, A^\gamma_{s \wedge \cdot}, W^\gamma, B^\gamma_{s \wedge \cdot} \big) \big| \Gc^\gamma_T\big) \rangle \psi \big(B^{\gamma,t}_{T \wedge \cdot},\muh^\gamma_{T \wedge \cdot} \big) \big]
		    \\
		    &=
		    \E^{\P^\gamma} \big[\beta \big( X^\gamma_{s \wedge \cdot}, A^\gamma_{s \wedge \cdot},W^\gamma,B^\gamma_{s \wedge \cdot} \big) \psi \big(B^{\gamma,t}_{T \wedge \cdot},\muh^\gamma_{T \wedge \cdot} \big) \big]
		   \!=\!
		   \E^{\Pb^{\gamma}} \big[\beta \big( X_{s \wedge \cdot}, A_{s \wedge \cdot}, W, B_{s \wedge \cdot} \big) \psi \big(B^{t}_{T \wedge \cdot},\muh_{T \wedge \cdot} \big) \big]
		    \\
		    &=
		    \E^{\Pb^{\gamma}} \big[ \big\langle \beta,\Lc^{\Pb^{\gamma}} \big( \big( X_{s \wedge \cdot}, A_{s \wedge \cdot}, W,B_{s \wedge \cdot} \big) \big| \Gcb^t_T\big) \big\rangle 
		    	\psi \big(B^{t}_{T \wedge \cdot},\muh_{T \wedge \cdot} \big) \big].
		\end{align*}
		This implies that $\muh_s=\Lc^{\Pb^{\gamma}} \big( \big( X_{s \wedge \cdot}, A_{s \wedge \cdot}, W^\gamma, B^\gamma_{s \wedge \cdot} \big) \big| \Gcb^t_T\big)$, $\Pb^{\gamma}$--a.s. for all $s \in (t,T]$.
		By the same argument and using the fact that $\Fc^{\gamma}_t \vee \sigma(W^{\gamma})$ is independent of $\Gc^{\gamma}_T$, 
		one can easily check that $\muh_s = \Lc^{\Pb^{\gamma}}(X_{s \wedge \cdot}, A_{s \wedge \cdot}, W_{\cdot}, B_{s \wedge \cdot})$ for $s \in [0,t]$,
		and that $\Pb^{\gamma} \circ X_{t \wedge \cdot}^{-1} = \nu$.
		This implies that $\Pb^{\gamma} \in \Pcb_W(t,\nu)$.

		\medskip 

		Assume in addition that $\gamma \in \Gamma_S(t,\nu)$ so that $\alpha^{\gamma}$ is $\F^{\gamma}$--predictable.
		Then there exists a Borel measurable function $\phi: [t,T] \x \Om^t \longrightarrow U$ such that
		$\alpha^{\gamma}_s=\phi \big(s,X^{\gamma}_{t \wedge \cdot},W^{\gamma,t}_{s \wedge \cdot},B^{\gamma, t}_{s \wedge \cdot} \big)$, for all $s \in [t,T]$, $\P^{\gamma}$--a.s. (see e.g. \citeauthor*{claisse2016pseudo} \cite[Proposition 10]{claisse2016pseudo}).
		This implies that $\alphab_s=\phi \big(s,X_{t \wedge \cdot},W^{t}_{s \wedge \cdot},B^{t}_{s \wedge \cdot} \big)$, $\Pb^{\gamma}$--a.s. for all $s \in [t,T]$,
		and it follows that $\Pb^{\gamma} \in \Pcb_S(t, \nu)$.
		
		\medskip 
		
		\noindent $(ii)$ Let $\Pb \in \Pcb_W(t,\nu)$ for some $\nu \in \Pc(\Cc^n)$.
		By \citeauthor*{stroock2007multidimensional} \cite[Theorem 4.5.2]{stroock2007multidimensional}, one knows that $(W, B)$ are $(\Fb, \Pb)$--Brownian motions on $[t,T]$,
		and 
		\begin{align*}
			X_s 
			&=
			X_t + \int_t^s b(r,   X_{\cdot}, \mub_r, \alphab_r) \mathrm{d}r 
			+ \int_t^s \sigma(r,   X_{\cdot}, \mub_r,   \alphab_r) \mathrm{d}   W_r 
			+ \int_t^s \sigma_0(r,   X_{\cdot}, \mub_r,   \alphab_r) \mathrm{d}   B_r, 
			\;   \Pb \mbox{\rm--a.s.},
		\end{align*}  
		Moreover, with the filtration $\Gb^t$ defined in \eqref{eq:def_Gt}, and in view of \Cref{rem:def_weak_rule}, it is straightforward to check that
		\[
			\gamma := \big(   \Omb,   \Fcb,   \Fb,  \Pb,   \Gb^t,   X,   W,   B, \mub, \mu,   \alphab \big)
			\in
			\Gamma_W(t,\nu).
		\]

		If, in addition, $\P \in \Pcb_S(t,\nu)$, so that $  A$ is a $(\sigma \big(  X_{t \wedge r \wedge \cdot},  W^t_{r \wedge \cdot},   B^t_{r \wedge \cdot} \big))_{r \in [t,T]}$--adapted continuous process.
		Using \Cref{corollary_doubleSDE} and the fact that $\muh_s= \Lc^\P \big((  X_{s \wedge \cdot},  A_{s \wedge \cdot},   W,   B_{s \wedge \cdot}) \big|   B^t_{r \in [t,s]},\muh_{s \wedge \cdot} \big)$, $\P$--a.s., for all $s \in [t,T]$, 
		one can deduce  that $\muh_s= \Lc^\P \big((  X_{s \wedge \cdot},  A_{s \wedge \cdot},   W,   B_{s \wedge \cdot}) \big|   B^t_{r \in [t,s]}\big)$, $\P$--a.s., for all $s \in [t,T]$.
		Let $\tilde \G^t$ be the filtration generated by $B^t$, $\tilde \F^t$ be the filtration generated by $(X_{t \wedge \cdot}, W^t, B^t)$,
		and $\tilde \G^{t,\Pb}$, $\tilde \F^{t, \Pb}$ be the corresponding $\Pb$--augmented filtrations.
		Then $\muh$ is $\tilde \G^{t,\Pb}$--predictable, and $X$ is $\tilde \F^{t, \Pb}$--predictable.
		Then it follows that
		\[
			\gamma' :=  \big( \Omb,   \Fcb,   \tilde \F^{t, \Pb},  \Pb,   \tilde \G^{t, \Pb},   X,   W^t,   B^t, \mub, \mu, \alphab \big) \in \Gamma_S(t,\nu).
		\]

		\noindent $(iii)$ Finally, the results related to $\Pcb^{\B}_S(t, \nu)$ and $\Gamma^{\B}_S(t,\nu)$ can be deduced by almost the same arguments as for $\Pcb_S(t, \nu)$ and $\Gamma_S(t,\nu)$.
	\end{proof}

	\begin{remark}
		From {\rm\Cref{lemma:equivalence}}, we can easily deduce that
		under {\rm\Cref{assum:Lip}}, for $\Pb \in \Pcb_S(t,\nu)$ or $\Pb \in \Pcb_S^\B(t,\nu)$, 
		the canonical process $\muh$ satisfies
		\[
		    \muh_s
		    =
		    \Lc^\Pb \big( (X_{s \wedge \cdot},A_{s \wedge \cdot}, W, B_{s \wedge \cdot}) \big| B^t_{s \wedge \cdot} \big)
		    =
		    \Lc^\Pb \big( (X_{s \wedge \cdot},A_{s \wedge \cdot}, W,B_{s \wedge \cdot}) \big| B^t \big),\; \Pb\mbox{\rm --a.s.},\; \mbox{\rm for all}\;s \in [0,T].
		\]
	\end{remark}

	A direct consequence of \Cref{lemma:equivalence} is that we can now reformulate equivalently the weak/strong formulation of the McKean--Vlasov control problem on the canonical space. 

	\begin{corollary}
		Let $(t, \nu) \in [0,T] \x \Pc(\Cc^n)$, one has 
		\begin{equation} \label{eq:def_JP}
			V_W(t, \nu) = \sup_{\Pb \in \Pcb_W(t, \nu)} J(t,\Pb),		
			~\mbox{where}~
			J(t, \Pb) 
			:=
			\E^{\Pb} \bigg[
				\int_t^T L\big(s, X, \mub_s, \alphab_s \big) \mathrm{d}s 
				+
				g\big( X, \mu_T \big)
			\bigg].
		\end{equation}
		Moreover, when {\rm\Cref{assum:Lip}} holds true and $\nu \in \Pc_2(\Cc^n)$, one has
		\[
			V_S(t, \nu) 
			=
			\sup_{\Pb \in \Pcb_S(t, \nu)} J(t,\Pb),
			~\mbox{\rm and}~
			V^\B_S(t, \nu) 
			=
			\sup_{\Pb \in \Pcb_S^\B(t, \nu)} J(t,\Pb).
		\]
	\end{corollary}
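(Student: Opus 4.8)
The plan is to deduce the corollary directly from \Cref{lemma:equivalence}, whose content is exactly that the map
\[
	\gamma \longmapsto \Pb^\gamma := \P^{\gamma} \circ \big( X^{\gamma}, A^{\gamma}, W^{\gamma}, B^{\gamma}, \muh^{\gamma} \big)^{-1}
\]
is a surjection from $\Gamma_W(t,\nu)$ onto $\Pcb_W(t,\nu)$, and (under \Cref{assum:Lip}, for $\nu\in\Pc_2(\Cc^n)$) from $\Gamma_S(t,\nu)$, resp. $\Gamma^\B_S(t,\nu)$, onto $\Pcb_S(t,\nu)$, resp. $\Pcb_S^\B(t,\nu)$. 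The only missing ingredient is that the reward functional is invariant under this map, namely $J(t,\gamma) = J(t,\Pb^\gamma)$ for every weak (resp. strong, $\B$--strong) control $\gamma$.

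First I would establish this invariance. Since $L$ and $g$ are non--anticipative Borel functions, it suffices to exhibit fixed Borel maps on the canonical space $\Omb$ that recover $(\alpha^\gamma_s, \mub^\gamma_s)$ and $\mu^\gamma_T$ from the sample path of $(X^{\gamma}, A^{\gamma}, W^{\gamma}, B^{\gamma}, \muh^{\gamma})$, with no dependence on $\gamma$. For the control, \eqref{eq:def_A_gamma} gives $\pi(\alpha^\gamma_s) = \underset{n\to\infty}{\overline{\lim}}\, n\big(A^\gamma_s - A^\gamma_{(s-1/n)\vee 0}\big)$ for $\mathrm{d}\P^\gamma\otimes\mathrm{d}s$--a.e. $(s,\omega)$, so that $\alpha^\gamma_s = \alphab_s$, $\mathrm{d}\P^\gamma\otimes\mathrm{d}s$--a.e., where $\alphab$ is the map defined on $\Omb$. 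For the conditional laws, \eqref{eq:def_muh} expresses $\muh^\gamma_s$ as the $\Gc^\gamma_s$--conditional law of $(X^\gamma_{s\wedge\cdot}, A^\gamma_{s\wedge\cdot}, W^\gamma, B^\gamma_{s\wedge\cdot})$; combining this with the a.e. identity above and a Fubini argument yields $\mub^\gamma_s = \muh^\gamma_s\circ(\Xh_{s\wedge\cdot},\alphah_s)^{-1} = \mub_s$, $\mathrm{d}\P^\gamma\otimes\mathrm{d}s$--a.e., and similarly $\mu^\gamma_T = \muh^\gamma_T\circ\Xh_{T\wedge\cdot}^{-1} = \mu_T$, $\P^\gamma$--a.s. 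The change of variables formula then gives $J(t,\gamma) = \E^{\Pb^\gamma}\big[\int_t^T L(s,X,\mub_s,\alphab_s)\mathrm{d}s + g(X,\mu_T)\big] = J(t,\Pb^\gamma)$.

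Granting the invariance, the corollary is immediate. On the one hand, using \Cref{lemma:equivalence}$(i)$, $V_W(t,\nu) = \sup_{\gamma\in\Gamma_W(t,\nu)} J(t,\gamma) = \sup_{\gamma\in\Gamma_W(t,\nu)} J(t,\Pb^\gamma) \le \sup_{\Pb\in\Pcb_W(t,\nu)} J(t,\Pb)$, since $\Pb^\gamma\in\Pcb_W(t,\nu)$. On the other hand, again by \Cref{lemma:equivalence}$(i)$, every $\Pb\in\Pcb_W(t,\nu)$ can be written $\Pb = \Pb^\gamma$ for some $\gamma\in\Gamma_W(t,\nu)$, so $J(t,\Pb) = J(t,\gamma) \le V_W(t,\nu)$; taking the supremum over $\Pb$ gives the reverse inequality, hence the equality. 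Replacing $(i)$ by \Cref{lemma:equivalence}$(ii)$ throughout yields $V_S(t,\nu) = \sup_{\Pb\in\Pcb_S(t,\nu)} J(t,\Pb)$ and $V_S^\B(t,\nu) = \sup_{\Pb\in\Pcb_S^\B(t,\nu)} J(t,\Pb)$ under \Cref{assum:Lip} when $\nu\in\Pc_2(\Cc^n)$.

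The step I expect to be the main obstacle is the identification $\mub^\gamma_s = \mub_s(\muh^\gamma)$: one must verify that the a.e. recovery of the control process from $A^\gamma$ transfers to the regular conditional probabilities $\muh^\gamma_s$, which needs a Fubini-type exchange between integration in $s$ and the conditional expectation, together with the fact (recorded in \Cref{remark:def2}) that $\muh^\gamma$ can be taken $\Gc^\gamma$--adapted and $\P^\gamma$--a.s. continuous so that \eqref{eq:def_muh} holds in a pointwise sense. Everything else is routine bookkeeping, and in particular the harder analytic content (existence of the canonical representation, the martingale problem characterisation, the $(H)$--hypothesis) has already been absorbed into \Cref{lemma:equivalence}.
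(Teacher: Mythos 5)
Your proposal is correct and takes the same route the paper leaves implicit: the paper introduces the corollary immediately after \Cref{lemma:equivalence} with the single phrase that it is ``a direct consequence'' of that lemma, and you have simply supplied the missing verification that $J(t,\gamma)=J(t,\Pb^\gamma)$. The Fubini step you flag (upgrading the $\mathrm{d}\P^\gamma\otimes\mathrm{d}s$--a.e.\ recovery of $\alpha^\gamma$ from $A^\gamma$ to an a.e.--in--$s$, $\P^\gamma$--a.s.\ identity inside the conditional law) is indeed the only point that needs a moment's care, and your treatment of it is sound.
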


\subsection{Technical lemmata}
\label{subsec:TechLemma}

	We provide in this section some technical results related to the sets $\Pch_W(t, \nuh)$.

	\begin{lemma} \label{graph:weak-formulation}
		Both graph sets
		\[
			\llbracket \widehat{\Pc}_W \rrbracket
			:=
			\big\{(t, \nuh, \Pb) : \Pb \in \widehat{\Pc}_W(t, \nuh)
			\big\}
			\;\mbox{\rm and}\;
			\llbracket \Pcb_W \rrbracket
			:=
			\big\{(t, \nu, \Pb) : \Pb \in \Pcb_W(t, \nu)
			\big\}
		\]
		are analytic subsets of, respectively, $[0,T] \x \Pc(\Omh) \x \Pc(\Omb)$
		and $[0,T] \x \Pc(\Cc^n) \x \Pc(\Omb)$.
		Moreover, the value function 
		\[
		    V_W: [0,T] \x \Pc(\Cc^n) \longrightarrow \R \cup \{- \infty,\infty \},
		\]
		is upper semi--analytic.
	\end{lemma}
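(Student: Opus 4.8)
The plan is to follow the measurable-selection route: realise $\llbracket\widehat\Pc_W\rrbracket$ as a countable intersection of sets each of which is cut out by a condition that is Borel (often even closed) in $(t,\nuh,\Pb)$, so that $\llbracket\widehat\Pc_W\rrbracket$ is Borel, hence analytic; then obtain $\llbracket\Pcb_W\rrbracket$ as the image of a Borel set under the coordinate projection that forgets $\nuh$, hence analytic; and finally deduce the upper semi-analyticity of $V_W$ from the reformulation \eqref{eq:def_JP} and the measurable selection theorem.

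I first translate the defining conditions of \Cref{def:weak_rule} into Borel constraints on $(t,\nuh,\Pb)$. The initial-law identity $\Pb\circ(X_{t\wedge\cdot},A_{t\wedge\cdot},W_{t\wedge\cdot},B_{t\wedge\cdot})^{-1}=\nuh(t)$ defines a Borel (in fact closed) set, since both $(t,\Pb)\longmapsto\Pb\circ(X_{t\wedge\cdot},A_{t\wedge\cdot},W_{t\wedge\cdot},B_{t\wedge\cdot})^{-1}$ and $(t,\nuh)\longmapsto\nuh(t)$ are continuous maps into the Polish space $\Pc(\Cc^n\x\Cc\x\Cc^d\x\Cc^\ell)$, whose diagonal is closed. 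The admissibility requirements $\Pb[\alphab_s\in U]=1$ for Lebesgue-a.e.\ $s\in[t,T]$, $\E^\Pb\big[\int_t^T(\rho(u_0,\alphab_s))^p\mathrm ds\big]<\infty$, $\E^\Pb[\|X\|^p]<\infty$ and $\Pb[|\Sb|_T<\infty]=1$ are each Borel in $(t,\Pb)$: the relevant integrands $(s,\omega)\longmapsto\mathbf 1_{\{\alphab_s\notin U\}}$, $(s,\omega)\longmapsto(\rho(u_0,\alphab_s))^p$, $\omega\longmapsto\|X\|^p$ and $\omega\longmapsto\mathbf 1_{\{|\Sb|_T<\infty\}}$ are Borel on $[0,T]\x\Omb$ (recall $\alphab$ is a Borel functional of $A$ and $|\Sb|_T$ a Borel functional on $\Omb$), the map $\Pb\longmapsto\E^\Pb[\,\cdot\,]$ of a nonnegative Borel integrand is Borel, and joint measurability in $(t,\Pb)$ follows from Fubini's theorem and a functional monotone class argument.

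The two genuinely structural conditions --- consistency of $\muh$ and the martingale problem --- each carry an uncountable quantifier that must be reduced to a countable one, and this is the heart of the proof. For the consistency of $\muh$, by \Cref{rem:def_weak_rule} the requirement is $\muh_s=\Lc^\Pb\big((X_{s\wedge\cdot},A_{s\wedge\cdot},W,B_{s\wedge\cdot})\,\big|\,\Gcb^t_T\big)$ for all $s\in(t,T]$ and $\muh_s=\Lc^\Pb(X_{s\wedge\cdot},A_{s\wedge\cdot},W,B_{s\wedge\cdot})$ for $s\in[0,t]$; since $s\longmapsto(X_{s\wedge\cdot},A_{s\wedge\cdot},W,B_{s\wedge\cdot})\in\Omh$ and $s\longmapsto\muh_s\in\Pc(\Omh)$ are continuous, it suffices to impose these at rational $s$, and each such conditional-law identity is equivalent to
\[
\E^\Pb\big[\langle f,\muh_s\rangle\,G(B^t_\cdot,\muh_\cdot)\big]=\E^\Pb\big[f(X_{s\wedge\cdot},A_{s\wedge\cdot},W,B_{s\wedge\cdot})\,G(B^t_\cdot,\muh_\cdot)\big]
\]
as $f$ ranges over a countable family of bounded continuous functions on $\Omh$ that is convergence-determining, and $G$ over a countable family of bounded continuous functions generating the Borel $\sigma$-field of $\Cc^\ell\x C([0,T],\Pc(\Omh))$; since $(t,\omega)\longmapsto B^t_\cdot(\omega)$ is continuous, every such identity defines a Borel (closed) subset of $[0,T]\x\Pc(\Omh)\x\Pc(\Omb)$. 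For the martingale problem I use the localizing sequence $\tau_m$ and the bounded $\Fb$-adapted processes $S^{\varphi,m}$ of \eqref{eq:def_tau_m}: ``$\Sb^\varphi$ is an $(\Fb,\Pb)$-local martingale on $[t,T]$ for every $\varphi\in C^2_b(\R^{n+d+\ell})$'' is equivalent to ``$\E^\Pb\big[(S^{\varphi_k,m}_r-S^{\varphi_k,m}_s)h\big]=0$'' for every $m\ge1$, every pair of rationals $t\le s\le r\le T$, every $h$ in a fixed countable family of bounded continuous $\Fcb_s$-measurable functions, and every $\varphi_k$ in a fixed countable subset of $C^2_b$ that determines the martingale problem; as $\omega\longmapsto S^{\varphi_k,m}_r(\omega)\,h(\omega)$ is bounded and Borel, $\Pb\longmapsto\E^\Pb[\,\cdot\,]$ of it is Borel, and the constraint is imposed only on the closed set $\{t\le s\}$, so once more a Borel subset results. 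Intersecting the countably many Borel sets thus obtained shows $\llbracket\widehat\Pc_W\rrbracket$ is Borel, in particular analytic. The main obstacle here is exactly this reduction: exhibiting a countable $\{\varphi_k\}\subset C^2_b$ that still characterises the local-martingale property, checking that the stopped, bounded processes $S^{\varphi_k,m}_r$ are Borel in $\omega$, and organising the bookkeeping so that all conditions remain jointly measurable in $(t,\nuh,\Pb)$ even though the horizon $[t,T]$ of the martingale property and the conditioning $\sigma$-field $\Gcb^t_T$ both move with $t$.

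Finally, since $\nuh\longmapsto\nuh\circ\Xh^{-1}$ is continuous, $\{(\nu,\nuh):\nuh\circ\Xh^{-1}=\nu\}$ is closed, so
\[
\llbracket\Pcb_W\rrbracket=\big\{(t,\nu,\Pb):\exists\,\nuh\in\Pc(\Omh),\ \nuh\circ\Xh^{-1}=\nu\ \text{and}\ (t,\nuh,\Pb)\in\llbracket\widehat\Pc_W\rrbracket\big\}
\]
is the image of a Borel subset of $[0,T]\x\Pc(\Cc^n)\x\Pc(\Omh)\x\Pc(\Omb)$ under the projection forgetting $\nuh$, hence analytic. For the value function, by \eqref{eq:def_JP} one has $V_W(t,\nu)=\sup\{J(t,\Pb):(t,\nu,\Pb)\in\llbracket\Pcb_W\rrbracket\}$, where $(t,\Pb)\longmapsto J(t,\Pb)$ is Borel: indeed $J(t,\Pb)=\E^\Pb[\Xi^t]$ with $\Xi^t:=\int_t^T L(s,X,\mub_s,\alphab_s)\mathrm ds+g(X,\mu_T)$ jointly Borel in $(t,\omega)$, and with the convention $\infty-\infty=-\infty$ the map $\Pb\longmapsto\E^\Pb[\Xi^t]=\sup_N\E^\Pb[(\Xi^t)_+\wedge N]-\sup_N\E^\Pb[(\Xi^t)_-\wedge N]$ is Borel. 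Since $\llbracket\Pcb_W\rrbracket$ is analytic and $J$ is Borel, the measurable selection theorem (e.g.\ \cite[Proposition 2.17]{karoui2013capacities}, see also \cite[Chapter 7]{bertsekas1978stochastic}) yields that $V_W$ is upper semi-analytic, the convention $\sup\emptyset=-\infty$ being consistent with $V_W(t,\nu)=-\infty$ when $\Pcb_W(t,\nu)=\emptyset$.
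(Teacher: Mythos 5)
Your proof is correct and follows essentially the same route as the paper's: both express $\llbracket\widehat\Pc_W\rrbracket$ as a countable intersection of Borel sets (integrability, initial/conditional-law identities tested against a countable family of bounded continuous functions, and the localized martingale constraints $\E^\Pb[S^{\varphi,m}_r\xi]=\E^\Pb[S^{\varphi,m}_s\xi]$ over rational $s\le r$), obtain $\llbracket\Pcb_W\rrbracket$ by projecting out $\nuh$, and then apply the Jankov--von Neumann measurable-selection theorem together with the Borel measurability of $(t,\Pb)\longmapsto J(t,\Pb)$ to conclude that $V_W$ is upper semi-analytic. The paper packages the conditions into the sets $K^1$, $K^{2,m}_{r,s}$, $K^3_s$, $K^4_s$ (using $\muh_{t\wedge s}$ and $\muh_{t\vee s}$ to absorb the dependence of the case split on $t$), but the content is the same as yours.
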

	\begin{proof}
	For $0\leq r \le s \leq T$, $m \ge 1$,
	$\chi \in C_b(\Omb)$,
	$\varphi \in C^2_b(\R^n \x \R^d \x \R^\ell)$, 
	$\phi \in C_b(\Omh)$, 
	$\psi \in C_b(\Cc^\ell \x C([0,T];\Pc(\Omh)))$, we define
	\[
		\xi_{r \wedge \cdot}:= \chi \big( X_{r \wedge \cdot}, A_{r \wedge \cdot}, W_{r \wedge \cdot}, B_{r \wedge \cdot}, \muh_{r \wedge \cdot} \big),
	\] 
	and the following Borel measurable subsets of $[0,T] \x \Pc(\Omh) \x \Pc(\Omb)$:
	\begin{align*}
		K^1
			&:=\bigg\{ (t,\nuh,\Pb):
				\int_t^T \Pb [\alphab_\theta \in U] \mathrm{d} \theta = T-t,
				~\E^{\P} \big[ \|X\|^p \big] < \infty,
				~\E^{\P} \bigg[\int_t^T  (\rho(u_0, \alphab_\theta))^p \mathrm{d}\theta \bigg] < \infty
			\bigg\}, \\
		K^{2,m}_{r,s}[\chi, \varphi]
			&:=\big\{ (t,\nuh,\P):
				\E^{\P}\big[\Sb_r^{\varphi, m} \xi_{r \wedge \cdot}\big]
				=
				\E^{\P}\big[\Sb_{s}^{\varphi, m} \xi_{r \wedge \cdot}\big]				
			\big\}, \\
		K^3_{s}[\phi]
			&:= \Big\{ (t,\nuh,\P):
				\E^{\P} \big[ \big|\langle \phi, \muh_{t \wedge s}\rangle
				-
				\E^{\P}\big[\phi(X_{[t \wedge s] \wedge \cdot},A_{[t \wedge s] \wedge \cdot},W,B_{[t \wedge s] \wedge \cdot})\big] \big| \big]=\E^{\P} \big[ \big|\langle \phi, \muh_t(t) \rangle
				-
				\langle \phi, \nuh(t) \rangle \big| \big]=0
			\Big\},\\
		K^4_s[\phi,\psi]
			&:=\Big\{ (t,\nuh,\P):
				\E^{\P}\big[\langle \phi, \muh_{t \vee s} \rangle \psi(B^t, \muh)\big]
				= 
				\E^{\P}\big[\phi(X_{[t \vee s] \wedge \cdot},A_{[t \vee s] \wedge \cdot},W,B_{[t \vee s] \wedge \cdot}) \psi(B^t, \muh)\big]
			\Big\}.
	\end{align*}
	The above Borel measurable sets allow to characterise the graph set $\llbracket \widehat{\Pc}_W \rrbracket$.
	Indeed, $K^1$ contains the probabilities on $\Omb$ such that the canonical element $\alphab$ takes its values in $U$ and not in $U \cup \{ \partial \}$,
	$K^{2,m}_{r,s}[\chi,\varphi]$ reduces the set $\Pc(\Omb)$ to the set of probabilities on $\Omb$ that solves a (local) martingale problem,
	while the probabilities which satisfy the `fixed point property`, i.e. the canonical process $\muh$ is equal to the conditional distribution of canonical process $(X, A, W, B)$, are contained in $K^3_s[\phi]$ and $K^4_s[\phi,\psi]$.

\medskip
	Let us consider a countable dense subset $\X$ of $\big(r, s, m, \chi, \varphi, \phi,\psi \big)$ in 
	\begin{align*}
		[0,T]^2 \x \N \x C_b(\Omb) \x C^2_b(\R^n \x \R^d \x \R^{\ell}) \x C_b(\Omh)
		\x \Cc_b(\Cc^\ell \x C([0,T];\Pc(\Omh))),
	\end{align*}
	where $0\leq r \le s\leq T$.
	By the above remarks, it is straightforward to check that
	\[
		\llbracket \widehat{\Pc}_W \rrbracket
		=
		\bigcap_{\X}
		\Big(
			K^1[h] \cap K^{2,m}_{r,s}[ \chi, \varphi]  \cap K^3_s[\phi] \cap K^4_s[\phi, \psi] \cap K^5_s[\phi, \psi]
		\Big),
	\]
	and hence it is a Borel subset of $[0,T] \x \Pc(\Omh) \x \Pc(\Omb)$. Furthermore, since $\Pc(\Omh)\ni\nuh  \longmapsto \nuh \circ (X)^{-1} \in \Pc(\Cc^n)$ is continuous, the set
	\[
		\llbracket \Pcb_W \rrbracket
	=
		\big\{
			(t, \nu, \Pb) : (t, \nuh, \Pb) \in \llbracket \widehat{\Pc}_W \rrbracket,\; \nuh \circ (X)^{-1} = \nu
		\big\},
	\]
	is an analytic subset of $[0,T] \x \Pc(\Cc^n) \x \Pc(\Omb)$. Finally, use the (analytic) measurable selection theorem 
	{\color{black} (see e.g. \citeauthor*{karoui2013capacities} \cite[Proposition 2.17]{karoui2013capacities}),}
	it follows that 
	\[
		V_W(t,\nu)
		=
		\sup_{(t,\nu,\Pb) \in \llbracket \Pcb_W \rrbracket} J(t,\Pb),
	\]
	is upper semi--analytic as desired.
\end{proof}

	We next prove a stability result w.r.t. the ``conditioning'' of $\Pch_W(t,\nuh)$.	
	\begin{lemma} \label{lemm:conditioning}
		Let $(t, \nuh) \in [0,T] \x \Pc(\Omh)$,
		$\Pb \in \Pch_W(t, \nuh)$, $\taub$ be a $\Gb^t$--stopping time taking values in $[t, T]$, and 
		$ \big( \Pb^{\Gcb^t_{\taub}}_{\omb} \big)_{\omb \in \Omb}$ be a family of {\rm r.c.p.d.} of $\Pb$ knowing $\Gcb^t_{\taub}$.
		Then
		\[
			\Pb^{\Gcb^t_{\taub}}_{\omb} \in \Pch_W \big(\taub(\omb), \muh_{\taub(\omb)}(\omb)  \big),
			~\mbox{\rm for}~
			\Pb\text{\rm--a.e.}\; \omb  \in \Omb.
		\]
	\end{lemma}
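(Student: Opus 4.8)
The plan is to fix, once and for all, a single $\Pb$-null set $N\subseteq\Omb$ and to verify directly, for every $\omb\notin N$, the three defining properties of a weak control rule in \Cref{def:weak_rule} for the probability $\Qb_{\omb}:=\Pb^{\Gcb^t_{\taub}}_{\omb}$ with initial datum $(s_0,\nuh_0)$, where $s_0:=\taub(\omb)$ and $\nuh_0:=\muh_{s_0}(\omb)$. Two structural observations will be used throughout: since $\taub$ and $\{\taub\leq s\}$ are $\Gcb^t_{\taub}$-measurable, $\Qb_{\omb}$ is carried by $\{\taub=s_0\}$; and since $\Gcb^t_{s\wedge\taub}\subseteq\Gcb^t_{\taub}$, the canonical random variable $\muh_s$ is $\Gcb^t_{\taub}$-measurable on $\{\taub\geq s\}$, hence $\Qb_{\omb}$-a.s.\ equal to the constant $\muh_s(\omb)$ for every $s\leq s_0$.

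\emph{Integrability (\Cref{def:weak_rule}(i) and the first half of (iii)).} These are routine: if $\xi\geq 0$ and $\E^{\Pb}[\xi]<\infty$ then $\E^{\Qb_{\omb}}[\xi]<\infty$ for $\Pb$-a.e.\ $\omb$; applying this to $\xi=\|X\|^p$, to $\xi=\int_t^T(\rho(u_0,\alphab_\theta))^p\mathrm{d}\theta\geq\int_{\taub}^T(\rho(u_0,\alphab_\theta))^p\mathrm{d}\theta$, and to $\xi=\mathbf 1_{\{|\Sb|_T=\infty\}}$ handles the required finiteness statements. For ``$\alphab_\theta\in U$ for Lebesgue-a.e.\ $\theta\in[\taub,T]$'', note $\Pb[\alphab_\theta\notin U]=0$ for a.e.\ $\theta\in[t,T]$; conditioning each such $\theta$ and applying Fubini shows $\Qb_{\omb}[\alphab_\theta\notin U]=0$ for a.e.\ $\theta\in[t,T]\supseteq[s_0,T]$, for $\Pb$-a.e.\ $\omb$.

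\emph{Martingale problem (\Cref{def:weak_rule}(iii)).} This will be the classical conditioning argument for (controlled) martingale problems, adapted to the fact that we condition on the common-noise $\sigma$-field $\Gcb^t_{\taub}$ rather than on $\Fcb_{\taub}$. Fix $\varphi\in C^2_b(\R^n\x\R^d\x\R^{\ell})$, $m\geq 1$, rationals $s_1\leq s_2$ in $[t,T]$, a bounded $\Fcb_{s_1}$-measurable $\eta$, and a bounded $\Gcb^t_{\taub}$-measurable $\psi$. Since $\psi\mathbf 1_{\{\taub\leq s_1\}}$ is $\Gcb^t_{s_1}$-measurable (the stopping-time $\sigma$-field property) and $\Gcb^t_{s_1}\subseteq\Fcb_{s_1}$, the product $\psi\mathbf 1_{\{\taub\leq s_1\}}\eta$ is bounded and $\Fcb_{s_1}$-measurable; as $S^{\varphi,m}=\Sb^{\varphi}_{\cdot\wedge\tau_m}$ is a bounded $(\Fb,\Pb)$-martingale, $\E^{\Pb}\big[\psi\mathbf 1_{\{\taub\leq s_1\}}\eta\,(\Sb^{\varphi}_{s_2\wedge\tau_m}-\Sb^{\varphi}_{s_1\wedge\tau_m})\big]=0$. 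Rewriting the left-hand side via the r.c.p.d.\ and letting $\psi$ range over a countable family gives $\mathbf 1_{\{\taub(\omb)\leq s_1\}}\E^{\Qb_{\omb}}\big[\eta\,(\Sb^{\varphi}_{s_2\wedge\tau_m}-\Sb^{\varphi}_{s_1\wedge\tau_m})\big]=0$ for $\Pb$-a.e.\ $\omb$. Running over a countable dense family of $(\varphi,m,s_1,s_2,\eta)$ and then using right-continuity and $\Qb_{\omb}[|\Sb|_T<\infty]=1$ (so that $\tau_m\geq T$ eventually $\Qb_{\omb}$-a.s.), we conclude that $\Sb^{\varphi}$ is an $(\Fb,\Qb_{\omb})$-local martingale on $[s_0,T]$ for all $\varphi\in C^2_b$.

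\emph{The conditional-law fixed point (\Cref{def:weak_rule}(ii)), and the main obstacle.} This is where the immersion property built into the definition of $\Pch_W$ is genuinely needed. For $s\leq s_0$ one must show $\Qb_{\omb}\circ(X_{s\wedge\cdot},A_{s\wedge\cdot},W,B_{s\wedge\cdot})^{-1}=\muh_s(\omb)$: starting from $\muh_s=\Lc^{\Pb}((X_{s\wedge\cdot},A_{s\wedge\cdot},W,B_{s\wedge\cdot})\,|\,\Gcb^t_s)=\Lc^{\Pb}((X_{s\wedge\cdot},A_{s\wedge\cdot},W,B_{s\wedge\cdot})\,|\,\Gcb^t_T)$ (\Cref{def:weak_rule}(ii) and \Cref{rem:def_weak_rule}), the $(\Gb^t,\Pb)$-martingale $u\mapsto\E^{\Pb}[\mathbf 1_{\{(X_{s\wedge\cdot},A_{s\wedge\cdot},W,B_{s\wedge\cdot})\in D\}}\,|\,\Gcb^t_u]$ is constant on $[s,T]$ for every Borel $D\subseteq\Omh$, so optional sampling at $\taub$ yields $\E^{\Pb}[\mathbf 1_{\{\cdots\in D\}}\,|\,\Gcb^t_{\taub}]=\muh_s(D)$ on $\{\taub\geq s\}$, which is the claim. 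For $s>s_0$ one must instead show $\muh_s=\Lc^{\Qb_{\omb}}((X_{s\wedge\cdot},A_{s\wedge\cdot},W,B_{s\wedge\cdot})\,|\,\Gcb^{s_0}_T)$ under $\Qb_{\omb}$; this uses that on each atom of $\Gcb^t_{\taub}$ (where $\taub\equiv s_0$ and $B^{s_0}_r=B^t_{r\vee s_0}-B^t_{s_0}$) one has $\Gcb^t_T=\Gcb^t_{\taub}\vee\Gcb^{\taub}_T$, together with $\muh_s=\Lc^{\Pb}((X_{s\wedge\cdot},\cdots)\,|\,\Gcb^t_T)$ and the elementary ``r.c.p.d.\ of an r.c.p.d.'' identity $\Lc^{\Pb}(\,\cdot\,|\,\Gcb^t_{\taub}\vee\Gcb^{\taub}_T)(\omb)=\Lc^{\Qb_{\omb}}(\,\cdot\,|\,\Gcb^{s_0}_T)$ for $\Pb$-a.e.\ $\omb$. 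Taking $s=s_0$ in the first case and projecting onto the $W_{s_0\wedge\cdot}$-coordinate recovers the initial-condition requirement $\Qb_{\omb}\circ(X_{s_0\wedge\cdot},A_{s_0\wedge\cdot},W_{s_0\wedge\cdot},B_{s_0\wedge\cdot})^{-1}=\nuh_0(s_0)$. Running the test sets $D$ and the times $s$ over countable dense families and invoking the $\Pb$-a.s.\ continuity of $\muh$ collapses everything onto one $\Pb$-null set. The delicate point — and the one I expect to be the main obstacle — is precisely this last step: reconciling the four conditionings by $\Gcb^t_s$, $\Gcb^t_{\taub}$, $\Gcb^t_T$ and $\Gcb^{\taub}_T$ for the conditional-law process $\muh$, which is exactly why the $(H)$-hypothesis-type condition \eqref{eq:H_Hypo} is imposed in the definition of a weak control (rule) in the first place.
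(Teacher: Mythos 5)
Your proof is correct, and its overall architecture mirrors the paper's: verify the three defining conditions of a weak control rule (integrability, conditional-law fixed point, local martingale problem) for the r.c.p.d.\ $\Qb_{\omb}:=\Pb^{\Gcb^t_{\taub}}_{\omb}$, collecting all exceptional sets onto a single $\Pb$-null set via countable dense families of test data and continuity. The integrability and martingale steps are essentially the same as the paper's (the paper phrases the martingale step with indicator test events $J\in\Fcb_s$, $K\in\Gcb^t_{\taub}$ and the observation $\E^{\Pb}[\Sb^{\varphi}_{\tau_m\wedge r}\mathbf 1_{J\cap K\cap\{\taub\leq s\}}]=\E^{\Pb}[\Sb^{\varphi}_{\tau_m\wedge s}\mathbf 1_{J\cap K\cap\{\taub\leq s\}}]$, which is precisely your $\psi\mathbf 1_{\{\taub\leq s_1\}}\eta$ calculation). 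Where you diverge in presentation is the conditional-law fixed point. The paper runs a single test-function identity: for $\beta\in C_b(\Omh)$, $\psi\in C_b(\Cc^\ell\x C([0,T],\Pc(\Omh)))$, $Z\in\Gcb^t_{\taub}$, it writes
\[
\E^{\Pb}\big[\langle\beta,\muh_s\rangle\psi(B^{\taub},\muh)\mathbf 1_{Z\cap\{\taub\le s\}}\big]
=
\E^{\Pb}\big[\beta(X_{s\wedge\cdot},A_{s\wedge\cdot},W,B_{s\wedge\cdot})\psi(B^{\taub},\muh)\mathbf 1_{Z\cap\{\taub\le s\}}\big],
\]
a direct consequence of $\muh_s=\Lc^{\Pb}(\cdots|\Gcb^t_T)$ (the immersion property) and the $\Gcb^t_T$-measurability of $\psi(B^{\taub},\muh)\mathbf 1_{Z\cap\{\taub\le s\}}$, then conditions on $\Gcb^t_{\taub}$ and reads off $\muh_s=\Lc^{\Qb_{\omb}}(\cdots|\Gcb^{\taub(\omb)}_T)$ for $s\ge\taub(\omb)$, handling $s\le\taub(\omb)$ ``similarly''. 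You instead split explicitly: for $s\le s_0$ you apply optional sampling at $\taub$ to the $\Gb^t$-martingale $u\mapsto\E^{\Pb}[\mathbf 1_{\{(X_{s\wedge\cdot},\cdots)\in D\}}|\Gcb^t_u]$ (constant on $[s,T]$ by immersion), and for $s>s_0$ you use the $\sigma$-field decomposition $\Gcb^t_T=\Gcb^t_{\taub}\vee\Gcb^{\taub}_T$ on atoms together with the r.c.p.d.-of-an-r.c.p.d.\ identity. Both are ultimately the tower property plus the r.c.p.d.\ pushforward applied to the immersion identity $\muh_s=\Lc^{\Pb}(\cdots|\Gcb^t_T)$; your version is slightly longer but lays bare exactly why \eqref{eq:H_Hypo} is imposed and separates cleanly which $\sigma$-field enters at each step, while the paper's one-shot test-function calculation is more compact. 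One small remark: your $\Gcb^t_T=\Gcb^t_{\taub}\vee\Gcb^{\taub}_T$ is a statement on atoms (or, equivalently, $\Qb_{\omb}$-a.s.), as you note parenthetically; it is worth keeping this qualifier since $\Gcb^{\taub}_T$ is not a fixed sub-$\sigma$-field of $\Fcb$.
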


\begin{proof}

	Let $\Pb \in \Pch_W(t, \nuh)$.
	First, it is easy to check that for $\Pb$--a.e. $\omb \in \Omb$,
	one has $\Pb^{\Gcb^t_{\taub}}_{\omb} [\alphab_s \in U]=1$, for Lebesgue--almost every $s \in [\taub(\omb),T]$, 
	and $\E^{\Pb^{\Gcb^t_{\taub}}_{\omb}} \big[ \int_{\taub(\omb)}^T \big( \rho(u_0, \alphab_s) \big)^p \mathrm{d}s \big] < \infty$.

	\vspace{0.5em}

	Next, notice that 
	for all $s\in[0,T]$, $\beta \in C_b(\Cc^n \x \Cc \x \Cc^d \x \Cc^\ell)$, $\psi \in C_b(\Cc^\ell \x C([0,T],\Pc(\Omh)))$ and $Z \in \Gcb^t_{\taub}$,
	\begin{align*}
		\E^{\Pb} \big[\langle \beta, \muh_s \rangle \psi(B^{\taub}, \muh) \mathbf{1}_{Z \cap \{ \taub \le s\} } \big]
		=
		\E^{\Pb} \big[\beta(X_{s \wedge \cdot},A_{s \wedge \cdot},W, B_{s \wedge \cdot}) \psi(B^{\taub}, \muh)\mathbf{1}_{Z \cap \{ \taub \le s\} } \big],
	\end{align*}
	so that, for $\Pb$--a.e. $\omb \in \Omb$ and any $\taub(\omb)\leq s\leq T$,
	\begin{align*}
		\E^{\Pb^{\Gcb^t_{\taub}}_{\omb}} \big[\langle \beta, \muh_s \rangle \psi(B^{\taub}_{s \wedge \cdot}, \muh_{s \wedge \cdot}) \big]
		=
		\E^{\Pb^{\Gcb^t_{\taub}}_{\omb}} \big[\beta(X_{s \wedge \cdot},A_{s \wedge \cdot},W, B_{s \wedge \cdot}) \psi(B^{\taub}_{s \wedge \cdot}, \muh_{s \wedge \cdot}) \big].
	\end{align*}
	By considering a countable dense set of maps $(\beta,\psi) \in C_b(\Cc^n \x \Cc \x \Cc^d \x \Cc^\ell) \x C_b(\Cc^\ell \x C([0,T],\Pc(\Omh)))$, 
	it follows that 
	\begin{align*}
	    \muh_s=\Lc^{\Pb^{\Gcb^t_{\taub}}_{\omb}}  \big(X_{s \wedge \cdot} , A_{s \wedge \cdot}, W, B_{s \wedge \cdot}| \Gcb^{\taub(\omb)}_T\big),\; \Pb^{\Gcb^t_{\taub}}_{\omb} \mbox{--a.s.},\; \mbox{for all}\; s \ge \taub(\omb),\; \text{for}\; \Pb\mbox{--a.e.}\; \omb \in \Omb.
	\end{align*}
	Similarly, one can prove that, for $\Pb$--a.e. $\omb$, and $s \le \taub(\omb)$,
	\begin{align*}
	    \muh_s
	    =
	    \Lc^{\Pb^{\Gcb^t_{\taub}}_{\omb}}  (X_{s \wedge \cdot} , A_{s \wedge \cdot}, W, B_{s \wedge \cdot} ),\; \Pb^{\Gcb^t_{\taub}}_{\omb} \mbox{--a.s.},
	\end{align*}
	and hence, for $\Pb$--a.e. $\omb \in \Omb$, 
	\[
		\muh_s =
			\Lc^{\Pb^{\Gcb^t_{\taub}}_{\omb}}  (X_{s \wedge \cdot} , A_{s \wedge \cdot}, W, B_{s \wedge \cdot} ) \mathbf{1}_{\{ s \in [0,\taub(\omb)] \}}
			+
			\Lc^{\Pb^{\Gcb^t_{\taub}}_{\omb}} (X_{s \wedge \cdot} , A_{s \wedge \cdot}, W,B_{s \wedge \cdot}| \Gcb^{\taub(\omb)}_T ) \mathbf{1}_{\{s \in (\taub(\omb), T] \}},
			~
			\Pb^{\Gcb^t_{\taub}}_{\omb} \mbox{--a.s.}
	\]

	Finally, it is clear that for $\Pb$--a.e. $\omb$, one has $\E^{\Pb^{\Gcb^t_{\taub}}_{\omb}} \big [\| X\|^p \big] < \infty$ and $\Pb^{\Gcb^t_{\taub}}_{\omb} \big[ |\Sb|_T < \infty \big] = 1$.
	Moreover, let $\varphi \in C_b^2(\R^n \x \R^d \x \R^\ell)$, 
	so that the localised process $S^{\varphi, m} = \Sb^{\varphi}_{\tau_m \wedge \cdot}$ is a $(\Fb,\Pb)$--martingale on $[t,T]$.
	Fix $T\geq r > s \ge t$, $J \in \Fcb_s$ and $K \in \Gcb^t_{\taub}$, we have
	\[
		\E^{\Pb} \Big[\E^{\Pb^{\Gcb^t_{\taub}}_.}\big [ \Sb^{\varphi}_{\tau_m \wedge r}  \mathbf{1}_J \big ] \mathbf{1}_{K \cap \{ \taub \le s\} } \Big]   
		=
		\E^{\Pb}\big [ \Sb^{\varphi}_{\tau_m \wedge r}  \mathbf{1}_{J \cap K \cap \{ \taub \le s\} } \big]
		=
		\E^{\Pb}\big [ \Sb^{\varphi}_{\tau_m \wedge s}  \mathbf{1}_{J \cap K \cap \{\taub \le s\} } \big] 
		=
		\E^{\Pb}\Big [\E^{\Pb^{\Gcb^t_{\taub}}_.}\big [ \Sb^{\varphi}_{\tau_m \wedge s}  \mathbf{1}_J \big ] \mathbf{1}_{K \cap \{ \taub \le s\} } \Big].
	\]
	This implies that 
	\[ 
		\E^{\Pb^{\Gcb^t_{\taub}}_{\omb}}\big [ \Sb^{\varphi}_{\tau_m \wedge r}  \mathbf{1}_J \big ] 
		=
		\E^{\Pb^{\Gcb^t_{\taub}}_{\omb}}\big [ \Sb^{\varphi}_{\tau_m \wedge s}  \mathbf{1}_J \big ],
		~\mbox{\rm for}~\Pb \mbox{\rm--a.e.}~\omb.
	\]
	By considering countably many $s, r, J$, 
	it follows that $\Sb^{\varphi}$ is a $\big( \Fb, \Pb^{\Gcb^t_{\taub}}_{\omb}\big)$--local martingale on $[\taub(\omb), T]$ for $\Pb$--a.e. $\omb \in \Omb$.
	We hence conclude the proof.
\end{proof}

	We next provide a stability result for $\Pcb_W$ under concatenation.
	For any constant $M > 0$, let us introduce
	\[
		\Pcb^M_t := \Big \{ 
			\Pb \in \Pc(\Omb)~:
			\E^{\Pb} \big[ \| X \|^p \big] +
			\E^{\Pb} \Big[ \!\! \int_t^T \!\! \big( \rho(u_0, \alphab_s) \big)^p \mathrm{d}s \Big] \le M
		\Big\},
		~
		\Pcb_W^M(t, \nu)
		:= 
		\Pcb_W(t, \nu) \cap \Pcb^M_t,
		~
		\Pch_W^M(t, \nuh) 
		:=
		\Pch_W(t, \nuh) \cap \Pcb^M_t,
	\]
	and
	\[
		V_W^M(t, \nu) 
		:=
		\sup_{\Pb \in \Pcb_W^M(t, \nu)} J(t, \Pb).
	\]
	Notice that $V_W^M(t, \nu) \nearrow V_W(t, \nu)$ as $M \nearrow \infty$. 
	Moreover, as in \Cref{graph:weak-formulation}, the graph set 
	\begin{equation} \label{eq:analytic_PCM}
		\big\{ (t, \nu, M, \Pb) ~: \Pb \in \Pcb^M_W(t, \nu) \big\}
		~\mbox{is analytic, and}~
		(t, \nu, M) \longmapsto V^M_W(t,\nu) \in \R \cup \{- \infty,\infty \}
		~\mbox{is upper semi--analytic.}
	\end{equation}

	\begin{lemma} \label{lemm:equiv_nuh_nu}
		Let $t \in [0,T],$ $\nuh_1, \nuh_2 \in \Pc(\Omh)$ and $\nu \in \Pc(\Cc^n)$ be such that
		$\nuh_1 \circ X^{-1}_{t \wedge \cdot} = \nuh_2 \circ X^{-1}_{t \wedge \cdot} = \nu(t)$.
		Then for all $\Pb_1 \in \Pch_W(t, \nuh_1)$, there exists $\Pb_2 \in \Pch_W(t, \nuh_2)$ satisfying
		\[
			\Pb_1 \circ  \big(X, A^t, W^t, B^t\big)^{-1}
			=
			\Pb_2 \circ  \big(X, A^t, W^t, B^t\big)^{-1},
		\]
		where $A^t_{\cdot}:=A_{\cdot \vee t}-A_t,$ so that $J(t, \Pb_1) = J(t, \Pb_2)$.
		Consequently, one has
		\[
			V_W(t, \nu) = \sup_{\Pb \in \Pch_W(t, \nuh_1)} J(t, \Pb),
			\;\mbox{\rm and}\;
			V_W^M(t, \nu) = \sup_{\Pb \in \Pch_W^M(t, \nuh_1)} J(t, \Pb).
		\]
	\end{lemma}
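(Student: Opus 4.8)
\emph{Strategy.} The set $\Pch_W(t,\nuh_i)$ depends on $\nuh_i$ only through the law $\nuh_i(t)$ of the truncated canonical process $(X_{t\wedge\cdot},A_{t\wedge\cdot},W_{t\wedge\cdot},B_{t\wedge\cdot})$ (this is the sole way $\nuh$ enters Definitions \ref{def:weak_rule}--\ref{def:PS}, through the identity $\Pb\circ(X_{t\wedge\cdot},A_{t\wedge\cdot},W_{t\wedge\cdot},B_{t\wedge\cdot})^{-1}=\nuh(t)$), and by hypothesis $\nuh_1(t)$ and $\nuh_2(t)$ share the same $X_{t\wedge\cdot}$-marginal $\nu(t)$. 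The plan is to transplant a given $\Pb_1\in\Pch_W(t,\nuh_1)$ onto $\nuh_2$ by re-sampling \emph{only} the pre-$t$ parts of the $A$-, $W$- and $B$-coordinates, leaving the whole state path $X$, the post-$t$ increments $(A^t,W^t,B^t)$, and hence the control $\alphab$ on $[t,T]$, untouched. The crucial input is Remark \ref{rem:def_weak_rule}: under $\Pb_1$ the block $\sigma(X_{t\wedge\cdot},A_{t\wedge\cdot},W,B_{t\wedge\cdot})$ is independent of the common-noise $\sigma$-field $\Gcb^t_T=\sigma\{(B^t_r,\muh_r):r\le T\}$. Combined with the facts that $\bar b,\bar a$ do not depend on their $W$- and $B$-arguments, that $\alphab_s$ for $s\in(t,T]$ is a function of $A^t$, and that the martingale problem is posed only on $[t,T]$ (so that $W_t$, $B_t$, $\int_0^t\Lc_r\varphi\,\mathrm dr$ and $|\Sb|_t$ enter $\Sb^\varphi$ restricted to $[t,T]$ only as $\Fcb_t$-measurable shifts, which can be absorbed into the test functions), the pre-$t$ values of $W$ and $B$ influence neither the dynamics nor $J(t,\cdot)$.

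\emph{Construction.} Disintegrate $\nuh_2(t)$ over its $\Cc^n_{0,t}$-marginal $\nu(t)$ and parametrise the resulting Polish-valued kernel by one uniform variable, as in the proof of Lemma \ref{lemm:equality_Markov}: fix a Borel map $\kappa:\Cc^n_{0,t}\times[0,1]\longrightarrow\Cc_{0,t}\times\Cc^d_{0,t}\times\Cc^\ell_{0,t}$ with $\big(\chi,\kappa(\chi,\eta)\big)\sim\nuh_2(t)$ when $\chi\sim\nu(t)$ and $\eta\sim\mathrm U([0,1])$ are independent. Work on $\big(\Omb\times[0,1],\Fcb\otimes\Bc([0,1]),\Pb_1\otimes\lambda\big)$ with $\lambda$ Lebesgue measure and $\eta$ the extra coordinate, write $(\kappa^A,\kappa^W,\kappa^B):=\kappa(X_{t\wedge\cdot},\eta)$, and define an $\Omb$-valued map $\Theta=(X,A^\sharp,W^\sharp,B^\sharp,\muh^\sharp)$ by keeping $X$, by gluing
\[
 A^\sharp_s:=\kappa^A_{s\wedge t}+(A_{s\vee t}-A_t),\quad W^\sharp_s:=\kappa^W_{s\wedge t}+(W_{s\vee t}-W_t),\quad B^\sharp_s:=\kappa^B_{s\wedge t}+(B_{s\vee t}-B_t),\ s\in[0,T],
\]
and by letting $\muh^\sharp_s$ be the $(\Pb_1\otimes\lambda)$-law of $(X_{s\wedge\cdot},A^\sharp_{s\wedge\cdot},W^\sharp,B^\sharp_{s\wedge\cdot})$ for $s\le t$ and a continuous version of $\Lc^{\Pb_1\otimes\lambda}\big((X_{s\wedge\cdot},A^\sharp_{s\wedge\cdot},W^\sharp,B^\sharp_{s\wedge\cdot})\,\big|\,\sigma\{(B^t_r,\muh^\sharp_r):r\le s\}\big)$ for $s\in(t,T]$, the existence and continuity being obtained as in Lemma \ref{lemm:Cond_Law} and Remark \ref{rem:def_weak_rule}. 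Set $\Pb_2:=(\Pb_1\otimes\lambda)\circ\Theta^{-1}$. Since $(A^\sharp)^t=A^t$, $(W^\sharp)^t=W^t$ and $(B^\sharp)^t=B^t$, the map $\Theta$ is the identity on $(X,A^t,W^t,B^t)$, hence $\Pb_2\circ(X,A^t,W^t,B^t)^{-1}=\Pb_1\circ(X,A^t,W^t,B^t)^{-1}$; moreover $(X_{t\wedge\cdot},A^\sharp_{t\wedge\cdot},W^\sharp_{t\wedge\cdot},B^\sharp_{t\wedge\cdot})\sim\nuh_2(t)$ and, by the independence recalled above together with $\eta\perp\Omb$, this truncated tuple is $\Pb_2$-independent of $\sigma\{(B^t_r,\muh^\sharp_r):r\le T\}$.

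\emph{Verification and conclusion.} It remains to see $\Pb_2\in\Pch_W(t,\nuh_2)$ with $J(t,\Pb_2)=J(t,\Pb_1)$. The conditions of Definition \ref{def:weak_rule}$(i)$, $\E^{\Pb_2}[\|X\|^p]<\infty$ and $\Pb_2[|\Sb|_T<\infty]=1$ are inherited, since the quantities involved depend only on $X$, $A^t$ and on pre-$t$ data whose law is pinned by $\nuh_2(t)$ (recall $\bar b,\bar a$ act on the pre-$t$ window through $X_{\cdot\wedge t}$ and $\alphab|_{[0,t]}$ only). The two substantive points are: $(a)$ the fixed-point identity \eqref{eq:property_muh} for $\muh^\sharp$ under $\Pb_2$ — here one uses that the re-sampled pre-$t$ triple is conditionally independent of the common-noise $\sigma$-field given $X_{t\wedge\cdot}$, and that $\muh_s$ and $\muh^\sharp_s$ differ only by a deterministic change of their pre-$t$ $(A,W,B)$-marginal, hence generate the same conditioning and carry the same $\Xh$- and $\alphah$-marginals, which in turn forces $\mu^\sharp,\mub^\sharp$ to be distributed as $\mu,\mub$ under $\Theta$ and thus $J(t,\Pb_2)=J(t,\Pb_1)$; and $(b)$ the local-martingale property of $\Sb^\varphi$ on $[t,T]$ under $\Pb_2$ for all $\varphi\in C^2_b(\R^n\times\R^d\times\R^\ell)$, which holds because, modulo the $\Fcb_t$-measurable shifts absorbed into $\varphi$, the restriction of $\Sb^\varphi$ to $[t,T]$ has the same conditional law given $\Fcb_t$ as under $\Pb_1$. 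Finally, applying the transplant in both directions shows that $J(t,\cdot)$ is constant on the family of $\Pch_W(t,\nuh)$ with a common $\nuh(t)$-trace; since $\Pch_W$ depends only on that trace, $\Pcb_W(t,\nu)=\bigcup\{\Pch_W(t,\nuh):\nuh\circ\Xh^{-1}_{t\wedge\cdot}=\nu(t)\}$, so combining with $V_W(t,\nu)=\sup_{\Pb\in\Pcb_W(t,\nu)}J(t,\Pb)$ from \eqref{eq:def_JP} gives $V_W(t,\nu)=\sup_{\Pb\in\Pch_W(t,\nuh_1)}J(t,\Pb)$; and since $\Theta$ alters neither $X$ nor $A^t$, it preserves membership in $\Pcb^M_t$, yielding the corresponding identity for $V_W^M$. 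I expect the main obstacle to be point $(a)$, namely re-establishing the self-referential fixed-point property of the measure coordinate after the pre-$t$ coordinates have been reshuffled; the cleanest route is to isolate a common-noise ``core'' $\sigma$-field generated by $B^t$ and by the part of $\muh$ untouched by the re-sampling, to check that the new pre-$t$ triple is independent of it given $X_{t\wedge\cdot}$, and then to compute all conditional laws block by block.
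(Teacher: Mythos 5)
Your overall strategy is precisely the one the paper has in mind: the paper states that the proof of Lemma \ref{lemm:equiv_nuh_nu} ``is almost the same as that of Lemma \ref{lemm:equality_Markov}'', and that proof re-samples exactly the pre-$t$ part of the relevant coordinate by a Borel kernel $\psi(\cdot,\eta)$ parametrised by an independent uniform variable $\eta$, while keeping the post-$t$ increments and the common-noise filtration untouched. You correctly identify that here the coordinates to be re-sampled are $(A_{t\wedge\cdot},W_{t\wedge\cdot},B_{t\wedge\cdot})$ conditionally on $X_{t\wedge\cdot}$, that the whole state path $X$ and all post-$t$ increments $(A^t,W^t,B^t)$ are kept, and that the independence in Remark \ref{rem:def_weak_rule} is what makes the re-sampled block compatible with the common-noise field; the argument that $\mub$ and $\mu$ are invariant (because the $\Xh$- and $\alphah$-marginals of $\muh_s$ are untouched for $s\ge t$) is also the right one.

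The one genuine gap is the definition of $\muh^\sharp$: as written it is circular, since for $s\in(t,T]$ you condition on $\sigma\{(B^t_r,\muh^\sharp_r):r\le s\}$ while this $\sigma$-field is itself built from the object being defined; there is no inductive or limiting procedure that makes this well-posed. You acknowledge the issue in your point $(a)$, but the cleanest fix is not sketched. Two standard routes close it. \emph{(1)} Define $\muh^\sharp_s:=\Lc^{\Pb_1\otimes\lambda}\bigl((X_{s\wedge\cdot},A^\sharp_{s\wedge\cdot},W^\sharp,B^\sharp_{s\wedge\cdot})\,\big|\,\Gcb^t_T\bigr)$ for $s>t$, with $\Gcb^t_T=\sigma\{(B^t_r,\muh_r):r\le T\}$ the common-noise field of $\Pb_1$ (so no circularity), then check that up to $\Pb_1\otimes\lambda$-null sets $\sigma\{(B^t_r,\muh^\sharp_r):r\le T\}=\Gcb^t_T$; this follows because $\muh_r\mapsto\muh^\sharp_r$ is, given $(B^t_u)_{u\le r}$, a deterministic and invertible map — to recover $\muh_r$ from $\muh^\sharp_r$ one marginalises onto $(X_{r\wedge\cdot},A^t_{r\wedge\cdot},W^t,B^t_{r\wedge\cdot})$, which is unchanged, and re-glues the pre-$t$ disintegration of $\nuh_1(t)$ over $\nu(t)$ in place of that of $\nuh_2(t)$ — and then the required fixed-point identity for $\muh^\sharp$ follows by tower property. \emph{(2)} Mirror the paper more literally: by Lemma \ref{lemma:equivalence}$(ii)$ pass to a weak control term $\gamma_1$ with $\Pb^{\gamma_1}=\Pb_1$, enlarge the probability space with the uniform $\eta$, re-sample the pre-$t$ paths of $(W^{\gamma},B^{\gamma})$ (and redefine $A$ accordingly) while keeping $X^\gamma,\alpha^\gamma$ and the filtration $\G^\gamma$, and set $\Pb_2:=\Pb^{\gamma_2}$; here $\muh^{\gamma_2}$ is defined unambiguously from $\Gc^{\gamma_2}=\Gc^{\gamma_1}$ via \eqref{eq:def_muh}, so the circularity never arises. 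Either route is a small but necessary completion of your argument; everything else you write is correct and in the spirit of the paper.
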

	The proof is almost the same as that of \Cref{lemm:equality_Markov}, and hence it is omitted.

	\begin{lemma} \label{lemm:concatenation}
		Let $(t, \nu) \in [0,T] \x \Pc(\Cc^n)$, $\Pb \in \Pcb_W(t, \nu)$,
		$\taub$ be a $\Gb^t$--stopping time taking values in $[t, T]$,
		$\eps > 0$.
		Then there exists a family of probability measures $(\Qb_{t, \nuh, M}^{\eps})_{(t, \nuh, M) \in [0,T] \x \Pc(\Omh) \x \R_+}$ such that $(t, \nuh, M) \longmapsto \Qb_{t, \nuh, M}^{\eps}$ is universally measurable,
		and for every $(t,\nuh, M)$ s.t. $ \Pch^M_W(t, \nuh) \neq \emptyset$, one has
		\begin{equation} \label{eq:Q_eps_select}
			\Qb^{\eps}_{t, \nuh, M} \in \Pch^M_W(t, \nuh)
			\; \mbox{\rm and}\; 
			J \big(t, \Qb_{t, \nuh, M}^{\eps} \big) \ge 
			\begin{cases}
				V^M_W(t, \nu) - \eps,\; \mbox{\rm when}\; V^M_W(t,\nu) < \infty,\\
				1/\eps,\; \mbox{\rm when}\; V^M_W(t, \nu) = \infty,
			\end{cases}
			\mbox{with}~\nu := \nuh \circ \Xh^{-1}.
		\end{equation}
		Moreover, there exists a $\Pb$--integrable, $\Gcb^t_{\taub}$--measurable r.v. $\widehat M: \Omb \to \R_+$ such that
		for all constant $M \ge 0$, one can find a probability measure $\Pb^{M, \eps} \in \Pcb_W(t, \nu)$ satisfying 
		$\Pb^{M, \eps}|_{\Fcb_{\taub}} = \Pb|_{\Fcb_{\taub}}$ and
		\[
			\big( \Qb^{\eps}_{\taub(\omb), \muh_{\taub(\omb)} (\omb), M+ \widehat M(\omb)} \big)_{\omb \in \Omb}
			~\mbox{\rm is a version of {\rm r.c.p.d.} of}~ 
			\Pb^{M, \eps}
			~\mbox{\rm knowing}~
			\Gcb^t_{\taub}.
		\]
	\end{lemma}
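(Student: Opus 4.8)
The plan is to combine a measurable selection step, producing the universally measurable family $(\Qb^{\eps}_{t,\nuh,M})$, with a conditioning/concatenation step gluing $\Pb$ to that family along $\taub$. \textbf{Selection.} By the arguments of \Cref{graph:weak-formulation} together with \eqref{eq:analytic_PCM}, the graph $\{(t,\nuh,M,\Pb):\Pb\in\Pch^M_W(t,\nuh)\}$ is a Borel, hence analytic, subset of $[0,T]\x\Pc(\Omh)\x\R_+\x\Pc(\Omb)$, and $(t,\Pb)\longmapsto J(t,\Pb)$ is Borel, hence upper semi--analytic; moreover, by \Cref{lemm:equiv_nuh_nu}, $\sup_{\Pb\in\Pch^M_W(t,\nuh)}J(t,\Pb)=V^M_W(t,\nuh\circ\Xh^{-1})$, which is upper semi--analytic. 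Applying the Jankov--von Neumann selection theorem for upper semi--analytic functions in the version that handles infinite suprema (e.g.\ \cite[Proposition 7.50]{bertsekas1978stochastic}, or \cite[Proposition 2.17]{karoui2013capacities}) then yields a universally measurable map $(t,\nuh,M)\longmapsto\Qb^{\eps}_{t,\nuh,M}$, defined on $\{(t,\nuh,M):\Pch^M_W(t,\nuh)\neq\emptyset\}$ and extended arbitrarily elsewhere, with $\Qb^{\eps}_{t,\nuh,M}\in\Pch^M_W(t,\nuh)$ and satisfying the two--sided estimate of \eqref{eq:Q_eps_select} (the case $V^M_W=\infty$ being absorbed by this form of the theorem).

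\textbf{Conditioning.} Let $(\Pb^{\Gcb^t_{\taub}}_{\omb})_{\omb}$ be a family of r.c.p.d.\ of $\Pb$ knowing $\Gcb^t_{\taub}$. By \Cref{lemm:conditioning}, $\Pb^{\Gcb^t_{\taub}}_{\omb}\in\Pch_W\big(\taub(\omb),\muh_{\taub(\omb)}(\omb)\big)$ for $\Pb$--a.e.\ $\omb$; on this set we put $\widehat M(\omb):=\E^{\Pb^{\Gcb^t_{\taub}}_{\omb}}\big[\|X\|^p\big]+\E^{\Pb^{\Gcb^t_{\taub}}_{\omb}}\big[\int_{\taub(\omb)}^T(\rho(u_0,\alphab_s))^p\,\mathrm{d}s\big]$ and $\widehat M:=0$ elsewhere. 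Since $\omb\longmapsto\Pb^{\Gcb^t_{\taub}}_{\omb}$, $\taub$ and $(\muh_{s\wedge\taub})_{s}$ are $\Gcb^t_{\taub}$--measurable, $\widehat M$ is $\Gcb^t_{\taub}$--measurable, while $\E^{\Pb}[\widehat M]\le\E^{\Pb}[\|X\|^p]+\E^{\Pb}[\int_t^T(\rho(u_0,\alphab_s))^p\,\mathrm{d}s]<\infty$, so $\widehat M$ is $\Pb$--integrable. By construction $\Pb^{\Gcb^t_{\taub}}_{\omb}\in\Pch^{\widehat M(\omb)}_W(\taub(\omb),\muh_{\taub(\omb)}(\omb))$, hence $\Pch^{M+\widehat M(\omb)}_W(\taub(\omb),\muh_{\taub(\omb)}(\omb))\neq\emptyset$ for every $M\ge0$, and $\Qb^{\eps}_{\taub(\omb),\muh_{\taub(\omb)}(\omb),M+\widehat M(\omb)}$ is well defined and lies in this set. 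The map $\omb\longmapsto\Qb^{\eps}_{\taub(\omb),\muh_{\taub(\omb)}(\omb),M+\widehat M(\omb)}$ is $\Gcb^t_{\taub}$--measurable; moreover, unwinding \Cref{def:weak_rule} and \Cref{rem:def_weak_rule}, a weak control rule started at $(\taub(\omb),\muh_{\taub(\omb)}(\omb))$ freezes both the common--noise increments and the path $(\muh_{r})_{r\le\taub(\omb)}$ to the values read off $\muh_{\taub(\omb)}(\omb)$ and $\taub(\omb)$, hence to those of $\omb$, so this rule is carried by $[\omb]_{\Gcb^t_{\taub}}$.

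\textbf{Concatenation.} We then let $\Pb^{M,\eps}$ be the probability on $(\Omb,\Fcb)$ with $\Pb^{M,\eps}|_{\Gcb^t_{\taub}}=\Pb|_{\Gcb^t_{\taub}}$ whose r.c.p.d.\ knowing $\Gcb^t_{\taub}$ is $\omb\longmapsto\Qb^{\eps}_{\taub(\omb),\muh_{\taub(\omb)}(\omb),M+\widehat M(\omb)}$; this is well posed by the previous paragraph. The identity $\Pb^{M,\eps}|_{\Fcb_{\taub}}=\Pb|_{\Fcb_{\taub}}$ follows because, by condition $(ii)$ of \Cref{def:weak_rule} and \eqref{eq:property_muh}, the $\Fcb_{\taub(\omb)}$--marginal of a rule started at $(\taub(\omb),\muh_{\taub(\omb)}(\omb))$ coincides with that of $\muh_{\taub(\omb)}(\omb)$, hence with that of $\Pb^{\Gcb^t_{\taub}}_{\omb}$; averaging over $\omb\sim\Pb$ gives the claim. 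Finally one checks $\Pb^{M,\eps}\in\Pcb_W(t,\nu)$: writing $\Pb\in\Pch_W(t,\nuh_0)$, the integrability requirements of \Cref{def:weak_rule} follow by integrating the $(M+\widehat M)$--bounds of the pieces against $\Pb$, using $\E^{\Pb}[\widehat M]<\infty$ and $\Pb^{M,\eps}|_{\Fcb_{\taub}}=\Pb|_{\Fcb_{\taub}}$ for the cost on $[t,\taub]$; the local martingale property of $\Sb^{\varphi}$ on $[t,T]$ is obtained by the standard pasting argument for (localized) martingale problems (on $[t,\taub]$ from $\Pb$, on $[\taub,T]$ from the $\Qb^{\eps}$'s via the tower property over $\Gcb^t_{\taub}$, cf.\ \cite{stroock2007multidimensional}); and the ``fixed point'' identity \eqref{eq:property_muh} is recovered by combining that of $\Pb$ on $[0,\taub]$ with those of the $\Qb^{\eps}$'s on $[\taub,T]$, using that under $\Pb$ the $\sigma$--field $\Fcb_t\vee\sigma(W)$ is independent of $\Gcb^t_T$, so that conditioning successively given $\Gcb^t_{\taub}$ and then along the common noise after $\taub$ amounts to conditioning given $\Gcb^t_T$.

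\textbf{Main obstacle.} The delicate point is this last verification: $\muh$ is required to be the conditional law given $\Gcb^t_T$ of the \emph{entire} path $(X,A,W,B)$, including its behaviour \emph{before} $\taub$, whereas in the restarted rules $\Qb^{\eps}$ the pre-$\taub$ behaviour has been re-encoded as the (non--degenerate) initial law $\muh_{\taub(\omb)}(\omb)$. Checking that gluing the rules $\Qb^{\eps}_{\taub(\omb),\muh_{\taub(\omb)}(\omb),\cdot}$ back together along $\taub$ reproduces a genuine weak control rule at $(t,\nuh_0)$ satisfying \eqref{eq:property_muh} is precisely what the extended common--noise filtration $\Gb^t$ and the compatibility statements of \Cref{rem:def_weak_rule} are designed to make possible, and it is where the two--filtration structure of the problem genuinely enters; the remaining items (martingale pasting, the integrability bookkeeping, and the measurability of the selection through the $V^M_W=\infty$ regime) are comparatively routine.
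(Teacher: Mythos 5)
Your proposal follows essentially the same route as the paper's proof: a measurable selection step based on the analyticity of the graph $\{(t,\nuh,M,\Pb):\Pb\in\Pch^M_W(t,\nuh)\}$ and \Cref{lemm:equiv_nuh_nu}, then a conditioning step via \Cref{lemm:conditioning} to define $\widehat M$, then pasting $\Pb$ with $\big(\Qb^\eps_{\taub(\omb),\muh_{\taub(\omb)}(\omb),M+\widehat M(\omb)}\big)_\omb$ and verifying the weak--control--rule properties (integrability, local martingale by localization, and the fixed--point identity for $\muh$). You also correctly isolate the genuinely delicate point — showing that the glued measure still satisfies \eqref{eq:property_muh} for the full path given $\Gcb^t_T$ — which the paper settles by a direct test--function computation together with the observations \eqref{eq:loi_Xmu}--\eqref{fix_value-B} that the restarted rules are carried by $[\omb]_{\Gcb^t_\taub}$.
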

	\begin{proof}
	The existence of the family of probability measures $((\Qb_{t, \nuh, M}^{\eps})_{(t, \nuh, M) \in [0,T] \x \Pc(\Omh)\x \R_+})$ satisfying \eqref{eq:Q_eps_select} follows by \eqref{eq:analytic_PCM} and \Cref{lemm:equiv_nuh_nu}, together with the measurable selection theorem {\color{black} (see e.g. \cite[Proposition 2.21]{karoui2013capacities})}.
	
	\medskip

	With $\Pb \in \Pcb_W(t, \nu)$, we consider a family of r.c.p.d. $(\Pb_{\omb})_{\omb \in \Omb}$ of $\Pb$ knowing $\Gcb^t_{\taub}$, and define
	\[
		\widehat M(\omb) := \E^{\Pb_{\omb}} \bigg[ \| X\|^p + \int_{\taub}^T \big( \rho(\alphab_s, u_0) \big)^p \mathrm{d}s \bigg],
	\]
	so that $\Pb_{\omb} \in  \Pch^{\widehat M(\omb)}_W \big(\taub(\omb), \muh_{\taub(\omb)}(\omb) \big)$ for $\Pb$--a.e. $\omb$, by Lemma \ref{lemm:conditioning}.
	In particular, $\Pch^{\widehat M(\omb)}_W \big(\taub(\omb), \muh_{\taub(\omb)}(\omb) \big)$ is nonempty for $\Pb$--a.e. $\omb \in \Omb$.
	For a fixed constant $M \ge 0$, let
	\[
		\Qb^{\eps}_{\omb} := \Qb^{\eps}_{\taub(\omb), \muh_{\taub(\omb)}(\omb), \widehat M(\omb)+M}.
	\]
	Notice that, for $\Pb$--a.e. $\omb \in \Omb$,
	\[
		\muh_{\taub(\omb)}( \omb)
		=
		\Pb_{\omb} \circ (X_{\taub(\omb) \wedge \cdot},A_{\taub(\omb) \wedge \cdot}, W,  B_{\taub(\omb) \wedge \cdot})^{-1}
		=
		\Qb^{\eps}_{\omb} \circ (X_{\taub(\omb) \wedge \cdot},A_{\taub(\omb) \wedge \cdot}, W,  B_{\taub(\omb) \wedge \cdot})^{-1},
	\]
	then
	\begin{align} \label{eq:loi_Xmu}
		\Lc^{\Qb^\eps_{\omb}} \big(X_{\taub(\omb) \wedge \cdot},A_{\taub(\omb) \wedge \cdot},W_{\taub(\omb) \wedge \cdot},B_{\taub(\omb) \wedge \cdot},\muh_{\taub(\omb) \wedge \cdot} \big)
		&=
		\Lc^{\Qb^\eps_{\omb}} \big(X_{\taub(\omb) \wedge \cdot},A_{\taub(\omb) \wedge \cdot},W_{\taub(\omb) \wedge \cdot},B_{\taub(\omb) \wedge \cdot} \big) 
		\otimes \Lc^{\Qb^\eps_{\omb}} \big(\muh_{\taub(\omb) \wedge \cdot} \big)
		\nonumber \\
		&=
		\Lc^{\Pb_{\omb}} \big(X_{\taub(\omb) \wedge \cdot},A_{\taub(\omb) \wedge \cdot},W_{\taub(\omb) \wedge \cdot},B_{\taub(\omb) \wedge \cdot},\muh_{\taub(\omb) \wedge \cdot} \big).
	\end{align}
	In particular, one has
	\begin{equation} \label{fix_value-B}
	    \Qb^\eps_{\omb} \big[B^t_{\taub(\omb) \wedge \cdot}=(\omb^b)^t_{\taub(\omb) \wedge \cdot},\;\muh_{\taub(\omb) \wedge \cdot}=\omb^{\hat \nu}_{\taub(\omb) \wedge \cdot} \big]=1,
	    \; \mbox{for}\; \Pb \mbox{--a.e.}\; \omb = (\omb^x, \omb^a, \omb^w, \omb^b, \omb^{\hat \nu}) \in \Omb.
	\end{equation}
	
	
	Let us then define  a probability measure $\Pb^{M, \eps}$ on $\Omb$ by
	\begin{align*}
		\Pb^{M, \eps} [K]
		:=
		\int_{\Omb} \Qb_{\omb}(K) \Pb(\mathrm{d}\omb),
		~\mbox{\rm for all}~
		K \in \Fcb.
	\end{align*}
	By \eqref{eq:loi_Xmu}, one has $\Pb^{M, \eps} = \Pb$ on $\Fcb_{\taub}$, and moreover, $(\Qb^{\eps}_{\omb})_{\omb \in \Omb}$ is a family of r.c.p.d. of $\Pb^{M, \eps}$ knowing $\Gcb^t_{\taub}$.
	To conclude the proof, it is enough to check that $\Pb^{M, \eps} \in \Pcb_W(t, \nu)$.
	
	\medskip
	First, it is clear that $\Pb^{M,\eps}[\alphab_s \in U]=1$, for Lebesgue--almost every $s \in [t,T]$, and
	\[
		\E^{\Pb^{M, \eps}} \bigg[ \int_t^T \big( \rho(u_0, \alphab_s) \big)^p \mathrm{d}s \bigg]
		\le
		\E^{\Pb} \bigg[ \int_t^{\taub} \big( \rho(u_0, \alphab_s) \big)^p \mathrm{d}s \bigg]
		+
		\E^{\Pb} \big[ \widehat M \big]
		< \infty. 
	\]

	Next, for each $\beta \in C_b(\Omh)$, $\psi \in C_b(\Cc^{\ell} \x \Pc(\Omh))$, $h \in C_b(\Cc^{\ell})$ and $s \in [t,T]$, one has
	\begin{align*}
		&\ 
		\E^{\Pb^{M, \eps}}\big[ \beta(X_{s \wedge \cdot},A_{s \wedge \cdot},W,B_{s \wedge \cdot})\psi(B^{\taub},\muh) h(B^t_{\taub \wedge \cdot}) \big]
		=
		\int_{\omb} \E^{\Qb^\eps_{\omb}}\big[\beta(X_{s \wedge \cdot},A_{s \wedge \cdot},W,B_{s \wedge \cdot})\psi(B^{\taub},\muh) h(B^t_{\taub \wedge \cdot})\big] \Pb(\mathrm{d}\omb)
		\\
		=&\ 
		\int_{\omb} \E^{\Qb^\eps_{\omb}}\big[\beta(X_{s \wedge \cdot},A_{s \wedge \cdot},W,B_{s \wedge \cdot})\psi(B^{\taub},\muh) \big] h(B^t_{\taub(\omb) \wedge \cdot}(\omb)) \Pb(\mathrm{d}\omb) \\
		=&\
		\int_{\omb} \E^{\Qb^{\eps}_{\omb}}\big[\E^{\Qb^\eps_{\omb}}\big[\beta(X_{s \wedge \cdot},A_{s \wedge \cdot},W,B_{s \wedge \cdot}) \big|\Gcb^{\taub(\omb)}_{T} \big]\psi(B^{\taub},\muh) \big] h(B^t_{\taub(\omb) \wedge \cdot}(\omb)) \Pb(\mathrm{d}\omb)
		\\
		=&\
		\int_{\omb} \E^{\Qb^\eps_{\omb}}\big[\langle \beta, \muh_s \rangle \psi(B^{\taub},\muh) \big] h(B^t_{\taub(\omb) \wedge \cdot} (\omb))) \Pb(\mathrm{d}\omb)\\
		=&\
		\int_{\omb} \E^{\Qb^\eps_{\omb}}\big[\langle \beta, \muh_s \rangle \psi(B^{\taub},\muh) h(B^t_{\taub \wedge \cdot}))\big] \Pb(\mathrm{d}\omb)
		\\
		= &\
		\E^{\Pb^{M, \eps}}\big[\langle \beta,\muh_s \rangle \psi(B^{\taub},\muh) h(B^t_{\taub \wedge \cdot})\big],
	\end{align*}
	where the second and fifth equalities are due to \Cref{fix_value-B}, 
	and the fourth follows by the fact that $\Qb^\eps_{\omb} \in \Pch_W(\taub(\omb), \muh_{\taub(\omb)}(\omb))$.
	Notice that $B^t_u = B^t_{\taub \wedge u} + B^{\taub}_u$ for any $u \in [t,T]$, the above equality implies that
	\[
		\muh_s = \Lc^{\Pb^{M, \eps}} \big(X_{s \wedge \cdot}, A_{s \wedge \cdot}, W, B_{s \wedge \cdot} \big| \Gcb^t_T \big),
		\; \Pb^{M, \eps}\mbox{\rm--a.s.}
	\]
Finally, we easily check that $\Pb^{M, \eps}[ |\Sb|_T < \infty] = 1$ and $\E^{\Pb^{M, \eps}}[ \|X\|^p] \le \E^{\Pb}[ \|X\|^p] + M < \infty$.
	For a fixed test function $ \varphi \in C^2_b(\R^{n+d+\ell})$, we consider the localised stopping times
	$\tau_m$ defined in \eqref{eq:def_tau_m} and $\tau^{\omb}_k(\omb')  :=  \taub(\omb) \vee \tau_k(\omb')$ for each $\omb \in \Omb$.
	We know that $\tau^{\omb}_k \le \tau^{\omb}_{k+1}$, for any $k\in\N$, that $\tau^{\omb}_k \underset{k\rightarrow\infty}{\longrightarrow} \infty$, and that $(\Sb^{\varphi}_{s \wedge \tau^{\omb}_k})_{s \in [\tau(\omb),T]}$ is an $(\Fb, \Qb^\eps_{\omb})$--martingale for all $k \in \N$. 
	Notice that for all $s \in [t,T]$ and $A \in \Fcb_s$, the map
	\[
		\omb \longmapsto \E^{\Qb^\eps_{\omb}}\Big[\Sb^{\varphi}_{s \wedge \tau_m \wedge \tau_k^{\omb}} \mathbf{1}_A \mathbf{1}_{s > \taub(\omb)} \Big]
		~\mbox{is}~\Gcb^t_{\taub} \mbox{--measurable}.
	\]
	Then for $s \le r \le T,$
	\begin{align*}
		&\
		\E^{\Pb^{M, \eps}}\big[\Sb^{\varphi}_{s \wedge \tau_m} \mathbf{1}_A\big]
		= 
		\E^{\Pb^{M, \eps}}\big[\Sb^{\varphi}_{s \wedge \tau_m} \mathbf{1}_A \mathbf{1}_{s \le \taub}\big]
		+
		\E^{\Pb^{M, \eps}}\big[\Sb^{\varphi}_{s \wedge \tau_m} \mathbf{1}_A \mathbf{1}_{s > \taub}\big]
		\\
		=&\
		\E^{\Pb}[\Sb^{\varphi}_{s \wedge \tau_m} \mathbf{1}_A \mathbf{1}_{s \le \taub}]
		+
		\lim_{k\rightarrow\infty}
		\int_{\Omb}\E^{\Qb^\eps_{\omb}}\big[\Sb^{\varphi}_{s \wedge \tau_m \wedge \tau_k^{\omb}} \mathbf{1}_A \mathbf{1}_{s > \taub(\omb)}\big] \Pb(d\omb) 
		=
		\E^{\Pb}\big[\Sb^{\varphi}_{s \wedge \tau_m} \mathbf{1}_A \mathbf{1}_{s \le \taub}\big]
		+
		\E^{\Pb^{M, \eps}}\big[\Sb^{\varphi}_{r \wedge \tau_m} \mathbf{1}_A \mathbf{1}_{s > \taub}\big]
		\\
		=&\ 
		\E^{\Pb}\big[\Sb^{\varphi}_{\taub \wedge \tau_m} \mathbf{1}_A \mathbf{1}_{s \le \taub}\mathbf{1}_{r < \taub} \big]
		+
		\E^{\Pb}\big[\Sb^{\varphi}_{\taub \wedge \tau_m} \mathbf{1}_A \mathbf{1}_{s \le \taub}\mathbf{1}_{\taub \le r}\big]
		+
		\E^{\Pb^{M, \eps}}\big[\Sb^{\varphi}_{r \wedge \tau_m} \mathbf{1}_A \mathbf{1}_{s > \taub}\big]
		\\
		=&\ 
		\E^{\Pb}\big[\Sb^{\varphi}_{r \wedge \tau_m} \mathbf{1}_A \mathbf{1}_{s \le \taub}\mathbf{1}_{r < \taub}\big]
		+
		\E^{\Pb^{M, \eps}}\big[\Sb^{\varphi}_{\taub \wedge \tau_m} \mathbf{1}_A \mathbf{1}_{s \le \taub}\mathbf{1}_{\taub \le r}\big]
		+
		\E^{\Pb^{M, \eps}}\big[\Sb^{\varphi}_{r \wedge \tau_m} \mathbf{1}_A \mathbf{1}_{s > \taub}\big]
		\\
		=&\ 
		\E^{\Pb^{M, \eps}}\big[\Sb^{\varphi}_{r \wedge \tau_m} \mathbf{1}_A \mathbf{1}_{s \le \taub}\mathbf{1}_{r < \taub}\big]
		+
		\E^{\Pb^{M, \eps}}\big[\Sb^{\varphi}_{r \wedge \tau_m} \mathbf{1}_A \mathbf{1}_{s \le \taub}\mathbf{1}_{\taub \le r}\big]
		+
		\E^{\Pb^{M, \eps}}\big[\Sb^{\varphi}_{r \wedge \tau_m} \mathbf{1}_A \mathbf{1}_{s > \taub}\big]
		= 
		\E^{\Pb^{M, \eps}}\big[\Sb^{\varphi}_{r \wedge \tau_m} \mathbf{1}_A\big],
	\end{align*}
	which means that $(\Sb_{u}^{\varphi})_{u \in [t,T]}$ is an $(\Fb,\Pb^{M, \eps})$--local martingale,
	and hence $\Pb^{M, \eps} \in \Pcb_W(t, \nu)$.
	\end{proof}

\subsection{Proof of the main results}
\label{subsec:Proofs}

\subsubsection{Proof of Theorem \ref{thm:WeakDPP} }
\label{Proof_of_WeakDPP}

	First, $V_W$ is upper semi--analytic by \Cref{graph:weak-formulation}.
	Further, let $\taub$ be a $\Gb^t$--stopping time taking value in $[t,T]$,
	 it follows by \Cref{lemm:conditioning} that, for every $\Pb \in \Pcb_W(t,\nu)$
	\begin{align*}
		J(t, \Pb)
		&=
		\E^{\Pb} \bigg[
			\int_t^{\taub} L(s, X_{s \wedge \cdot} , \mub_s, \alphab_s) \mathrm{d}s
			+
		    \E^\Pb 
		    \bigg[	
			\int_{\taub}^T L(s, X_{s \wedge \cdot} , \mub_s, \alphab_s) ds
			+ 
			g\big( X_{T \wedge \cdot}, \mu_{T} \big)
		    \bigg|\Gcb^t_{\tau} \bigg]
		\bigg] 
		\\
		&\le
		\E^\Pb \bigg[
			\int_t^{\taub} L(s, X_{s \wedge \cdot}, \mub_s, \alphab_s) \mathrm{d}s
			+
		    V_W\big(\taub, \mu_{\taub} \big)
		\bigg] 
		\\
		&\le 
		\sup_{\Pb^\prime \in \Pcb_W(t, \nu)} 
		\E^{\Pb^\prime} \bigg[
			\int_t^{\taub} L(s, X_{s \wedge \cdot}, \mub_s, \alphab_s) \mathrm{d}s
			+ 
			V_W\big(\taub, \mu_{\taub} \big) 
		\bigg].
	\end{align*}
	Notice that a $\Gb^t$--stopping time on $\Omb$ can be considered as a $\G^{\star}$--stopping time $\tau^{\star}$ on $\Om^{\star}$.
	Then by the way how $\tau^{\gamma}$ is defined from $\tau^{\star}$ in \eqref{eq:tau_from_taut} and \Cref{lemma:equivalence}, we obtain the inequality
	\begin{align} \label{eq:DPP-firstEquality}
		V_W(t, \nu) 
		\le 
		\sup_{\gamma \in \Gamma_W(t, \nu)} 
		\E^{\P^{\gamma}} \bigg[
			\int_t^{\tau^{\gamma}} L(s, X^{\gamma}_{s \wedge \cdot} , \mub^{\gamma}_s, \alpha^{\gamma}_s) \mathrm{d}s
			+ 
			V_W\big(\tau^{\gamma}, \mu^{\gamma}_{\tau^{\gamma}} \big) 
		\bigg].
	\end{align}
	
	We now consider the reverse inequality, for which one can assume w.l.o.g. that 
	\begin{equation} \label{eq:V_finite}
		V_W(t, \nu) < \infty,
		~\mbox{and}~
		\sup_{\Pb^\prime \in \Pcb_W(t, \nu)} 
		\E^{\Pb^\prime} \bigg[
			\int_t^{\taub} L(s, X_{s \wedge \cdot}, \mub_s, \alphab_s) \mathrm{d}s
			+ 
			V_W\big(\taub, \mu_{\taub} \big) 
		\bigg]
		> - \infty.
	\end{equation}
	Let $\Pb \in \Pcb_W(t,\nu)$ be a weak control rule,
	then by \Cref{lemm:concatenation}, for some $\Fcb_{\taub}$--measurable $\Pb$--integrable r.v. $\widehat M : \Omb \to \R_+$, 
	one has a family of  probability measures $(\Pb^{M, \eps})_{M \ge 0}$ in $\Pcb_W(t, \nu)$ such that
	\begin{align*}
		&\ \E^\Pb \bigg[
			\int_t^{\taub} L(s, X_{s \wedge \cdot}, \mub_s, \alphab_s) \mathrm{d}s
			+ 
			\big(V^{M + \widehat M(\omb)}_W\big(\taub, \mu_{\taub} \big)-\varepsilon\big)\mathbf{1}_{\{V^{M +\widehat M(\omb)}_W(\taub, \mu_{\taub} )<\infty\}}
			\bigg]
			+
			\frac{1}{\varepsilon}\Pb\big[V^{M + \widehat M(\omb)}_W(\taub, \mu_{\taub})=\infty\big]
		\\
		\le&\
		\E^\Pb \bigg[
			\int_t^{\taub} L(s, X_{s \wedge \cdot}, \mub_s, , \alphab_s ) \mathrm{d}s
			+ 
			\E^{\Qb^\eps_{\taub,\hat \mu, M+\widehat M}} 
		    \bigg[	
			\int_{\taub}^T L(s, X_{s \wedge \cdot} , \mub_s, \alphab_s) \mathrm{d}s
			+ 
			g\big( X_{T \wedge \cdot}, \mu_{T} \big)
		    \bigg] 
		\bigg]
		\\
		=&\
		\E^{\Pb^{M, \eps}} \bigg[
			\int_t^T L(s, X_{s \wedge \cdot}, \mub_s, \alphab_s) \mathrm{d}s
			+ 
			g\big( X_{T \wedge \cdot}, \mu_{T} \big)
		\bigg]
		\le
		V_W(t,\nu).
	\end{align*}
	If $\Pb\big[V^{M+ \widehat M}_W(\taub, \mu_{\taub})=\infty\big)]> 0$ for some $M \ge 0$, 
	then by taking $\eps \longrightarrow 0$, one finds $V_W(t,\nu)=\infty$ which is in contradiction to \eqref{eq:V_finite}.
	When $\Pb\big[V^{M+\widehat M}_W(\taub, \mu_{\taub})=\infty\big] = 0$ for all $M \ge 0$,
	let $M  \longrightarrow \infty$ and then take the supremum over all $\Pb \in \Pcb_W(t,\nu)$, it follows that
	\begin{align*}
		&\sup_{\Pb^\prime \in \Pcb_W(t,\nu)}\E^{\Pb^\prime} \bigg[
			\int_t^{\taub} L(s, X_{s \wedge \cdot}, \mub_s, \alphab_s ) \mathrm{d}s
			+ 
			V_W\big(\taub, \mu_{\taub} \big) 
		\bigg]
		-
		\eps
		\le
		V_W(t,\nu).
	\end{align*}
	Notice that $\eps > 0$ is arbitrary, and again by the way how $\tau^{\gamma}$ is defined from $\tau^{\star}$ (equivalent to $\taub$ on $\Omb$) and \Cref{lemma:equivalence}, 
	we can conclude the proof with \eqref{eq:DPP-firstEquality}.
	\qed

\subsubsection{Proof of Theorem \ref{thm:StrongDPP}}
\label{proof:StrongDPP}

	Let $(t,\nu) \in [0,T] \x \Pc_2(\Cc^n)$ and \Cref{assum:Growth} hold,  by  \cite[Theorem 3.1]{djete2019general} (letting $\hat p = p =2$ in their Assumption 2.1), one has
	\[
		V_S(t,\nu) =V_W(t,\nu).
	\]
	Therefore $V_S: [0,T] \x \Pc_2 (\Cc^n) \longrightarrow \R \cup \{-\infty,\infty \}$ has the same measurability as $V_W: [0,T] \x \Pc_2(\Cc^n) \longrightarrow \R \cup \{-\infty,\infty \}$.

	\medskip
	
	Next, let $\tau$ be a $\G^{t, \circ}$--stopping time on $(\Om^t, \Fc^t, \P^t_{\nu})$ taking value in $[t,T]$,
	we denote  $\tau^{\gamma} := \tau(B^{\gamma, t})$ for $\gamma \in \Gamma_S(t,\nu)$.
	Then by the formulation equivalence result in \Cref{prop:strong_ctrl_fixed_space}, the DPP result \eqref{eq:strongDPP} is equivalent to
	\[
		V_S(t,\nu) 
		=
		\sup_{\gamma \in \Gamma_S(t,\nu)} 
		\E^{\P^\gamma} \bigg[
			\int_t^{\tau^{\gamma}} L\big(s, X^\gamma_{s \wedge \cdot}, \mub^\gamma_s, \alpha^\gamma_s \big) \mathrm{d}s
			+
			V_S({\tau^{\gamma}}, \mu^\gamma)
		\bigg].
	\]
	
	Recall that under \Cref{assum:Lip} and by \Cref{theorem_Existence/uniqueness-SDE}, one has 
	\[
		\E^{\P^\gamma}
		\bigg[ 
			\sup_{s \in [0,T]} |X^\gamma_s|^2 
		\bigg] 
		< 
		\infty.
	\]
	Then by \Cref{lemm:conditioning} and the fact that $V_S = V_W$, it follows that
	\begin{align*}
		V_S(t,\nu) 
		=
		\sup_{\gamma \in \Gamma_S(t,\nu)} J(t,\gamma)
		&=
		\sup_{\gamma \in \Gamma_S(t,\nu)}
		\E^{\P^\gamma} \bigg[
			\int_t^{\tau^{\gamma}} L\big(s, X^\gamma_{s \wedge \cdot}, \mub^\gamma_s, \alpha^\gamma_s \big) \mathrm{d}s
			+
			\int_{\tau^{\gamma}}^T L\big(s, X^\gamma_{s \wedge \cdot}, \mub^\gamma_s, \alpha^\gamma_s \big) \mathrm{d}s 
			+
			g\big( X^\gamma_{T \wedge \cdot}, \mu^\gamma_T \big)
		\bigg]
		\\
		&\le
		\sup_{\gamma \in \Gamma_S(t,\nu)} 
		\E^{\P^\gamma} \bigg[
			\int_t^{\tau^{\gamma}} L\big(s, X^\gamma_{s \wedge \cdot}, \mub^\gamma_s, \alpha^\gamma_s \big) \mathrm{d}s
			+
			V_S({\tau^{\gamma}}, \mu^\gamma)
		\bigg].
	\end{align*}
	Further, by \Cref{thm:WeakDPP}, we have
	\begin{align*}
		V_S(t,\nu)
		=
		V_W(t,\nu)
		&=
		\sup_{\gamma \in \Gamma_W(t,\nu)}
		\E^{\P^\gamma} \bigg[
			\int_t^{\tau^{\gamma}} L\big(s, X^\gamma_{s \wedge \cdot}, \mub^\gamma_s, \alpha^\gamma_s \big) \mathrm{d}s
			+
		V_W({\tau^{\gamma}},\mu^\gamma)
		\bigg]
		\\
		&\ge
		\sup_{\gamma \in \Gamma_S(t,\nu)}
		\E^{\P^\gamma} \bigg[
			\int_t^{\tau^{\gamma}} L\big(s, X^\gamma_{s \wedge \cdot}, \mub^\gamma_s, \alpha^\gamma_s \big) \mathrm{d}s
			+
		V_S({\tau^{\gamma}},\mu^\gamma)
		\bigg],
	\end{align*}
	and hence the proof is concluded.
	\qed

\subsubsection{Proof of Theorem \ref{thm:B-StrongDPP}}

In this part, we use the results and techniques of \Cref{thm:WeakDPP} to show the $\mathrm{DPP}$ for $V^\B_S$. We start by proving the universal measurability of $V^\B_S$. For this, we consider an equivalent formulation of $V^\B_S$, which is more appropriate for our purpose.

\paragraph{An equivalent reformulation for $V_S^\B$}
	Let $\Omt^\star:=\Cc^\ell$ be the canonical space with canonical process $\Bt^\star$,
	and $\Pt^\star$ be the Wiener measure, under which $\Bt^\star$ is an $\ell$--dimensional standard Brownian motion.
	Let $\Ft^{\star} = (\Fct^{\star}_t)_{t \in [0,T]}$ be the canonical filtration.
	Recall that we consider a fixed Borel map $\pi: U \cup \{\partial\} \to \R \cup \{-\infty,\infty \}$.
	We denote by $\Uc$ the set of $\Ft^{\star}$--predictable processes $\theta$ taking values in $\R$, such that $\E^{\Pt^{\star}} \big[\int_0^T \big|\theta_t \big|^2 \mathrm{d}t \big]< \infty.$	
	Define a metric $d^{\star}$ on $\Uc$ by
	\[
		d^{\star}(\eta,\theta)^2
		:=
		\E^{\Pt^{\star}}\bigg[\int_0^T  \big|\eta_t-\theta_t\big|^2  \mathrm{d}t \bigg],
		\; \mbox{for all}\; 
		(\eta,\theta) \in \Uc \x \Uc,
	\]
	so that $(\Uc, d^{\star})$ is a Polish space (see e.g. \citeauthor*{brezis2011functional} \cite[Theorems 4.8 and 4.13]{brezis2011functional}). 
	Next, let $\theta \in \Uc$, and define $A^{\theta}_t := \int_0^t \theta_s \mathrm{d}s$, $t\in[0,T]$. We consider then the map $\Upsilon: \Uc \longrightarrow \Pc(\Cc^\ell \x \Cc)$ defined by
	\[
		\Upsilon (\theta)
		:=
		\Pt^{\star} \circ \Big(\Bt^{\star}_{\cdot}, A^{\theta(\Bt^{\star}_{\cdot})}_{\cdot} \Big)^{-1},\; \theta\in\Uc.
	\]
	
	Let us introduce, for all $t \in [0,T]$, $\nu \in \Pc_2(\Cc^n)$ and $\nuh \in \Pc_2(\Omh)$ such that $\nu = \nuh \circ \Xh^{-1}$, 
	\[
		\Pcb_S^\star(t,\nu)
		:=
		\big\{
			\Pb \in \Pcb_W(t,\nu)  :
			\Pb \circ \big(B_{\cdot}, A_{\cdot} \big)^{-1} \in \Upsilon \big(\Uc \big),\; \mbox{and}\; B_{t \wedge \cdot}\; \mbox{is}\; \Pb \mbox{--independent of}\;(B^t,A)
		\big\},
	\]
	and
	\[
		\Pch_S^\star(t, \nuh) := \Pch_W(t, \nuh) \cap \Pcb_S^\star(t, \nu).
	\]

	\begin{lemma} \label{lemm:VS_star}
		Let $(t,\nu) \in [0,T] \x \Pc_2(\Cc^n)$ and $\nuh \in \Pc(\Omh)$ be such that $\nuh \circ \Xh^{-1} = \nu$.
		Then under {\rm \Cref{assum:Lip}}, one has $\Pcb_S^\star(t,\nu) \subseteq \Pcb_S^\B(t,\nu) $ and
		\begin{equation} \label{eq:equiv_VSB}
			V^\B_S(t,\nu)
			=
			\sup_{\Pb \in \Pcb^\star_S(t,\nu)} J(t,\Pb).
		\end{equation}
	\end{lemma}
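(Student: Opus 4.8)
The strategy is to establish the inclusion $\Pcb_S^\star(t,\nu) \subseteq \Pcb_S^\B(t,\nu)$ first, and then prove the equality \eqref{eq:equiv_VSB} by a two-sided inequality, the easy direction following from the inclusion and the harder direction by approximating a general $\B$-strong control rule by rules whose control is built from a predictable functional of the Brownian path alone.

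For the inclusion, let $\Pb \in \Pcb_S^\star(t,\nu)$. By definition $\Pb \circ (B_\cdot, A_\cdot)^{-1} = \Upsilon(\theta)$ for some $\theta \in \Uc$, i.e. $A_s = \int_0^s \theta_r(B_\cdot)\,\mathrm{d}r$ $\Pb$--a.s. with $\theta$ being $\Ft^\star$--predictable; since $B_{t\wedge\cdot}$ is $\Pb$--independent of $(B^t, A)$, one checks that on $[t,T]$ the increments $A^t_\cdot := A_{\cdot\vee t} - A_t$ are a predictable functional of $B^t$ only. Recalling the definition of $\alphab$ from $A$ via $\alphab_s = \pi^{-1}(\overline{\lim}_n n(A_s - A_{(s-1/n)\vee 0}))$, it follows that $\alphab_s = \phi(s, B^t_{s\wedge\cdot})$ $\Pb$--a.s. for all $s\in[t,T]$ for a Borel map $\phi: [0,T]\times\Cc^\ell_{t,T}\to U$ (the exceptional value $\partial$ being excluded $\mathrm{d}s\otimes\mathrm{d}\Pb$--a.e. by the membership $\Pb\in\Pcb_W(t,\nu)$). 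Hence $\Pb\in\Pcb_S^\B(t,\nu)$ by \Cref{def:PS}, and since $J(t,\Pb)$ depends only on $\Pb$, we immediately get $V^\B_S(t,\nu) = \sup_{\Pb\in\Pcb_S^\B(t,\nu)} J(t,\Pb) \ge \sup_{\Pb\in\Pcb_S^\star(t,\nu)} J(t,\Pb)$.

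For the reverse inequality, take an arbitrary $\Pb\in\Pcb_S^\B(t,\nu)$, so $\alphab_s = \phi(s, B^t_{s\wedge\cdot})$ $\Pb$--a.s. for a Borel $\phi$. The natural candidate in $\Pcb_S^\star$ is obtained by transporting this data onto a canonical Wiener setup: set $\theta_s := \pi(\phi(s, \Bt^\star_{s\wedge\cdot} - \Bt^\star_{t\wedge\cdot}))$ as an $\Ft^\star$--predictable process (extended arbitrarily, e.g. by $0$, on $[0,t)$). Two points must be addressed. First, $\theta$ needs to lie in $\Uc$, i.e. satisfy $\E^{\Pt^\star}[\int_0^T|\theta_s|^2\,\mathrm{d}s]<\infty$; this requires the control to have finite $L^2$-cost, which is part of the admissibility in \Cref{def:PS} with $p=2$ under \Cref{assum:Lip}, provided one works with truncations $\alphab\wedge$ (that is, replace $\phi$ by $\phi\mathbf 1_{\{\rho(u_0,\phi)\le N\}} + u_0\mathbf 1_{\{\rho(u_0,\phi)>N\}}$, produce the corresponding rule, and pass to the limit $N\to\infty$ using continuity of $J$ under the integrability bound—this is the place where the Lipschitz/growth conditions in \Cref{assum:Lip} enter, via stability of the SDE solution in \Cref{theorem_Existence/uniqueness-SDE}). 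Second, given $\theta\in\Uc$ one must re-solve the controlled McKean--Vlasov SDE \eqref{eq:MKV_SDE_weak} with this control on an appropriate probability space, set $\muh$ to be the conditional law knowing $B^t$, and verify the resulting measure $\Pb'$ satisfies all the constraints defining $\Pcb_W(t,\nu)$ — which holds by \Cref{lemma:equivalence} and its proof (the well-posedness under \Cref{assum:Lip} comes from \Cref{theorem_Existence/uniqueness-SDE}) — and moreover that $\Pb'\circ(B_\cdot,A_\cdot)^{-1} = \Upsilon(\theta)$ and $B_{t\wedge\cdot}$ is $\Pb'$--independent of $(B^t,A)$, which hold by construction. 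Since the joint law of $(X,A,W,B,\muh)$ is unchanged by this transport (both are characterised by the same control functional and the same SDE, whose solution is unique in law under \Cref{assum:Lip}), we get $J(t,\Pb') = J(t,\Pb)$, hence $\sup_{\Pcb_S^\star(t,\nu)} J \ge \sup_{\Pcb_S^\B(t,\nu)} J = V^\B_S(t,\nu)$, completing the proof.

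**Main obstacle.** The delicate point is the approximation step: a priori a $\B$-strong control process need not have the $L^2$-integrability needed for $\theta$ to belong to $\Uc$ in a form compatible with the Polish-space structure of $(\Uc,d^\star)$, and one must be careful that the truncation procedure both preserves membership in $\Pcb_S^\B$ and has values $J(t,\cdot)$ converging to $J(t,\Pb)$; this relies on the quadratic growth bound in \Cref{assum:Lip} together with the stability estimates for the McKean--Vlasov SDE from \Cref{theorem_Existence/uniqueness-SDE}, and on dominated convergence for the reward, using that $L$ and $g$ are only assumed measurable but the state process has a uniform $L^2$ bound. Verifying that the transported measure genuinely lies in $\Pcb_W(t,\nu)$ — in particular the martingale-problem and conditional-law ``fixed-point'' constraints of \Cref{def:weak_rule} — is otherwise routine given \Cref{lemma:equivalence}.
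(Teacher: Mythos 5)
Your overall two--sided strategy matches the paper's: show the inclusion $\Pcb_S^\star(t,\nu) \subseteq \Pcb_S^\B(t,\nu)$ (giving $\sup_{\Pcb^\star_S} J \le V_S^\B$), and then, given a $\B$--strong control, build an element of $\Pcb_S^\star$ with the same cost (giving the reverse inequality). The argument for the inclusion is essentially the paper's: from $\Pb\circ(B,A)^{-1}\in\Upsilon(\Uc)$ one writes $\pi(\alphab_s) = \theta^\star_s(B_{s\wedge\cdot})$, and the independence of $B_{t\wedge\cdot}$ from $(B^t,A)$ is what removes the dependence on the pre--$t$ part of $B$ so as to produce the Borel selector $\phi$. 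Similarly your ``transport onto a fresh Wiener setup'' for the reverse inequality parallels the paper's construction (the paper replaces $B^\gamma_{t\wedge\cdot}$ by an independent Brownian path $\widetilde B_{t\wedge\cdot}$ to enforce the independence requirement of $\Pcb_S^\star$, and then pushes forward; you do the same thing in one shot on the canonical Wiener space via \Cref{lemma:equivalence} and \Cref{theorem_Existence/uniqueness-SDE}). On these points your argument is correct.

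However, the ``main obstacle'' you identify is not an obstacle, and this reveals a misreading of the construction. Recall that $\pi$ is a fixed Borel isomorphism from $U$ onto $\pi(U)\subseteq[0,1]$, and that the accumulated-control path is defined by $A_s = \int_t^{s\vee t}\pi(\alphab_r)\,\mathrm{d}r$. Consequently, the candidate process $\theta_s := \pi\big(\phi(s,\Bt^\star_{s\wedge\cdot}-\Bt^\star_{t\wedge\cdot})\big)$ takes values in $[0,1]$ \emph{pointwise}, so $\E^{\Pt^\star}\big[\int_0^T|\theta_s|^2\,\mathrm{d}s\big]\le T$ and $\theta\in\Uc$ holds trivially, with no relation whatsoever to the $L^2$--integrability of $\rho(u_0,\alpha)$. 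Your proposed truncation $\phi\mapsto\phi\mathbf{1}_{\{\rho(u_0,\phi)\le N\}}+u_0\mathbf{1}_{\{\rho(u_0,\phi)>N\}}$, and the ensuing passage to the limit $N\to\infty$, are therefore not needed; and they would in any case be problematic, since the limiting argument you invoke relies on ``continuity of $J$'' (and implicitly on stability of the cost under control perturbations), whereas $L$ and $g$ are only assumed Borel measurable in this part of the paper --- precisely the regularity that the measurable-selection approach is designed to avoid assuming. In short: the inclusion step and the construction via $\Upsilon$ are right, but the approximation argument you single out as the crux is spurious and should be removed, since boundedness of $\pi$ already disposes of the membership $\theta\in\Uc$.
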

	\begin{proof}
	
	First, take $\gamma \in \Gamma_S^\B(t,\nu)$. 
	W.l.o.g., we can assume that there exists an independent Brownian motion $\widetilde B$ in the space $(\Om^{\gamma}. \Fc^{\gamma}, \P^{\gamma})$,
	and let  $B_{\cdot}^{\star,\gamma}:=B^\gamma_{t \vee \cdot}-B^\gamma_t+\widetilde{B}_{t \wedge \cdot}$,
	then
	\[
		\gamma' := \big( \Om^{\gamma}, \Fc^{\gamma},\P^{\gamma},  \F^{\gamma}, \G^{\gamma}, X^{\gamma}, W^{\gamma}, B^{\star,\gamma},  \mub^{\gamma}, \mu^{\gamma}, \alpha^{\gamma} \big) 
		\in
		\Pcb^{\B}_S(t,\nu).
	\]
	Recall that  $\alpha^{\gamma}$ is $\G^{\gamma}$--predictable and $\G^{\gamma}$ is the augmented filtration generated by $B^{\gamma,t}$,
	then for some Borel function $\phi: [t,T] \x \Cc^{\ell} \to U$,
	one has $\alpha^{\gamma}_s = \phi(s, B^{\gamma, t}_{s \wedge \cdot})$, $s \in [t,T]$, $\P^{\gamma}$--a.s.
	Let $A^{\gamma'}_\cdot := \int_0^{\cdot} \pi \big( \phi(s,B^{\gamma,t}_{s \wedge \cdot})  \big) \mathrm{d}s$ and $\muh^{\gamma'}$ be defined as in \eqref{eq:def_muh}, 
	it follows that 
	$
		\Pb':= \P^{\gamma} \circ \big( X^\gamma,A^{\gamma'},W^\gamma, B^{\gamma'}, \muh^{\gamma'} \big)^{-1}  \in \Pcb_W(t, \nu)
	$
	satisfies $\Pb' \circ (B, A)^{-1} \in \Gamma(\Uc)$
	and $J(t, \Pb') = J(t, \gamma)$.
	Then  $J(t,\gamma)=J(t,\Pb') \le \sup_{\Pb \in \Pcb^\star_S(t,\nu)} J(t,\Pb)$ and hence $V^{\B}_S(t,\nu) \le \sup_{\Pb \in \Pcb^\star_S(t,\nu)} J(t,\Pb)$.

	\medskip

	Next, given $\Pb \in \Pcb^{\star}_S(t,\nu)$, since $\Pb \circ \big(B, A \big)^{-1} \in \Upsilon \big(\Uc \big)$, there exists $\theta^\star \in \Uc$ such that $\Pb \circ \big(B_{\cdot}, A_{\cdot} \big)^{-1}= \Pt^\star \circ \big(\Bt^{\star}_{\cdot}, A^{\theta^\star(\Bt^{\star}_{\cdot})}_{\cdot}) \big)^{-1}.$ 
	Thus $\pi\big(\alphab_s(\om) \big)=\theta^\star_s(B_{s \wedge \cdot}(\om))$, for $\mathrm{d}\Pb \otimes \mathrm{d}t$--a.e. $(s,\omb) \in [t,T] \x \Omb$. 
	As $\Pb \in \Pcb_W(t,\nu),$ we know $\Pb[\alphab_s \in U]=1$ for $\mathrm{d}t$--a.e. $s \in [0,T],$ therefore $\pi\big(\alphab_s(\omb) \big)=\theta^\star_s(B_{s \wedge \cdot}(\omb)) \in \pi(U)$ and $\alphab_s(\omb)= \pi^{-1} \big(\theta^\star_s(B_{s \wedge \cdot}(\omb)) \big) \in U$, for $\mathrm{d}\Pb \otimes \mathrm{d}t$--a.e. $(s,\omb) \in [0,T] \x \Omb$.
	 Further, since $(B^t,A)$ is $\Pb$--independent of $B_{t \wedge \cdot},$ 
	it follows that there is a Borel measurable function $\phi: [0,T] \x \Cc^\ell \longrightarrow \R$ such that 
	$A_s=\phi(s,B^t_{s \wedge \cdot})$, $s \in  [0,T]$, $\Pb$--a.s.,
	and therefore $\Pb \in \Pcb^{\B}_S(t,\nu)$. 
	This implies that $\Pcb_S^\star(t,\nu) \subseteq \Pcb_S^\B(t,\nu)$, and the equality \eqref{eq:equiv_VSB}.
	\end{proof}

	We are now ready to prove the measurability of $V_S^\B$.
	\begin{lemma}
	 \label{lemm:measurability_B-Strong}
		The graph sets
		\begin{align*}
			\llbracket \Pcb^\star_S \rrbracket
			:=
			\big \{
				(t,\nu,\Pb) \in [0,T] \x \Pc_2(\Cc^n) \x \Pc(\Omb): \Pb \in \Pcb_S^\star(t,\nu)
			\big \},
			\;\mbox{\rm and}\;
			\llbracket \Pch^\star_S \rrbracket
			:=
			\big \{
				(t,\nuh,\Pb) : \Pb \in \Pch_S^\star(t,\nuh)
			\big \},
			\end{align*}
		are analytic sets in respectively $[0,T] \x \Pc_2(\Cc^{n}) \x  \Pc(\Omb)$ and $ [0,T] \x \Pc_2(\Omh) \x \Pc(\Omb)$.
		Consequently, 
			$V_S^\B
			:
			[0,T] \x \Pc_2(\Cc^n)
			\longrightarrow
			\R \cup \{-\infty,\infty\}$
		is upper semi--analytic.
	\end{lemma}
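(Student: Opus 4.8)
The plan is to realise $\llbracket \Pcb^\star_S \rrbracket$ and $\llbracket \Pch^\star_S \rrbracket$ by intersecting the analytic graph sets $\llbracket \Pcb_W \rrbracket$ and $\llbracket \widehat{\Pc}_W \rrbracket$ of \Cref{graph:weak-formulation} with the two additional constraints occurring in the definition of $\Pcb^\star_S$, namely
\[
\text{(a)}\ \Pb \circ (B_\cdot, A_\cdot)^{-1} \in \Upsilon(\Uc),
\qquad
\text{(b)}\ B_{t \wedge \cdot}\ \text{is}\ \Pb\text{--independent of}\ (B^t, A),
\]
and to check that each of (a) and (b) cuts out an analytic subset of $[0,T] \x \Pc_2(\Cc^n) \x \Pc(\Omb)$ (resp. of $[0,T] \x \Pc_2(\Omh) \x \Pc(\Omb)$), so that the intersection remains analytic. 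Once this is done, the upper semi--analyticity of $V^\B_S$ follows from the reformulation $V^\B_S(t,\nu) = \sup\{ J(t,\Pb) : (t,\nu,\Pb) \in \llbracket \Pcb^\star_S \rrbracket \}$ given by \Cref{lemm:VS_star}, together with the upper semi--analyticity of $(t,\Pb) \longmapsto J(t,\Pb)$ (which is obtained exactly as in the proof of \Cref{graph:weak-formulation} and requires no regularity on $L$, $g$) and the measurable selection theorem \cite[Proposition 2.17]{karoui2013capacities}.

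For (a), I would first check that $\Upsilon : (\Uc, d^\star) \longrightarrow \Pc(\Cc^\ell \x \Cc)$ is continuous: if $\theta^j \to \theta$ in $d^\star$, then, with $A^{\theta}_\cdot := \int_0^\cdot \theta_s \mathrm{d}s$, the Cauchy--Schwarz inequality gives $\E^{\Pt^\star}\big[ \| A^{\theta^j} - A^{\theta} \|^2 \big] \le T\, d^\star(\theta^j, \theta)^2 \to 0$, hence $(\Bt^\star, A^{\theta^j}) \to (\Bt^\star, A^{\theta})$ in $L^2(\Pt^\star)$ and a fortiori in law, i.e. $\Upsilon(\theta^j) \to \Upsilon(\theta)$ weakly. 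Since $(\Uc, d^\star)$ is Polish, $\Upsilon(\Uc)$ is then an analytic subset of $\Pc(\Cc^\ell \x \Cc)$, and, as the push--forward map $\Pc(\Omb) \ni \Pb \longmapsto \Pb \circ (B_\cdot, A_\cdot)^{-1}$ is continuous, the set of triples satisfying (a) is the preimage of an analytic set under a continuous map, hence analytic.

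For (b), I would use that, for fixed $t$, the independence is equivalent to $\E^\Pb[ f(B_{t \wedge \cdot}) g(B^t_\cdot, A_\cdot) ] = \E^\Pb[ f(B_{t \wedge \cdot}) ]\, \E^\Pb[ g(B^t_\cdot, A_\cdot) ]$ for all $(f,g)$ in fixed countable families of bounded continuous functions (e.g. cylinder functions) determining the $\sigma$--fields $\sigma(B_{t \wedge \cdot})$ and $\sigma(B^t, A)$. For each such $(f,g)$ the integrand $(t,\omb) \longmapsto f(B_{t \wedge \cdot}(\omb))\, g(B^t_\cdot(\omb), A_\cdot(\omb))$ is bounded and jointly continuous on $[0,T] \x \Omb$, since the stopped and shifted path maps $(t,\omb) \mapsto B_{t \wedge \cdot}(\omb)$ and $(t,\omb) \mapsto B^t_\cdot(\omb)$ are continuous into $\Cc^\ell$; a monotone class argument then shows that $(t,\Pb) \longmapsto \E^\Pb[ H(t,\cdot) ]$ is Borel on $[0,T] \x \Pc(\Omb)$ for every bounded Borel $H$, and applying this to the integrand and to its two marginal versions shows that (b) cuts out a Borel set. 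Intersecting the graphs of \Cref{graph:weak-formulation} with the sets produced by (a) and (b) — and, for $\llbracket \Pch^\star_S \rrbracket$, pre--composing with the continuous map $\nuh \longmapsto \nuh \circ \Xh^{-1}$ so that the constraint $\nu = \nuh \circ \Xh^{-1}$ enters measurably — yields the two claimed analytic sets.

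I expect the only real difficulty to be in carefully bookkeeping joint measurability in the time variable $t$: both extra constraints involve the $t$--dependent operations $B \mapsto B_{t \wedge \cdot}$ and $B \mapsto B^t$, and one must verify that they interact measurably with the weak topology on $\Pc(\Omb)$. The two facts that make this work are the joint continuity of these stopped/shifted path maps and the continuity of $\Upsilon$; the latter only yields that $\Upsilon(\Uc)$ is analytic rather than Borel, which is why the graph sets here are merely analytic, but this is harmless for the measurable selection argument.
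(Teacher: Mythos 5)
Your proof is correct and follows essentially the same route as the paper's: realize the graph sets as the intersection of the analytic graph set from \Cref{graph:weak-formulation} with the two constraint sets coming from the definition of $\Pcb^\star_S$, check that each of these is (at worst) analytic, and conclude via \Cref{lemm:VS_star} together with measurable selection. The only difference is that the paper additionally uses the injectivity of $\Upsilon$ and the Lusin--Souslin theorem to deduce that $\Upsilon(\Uc)$ is Borel rather than merely analytic, so that the set cut out by constraint (a) is Borel; as you observe, this refinement is immaterial for the conclusion, since $\llbracket \Pcb_W \rrbracket$ is in any case only analytic.
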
    
	\begin{proof}
	We will only consider the case of $\Pcb^{\star}_S$, while the proof is almost the same for $\Pch^{\star}_S$. First, notice that 
	\[
		\Upsilon:\;
		\Uc
		\longrightarrow 
		\Pc(\Cc^\ell \x \Cc),
	\]
	is continuous and injective, so that $\Upsilon \big(\Uc \big)$ is a Borel subset of $\Pc(\Cc^\ell \x \Cc )$ {\color{black} (see e.g. \citeauthor*{kechris1995classical} \cite[Theorem 15.1]{kechris1995classical})}. 
	It follows that
	\begin{align*}
		 \D^1
		:=
		\big \{
			(t,\nu,\Pb) \in [0,T] \x \Pc_2(\Cc^n) \x \Pc(\Omb) : \Pb \circ \big(B_{\cdot}, A_{\cdot} \big)^{-1} \in \Upsilon \big(\Uc \big)
		\big \},
	\end{align*}
	is a Borel subset of $[0,T] \x \Pc(\Cc^n) \x \Pc(\Omb)$,
	as the map
	\[
	    \Gamma_1:[0,T] \x \Pc_2(\Cc^n) \x \Pc(\Omb)\ni (t,\nu,\Pb)  \longmapsto \Pb \circ \big(B_{\cdot}, A_{\cdot} \big)^{-1} \in \Pc(\Cc^\ell \x \Cc),
	\]
	is Borel measurable. 
	Similarly
	\begin{align*}
		 \D^2
		:=
		\big \{
			(t,\nu,\Pb) \in [0,T] \x \Pc_2(\Cc^n) \x \Pc(\Omb) : B_{t \wedge \cdot}\; \mbox{is}\; \Pb\mbox{--independent of}\;(B^t,A,\muh)
		\big \},
	\end{align*}
	is also a Borel subset of $[0,T] \x \Pc(\Cc^n) \x \Pc(\Omb)$.
	Indeed, for all $(h,\psi) \in C_b(\Cc^\ell) \x C_b \big( \Cc^\ell \x \Cc\big)$, the function
	\[
	    \Gamma_{h,\psi}:[0,T] \x \Pc_2(\Cc^n) \x \Pc(\Omb)\ni (t,\nu,\Pb)  \longmapsto \big(\E^\Pb [h(B_{t \wedge \cdot}) \psi(B^t,A)] 
	    -
	    \E^\Pb [h(B_{t \wedge \cdot})] \E^\Pb [ \psi(B^t,A)] \big) \in \R,
	\]
	is continuous. 
	By consider a countable dense subset $\Rc \subset C_b(\Cc^\ell) \x C_b \big( \Cc^\ell \x \Cc \big)$, it follows that
	\[
	    \D^2= \bigcap_{(h,\psi) \in \Rc} \Gamma_{\varphi,\psi}^{-1}\{ 0 \},
	\]
	is a Borel set. Finally, notice that 
	\begin{align*}
		\llbracket \Pc^\star_S \rrbracket
		=
		\llbracket\Pcb_W \rrbracket
		\cap
		\D^1
		\cap
		\D^2,
	\end{align*}
	
	and we then conclude the proof by \Cref{graph:weak-formulation} and \Cref{lemm:VS_star}.
	\end{proof}
	
	Recall that for each $M > 0$, $\Pc^M_t$ is defined in Section \ref{subsec:TechLemma}, we similarly introduce,
	for $(t, \nu) \in [0,T] \x \Pc_2(\Cc^n)$ and $\nuh \in \Pc_2(\Omh)$ such that $\nu = \nuh \circ \Xh^{-1} $,
	\[
		\Pcb_S^{\B,M}(t, \nu)
		:= \Pcb_S^\B(t, \nu) \cap \Pc^M_t,
		~
		\Pcb_S^{\star,M}(t, \nu)
		:=
		 \Pcb_S^\star \cap \Pcb_S^{\B, M}(t,\nu),
		~
		\Pch_S^{\B,M}(t,\nuh)
		:= 
		\Pcb_S^\B(t, \nuh) \cap \Pc^M_t,
		~
		\Pch_S^{\star,M}(t, \nuh) 
		:=
		 \Pcb_S^\star \cap \Pch_W^{\B,M}(t,\nuh).
	\]
	By \Cref{lemm:equiv_nuh_nu}, it is clear that
	\[
		V_S^{\B,M}(t, \nu) 
		:=\!\!
		\sup_{\Pb \in \Pcb_S^{\B,M}(t, \nu)} \!\! J(t, \Pb)
		= \!\!
		\sup_{\Pb \in \Pcb_S^{\star,M}(t, \nu)} \!\! J(t, \Pb)
		=\!\!
		\sup_{\Pb \in \Pch_S^{\B,M}(t, \nuh)} \!\! J(t, \Pb)
		=\!\!
		\sup_{\Pb \in \Pch_S^{\star,M}(t, \nuh)} \!\! J(t, \Pb)
		~\nearrow~ V_S^\B(t, \nu),
		~\mbox{as}~M \nearrow \infty.
	\]

	\begin{lemma} \label{lemm:conditioning-concatenation_StrongFormulation}
		$(i)$
		Let $(t,\nu) \in [0,T] \x \Pc_2(\Cc^n)$, $\Pb \in \Pcb_S^{\B}(t,\nu)$, 
		$\taub$ a $\Gb^t$--stopping time taking values in $[t,T]$, 
		and $ \big( \Pb^{\Gcb^t_{\taub}}_{\omb} \big)_{\omb \in \Omb}$ be a family of {\rm r.c.p.d.} of $\Pb$ knowing $\Gcb^t_{\taub}$.
		Then $\Pb^{\Gcb^t_{\taub}}_{\omb} \in \Pcb_S^\B \big(\taub(\omb), \mu_{\taub(\omb)}(\omb) \big)$, for $\Pb$--a.e. $\omb \in \Omb$.

		\vspace{0.5em}
		
		$(ii)$ 
		The graph set $\big\{ (t, \nuh, M, \Pb) ~: \Pb \in \Pch_S^{\star,M}(t, \nuh) \big\}$ is analytic.
		Further, let $(t, \nu) \in [0,T] \x \Pc_2(\Cc^n)$, $\Pb \in \Pcb_S^\B(t, \nu)$,
		$\taub$ be a $\Gb^t$--stopping time taking values in $[t, T]$, and 
		$\eps > 0$.
		Then there exists a family of probability measures 
		and a family of probability measures $(\Qb_{t, \nuh, M}^{\eps})_{(t, \nuh, M) \in [0,T] \x \Pc(\Omh) \x \R_+}$ such that $(t, \nuh, M) \longmapsto \Qb_{t, \nuh, M}^{\eps}$ is universally measurable,
		and for every $(t,\nuh, M)$ s.t. $ \Pch^{\B, M}_S(t, \nuh) \neq \emptyset$, 
		one has
		\begin{equation} \label{ineq:eps-optimal}
			\Qb^{\eps}_{t, \nuh, M} \in \Pch^{\B,M}_S(t, \nuh),
			\; \mbox{\rm and}\; 
			J \big(t, \Qb_{t, \nuh,M}^{\eps} \big) \ge 
			\begin{cases}
				V^{\B,M}_S(t, \nu) - \eps,\; \mbox{\rm when}\; V^{\B,M}_S(t,\nu) < \infty,\\
				\frac{1}{\eps}, \; \mbox{\rm when}; V^{\B,M}_S(t, \nu) = \infty,
			\end{cases}
			\mbox{for}~\nu = \nuh \circ \Xh^{-1}.
		\end{equation}
		Moreover, there is a $\Gcb^t_{\taub}$--measurable and $\Pb$--integrable r.v. $\widehat M : \Omb \to \R_+$ such that for all constant $M>0$, 
		there exists $\Pb^{M, \eps} \in \Pcb^{\B}_S(t, \nu)$ such that $\Pb^{M, \eps}|_{\Fcb_{\taub}} = \Pb|_{\Fcb_{\taub}}$ and
		\[
			\Big( \Qb^{\eps}_{\taub(\omb), \muh(\omb), M+\widehat M(\omb)} \Big)_{\omb \in \Omb}
			~\mbox{\rm is a version of the r.c.p.d. of}~
			\Pb^{\eps,M}
			~\mbox{\rm knowing}~ \Gcb^t_{\taub}.
		\]
	\end{lemma}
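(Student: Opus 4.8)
The plan is to re-run, in the $\B$--strong setting, the three ingredients already established for the weak formulation — conditioning (\Cref{lemm:conditioning}), the $\nuh$--versus--$\nu$ equivalence (\Cref{lemm:equiv_nuh_nu}) and concatenation (\Cref{lemm:concatenation}) — and to carry along, on top of them, the one extra piece of structure that distinguishes $\B$--strong rules from general weak control rules, namely that the control $\alphab$ is a Borel feedback functional of the common noise $B^t$. Throughout I will use freely the reformulations of \Cref{lemma:equivalence} and \Cref{lemm:VS_star}, and the identity (recorded just before the statement) $V_S^{\B,M}(t,\nu)=\sup_{\Pb\in\Pch_S^{\star,M}(t,\nuh)}J(t,\Pb)$ valid for any $\nuh$ with $\nuh\circ\Xh^{-1}=\nu$.

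\emph{Part $(i)$.} Since $\Pb\in\Pcb_S^\B(t,\nu)\subseteq\Pcb_W(t,\nu)$, \Cref{lemm:conditioning} already yields $\Pb^{\Gcb^t_{\taub}}_{\omb}\in\Pch_W\big(\taub(\omb),\muh_{\taub(\omb)}(\omb)\big)\subseteq\Pcb_W\big(\taub(\omb),\mu_{\taub(\omb)}(\omb)\big)$ for $\Pb$--a.e.\ $\omb$, so only the feedback structure needs checking. Fix a Borel $\phi:[0,T]\x\Cc^\ell_{t,T}\to U$ with $\alphab_s=\phi(s,B^t_{s\wedge\cdot})$, $\Pb$--a.s.\ for all $s\in[t,T]$. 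Since $\sigma(B^t_{\taub\wedge\cdot})\subseteq\Gcb^t_{\taub}$ and $\Pb^{\Gcb^t_{\taub}}_{\omb}\big([\omb]_{\Gcb^t_{\taub}}\big)=1$, the segment $(\omb^b)^t_{\taub(\omb)\wedge\cdot}$ is frozen under $\Pb^{\Gcb^t_{\taub}}_{\omb}$ — this is exactly \eqref{fix_value-B} — and, since $B^t_s=B^t_{\taub(\omb)}+B^{\taub(\omb)}_s$ for $s\ge\taub(\omb)$, the path $B^t_{s\wedge\cdot}$ is the concatenation at time $\taub(\omb)$ of this frozen segment with $B^{\taub(\omb)}_{s\wedge\cdot}$. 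One then checks that $\phi^\omb(s,b):=\phi\big(s,(\omb^b)^t_{\taub(\omb)\wedge\cdot}\oplus_{\taub(\omb)}b\big)$, which is Borel on $[0,T]\x\Cc^\ell_{\taub(\omb),T}$, satisfies $\alphab_s=\phi^\omb(s,B^{\taub(\omb)}_{s\wedge\cdot})$, $\Pb^{\Gcb^t_{\taub}}_{\omb}$--a.s.\ for $s\in[\taub(\omb),T]$, whence $\Pb^{\Gcb^t_{\taub}}_{\omb}\in\Pcb_S^\B\big(\taub(\omb),\mu_{\taub(\omb)}(\omb)\big)$ for $\Pb$--a.e.\ $\omb$.

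\emph{Part $(ii)$, measurability and selection.} The graph set $\big\{(t,\nuh,M,\Pb):\Pb\in\Pch_S^{\star,M}(t,\nuh)\big\}$ is the intersection of the analytic set $\llbracket\Pch_S^\star\rrbracket$ of \Cref{lemm:measurability_B-Strong} (adjoined with the free coordinate $M\in\R_+$) with the Borel set $\big\{(t,\nuh,M,\Pb):\Pb\in\Pc^M_t\big\}$ of \eqref{eq:analytic_PCM}, hence is analytic; likewise for its $\Pc_2(\Cc^n)$--indexed counterpart through the continuous map $\nuh\mapsto\nuh\circ\Xh^{-1}$. Applying the measurable selection theorem \cite[Proposition~2.21]{karoui2013capacities} to this analytic graph and to the upper semi--analytic objective $\Pb\mapsto J(t,\Pb)$, together with the identity $V_S^{\B,M}(t,\nu)=\sup_{\Pb\in\Pch_S^{\star,M}(t,\nuh)}J(t,\Pb)$ and the inclusion $\Pcb_S^\star\subseteq\Pcb_S^\B$ from \Cref{lemm:VS_star}, produces a universally measurable family $(\Qb^\eps_{t,\nuh,M})$ with the $\eps$--optimality property \eqref{ineq:eps-optimal} (splitting, as usual, according to whether $V_S^{\B,M}(t,\nu)$ is finite or not); in particular $\Qb^\eps_{t,\nuh,M}\in\Pch_S^{\star,M}(t,\nuh)\subseteq\Pch_S^{\B,M}(t,\nuh)$.

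\emph{Part $(ii)$, concatenation.} Given $\Pb\in\Pcb_S^\B(t,\nu)$ and a family of r.c.p.d.\ $(\Pb_\omb)$ of $\Pb$ knowing $\Gcb^t_{\taub}$, part $(i)$ gives $\Pb_\omb\in\Pch_S^{\B,\widehat M(\omb)}\big(\taub(\omb),\muh_{\taub(\omb)}(\omb)\big)$ for $\Pb$--a.e.\ $\omb$, with $\widehat M(\omb):=\E^{\Pb_\omb}\big[\|X\|^p+\int_{\taub}^T\rho(\alphab_s,u_0)^p\,\mathrm{d}s\big]$ being $\Gcb^t_{\taub}$--measurable and $\Pb$--integrable; one sets $\Qb^\eps_\omb:=\Qb^\eps_{\taub(\omb),\muh_{\taub(\omb)}(\omb),M+\widehat M(\omb)}$ and $\Pb^{M,\eps}[K]:=\int_{\Omb}\Qb^\eps_\omb(K)\,\Pb(\mathrm{d}\omb)$. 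That $\Pb^{M,\eps}\in\Pcb_W(t,\nu)$, that $\Pb^{M,\eps}|_{\Fcb_{\taub}}=\Pb|_{\Fcb_{\taub}}$, and that $(\Qb^\eps_\omb)$ is a version of its r.c.p.d.\ knowing $\Gcb^t_{\taub}$ follow verbatim from the argument in \Cref{lemm:concatenation} (using \eqref{eq:loi_Xmu}, \eqref{fix_value-B} and the localised martingale--problem computation). The one genuinely new point — and the step I expect to be the main obstacle — is that $\Pb^{M,\eps}$ is again $\B$--strong, i.e.\ that there is a \emph{single} Borel map $\Phi:[0,T]\x\Cc^\ell_{t,T}\to U$ with $\alphab_s=\Phi(s,B^t_{s\wedge\cdot})$, $\Pb^{M,\eps}$--a.s.; I plan to build it by gluing the feedback map $\phi$ of $\Pb$ on $[t,\taub]$ with the feedback maps of the $\Qb^\eps_\omb$ on $[\taub,T]$, the delicate part being the measurability of this gluing in the conditioning variable $\omb$. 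This is handled by working with the $\star$--encoding of the selected $\Qb^\eps_\omb$ (where the control is recovered from an $\R$--valued, raw--predictable, $[0,1]$--valued integrand — namely $\pi(\alphab)$, automatically square integrable — via the Borel injection $\Upsilon$), together with the fact that, under \Cref{assum:Lip}, both $\muh$ and the $\Gb^t$--stopping time $\taub$ are $\sigma(B^t)$--measurable under the $\B$--strong rule $\Pb$ (indeed $\muh_s=\Lc^\Pb\big((X_{s\wedge\cdot},A_{s\wedge\cdot},W,B_{s\wedge\cdot})\,\big|\,B^t\big)$). Consequently the conditioning datum $\Gcb^t_{\taub}=\sigma\big(B^t_{\taub\wedge\cdot},\muh_{\taub\wedge\cdot}\big)$ is itself a Borel functional of $B^t_{\taub\wedge\cdot}$, the feedback maps of the $\Qb^\eps_\omb$ can be chosen jointly Borel in it, and the resulting $\Phi$ is Borel; once $\Phi$ is produced, $\Pb^{M,\eps}\in\Pcb_S^\B(t,\nu)$ by definition, which completes the proof.
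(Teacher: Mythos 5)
Your part $(i)$ and the measurability/selection and concatenation set-up of part $(ii)$ track the paper's proof closely: the $\phi^{\omb}(s,\cdot)=\phi\big(s,\omb^b\otimes_{\taub(\omb)}\cdot\big)$ construction for part $(i)$, the use of \Cref{lemm:measurability_B-Strong} plus \eqref{eq:analytic_PCM} and the selection theorem for $(\Qb^{\eps}_{t,\nuh,M})$, and the definitions of $\widehat M$, $\Qb^{\eps}_{\omb}$, $\Pb^{M,\eps}$ are all the same.

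Where you diverge — and where your argument is substantially more delicate than the paper's — is the final verification that $\Pb^{M,\eps}\in\Pcb^{\B}_S(t,\nu)$. You propose to \emph{build} a single Borel feedback map $\Phi$ by gluing $\phi$ on $[t,\taub]$ with the feedback maps $\phi^{\omb}$ of the selected $\Qb^\eps_{\omb}$ on $[\taub,T]$, and you flag yourself that the measurable-in-$\omb$ choice of $\phi^\omb$ (and thus the Borel-ness of $\Phi$) is the obstacle, to be handled via the $\star$--encoding $\Upsilon^{-1}(\Qb^\eps_\omb\circ(B,A)^{-1})\in\Uc$. This is plausible but is only sketched: $\Uc$ consists of $\mathrm{d}t\otimes\mathrm{d}\Pt^\star$--equivalence classes, so extracting from the measurable $\Uc$--valued selection a single jointly Borel function $(s,b)\mapsto\Phi(s,b)$ requires a separate jointly-measurable-version argument that you do not carry out. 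The paper avoids this entirely: it never constructs $\Phi$ by hand. Instead, it tests the equality $A_s=\E^{\Pb^{M,\eps}}\big[A_s\,\big|\,B^t_{s\wedge\cdot}\big]$ against functionals $h(A_s)\psi(B^t_{s\wedge\cdot})$, splitting on $\{s\le\taub\}$ and $\{s>\taub\}$, using \eqref{fix_value-B} and the $\B$--strong property of $\Qb^\eps_\omb$ on the second event and $\Pb^{M,\eps}|_{\Fcb_\taub}=\Pb|_{\Fcb_\taub}$ with $\Pb\in\Pcb^\B_S$ on the first. This establishes that $A$ is adapted to the $\Pb^{M,\eps}$--augmented filtration generated by $B^t$, and the existence of a Borel $\hat\phi$ with $A_s=\hat\phi(s,B^t_{s\wedge\cdot})$ then follows abstractly (as in the functional-representation result of \cite{claisse2016pseudo} used elsewhere in the paper). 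You should either adopt this conditional-expectation route, or supply the jointly measurable gluing in full — as written, that step is a genuine gap.
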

	\begin{proof}
	$(i)$
	Let $\Pb \in \Pcb_S^{\B}(t,\nu)$, then there exists a Borel measurable function $\phi: [t, T] \x \Cc^{\ell} \longrightarrow U$ such that
	\[
		\alphab_s
		=
		\phi\big(s,B^t_{s \wedge \cdot}\big),
		\; \mbox{for all}\; s \in [t,T],\; 
		\Pb\mbox{--a.s.}
	\]
	Let us consider the concatenated path $(\omb \otimes_t \bar \w)_s := \omb_{t \wedge s} + \bar \w_{s\vee t} - \bar \w_t$ and define a Borel measurable function $\phi^{\omb}$ by
	\[
		\phi^{\omb}\big(s, \bar \w^b\big)
		:=
		\phi\big(s, \omb^b \otimes_{\taub(\omb)} \bar \w^b \big),
		~\mbox{for}~
		s \in [\taub(\omb),T],
		~\omb = (\omb^x, \omb^a, \omb^w, \omb^b, \omb^{\muh}),
		~\w = (\w^x, \w^a, \w^w, \w^b, \w^{\muh}) \in \Omb.
	\]
	Then by a classical conditioning argument, it is easy to check that for $\Pb$--a.e. $\omb \in \Omb$,
	\[
		\alphab_s
		=
		\phi^{\omb} \big(s,B^{\taub(\omb)}_{s \wedge \cdot}\big),\; \mbox{for all} \; s \in [\taub(\omb),T],\; 
		\Pb^{\Gcb^t_{\taub}}_{\omb} \mbox{--a.s.}
	\]
	Using \Cref{lemm:conditioning} and \Cref{def:PS}, it follows that $\Pb^{\Gcb^t_{\taub}}_{\omb} \in \Pcb_S^\B \big(\taub(\omb), \mu_{\taub(\omb)}(\omb) \big)$, for $\Pb$--a.e. $\omb \in \Omb$.

	\vspace{0.5em}

	$(ii)$ 
	Using \Cref{lemm:measurability_B-Strong}, it is easy to see that the graph set $\big\{ (t, \nu, M, \Pb) ~: \Pb \in \Pcb_S^{\star,M}(t, \nu) \big\}$ is analytic.
	Then one can apply the same arguments as in \Cref{lemm:concatenation} to obtain a measurable family $(\Qb^{\eps}_{t, \nu, M})_{(t, \nu, M) \in [0,T] \x \Pc(\Cc^n) \x \R_+}$ 
	such that 
	\[
		\Qb^{\eps}_{t, \nu, M} \in \Pcb_S^{\star, M}(t, \nu)
		~\mbox{\rm and}~
		J(t, \Qb^{\eps}_{t, \nu, M} ) \ge (V^{\B, M}_S(t, \nu) - \eps ) \mathbf{1}_{\{V^{\B, M}_S < \infty\}} + \frac{1}{\eps} \mathbf{1}_{\{V^{\B, M}_S = \infty\}}.
	\]
	To proceed, we will define a family $(\Qb^{\eps}_{t, \nuh, M})_{(t, \nuh, M) \in [0,T] \x \Pc(\Omh) \x \R_+} $ from the family $(\Qb^{\eps}_{t, \nu, M})_{(t, \nu, M) \in [0,T] \x \Pc(\Cc^n) \x \R_+}$ as follows.
	For all $(t, \nuh) \in [0,T] \x \Pc(\Omh)$, let $ \nu := \nuh \circ \Xh^{-1}$. 
	Then on the probability space $(\Omb, \Fcb_T, \Qb^{\eps}_{t, \nu, M})$, we consider a $\Fcb_t$--measurable random element $(A'_s, W'_s, B'_s)_{s \in [0,t]}$ such that 
	\[
		\Qb^{\eps}_{t, \nu, M} \circ (X_{t \wedge \cdot} , A'_{t \wedge \cdot}, W'_{t \wedge \cdot}, B'_{t \wedge \cdot} )^{-1} = \nuh(t).
	\]
	Define
	\[
		A'_s := A'_t + A_s - A_t, ~ W'_s := W'_t + W^t_s,~ B'_s := B'_t + B^t_s,~\mbox{for}~s \in [t,T],
	\]
	\[
		\muh'_s := \Lc^{\Qb^{\eps}_{t, \nu, M}}( X_{s \wedge}, A'_{s \wedge}, W', B'_{s \wedge}) \mathbf{1}_{\{s \in [0,t]\}} + \Lc^{\Qb^{\eps}_{t, \nu, M}}( X_{s \wedge}, A'_{s \wedge}, W', B'_{s \wedge} | \Gcb^t_T) \mathbf{1}_{\{s \in (t,T]\}}.
	\]
	Let
	\[
		\Qb^{\eps}_{t, \nuh, M} := \Qb^{\eps}_{t, \nu, M} \circ  \big( X, A', W', B', \muh' \big),
		~\mbox{so that}~ J(t, \Qb^{\eps}_{t, \nuh, M} ) = J(t, \Qb^{\eps}_{t, \nu, M})
		~\mbox{and hence satisfies \eqref{ineq:eps-optimal}}.
	\]
	Let $\Pb \in \Pcb^{\B}_S(t, \nu)$,
	as in \Cref{lemm:concatenation}, for $M > 0$, we let 
	\[
		\widehat M(\omb) :=  \E\bigg[ \|X \|^p + \int_{\taub}^T \big( \rho(\alphab_s, u_0) \big)^p \mathrm{d}s  \bigg| \Gcb^t_{\taub} \bigg](\omb),
		~\mbox{\rm and}~
		\Qb^{\eps}_{\omb} := \Qb^{\eps}_{\taub(\omb), \muh_{\taub(\omb)}(\omb), \widehat M(\omb) +M}.
	\]
	Again, as in the proof of \Cref{lemm:concatenation}, one has $\Qb^{\eps}_{\omb}$ satisfies \eqref{eq:loi_Xmu} and \eqref{fix_value-B},
	which allows defining $\Pb^{M, \eps}$ by
	\[
		\Pb^{M, \eps} [K] \in \int_{\Omb} \Qb_{\omb}[K] \Pb(d \omb),\; \mbox{for all}\; K \in \Fcb,
	\]
	so that $\Pb^{M, \eps} \in \Pcb_W(t, \nu)$, $\Pb^{M, \eps} = \Pb$ on $\Fcb_{\taub}$ and $(\Qb^{\eps}_{\omb})_{\omb \in \Omb}$ is a family of r.c.p.d. of $\Pb^{M, \eps}$ knowing $\Gcb^t_{\taub}$.
	
	\vspace{0.5em}

	Finally, it is enough to prove that $\Pb^{M, \eps} \in \Pcb_S^{\B}(t,\nu)$.
	Let $s \in [t, T]$, $h  \in C_b(\R)$ and $ \psi \in C_b(\Cc^{\ell})$, then
	\begin{align*}
		\E^{\Pb^{M, \eps}} \big[A_s h(A_s) \psi \big(B^t_{s \wedge \cdot} \big) \mathbf{1}_{s > \taub}\big]
		&=
		\E^{\Pb^{M, \eps}} \big[ \E^{\Qb^\eps_\cdot} \big[A_{s} h(A_s) \psi \big(B^t_{s \wedge \cdot} \big) \big]  \mathbf{1}_{s > \taub}\big] \\
		&=
		\E^{\Pb^{M, \eps}} \Big[ \E^{\Qb^\eps_\cdot} \Big[ \E^{\Qb^\eps_\cdot} \big[A_{s} \big| B^t_{s \wedge \cdot} \big] h(A_s) \psi \big(B^t_{s \wedge \cdot} \big) \Big] \mathbf{1}_{s > \taub}\Big]
		\\
		&=
		\E^{\Pb^{M, \eps}} \Big[ \E^{\Pb^{M, \eps}} \Big[ \E^{\Pb^{M, \eps}} \big[A_{s} \big| \Gcb^t_{\taub} \vee \sigma(B^t_{s \wedge \cdot} ) \big] h(A_s) \psi \big(B^t_{s \wedge \cdot} \big) \Big| \Gcb^t_{\taub} \Big] \mathbf{1}_{s > \taub}\Big] \\
		&=
		\E^{\Pb^{M, \eps}} \big[  \E^{\Pb^{M, \eps}} \big[A_{s} \big| B^t_{s \wedge \cdot} \big] h(A_s) \psi \big(B^t_{s \wedge \cdot} \big) \mathbf{1}_{s > \taub}\big],
	\end{align*}
	where the second equality follows by the fact that $\Qb^{\eps}_{\omb} \in \Pcb_S^\B(\taub(\omb), \nu')$ for some $\nu' \in \Pc(\Cc^n)$ and hence
	$h(A_s) = h( \phi(B^{\taub(\omb)}_{s \wedge \cdot}))$, $\Qb^{\eps}_{\omb}$--a.s., for some Borel measurable function $\phi$,
	and the last quality follows by the fact that on $\{s > \taub\},$ $\E^{\Pb^{M, \eps}} \big[A_{s} \big| \Gcb^t_{\taub} \vee \sigma(B^t_{s \wedge \cdot} ) \big]=\E^{\Pb^{M, \eps}} \big[A_{s} \big| B^t_{s \wedge \cdot} \big]$.
	Further, as $\Pb^{M, \eps}|_{\Fcb_{\taub}} = \Pb|_{\Fcb_{\taub}}$ and $\Pb \in \Pcb_S^{\B}(t,\nu)$, one can use similarly argument to find that
	\[
		\E^{\Pb^{M, \eps}} \big[A_s h(A_s) \psi \big(B^t_{s \wedge \cdot} \big) \mathbf{1}_{s \le \taub}\big]
		=
		\E^{\Pb^{M, \eps}} \big[  \E^{\Pb^{M, \eps}} \big[A_{s} \big| B^t_{s \wedge \cdot} \big] h(A_s) \psi \big(B^t_{s \wedge \cdot} \big) \mathbf{1}_{s \le\taub}\big].
	\]
	This implies that
	\[
		\E^{\Pb^{M, \eps}} \Big[  \Big( A_s - \E^{\Pb^{M, \eps}} \big[A_{s} \big| B^t_{s \wedge \cdot} \big] \Big) h(A_s) \psi \big(B^t_{s \wedge \cdot} \big) \Big]
		=0,
		~\mbox{\rm and hence}~
		A_s 
		=
		\E^{\Pb^{M, \eps}} \big[A_s \big| B^t_{s \wedge \cdot} \big],
		~\Pb^{M, \eps}\mbox{--a.s.}
	\]
	In other words, $A$ is a continuous process, adapted to the $\Pb^{M, \eps}$--augmented filtration generated by $B^t$,
	then there exists a Borel  measurable function $\hat \phi: [t,T] \x \Cc^{\ell} \longrightarrow U$ such that $A_s = \hat \phi(s, B^t_{s \wedge \cdot})$, for all $s \in [t,T]$, $\Pb^{M, \eps}$--a.s.,
	and hence $\Pb^{M, \eps} \in \Pcb_S^{\B}(t,\nu)$,
	which concludes the proof.
\end{proof}

\paragraph{Proof of Theorem \ref{thm:B-StrongDPP}}     

	The proof is almost the same as that of Theorem \ref{thm:WeakDPP}.
	First, one has the measurability of $V_S^\B$ by \Cref{lemm:measurability_B-Strong}.
	Next, notice that a $\G^{t, \circ}$--stopping time  $\tau$ on $\Om^t$ can be considered as a special $\Gb^t$--stopping time $\taub$ on $\Omb$.
	then using the conditioning argument in \Cref{lemm:conditioning-concatenation_StrongFormulation}, it follows that 
	\begin{align*}
	    V^{\B}_S(t, \nu) 
		\le 
		\sup_{\Pb \in \Pcb_S^\B(t, \nu)} 
		\E \bigg[
			\int_t^{\tau} L(s, X_{s \wedge \cdot} , \mub_s, \alphab_s) \mathrm{d}s
			+ 
			V_S^\B\big(\tau, \mu_{\tau} \big) 
		\bigg].
	\end{align*}
    	Finally, it is enough to use the concatenation argument in \Cref{lemm:conditioning-concatenation_StrongFormulation} and sending $M \to \infty$ to obtain the reverse inequality
	\begin{align*}
		V_S^\B(t,\nu)
		\ge
		\sup_{\Pb \in \Pcb_S^\B(t,\nu)}\E^\Pb \bigg[
			\int_t^{\tau} L(s, X_{s \wedge \cdot} ,\mub_s,\alphab_s) \mathrm{d}s
			+ 
			V_S^\B\big(\tau, \mu_{\tau} \big) 
		\bigg].
	\end{align*}
	\qed

\begin{appendix}
\section{Some technical results on controlled McKean--Vlasov SDEs}

	Let us first recall a technical optional projection result.

\begin{lemma} \label{lemm:Cond_Law}
    
	Let $E$ be a Polish space, $(\Om, \Fc, \P)$ be a complete probability space, equipped with a complete filtration $\G:=(\Gc_t)_{t\ge 0}$.

\medskip
	\noindent $(i)$
	Given an $E$--valued measurable process $(X_t)_{t \in [0,T]}$,
	there exists a $\Pc(E)$--valued $\G$--optional process $\beta$ such that
	\[
		\beta_{\tau} =\Lc^\P \big(X_{\tau} \big|\Gc_{\tau}\big),\; \P\mbox{\rm --a.s.},\;\mbox{\rm for all}\; \G \mbox{\rm--stopping times}\; \tau.
	\]

	\noindent $(ii)$
	Assume in addition that $X$ is a continuous process,
	and that the $\G$--optional $\sigma$--field is identical to the $\G$--predictable $\sigma$--field.
	Then one can choose $\beta$ to be an a.s. continuous process.
\end{lemma}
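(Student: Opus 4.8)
The plan is to treat the two parts separately, using the standard machinery of optional projections together with a countable-generation argument for the Polish-valued case. For part $(i)$, the key observation is that $\Pc(E)$, with $E$ Polish, is itself Polish, so it suffices to produce, for a countable convergence-determining family $(f_k)_{k\ge 1}$ of bounded continuous functions on $E$, the $\G$--optional projections of the real-valued bounded processes $(f_k(X_t))_{t\in[0,T]}$, and then glue them together. Concretely, first I would fix such a family $(f_k)_k \subset C_b(E)$ that determines weak convergence (this exists because $E$ is Polish). For each $k$, the process $t\longmapsto f_k(X_t)$ is bounded and measurable, so by the classical optional projection theorem (see e.g. \cite{stroock2007multidimensional} or Dellacherie--Meyer) there is a $\G$--optional process ${}^{o}\!\big(f_k(X)\big)$ which agrees with $\E^\P[f_k(X_\tau)\mid \Gc_\tau]$ at every $\G$--stopping time $\tau$, $\P$--a.s. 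Next I would check, using the a.s. uniqueness of optional projections and the fact that linearity and order are preserved, that on a single $\P$--full set the family $\big({}^{o}\!(f_k(X))_t\big)_k$ is, for every $t$, the sequence of integrals of a (unique) Borel probability measure $\beta_t$ on $E$: one invokes a Riesz-type / Stone--Daniell argument on the countable algebra generated by the $f_k$, exactly as in the construction of regular conditional distributions, noting that tightness is automatic because each $\Lc^\P(X_\tau\mid\Gc_\tau)$ is a genuine probability measure. Measurability of $t\longmapsto \beta_t$ as a $\Pc(E)$--valued process, and its $\G$--optionality, then follow because weak convergence on $\Pc(E)$ is metrized by the countable family $(\mu\mapsto\int f_k\,d\mu)_k$ and each coordinate process ${}^{o}\!(f_k(X))$ is $\G$--optional. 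Finally, the identity $\beta_\tau=\Lc^\P(X_\tau\mid\Gc_\tau)$ at stopping times is inherited coordinate-wise from the $f_k$, again using that $(f_k)_k$ is convergence-determining.

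For part $(ii)$, the extra hypotheses are that $X$ is continuous and that the $\G$--optional and $\G$--predictable $\sigma$--fields coincide (which holds, for instance, when $\G$ is quasi-left-continuous). The plan is to upgrade the coordinate processes ${}^{o}\!(f_k(X))$ to a.s. continuous versions and then transfer continuity to $\beta$. For each fixed $k$, since $X$ is continuous and $f_k$ bounded continuous, $f_k(X)$ is a bounded continuous process; its optional projection is then a càdlàg process, and because optional $=$ predictable here the projection is in fact predictable, hence has no jumps at predictable times — together with a standard argument (e.g. approximating from the left and using that the optional projection of a continuous bounded process is continuous, as in Dellacherie--Meyer, or directly the equality of the optional and predictable projections) this yields an a.s. continuous modification $\widetilde{Z}^k$ of ${}^{o}\!(f_k(X))$. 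Since there are only countably many $k$, there is a single $\P$--full set on which all $\widetilde{Z}^k$ are continuous; on that set, $\beta_t$ reconstructed from $(\widetilde{Z}^k_t)_k$ as in part $(i)$ is a $\Pc(E)$--valued path which is continuous for the weak topology, because $\Wc$--convergence (equivalently weak convergence) on $\Pc(E)$ is characterized by convergence of each $\int f_k\,d\mu$. This gives the desired a.s. continuous $\Pc(E)$--valued $\G$--optional process $\beta$.

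The main obstacle I expect is the gluing step in part $(i)$: showing that the countably many real-valued optional projections ${}^{o}\!(f_k(X))$ can be realized \emph{simultaneously} as the integrals against one $\Pc(E)$--valued optional process, with the path $t\mapsto\beta_t$ jointly measurable and optional, rather than just having a well-defined $\beta_\tau$ for each stopping time separately. This is the familiar difficulty in constructing regular versions of conditional laws, and the remedy is the usual one — work on the countable algebra generated by the $f_k$, use the a.s. uniqueness of optional projections to fix all exceptional null sets at once, verify the Daniell--Stone positivity and $\sigma$--continuity conditions (tightness being free), and invoke that $E$ Polish $\Rightarrow$ $\Pc(E)$ Polish so that the resulting map into $\Pc(E)$ is Borel. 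In part $(ii)$ the only delicate point is the equality-of-projections argument ensuring that the optional projection of a bounded continuous process is continuous; this is a known consequence of the hypothesis "optional $=$ predictable" and I would simply cite it.
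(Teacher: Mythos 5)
Your overall plan matches the paper's: for part $(i)$ the paper simply cites \citeauthor*{kurtz1998martingale} \cite[Theorem A.3]{kurtz1998martingale} (or \citeauthor*{yor1977sur} \cite[Proposition 1]{yor1977sur}), and your Daniell--Stone gluing outline is essentially a reconstruction of that result; for part $(ii)$ the paper, like you, first gets a c\`adl\`ag version from the same citation and then upgrades to continuity using the hypothesis that the optional and predictable $\sigma$--fields coincide.

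However, your argument for part $(ii)$ contains a genuinely incorrect inference. You write that since optional $=$ predictable, ``the projection is in fact predictable, hence has no jumps at predictable times.'' This is a non-sequitur: a c\`adl\`ag predictable process can perfectly well jump at predictable times (the deterministic step function $\mathbf{1}_{[1,\infty)}(t)$ is predictable and jumps at the predictable time $1$). The property ``no jumps at predictable times'' (quasi-left-continuity) does not come from predictability of the projection; it comes from the \emph{continuity of $X$} together with the defining identity of the optional projection at stopping times: for bounded increasing stopping times $\tau_n \uparrow \tau$, continuity of $X$ and dominated convergence give $\E^{\P}[f_k(X_{\tau_n})] \to \E^{\P}[f_k(X_{\tau})]$, hence $\E^{\P}\big[{}^o(f_k(X))_{\tau_n}\big] \to \E^{\P}\big[{}^o(f_k(X))_{\tau}\big]$, which is the classical characterization of quasi-left-continuity for c\`adl\`ag processes (this is exactly the step where the paper invokes \cite[Theorem IV--T24]{dellacherie1972capacites}). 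The hypothesis ``optional $=$ predictable'' is then used \emph{the other way round}: it ensures that every stopping time is predictable, so the (predictable) jump times of the c\`adl\`ag projection are all predictable, and quasi-left-continuity forbids any jump at all. Your two parenthetical alternatives --- ``the optional projection of a continuous bounded process is continuous'' and ``the equality of the optional and predictable projections'' --- are respectively false without the hypothesis (that is precisely what part $(ii)$ is designed to add) and a nontrivial claim requiring its own proof, so neither repairs the gap as stated.
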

\begin{proof}
	$(i)$ The existence of such process $\beta$ is ensured by, e.g. \citeauthor*{kurtz1998martingale} \cite[Theorem A.3]{kurtz1998martingale} or \citeauthor*{yor1977sur} \cite[Proposition 1]{yor1977sur}.

	\vspace{0.5em}

	$(ii)$ When $X$ is a continuous process, it follows again by  \cite[Theorem A.3]{kurtz1998martingale} (or \cite[Proposition 1]{yor1977sur}) that $\beta$ is  c\`adl\`ag $\P$--a.s.
	Further, let $\varphi \in C_b(E)$ and $(\tau_n)_{n \ge 1}$ be a increasing sequence of uniformly bounded $\G$--stopping times\footnote{which is $\G$--predictable time as soon as the $\G$--optional $\sigma$--field is identical to the $\G$--predictable $\sigma$--field.}.
	One has $\langle \varphi, {\beta}_{\tau_n} \rangle=\E^{\P}[\varphi(X_{\tau_n})| \Gc_{\tau_n}]$, $\P$--a.s.,
	and hence $\lim_{n\rightarrow\infty} \E^{\P}[\langle \varphi, {\beta}_{\tau_n} \rangle]=\E^{\P}[\langle \varphi, {\beta}_{\lim_n \tau_n} \rangle]$. 
	 Then it follows by \citeauthor*{dellacherie1972capacites} \cite[Theorem IV--T24]{dellacherie1972capacites} that $(\langle \varphi, {\beta}_{t} \rangle)_{t \in [0,T]}$ is left--continuous, $\P$--a.s. By considering a countable dense family of functions $\varphi$ in $C_b(E)$, one concludes that $\beta$ is also left--continuous a.s.
\end{proof}

	\vspace{0.5em}

	Let $(\Om, \Fc,\P)$ be a complete probability space,
	$\F = (\Fc_s)_{s \ge 0}$ a complete filtration, 
	supporting two independent $\F$--Brownian motions $B^{\star}$ and $W^{\star}$, which are respectively $\R^d$-- and $\R^\ell$--valued.
	Let us fix a $\R^n$--valued, $\F$--adapted continuous process $(\xi_s)_{s \ge 0}$,
	a $U$--valued $\F$--predictable process $(\alpha_s)_{s \ge 0}$, a complete sub--filtration $\G = (\Gc_s)_{s \ge 0}$ of $\F$,
	and denote $W^{*,t}_s := W^{\star}_{s \vee t} - W^{\star}_t$ and $B^{*,t}_s := B^{\star}_{s \vee t} - B^{\star}_t$.
	We will study the following SDE with data $(t,\xi, \alpha, \G)$:
	$X_s=\xi_s$ for all $s \in [0,t]$, and with $\mub_r := \Lc^\P(X_{r \wedge \cdot},\alpha_{r}|\Gc_r)$,
	\begin{equation} \label{eq:General_SDE-McKeanVlasov}
		X_s
		= 
		\xi_t 
		+
		\int_t^s b \big(r, X_{r \wedge \cdot}, \mub_r, \alpha_r \big) \mathrm{d}r
		+
		\int_t^s \sigma\big(r, X_{r \wedge \cdot}, \mub_r , \alpha_r\big) \mathrm{d} W^{\star}_r
		+ 
		\int_t^s \sigma_0 \big(r, X_{r \wedge \cdot}, \mub_r, \alpha_r \big) \mathrm{d}B^{\star}_r,\; 
		\mbox{for all}\; s \ge t,\; \P\mbox{--a.s.}
	\end{equation}

	\begin{definition} \label{def:strong_sol}
		A strong solution of {\rm SDE} {\rm\eqref{eq:General_SDE-McKeanVlasov}}, with  data $(t,\xi, \alpha, \G)$, on $[0,T]$,
		is an $\R^n$--valued $\F$--adapted continuous process $X = (X_t)_{t \ge 0}$ such that
		$\E \big[  \sup_{s \in [0,T]} |X_s|^2 \big] < \infty$ and \eqref{eq:General_SDE-McKeanVlasov} holds true. 
	\end{definition}

\begin{theorem} \label{theorem_Existence/uniqueness-SDE}
	Let $0 \le t \le T$, {\rm\Cref{assum:Lip}} hold true, 
	$\E \big[\sup_{s \in [0,t]} |\xi_s|^p \big] < \infty$ and
	$\E \big[\int_t^T \rho(u_0,\alpha_s)^p \mathrm{d} s \big]< \infty$ for some $p \ge 2$. Then

	\medskip	
	
	$(i)$ there exists a unique strong solution $X^{t,\xi,\alpha}$ of \eqref{eq:General_SDE-McKeanVlasov} on $[0,T]$ with data $(t,\xi,\alpha,\G)$.
	Moreover, it holds that $\E \big[ \sup_{s \in [0,T]} \big| X^{t,\xi,\alpha}_s \big|^p \big] < \infty;$

	\medskip

	$(ii)$
	assume in addition that
	$(\xi_{t \wedge \cdot},W^{\star},B^{\star}_{t \wedge \cdot})$ is independent of $\G$, and $B^t$ is $\G$--adapted, 
	and there exists a Borel measurable function $\phi: [0,T] \x \Cc^n \x \Cc^d \x \Cc^{\ell} \longrightarrow U$ such that
	\[
		\alpha_s
		=
		\phi\big(s,\xi_{t \wedge \cdot},W^{\star,t}_{s \wedge \cdot},B^{\star,t}_{s \wedge \cdot}\big),\;\P\mbox{\rm--a.s.},\;\mbox{\rm for all}\;s \in [0,T].
	\]
	Then, with $A_s:=\int_t^{s \vee t} \pi(\alpha_r) \mathrm{d}r$,
	there exists a continuous process $(\muh_t)_{t \in [0,T]}$ such that for all $s \in [0,T]$ 
	\[
	    \muh_s
	    =
	    \Lc^\P \big((X^{t,\xi,\alpha}_{s \wedge \cdot},A_{s \wedge \cdot}, W^{\star}, B^{\star}_{s \wedge \cdot}) \big|\Gc_T \big)
	    =
	    \Lc^\P \big((X^{t,\xi,\alpha}_{s \wedge \cdot},A_{s \wedge \cdot}, W^{\star}, B^{\star}_{s \wedge \cdot}) \big|\Gc_s \big)
	    =
	    \Lc^\P \big((X^{t,\xi,\alpha}_{s \wedge \cdot},A_{s \wedge \cdot}, W^{\star}, B^{\star}_{s \wedge \cdot}) \big|B^{*,t}_{s \wedge \cdot} \big)
	   ,\;\P\mbox{\rm --a.s.}
	\]
\end{theorem}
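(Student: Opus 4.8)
The plan is to prove $(i)$ by a Picard iteration in which the McKean--Vlasov nonlinearity is treated as an exogenous random coefficient. Put $X^0_s := \xi_{t\wedge s}$ for $s\le t$ and $X^0_s := \xi_t$ for $s\ge t$; given a continuous $\F$--adapted $X^k$ with $\E[\sup_{s\le T}|X^k_s|^2]<\infty$, let $\mub^k_r := \Lc^\P(X^k_{r\wedge\cdot},\alpha_r\mid\Gc_r)$, which by \Cref{lemm:Cond_Law}$(i)$ is a well-defined $\Pc_2(\Cc^n\times U)$--valued $\G$--optional process, and let $X^{k+1}$ be the unique square-integrable strong solution of \eqref{eq:General_SDE-McKeanVlasov} with the measure argument frozen to $\mub^k$; this is now a classical SDE with Lipschitz-in-path, linear-growth $\F$--adapted random coefficients, so it is well-posed by \Cref{assum:Lip}. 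Combining the Lipschitz estimate in \Cref{assum:Lip} with the Burkholder--Davis--Gundy inequality and Gr\"onwall's lemma yields
\[
	\E\Big[\sup_{s\le v}\big|X^{k+1}_s-X^k_s\big|^2\Big]
	\le
	C\int_t^v\Big(\E\big[\sup_{r'\le r}\big|X^{k+1}_{r'}-X^k_{r'}\big|^2\big]+\E\big[\Wc_2\big(\mub^k_r,\mub^{k-1}_r\big)^2\big]\Big)\,\mathrm{d}r,
\]
and the ``diagonal'' coupling of $\mub^k_r$ and $\mub^{k-1}_r$ under $\P^{\Gc_r}_\omega$ gives $\E[\Wc_2(\mub^k_r,\mub^{k-1}_r)^2]\le\E[\sup_{r'\le r}|X^k_{r'}-X^{k-1}_{r'}|^2]$. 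Iterating shows $(X^k)_k$ is Cauchy in the Banach space of continuous $\F$--adapted processes under the norm $\E[\sup_{s\le T}|\cdot|^2]^{1/2}$; its limit $X$ solves \eqref{eq:General_SDE-McKeanVlasov} (one may pass to the limit in the equation since $(b,\sigma,\sigma_0)$ are continuous in $(\xb,\nub)$ by \Cref{assum:Lip}), and the same Gr\"onwall argument on the difference of two solutions gives uniqueness.

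For the $p$--moment bound, raise the growth estimate of \Cref{assum:Lip} to the power $p/2\ge1$, use Jensen's inequality in the measure argument so that $\big(\int(\|\yb\|^2+\rho(\hat u,u_0)^2)\mub_r(\mathrm{d}\yb,\mathrm{d}\hat u)\big)^{p/2}\le\int(\|\yb\|^2+\rho(\hat u,u_0)^2)^{p/2}\mub_r(\mathrm{d}\yb,\mathrm{d}\hat u)$, and note that $\E[\int\|\yb\|^p\mub_r(\cdot)]\le\E[\sup_{s\le r}|X_s|^p]$ while $\E[\int\rho(\hat u,u_0)^p\mub_r(\cdot)]=\E[\rho(\alpha_r,u_0)^p]$ is integrable over $[t,T]$ by assumption. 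Applying BDG with exponent $p$ and Gr\"onwall's lemma along the localizing sequence $\tau_N:=\inf\{s:|X_s|\ge N\}$, then letting $N\to\infty$ by Fatou, gives $\E[\sup_{s\le T}|X^{t,\xi,\alpha}_s|^p]<\infty$.

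For $(ii)$, the plan is to propagate through the Picard scheme the property that the conditional law sees only the common-noise increments. Under the additional assumptions $\alpha_s=\phi(s,\xi_{t\wedge\cdot},W^{*,t}_{s\wedge\cdot},B^{*,t}_{s\wedge\cdot})$ and $B^{*,t}$ is $\G$--adapted, so the augmented natural filtration $\G^B$ of $B^{*,t}$ satisfies $\Gc^B_s\subseteq\Gc_s\subseteq\Gc_T$, while $(\xi_{t\wedge\cdot},W^\star,B^\star_{t\wedge\cdot})$ is independent of $\Gc_T$. One shows by induction on $k$ that $\mub^k_r$ is $\Gc^B_r$--measurable: if $\mub^{k-1}$ is a Borel functional of $B^{*,t}$, then $X^k$ solves a classical SDE whose coefficients are measurable functionals of $(\xi_{t\wedge\cdot},W^{*,t},B^{*,t})$, hence so are $X^k_{r\wedge\cdot}$ and $A_{r\wedge\cdot}$; writing $\big(X^k_{s\wedge\cdot},A_{s\wedge\cdot},W^\star,B^\star_{s\wedge\cdot}\big)=H\big((\xi_{t\wedge\cdot},W^\star,B^\star_{t\wedge\cdot}),B^{*,t}_{s\wedge\cdot}\big)$ with the first coordinate independent of $\Gc_T$ and the second $\Gc^B_s$--measurable, the ``freezing'' lemma for conditional distributions of a function of an independent argument shows that $\Lc^\P(H\mid\Hc)$ equals one and the same $\Gc^B_s$--measurable object for every $\sigma$--field $\Hc$ with $\Gc^B_s\subseteq\Hc$ and $\Hc$ independent of $(\xi_{t\wedge\cdot},W^\star,B^\star_{t\wedge\cdot})$. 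Taking $\Hc\in\{\Gc^B_s,\Gc_s,\Gc_T\}$ proves the three conditional laws coincide for the iterate and closes the induction; letting $k\to\infty$ (the $L^2$--convergence $X^k\to X$ forcing $\Wc_2$--convergence in $L^2(\P)$ of the conditional laws) transfers all three identities to $X^{t,\xi,\alpha}$.

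Finally, for the continuity of $s\mapsto\muh_s$ I would invoke \Cref{lemm:Cond_Law}$(ii)$ with $E=\Cc^n\times\Cc\times\Cc^d\times\Cc^\ell$, the continuous $E$--valued process $s\mapsto(X^{t,\xi,\alpha}_{s\wedge\cdot},A_{s\wedge\cdot},W^\star,B^\star_{s\wedge\cdot})$, and the filtration $\G^B$ --- whose optional and predictable $\sigma$--fields agree, being the augmentation of a Brownian filtration --- to obtain a $\P$--a.s. continuous version of $\Lc^\P(\cdot\mid\Gc^B_s)$, which by the identities above coincides $\P$--a.s. with $\Lc^\P(\cdot\mid\Gc_s)$ and $\Lc^\P(\cdot\mid\Gc_T)$ for each $s$. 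The delicate point of the whole argument is $(ii)$: one must make sure the reduction ``$\mub_r$ depends only on $B^{*,t}$'' is stable both along the Picard iteration and in the limit, and that the freezing lemma is applied with the correct $\sigma$--fields --- in particular that $\Gc_s$ and $\Gc_T$ both remain independent of $(\xi_{t\wedge\cdot},W^\star,B^\star_{t\wedge\cdot})$, which is exactly what the $(H)$--hypothesis-type independence assumption provides (in the body of the paper this is the content of \Cref{corollary_doubleSDE}).
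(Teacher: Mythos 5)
Your part $(i)$ is the same Picard-iteration strategy the paper uses, just organized slightly differently: you freeze the measure at each step and solve a classical SDE, then pass to $L^2$ and upgrade to $L^p$ afterwards, while the paper works directly in the space $\Sc^p$ with a one-step map $\Psi(Y)_s := \xi_{t\wedge s} + \int_t^{t\vee s} b(r,Y_{r\wedge\cdot}, \Lc^\P(Y_{r\wedge\cdot},\alpha_r|\Gc_r),\alpha_r)\,\mathrm{d}r + \dots$ whose iterates decay factorially, so the $p$-moment bound is obtained for free as part of the fixed-point existence. Both hinge on exactly the same diagonal-coupling estimate $\Wc_p(\Lc^\P(Y^1_{r\wedge\cdot},\alpha_r|\Gc_r),\Lc^\P(Y^2_{r\wedge\cdot},\alpha_r|\Gc_r))^p \le \E^\P[\|Y^1_{r\wedge\cdot}-Y^2_{r\wedge\cdot}\|^p|\Gc_r]$, so for $(i)$ I would call this essentially the same proof with a minor change of bookkeeping (your version pays the small tax of an extra Gr\"onwall per step and a separate $L^p$ boot-strapping, whereas the paper's $\Psi^n$ iteration avoids both).

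For part $(ii)$ you take a genuinely different route. The paper sidesteps any induction by first solving the SDE on the fixed canonical space $\Om^t = \Cc^n_{0,t}\times\Cc^d_{t,T}\times\Cc^\ell_{t,T}$ with the Brownian filtration, obtaining a single Borel functional $\Psi$ with $Y^\alpha_s = \Psi_s(\zeta_{t\wedge\cdot},W^t_{s\wedge\cdot},B^t_{s\wedge\cdot})$, transplanting it to the original space as $X^\alpha_s := \Psi_s(\xi_{t\wedge\cdot},W^{\star,t}_{s\wedge\cdot},B^{\star,t}_{s\wedge\cdot})$, verifying by the freezing/independence argument that $X^\alpha$ solves \eqref{eq:General_SDE-McKeanVlasov} with data $(t,\xi,\alpha,\G)$ as well, and then invoking uniqueness from $(i)$ to conclude $X^{t,\xi,\alpha}=X^\alpha$. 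You instead propagate $\Gc^B$-measurability of $\mub^k$ through the Picard iteration and pass to the limit using the $L^2(\P)$-continuity of $\Wc_2$-conditional laws. Your argument is correct, but it requires two extra verifications that the paper's transfer-plus-uniqueness shortcut renders unnecessary: at each step you must extract the Borel functional representation of the iterate $X^k$ in terms of $(\xi_{t\wedge\cdot},W^{\star,t},B^{\star,t})$ (a Doob-representation/measurable-selection point you should state explicitly), and at the end you must confirm that the three conditional-law identities survive the $k\to\infty$ limit. The paper's approach buys a once-and-for-all application of the functional representation and an immediate conclusion by uniqueness; yours is more self-contained in that it never leaves the original filtered space, but it is operationally heavier. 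Both conclude with \Cref{lemm:Cond_Law}$(ii)$ for continuity in the same way, and your identification of the hypotheses that make the freezing step legitimate (the independence of $(\xi_{t\wedge\cdot},W^{\star},B^{\star}_{t\wedge\cdot})$ from $\Gc_T$ and the inclusion $\Gc^B_s \subseteq \Gc_s \subseteq \Gc_T$) is exactly right.
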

\begin{proof}
	$(i)$ We follow \cite[Theorem 5.1.1]{stroock2007multidimensional} to prove the existence and uniqueness of a strong solution to \eqref{eq:General_SDE-McKeanVlasov}.
	Let $\Sc^p$ be defined by
	\[
		\Sc^p
		:=
		\big\{
			Y := (Y_s)_{s \in [0,T]}: \R^n \mbox{--valued and}\; \F\mbox{--adapted and continuous process such that}\; 
		\E^\P \big[
		\|Y\|^p_T
		\big]
		<
		\infty 
		\big \},
	\]
	where $\| \xb \|_s:= \sup_{r \in [0,s]} |\xb_r|$ for $s \in [0,T]$ and $\xb \in \Cc^n$. 
	For all $Y \in \Sc^p$, we define, with $\mub_s := \Lc^\P \big(Y_{s \wedge \cdot},\alpha_{s} \big| \Gc_T\big)$, $\Psi(Y) := (\Psi(Y)_s)_{0 \le s \le T}$ by
	\[
		\Psi(Y)_s
		:=
		\xi_{t \wedge s}
		+
		\int_t^{t \vee s} b \big(r, Y_{r \wedge \cdot}, \mub_r, \alpha_r \big) \mathrm{d}r
		+
		\int_t^{t \vee s} \sigma\big(r, Y_{r \wedge \cdot}, \mub_r, \alpha_r \big) \mathrm{d} W^{\star}_r
		+ 
		\int_t^{t \vee s} \sigma_0 \big(r, Y_{r \wedge \cdot}, \mub_r, \alpha_r \big) \mathrm{d}B^{\star}_r.
	\]
	Then, for $(Y^1, Y^2) \in \Sc^p\times\Sc^p$, with $\mub^i_s := \Lc^\P \big(Y^i_{s \wedge \cdot},\alpha_{s} \big| \Gc_T\big)$, $i\in\{1,2\}$, one has
	\begin{align*}
		\E^\P \Big[
		\big\| \Psi(Y^1)-\Psi(Y^2) \big\|_s^p
		\Big]
		\le&\
		3^p \E^\P \bigg[
		\sup_{v \in [t,s]}
			\bigg|\int_t^v \big( \sigma\big(r, Y^\mathbf{1}_{r \wedge \cdot}, \mub^\mathbf{1}_r, \alpha_r \big)
			-
			\sigma\big(r, Y^2_{r \wedge \cdot}, \mub^2_r, \alpha_r \big) \big) \mathrm{d}W^{\star}_r \bigg|^p
		\bigg]
		\\
		&+
		3^p \E^\P \bigg[
		\sup_{v \in [t,s]}
		\bigg|\int_t^v \big(\sigma_0\big(r, Y^\mathbf{1}_{r \wedge \cdot}, \mub^\mathbf{1}_r, \alpha_r \big)
		-
		\sigma_0\big(r, Y^2_{r \wedge \cdot}, \mub^2_r, \alpha_r \big) \big)\mathrm{d}B^{\star}_r \bigg|^p
		\bigg]
		\\
		&+
		3^p \E^\P \bigg[ \bigg|
		\int_t^s \big| b\big(r, Y^\mathbf{1}_{r \wedge \cdot}, \mub^\mathbf{1}_r, \alpha_r \big)
		-
		b\big(r, Y^2_{r \wedge \cdot}, \mub^2_r, \alpha_r \big)\big|\mathrm{d}r \bigg|^p
		\bigg].
	\end{align*}
	Notice that, for all $r \in [t,T]$,
	\[
		\Wc_2 \big(\mub^\mathbf{1}_r,\mub^2_r \big)^p
		\le 
		\Wc_p \big(\mub^\mathbf{1}_r,\mub^2_r \big)^p
		=
		\Wc_p \Big(\Lc^\P((Y^\mathbf{1}_{r \wedge \cdot},\alpha_r)|\Gc_r),\Lc^\P((Y^2_{r \wedge \cdot},\alpha_r)|\Gc_r) \Big)^p
		\le
		\E^\P \Big[ \| Y^\mathbf{1}_{r \wedge \cdot}- Y^2_{r \wedge \cdot} \|^p \Big| \Gc_r\Big].
	\]
	Then by Burkholder--Davis--Gundy inequality, Jensen's inequality and \Cref{assum:Lip}, there is some constant $C_T>0$ such that
	\begin{equation} \label{eq:Psi_estim}
		\E^\P \Big[
		\|\Psi(Y^1)-\Psi(Y^2)\|_s^p
		\Big]
		\le
		C_T
		\int_t^s
		\E^\P \Big[
		\|Y^\mathbf{1}_{r \wedge \cdot}-Y^2_{r \wedge \cdot}\|_r^p
		\Big] \mathrm{d}r.
	\end{equation}
	Besides, by \Cref{assum:Lip}
	\[
		\E \big[ \|\Psi(0) \|^p \big]
		\le
		C \bigg( 1 + \E^\P \bigg[ \sup_{r \in [0,t]} |\xi_r|^p \bigg] + \E^\P \bigg[ \int_t^T \rho(u_0,\alpha_r)^p \mathrm{d}r \bigg]\bigg).
	\]
	Then by taking $Y^2=0$, \eqref{eq:Psi_estim} implies that $\Psi(Y) \in \Sc^p$ whenever $Y \in \Sc^p$.
	Moreover, for any positive integer $n$ 
	\begin{align*}
        \E^\P \Big[
        \|\Psi^n(Y^1)-\Psi^n(Y^2)\|_s^p
        \Big]
        &\le
        C_T
        \int_t^s
        \E^\P \Big[
        \|\Psi^{n-1}(Y^1)-\Psi^{n-1}(Y^2)\|_r^p
        \Big] \mathrm{d}r
        \\
        &\le
        (C_T)^2
        \int_t^s
        \int_t^r
        \E^\P \Big[
        \|\Psi^{n-2}(Y^1)-\Psi^{n-2}(Y^2)\|_v^p
        \Big] \mathrm{d}v \mathrm{d}r
        \\
        &\le
        (C_T)^n
        \int
        \mathbf{1}_{\{s \ge v_1 \ge v_2 \ge ... \ge v_n \ge t\}}
        \E^\P \Big[
        \|Y^1-Y^2\|^p_{v_n}
        \Big]
        \mathrm{d}v_1\dots \mathrm{d}v_n
        \\
        &\le
        (C_T)^n
        \E^\P \Big[
        \|Y^1-Y^2\|^p_{s}
        \Big]
        \frac{(s-t)^n}{n!}.
	\end{align*}
	Let $Y \in \Sc^p$, $X^0:=Y$, and $X^n:=\Psi^n(Y)$, for $n \ge 1$, 
	it follows that
	\[
		\E^\P \big[
		\|X^n-X^{n+1}\|_s^p
		\Big]
		\le
		(C_T)^n
		\E^\P \Big[
		\|Y-\Psi(Y)\|^p_{s}
		\Big]
		\frac{(s-t)^n}{n!},
		~\mbox{\rm and hence}~
		\E^\P \Big[
			\sum_{n\geq 1}
			\|X^n-X^{n+1}\|_T^p
		\Big] < \infty,
	\]
	which implies that the sequence $(X^n)_{n\geq 1}$ converges uniformly, $\P$--a.s., to some $X \in \Sc^p$. 
	Finally, it is straightforward to see that $X$ is the unique strong solution of \eqref{eq:General_SDE-McKeanVlasov} with data $(t,\xi,\alpha,\G)$. 
    


	\medskip

	$(ii)$ 
	Let $\nu := \P \circ (\xi_{t \wedge \cdot})^{-1}$.
	We recall that the canonical space $\Om^t:= \Cc^n_{0,t} \x \Cc^{d}_{t,T} \x \Cc^{\ell}_{t,T}$ was introduced in Section \ref{subsubsec:strong_form},
	with corresponding canonical processes $\zeta = (\zeta_s)_{0 \le s \le t}$, $W = (W_s)_{t \le s \le T}$, and $ B = (B_s)_{t \le s \le T}$.
	and filtration $\F^{t,\circ}$, $\G^{t,\circ}$.
	Moreover, under $\P^t_{\nu}$, $W^t_s := W_{s \vee t} - W_t$ and $B^t_s := B_{s \vee t} - B_t$ are standard Brownian motion on $[t,T]$ independent of $\zeta$,
	and  $\F^t$ and $\G^t$ are the $\P^t_{\nu}$--augmented filtration of $\F^{t,\circ}$ and $\G^{t,\circ} $.

	\vspace{0.5em}
	
	Define $\widetilde{\alpha}_s:=\phi(s,\zeta_{t \wedge \cdot}, W^t_{s \wedge \cdot}, B^t_{s \wedge \cdot}),$ for all $s \in [t,T]$. 
	There exists a unique solution ${Y}^\alpha$ of \Cref{eq:General_SDE-McKeanVlasov} on $\Om^t$, 
	associated with $(t, \zeta_{t \wedge \cdot},\widetilde{\alpha},\G^t)$. 
	As ${Y}^\alpha$ is an $\F^t$--adapted continuous process, 
	there exists an Borel measurable function $\Psi: [0,T] \x \Cc^n \x \Cc^d \x \Cc^{\ell} \to \R^n$ such that
	\[
	    {Y}^\alpha_s
	    =
	    \Psi_s (\zeta_{t \wedge \cdot}, W^t_{s \wedge \cdot}, B^t_{s \wedge \cdot}),~ s \in [0,T],~\P^t_{\nu}\mbox{--a.s.}
	\]
	Next, on the probability space $(\Om,\Fc,\P)$, let us define
	\[
	    X^\alpha_s:= \Psi_s (\xi_{t \wedge \cdot}, W^{*,t}_{s \wedge \cdot},B^{*,t}_{s \wedge \cdot}).
	\]
	Then it is clear that $X^\alpha$ is the unique solution, on $(\Om,\F,\Fc,\P)$, of \Cref{eq:General_SDE-McKeanVlasov}, associated with $(t,\xi_{t \wedge \cdot},\alpha,\F^{B^{\star,t}})$, where $\F^{B^{\star,t}}:=(\Fc^{B^{\star,t}}_s)_{s \in [0,T]}$ is the $\P$--augmented filtration generated by $B^{\star,t}$. 
	Moreover, as $(\xi_{t \wedge \cdot},W^{\star}, B^{\star}_{t \wedge \cdot})$ is independent of $\G$, and $B^{\star,t}$ is $\G$--adapted, one has, for all $s \in [t,T]$,
	\begin{align*}
		\mub_s
		=
		\Lc^\P \big( (X^\alpha_{s \wedge \cdot}, \alpha_s) \big| B^{\star,t}_{s \wedge \cdot} \big)
		&=
		\Lc^\P \big( (\Psi_{s \wedge \cdot} (\xi_{t \wedge \cdot}, W^{\star,t}_{s \wedge \cdot}, B^{\star,t}_{s \wedge \cdot}), \phi(s,\xi_{t \wedge \cdot},W^{\star,t}_{s \wedge \cdot},B^{*,t}_{s \wedge \cdot})) \big| B^{*,t}_{s \wedge \cdot} \big)
		\\
		&=
		\Lc^\P \big( (\Psi_{s \wedge \cdot} (\xi_{t \wedge \cdot}, W^{\star,t}_{s \wedge \cdot}, B^{\star,t}_{s \wedge \cdot}), \phi(s,\xi_{t \wedge \cdot},W^{\star,t}_{s \wedge \cdot}, B^{\star,t}_{s \wedge \cdot}) )\big| \Gc_s \big)
		\\
		&=
		\Lc^\P \big( (X^\alpha_{s \wedge \cdot}, \alpha_s )\big| \Gc_s \big), ~\P\mbox{\rm--a.s.}
	\end{align*}
	This implies that $X^\alpha$ is also a solution of \Cref{eq:General_SDE-McKeanVlasov} associated with $(t,\xi_{t \wedge \cdot},\alpha,\G)$,
	and hence $X^{t,\xi,\alpha} = X^\alpha$ by uniqueness of solution to \Cref{eq:General_SDE-McKeanVlasov}.
    
	\medskip
	Further, as $A_s$ is a Borel measurable function of $(\xi_{t \wedge \cdot},W^{\star, t}_{s \wedge \cdot},B^{\star,t}_{s \wedge \cdot})$, 
	it follows by the same argument that, for all $s \in [0,T]$,
	\[
		\muh_s 
		:=
		\Lc^\P \big(X^{t,\xi,\alpha}_{s \wedge \cdot},A_{s \wedge \cdot}, W^{\star}, B^{\star}_{s \wedge \cdot} \big|\Gc_T \big)
	    =
	    \Lc^\P \big(X^{t,\xi,\alpha}_{s \wedge \cdot},A_{s \wedge \cdot}, W^{\star}, B^{\star}_{s \wedge \cdot} \big|\Gc_s \big)
	    =
	    \Lc^\P \big(X^{t,\xi,\alpha}_{s \wedge \cdot},A_{s \wedge \cdot}, W^{\star}, B^{\star}_{s \wedge \cdot} \big|B^{\star,t}_{s \wedge \cdot} \big)
	   ,\;\P-\mbox{a.s.}
	\]
	Finally, using \Cref{lemm:Cond_Law}, one can choose the process $\muh$ to be continuous.
	\end{proof}

	Let us consider the following system of {\rm SDE}, 
	where a solution is a couple of $\F$--adapted continuous processes $(X, \muh)$ such that: for some $\nuh_0 \in \Pc(\Cc^n \x \Cc \x \Cc^d \x \Cc^\ell),$ $\E^{\P} \big[\|X\|^2 + \Wc_2(\muh,\nuh_0)^2 \big]< \infty,$
	$X_s=\xi_s$ for $s \in [0,t]$, and
	\begin{equation} \label{eq:double_SDE-McKeanVlasov}
		X_s
		= 
		\xi_t 
		+
		\int_t^s b \big(r, X_{r \wedge \cdot}, \mub_r, \alpha_r \big) \mathrm{d}r
		+
		\int_t^s \sigma\big(r, X_{r \wedge \cdot}, \mub_r, \alpha_r \big) \mathrm{d} W^{\star}_r
		+ 
		\int_t^s \sigma_0 \big(r, X_{r \wedge \cdot}, \mub_r, \alpha_r \big) \mathrm{d}B^{\star}_r,\;  s \in [t,T],\; \P\mbox{\rm --a.s.},
	\end{equation}
	and $\mub_r := \Lc^\P(X_{r \wedge \cdot},\alpha_{r}|B^{*,t}_{r \wedge \cdot}, \muh_{r \wedge \cdot})$ for all $r \ge t$,
	and with $A_s:=\int_t^{s \wedge t} \pi(\alpha_r) \mathrm{d}r$,
	$\muh_s=\Lc^\P \big( X_{s \wedge \cdot},A_{s \wedge \cdot}, W^{\star}, B^{\star}_{s \wedge \cdot} \big| B^{t,*}_{s \wedge \cdot},\muh_{s \wedge \cdot} \big)$ for all $s \ge 0$, and finally $(\xi_{t \wedge \cdot},W^{\star},B^{\star}_{t \wedge \cdot})$ is independent of $(B^{*,t},\muh)$.

	\begin{corollary} \label{corollary_doubleSDE}
		Let {\rm\Cref{assum:Lip}} hold true, 
		and assume that there exists a Borel measurable function $\phi: [0,T] \x \Cc^n \x \Cc^d \x \Cc^\ell \longrightarrow U$ such that 
		\[
			\alpha_s=\phi \big(s,\xi_{t \wedge \cdot},W^{\star,t}_{s \wedge \cdot},B^{\star,t}_{s \wedge \cdot} \big),\; \text{\rm for}\; \mathrm{d}\P \otimes \mathrm{d}t\; \mbox{\rm--a.e.} \; (s,\om) \in [t,T] \x \Om,
			~\mbox{and}~
			\E \bigg[\int_t^T \rho(u_0,\alpha_s)^2 \mathrm{d} s \bigg]< \infty.
		\]
		Then, {\rm\Cref{eq:double_SDE-McKeanVlasov}} has a unique solution  $(X,\muh)$,
		where $X$ is the strong solution of {\rm\Cref{eq:General_SDE-McKeanVlasov}} with data $(t,\xi, \alpha, \F^{B^{\star,t}})$,
		with $\F^{B^{\star,t}}$ being the $\P$--augmented filtration generated by $B^{\star,t}$ and
		\[
			\muh_s
			=
			\Lc^\P \big( X_{s \wedge \cdot}, A_{s \wedge \cdot}, W^{\star}, B^{\star}_{s \wedge \cdot} \big | B^{\star,t}_{s \wedge \cdot} \big),\; s\in[t,T],\; \P\mbox{\rm --a.s.}
		\]
	\end{corollary}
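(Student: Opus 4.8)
The plan is to deduce both existence and uniqueness from \Cref{theorem_Existence/uniqueness-SDE}$(ii)$; the whole content of the statement is that the extra conditioning on $\muh$ inside $\Lc^\P(\,\cdot\,|\,B^{\star,t}_{s \wedge \cdot},\muh_{s \wedge \cdot})$ is redundant as soon as $\muh$ is chosen to be the conditional law given $B^{\star,t}$.

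\textbf{Existence.} First I would apply \Cref{theorem_Existence/uniqueness-SDE}$(ii)$ with $\G := \F^{B^{\star,t}}$, the $\P$--augmentation of the filtration generated by the increment process $B^{\star,t}$. Its hypotheses hold: $(\xi_{t \wedge \cdot}, W^{\star}, B^{\star}_{t \wedge \cdot})$ is $\P$--independent of $\Fc^{B^{\star,t}}_T = \sigma(B^{\star,t})$ (the increments of $B^{\star}$ after $t$ are independent of $\Fc_t \supseteq \sigma(\xi_{t \wedge \cdot}, B^{\star}_{t \wedge \cdot})$ and of $W^{\star}$), $B^{\star,t}$ is trivially $\F^{B^{\star,t}}$--adapted, and the feedback representation and square integrability of $\alpha$ are assumed in the corollary. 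This yields $X := X^{t,\xi,\alpha}$ and a continuous process $\muh$ with, $\P$--a.s. for every $s$,
\[
\muh_s = \Lc^{\P}\big(X_{s \wedge \cdot}, A_{s \wedge \cdot}, W^{\star}, B^{\star}_{s \wedge \cdot} \,\big|\, B^{\star,t}_{s \wedge \cdot}\big) = \Lc^{\P}\big(X_{s \wedge \cdot}, A_{s \wedge \cdot}, W^{\star}, B^{\star}_{s \wedge \cdot} \,\big|\, \Fc^{B^{\star,t}}_s\big),
\]
so $\muh$ is $\F^{B^{\star,t}}$--adapted and $\muh_{s \wedge \cdot}$ is $\sigma(B^{\star,t}_{s \wedge \cdot})$--measurable up to $\P$--null sets. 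Hence $\sigma(B^{\star,t}_{s \wedge \cdot}, \muh_{s \wedge \cdot}) = \Fc^{B^{\star,t}}_s$ modulo $\P$, and therefore $\Lc^{\P}(\,\cdot\,|\,B^{\star,t}_{s \wedge \cdot}, \muh_{s \wedge \cdot}) = \Lc^{\P}(\,\cdot\,|\,\Fc^{B^{\star,t}}_s)$; in particular $\mub_r = \Lc^{\P}(X_{r \wedge \cdot}, \alpha_r | \Fc^{B^{\star,t}}_r)$, which is exactly the measure against which $X$ solves \eqref{eq:General_SDE-McKeanVlasov}, and the fixed--point relation for $\muh$ in \eqref{eq:double_SDE-McKeanVlasov} holds too. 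The independence requirement in \eqref{eq:double_SDE-McKeanVlasov} follows since $\sigma(B^{\star,t}, \muh) \subseteq \Fc^{B^{\star,t}}_T$. Finally $\E^{\P}[\|X\|^2] < \infty$ by the theorem, and with e.g. $\nuh_0 := \Lc^{\P}(X_{t \wedge \cdot}, 0, 0, 0)$ one gets $\E^{\P}[\Wc_2(\muh_s, \nuh_0)^2] \le \E^{\P}[\|X\|^2 + \|A\|^2 + \|W^{\star}\|^2 + \|B^{\star}\|^2] < \infty$ uniformly in $s$, using the integrability of $\alpha$ for the $A$--term. Thus $(X, \muh)$ solves \eqref{eq:double_SDE-McKeanVlasov}, and it is of the announced form.

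\textbf{Uniqueness.} Let $(X', \muh')$ be any solution of \eqref{eq:double_SDE-McKeanVlasov}, and set $\H := (\Hc_s)_{s \in [0,T]}$ with $\Hc_s$ the $\P$--augmentation of $\sigma(B^{\star,t}_{s \wedge \cdot}, \muh'_{s \wedge \cdot})$. Since $B^{\star,t}$ and $\muh'$ are $\F$--adapted and $\F$ is complete, $\H$ is a complete sub--filtration of $\F$. By definition $\muh'_s = \Lc^{\P}(\,\cdot\,|\,\Hc_s)$, hence the SDE in \eqref{eq:double_SDE-McKeanVlasov} exhibits $X'$ as a strong solution of \eqref{eq:General_SDE-McKeanVlasov} with data $(t, \xi, \alpha, \H)$. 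The hypotheses of \Cref{theorem_Existence/uniqueness-SDE}$(ii)$ are met for $\H$: $B^{\star,t}$ is $\H$--adapted by construction, the independence of $(\xi_{t \wedge \cdot}, W^{\star}, B^{\star}_{t \wedge \cdot})$ from $\Hc_T = \sigma(B^{\star,t}, \muh')$ is precisely the independence clause in the definition of a solution of \eqref{eq:double_SDE-McKeanVlasov}, and the conditions on $\alpha$ are those assumed. Therefore $X' = X^{t,\xi,\alpha}$, and since this process does not depend on the choice of conditioning filtration, $X' = X$, the solution built above. Moreover \Cref{theorem_Existence/uniqueness-SDE}$(ii)$ gives, for a continuous version, $\Lc^{\P}(X'_{s \wedge \cdot}, A'_{s \wedge \cdot}, W^{\star}, B^{\star}_{s \wedge \cdot} | \Hc_s) = \Lc^{\P}(X'_{s \wedge \cdot}, A'_{s \wedge \cdot}, W^{\star}, B^{\star}_{s \wedge \cdot} | B^{\star,t}_{s \wedge \cdot})$; the left--hand side equals $\muh'_s$ by the fixed--point relation, while, since $X' = X$ and $A' = A$ (same $\alpha$), the right--hand side equals $\muh_s$. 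By continuity, $\muh' = \muh$ $\P$--a.s., which finishes the proof.

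\textbf{Main obstacle.} The only delicate point is this ``collapse of the conditioning'': arguing that the auxiliary $\muh$ inside $\sigma(B^{\star,t}_{s \wedge \cdot}, \muh_{s \wedge \cdot})$ adds nothing beyond $B^{\star,t}_{s \wedge \cdot}$. For existence this is immediate because $\muh$ is built $\F^{B^{\star,t}}$--adapted. For uniqueness it is circular and can only be resolved a posteriori, by running \Cref{theorem_Existence/uniqueness-SDE}$(ii)$ along the self--generated filtration $\H$ (where the conditioning is tautologically by $\Hc_s$) to identify $X'$ with the canonical strong solution and, at the same time, to reduce the $\Hc_s$--conditional law to the $B^{\star,t}_{s \wedge \cdot}$--conditional law; only then does the dependence on $\muh'$ drop out. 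Beyond that, one just has to check that $\H$ is a genuine complete sub--filtration of the ambient $\F$, and to read all conditional--law identities in the ``$\P$--a.s., for all $s$'' sense afforded by the optional projection of \Cref{lemm:Cond_Law} and \Cref{theorem_Existence/uniqueness-SDE}$(ii)$, so that the passage to continuous versions is legitimate.
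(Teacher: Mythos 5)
Your proof is correct and follows essentially the same route as the paper: both arguments hinge on viewing any solution $(X',\muh')$ of \eqref{eq:double_SDE-McKeanVlasov} as a strong solution of \eqref{eq:General_SDE-McKeanVlasov} with conditioning filtration $\sigma(B^{\star,t}_{s\wedge\cdot},\muh'_{s\wedge\cdot})$, and then invoking \Cref{theorem_Existence/uniqueness-SDE}$(ii)$ (using the independence clause built into the definition of a solution) to collapse this conditioning onto $\F^{B^{\star,t}}$ and identify $X'$ with $X^{t,\xi,\alpha}$ and $\muh'$ with the $B^{\star,t}$--conditional law. The only difference is presentational: the paper's proof is a two--line uniqueness argument with existence left implicit, whereas you make both the existence verification (including the integrability of $\Wc_2(\muh,\nuh_0)$ and the fixed--point checks) and the uniqueness step fully explicit.
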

	\begin{proof}
		Given a solution $(X, \muh)$ to {\rm\Cref{eq:double_SDE-McKeanVlasov}},
		we notice that $X$ is a strong solution of \Cref{eq:General_SDE-McKeanVlasov} associated with data $(t,\alpha,\xi,\F^{B^{\star,t},\muh})$, where $\F^{B^{\star,t},\muh}:=(\Fc^{B^{\star,t},\muh}_s)_{s \in [0,T]}$ with $\Fc^{B^{\star,t},\muh}_s:= \sigma (B^{\star,t}_{s \wedge \cdot},\muh_{s \wedge \cdot})$. 
		As $(\xi_{t \wedge \cdot},W^{\star},B^{\star}_{t \wedge \cdot})$ is independent of $(B^{\star,t},\muh)$,
		it is then enough to apply \Cref{theorem_Existence/uniqueness-SDE} to conclude that $X$ is the strong solution of {\rm\Cref{eq:General_SDE-McKeanVlasov}} with data $(t,\xi, \alpha, \F^{B^{\star,t}})$.
	\end{proof}

\end{appendix}

\bibliography{bibliographyDylan}

\begin{thebibliography}{82}
\providecommand{\natexlab}[1]{#1}
\providecommand{\url}[1]{\texttt{#1}}
\expandafter\ifx\csname urlstyle\endcsname\relax
  \providecommand{\doi}[1]{doi: #1}\else
  \providecommand{\doi}{doi: \begingroup \urlstyle{rm}\Url}\fi

\bibitem[Acciaio et~al.(2018)Acciaio, Backhoff-Veraguas, and
  Carmona]{acciaio2018generalized}
B.~Acciaio, J.~Backhoff-Veraguas, and R.~Carmona.
\newblock Generalized {M}c{K}ean--{V}lasov (mean field) control: a stochastic
  maximum principle and a transport perspective.
\newblock \emph{arXiv preprint arXiv:1802.05754}, 2018.

\bibitem[Aliprantis and Border(2006)]{aliprantis2006infinite}
C.~Aliprantis and K.~Border.
\newblock \emph{Infinite dimensional analysis: a hitchhiker's guide}.
\newblock Springer--Verlag Berlin Heidelberg, 3rd edition, 2006.

\bibitem[Andersson and Djehiche(2011)]{andersson2011maximum}
D.~Andersson and B.~Djehiche.
\newblock A maximum principle for {SDE}s of mean--field type.
\newblock \emph{Applied Mathematics \& Optimization}, 63\penalty0 (3):\penalty0
  341--356, 2011.

\bibitem[Bahlali et~al.(2014)Bahlali, Mezerdi, and
  Mezerdi]{bahlali2014existence}
K.~Bahlali, M.~Mezerdi, and B.~Mezerdi.
\newblock Existence of optimal controls for systems governed by mean--field
  stochastic differential equations.
\newblock \emph{Afrika Statistika}, 9\penalty0 (1):\penalty0 627--645, 2014.

\bibitem[Bahlali et~al.(2017)Bahlali, Mezerdi, and
  Mezerdi]{bahlali2017existence}
K.~Bahlali, M.~Mezerdi, and B.~Mezerdi.
\newblock Existence and optimality conditions for relaxed mean--field
  stochastic control problems.
\newblock \emph{Systems \& Control Letters}, 102:\penalty0 1--8, 2017.

\bibitem[Bahlali et~al.(2018)Bahlali, Mezerdi, and Mezerdi]{bahlali2018relaxed}
K.~Bahlali, M.~Mezerdi, and B.~Mezerdi.
\newblock On the relaxed mean--field stochastic control problem.
\newblock \emph{Stochastics and Dynamics}, 18\penalty0 (3):\penalty0 1850024,
  2018.

\bibitem[Bandini et~al.(to appear)Bandini, Cosso, Fuhrman, and
  Pham]{bandini2015randomization}
E.~Bandini, A.~Cosso, M.~Fuhrman, and H.~Pham.
\newblock Randomization method and backward {SDE}s for optimal control of
  partially observed path--dependent stochastic systems.
\newblock \emph{The Annals of Applied Probability}, to appear.

\bibitem[Basei and Pham(2019)]{basei2019weak}
M.~Basei and H.~Pham.
\newblock A weak martingale approach to linear--quadratic {M}c{K}ean--{V}lasov
  stochastic control problems.
\newblock \emph{Journal of Optimization Theory and Applications}, 181\penalty0
  (2):\penalty0 347--382, 2019.

\bibitem[Bayraktar et~al.(2018)Bayraktar, Cosso, and
  Pham]{bayraktar2016randomized}
E.~Bayraktar, A.~Cosso, and H.~Pham.
\newblock Randomized dynamic programming principle and {F}eynman--{K}ac
  representation for optimal control of {M}c{K}ean--{V}lasov dynamics.
\newblock \emph{Transactions of the American Mathematical Society},
  370\penalty0 (3):\penalty0 2115--2160, 2018.

\bibitem[Bensoussan et~al.(2013)Bensoussan, Frehse, and
  Yam]{bensoussan2013mean}
A.~Bensoussan, J.~Frehse, and S.~Yam.
\newblock \emph{Mean field games and mean field type control theory}, volume
  101 of \emph{SpringerBriefs in mathematics}.
\newblock Springer--Verlag New York, 2013.

\bibitem[Bensoussan et~al.(2015)Bensoussan, Frehse, and
  Yam]{bensoussan2015master}
A.~Bensoussan, J.~Frehse, and S.~Yam.
\newblock The master equation in mean field theory.
\newblock \emph{Journal de Math{\'e}matiques Pures et Appliqu{\'e}es},
  103\penalty0 (6):\penalty0 1441--1474, 2015.

\bibitem[Bensoussan et~al.(2017)Bensoussan, Frehse, and
  Yam]{bensoussan2017interpretation}
A.~Bensoussan, J.~Frehse, and S.~Yam.
\newblock On the interpretation of the master equation.
\newblock \emph{Stochastic Processes and their Applications}, 127\penalty0
  (7):\penalty0 2093--2137, 2017.

\bibitem[Bertsekas and Shreve(1978)]{bertsekas1978stochastic}
D.~Bertsekas and S.~Shreve.
\newblock \emph{Stochastic optimal control: the discrete time case}, volume 139
  of \emph{Mathematics in science and engineering}.
\newblock Academic Press New York, 1978.

\bibitem[Bj{\"o}rk and Murgoci(2014)]{bjork2014theory}
T.~Bj{\"o}rk and A.~Murgoci.
\newblock A theory of {M}arkovian time--inconsistent stochastic control in
  discrete time.
\newblock \emph{Finance and Stochastics}, 18\penalty0 (3):\penalty0 545--592,
  2014.

\bibitem[Bj{\"o}rk et~al.(2017)Bj{\"o}rk, Khapko, and Murgoci]{bjork2017time}
T.~Bj{\"o}rk, M.~Khapko, and A.~Murgoci.
\newblock On time--inconsistent stochastic control in continuous time.
\newblock \emph{Finance and Stochastics}, 21\penalty0 (2):\penalty0 331--360,
  2017.

\bibitem[Bouchard et~al.(2017)Bouchard, Djehiche, and
  Kharroubi]{bouchard2017quenched}
B.~Bouchard, B.~Djehiche, and I.~Kharroubi.
\newblock Quenched mass transport of particles towards a target quenched mass
  transport of particles towards a target quenched mass transport of particles
  towards a target quenched mass transport of particles towards a target.
\newblock \emph{arXiv preprint arXiv:1707.07869}, 2017.

\bibitem[Brezis(2011)]{brezis2011functional}
H.~Brezis.
\newblock \emph{Functional analysis, {S}obolev spaces and partial differential
  equations}.
\newblock Springer--Verlag New York, 2011.

\bibitem[Brunick and Shreve(2013)]{brunick2013mimicking}
G.~Brunick and S.~Shreve.
\newblock Mimicking an {I}t{\^o} process by a solution of a stochastic
  differential equation.
\newblock \emph{The Annals of Applied Probability}, 23\penalty0 (4):\penalty0
  1584--1628, 2013.

\bibitem[Buckdahn et~al.(2011)Buckdahn, Djehiche, and Li]{buckdahn2011general}
R.~Buckdahn, B.~Djehiche, and J.~Li.
\newblock A general stochastic maximum principle for {SDE}s of mean--field
  type.
\newblock \emph{Applied Mathematics \& Optimization}, 64\penalty0 (2):\penalty0
  197--216, 2011.

\bibitem[Buckdahn et~al.(2017)Buckdahn, Li, and Ma]{buckdahn2017mean}
R.~Buckdahn, J.~Li, and J.~Ma.
\newblock A mean--field stochastic control problem with partial observations.
\newblock \emph{The Annals of Probability}, 27\penalty0 (5):\penalty0
  3201--3245, 2017.

\bibitem[Cardaliaguet(2010)]{cardaliaguet2010notes}
P.~Cardaliaguet.
\newblock Notes on mean field games (from {P}.-{L}. {L}ions' lectures at
  {C}oll\`ege de {F}rance).
\newblock Lecture given at Tor Vergata, April--May 2010, 2010.

\bibitem[Cardaliaguet et~al.(2019)Cardaliaguet, Delarue, Lasry, and
  Lions]{cardaliaguet2015master}
P.~Cardaliaguet, F.~Delarue, J.-M. Lasry, and P.-L. Lions.
\newblock \emph{The master equation and the convergence problem in mean field
  games}, volume 201 of \emph{Annals of mathematics studies}.
\newblock Princeton University Press, 2019.

\bibitem[Carmona and Delarue(2014)]{carmona2014master}
R.~Carmona and F.~Delarue.
\newblock The master equation for large population equilibriums.
\newblock In D.~Crisan, B.~Hambly, and T.~Zariphopoulou, editors,
  \emph{Stochastic analysis and applications 2014: in honour of Terry Lyons},
  volume 100 of \emph{Springer proceedings in mathematics and statistics},
  pages 77--128. Springer, 2014.

\bibitem[Carmona and Delarue(2015)]{carmona2015forward}
R.~Carmona and F.~Delarue.
\newblock Forward--backward stochastic differential equations and controlled
  {M}c{K}ean--{V}lasov dynamics.
\newblock \emph{The Annals of Probability}, 43\penalty0 (5):\penalty0
  2647--2700, 2015.

\bibitem[Carmona and Delarue(2018)]{carmona2018probabilisticI}
R.~Carmona and F.~Delarue.
\newblock \emph{Probabilistic theory of mean field games with applications
  {I}}, volume~83 of \emph{Probability theory and stochastic modelling}.
\newblock Springer International Publishing, 2018.

\bibitem[Carmona and Zhu(2016)]{carmona2014probabilistic}
R.~Carmona and X.~Zhu.
\newblock A probabilistic approach to mean field games with major and minor
  players.
\newblock \emph{The Annals of Applied Probability}, 26\penalty0 (3):\penalty0
  1535--1580, 2016.

\bibitem[Carmona et~al.(2013)Carmona, Delarue, and
  Lachapelle]{carmona2013control}
R.~Carmona, F.~Delarue, and A.~Lachapelle.
\newblock Control of {M}c{K}ean--{V}lasov dynamics versus mean field games.
\newblock \emph{Mathematics and Financial Economics}, 7\penalty0 (2):\penalty0
  131--166, 2013.

\bibitem[Carmona et~al.(2016)Carmona, Delarue, and Lacker]{carmona2014mean}
R.~Carmona, F.~Delarue, and D.~Lacker.
\newblock Mean field games with common noise.
\newblock \emph{The Annals of Probability}, 44\penalty0 (6):\penalty0
  3740--3803, 2016.

\bibitem[Chala(2014)]{chala2014relaxed}
A.~Chala.
\newblock The relaxed optimal control problem for mean--field {SDE}s systems
  and application.
\newblock \emph{Automatica}, 50\penalty0 (3):\penalty0 924--930, 2014.

\bibitem[Claisse et~al.(2016)Claisse, Talay, and Tan]{claisse2016pseudo}
J.~Claisse, D.~Talay, and X.~Tan.
\newblock A pseudo--{M}arkov property for controlled diffusion processes.
\newblock \emph{SIAM Journal on Control and Optimization}, 54\penalty0
  (2):\penalty0 1017--1029, 2016.

\bibitem[Dellacherie(1972)]{dellacherie1972capacites}
C.~Dellacherie.
\newblock \emph{Capacit{\'e}s et processus stochastiques}, volume~67 of
  \emph{Ergebnisse der Mathematik und ihrer Grenzgebiete. 2. Folge}.
\newblock Springer--Verlag Berlin Heidelberg, 1972.

\bibitem[Dellacherie(1985)]{dellacherie1985quelques}
C.~Dellacherie.
\newblock Quelques r{\'e}sultats sur les maisons de jeu analytiques.
\newblock \emph{S{\'e}minaire de probabilit{\'e}s de Strasbourg}, XIX:\penalty0
  222--229, 1985.

\bibitem[Djehiche and Hamad{\`e}ne(to appear)]{djehiche2018optimal}
B.~Djehiche and S.~Hamad{\`e}ne.
\newblock Optimal control and zero--sum stochastic differential game problems
  of mean--field type.
\newblock \emph{Applied Mathematics \& Optimization}, to appear.

\bibitem[Djete et~al.(2019)Djete, Possama{\"\i}, and Tan]{djete2019general}
M.~Djete, D.~Possama{\"\i}, and X.~Tan.
\newblock Mc{K}ean--{V}lasov optimal control: limit theory and equivalence
  between different formulations.
\newblock \emph{in preparation}, 2019.

\bibitem[El~Karoui and Quenez(1991)]{el1991programmation}
N.~El~Karoui and M.-C. Quenez.
\newblock Programmation dynamique et {\'e}valuation des actifs contingents en
  march{\'e} incomplet.
\newblock \emph{Comptes rendus de l'Acad{\'e}mie des sciences -- S{\'e}rie 1 --
  Math{\'e}matique}, 313\penalty0 (12):\penalty0 851--854, 1991.

\bibitem[El~Karoui and Quenez(1995)]{el1995dynamic}
N.~El~Karoui and M.-C. Quenez.
\newblock Dynamic programming and pricing of contingent claims in an incomplete
  market.
\newblock \emph{SIAM Journal on Control and Optimization}, 33\penalty0
  (1):\penalty0 29--66, 1995.

\bibitem[El~Karoui and Tan(2013{\natexlab{a}})]{karoui2013capacities}
N.~El~Karoui and X.~Tan.
\newblock Capacities, measurable selection and dynamic programming part {I}:
  abstract framework.
\newblock \emph{arXiv preprint arXiv:1310.3363}, 2013{\natexlab{a}}.

\bibitem[El~Karoui and Tan(2013{\natexlab{b}})]{karoui2013capacities2}
N.~El~Karoui and X.~Tan.
\newblock Capacities, measurable selection and dynamic programming part {II}:
  application in stochastic control problems.
\newblock \emph{arXiv preprint arXiv:1310.3364}, 2013{\natexlab{b}}.

\bibitem[El~Karoui et~al.(1987)El~Karoui, Huu~Nguyen, and
  Jeanblanc-Picqu{\'e}]{el1987compactification}
N.~El~Karoui, D.~Huu~Nguyen, and M.~Jeanblanc-Picqu{\'e}.
\newblock Compactification methods in the control of degenerate diffusions:
  existence of an optimal control.
\newblock \emph{Stochastics}, 20\penalty0 (3):\penalty0 169--219, 1987.

\bibitem[Fuhrman and Pham(2015)]{fuhrman2015randomized}
M.~Fuhrman and H.~Pham.
\newblock Randomized and backward {SDE} representation for optimal control of
  non--{M}arkovian {SDE}s.
\newblock \emph{The Annals of Applied Probability}, 25\penalty0 (4):\penalty0
  2134--2167, 2015.

\bibitem[Hamad\`ene and Lepeltier(1995)]{hamadene1995backward}
S.~Hamad\`ene and J.-P. Lepeltier.
\newblock Backward equations, stochastic control and zero--sum stochastic
  differential games.
\newblock \emph{Stochastics: An International Journal of Probability and
  Stochastic Processes}, 54\penalty0 (3-4):\penalty0 221--231, 1995.

\bibitem[Hern{\'a}ndez and Possama{\"\i}(2019)]{hernandez2019me}
C.~Hern{\'a}ndez and D.~Possama{\"\i}.
\newblock Me, myself and {I}: a general theory of non--{M}arkovian
  time--inconsistent stochastic control for sophisticated agents.
\newblock \emph{in preparation}, 2019.

\bibitem[Huang et~al.(2015)Huang, Li, and Yong]{huang2015linear}
J.~Huang, X.~Li, and J.~Yong.
\newblock A linear--quadratic optimal control problem for mean--field
  stochastic differential equations in infinite horizon.
\newblock \emph{Mathematical Control and Related Fields}, 5\penalty0
  (1):\penalty0 97--139, 2015.

\bibitem[Huang et~al.(2003)Huang, Caines, and Malham{\'e}]{huang2003individual}
M.~Huang, P.~Caines, and R.~Malham{\'e}.
\newblock Individual and mass behaviour in large population stochastic wireless
  power control problems: centralized and {N}ash equilibrium solutions.
\newblock In C.~Abdallah and F.~Lewis, editors, \emph{Proceedings of the 42nd
  IEEE conference on decision and control, 2003.}, pages 98--103. IEEE, 2003.

\bibitem[Huang et~al.(2006)Huang, Malham{\'e}, and Caines]{huang2006large}
M.~Huang, R.~Malham{\'e}, and P.~Caines.
\newblock Large population stochastic dynamic games: closed--loop
  {M}c{K}ean--{V}lasov systems and the {N}ash certainty equivalence principle.
\newblock \emph{Communications in Information \& Systems}, 6\penalty0
  (3):\penalty0 221--252, 2006.

\bibitem[Huang et~al.(2007{\natexlab{a}})Huang, Caines, and
  Malham{\'e}]{huang2007invariance}
M.~Huang, P.~Caines, and R.~Malham{\'e}.
\newblock An invariance principle in large population stochastic dynamic games.
\newblock \emph{Journal of Systems Science and Complexity}, 20\penalty0
  (2):\penalty0 162--172, 2007{\natexlab{a}}.

\bibitem[Huang et~al.(2007{\natexlab{b}})Huang, Caines, and
  Malham{\'e}]{huang2007large}
M.~Huang, P.~Caines, and R.~Malham{\'e}.
\newblock Large--population cost--coupled {LQG} problems with nonuniform
  agents: Individual--mass behavior and decentralized $\varepsilon$--{N}ash
  equilibria.
\newblock \emph{IEEE Transactions on Automatic Control}, 52\penalty0
  (9):\penalty0 1560--1571, 2007{\natexlab{b}}.

\bibitem[Huang et~al.(2007{\natexlab{c}})Huang, Caines, and
  Malham{\'e}]{huang2007nash}
M.~Huang, P.~Caines, and R.~Malham{\'e}.
\newblock The {N}ash certainty equivalence principle and {M}c{K}ean--{V}lasov
  systems: an invariance principle and entry adaptation.
\newblock In D.~Castanon and J.~Spall, editors, \emph{46th IEEE conference on
  decision and control, 2007}, pages 121--126. IEEE, 2007{\natexlab{c}}.

\bibitem[Kac(1956)]{kac1956foundations}
J.~Kac.
\newblock Foundations of kinetic theory.
\newblock In J.~Neyman, editor, \emph{Proceedings of the third Berkeley
  symposium on mathematical statistics and probability, volume 3: contributions
  to astronomy and physics}, pages 171--197. University of California Press,
  1956.

\bibitem[Kechris(1995)]{kechris1995classical}
A.~Kechris.
\newblock \emph{Classical descriptive set theory}, volume 156 of \emph{Graduate
  texts in mathematics}.
\newblock Springer--Verlag New York, 1995.

\bibitem[Kurtz(1998)]{kurtz1998martingale}
T.~Kurtz.
\newblock Martingale problems for conditional distributions of {M}arkov
  processes.
\newblock \emph{Electronic Journal of Probability}, 3\penalty0 (9):\penalty0
  1--29, 1998.

\bibitem[Lacker(2017)]{lacker2017limit}
D.~Lacker.
\newblock Limit theory for controlled {M}c{K}ean--{V}lasov dynamics.
\newblock \emph{SIAM Journal on Control and Optimization}, 55\penalty0
  (3):\penalty0 1641--1672, 2017.

\bibitem[Lasry and Lions(2006{\natexlab{a}})]{lasry2006jeux}
J.-M. Lasry and P.-L. Lions.
\newblock Jeux {\`a} champ moyen. {I}--{L}e cas stationnaire.
\newblock \emph{Comptes Rendus Math{\'e}matique}, 343\penalty0 (9):\penalty0
  619--625, 2006{\natexlab{a}}.

\bibitem[Lasry and Lions(2006{\natexlab{b}})]{lasry2006jeux2}
J.-M. Lasry and P.-L. Lions.
\newblock Jeux {\`a} champ moyen. {II}--{H}orizon fini et contr{\^o}le optimal.
\newblock \emph{Comptes Rendus Math{\'e}matique}, 343\penalty0 (10):\penalty0
  679--684, 2006{\natexlab{b}}.

\bibitem[Lasry and Lions(2007)]{lasry2007mean}
J.-M. Lasry and P.-L. Lions.
\newblock Mean field games.
\newblock \emph{Japanese Journal of Mathematics}, 2\penalty0 (1):\penalty0
  229--260, 2007.

\bibitem[Lauri\'ere and Pironneau(2014)]{lauriere2014dynamic}
M.~Lauri\'ere and O.~Pironneau.
\newblock Dynamic programming for mean--field type control.
\newblock \emph{Comptes Rendus Math\'ematique}, 352\penalty0 (9):\penalty0
  707--713, 2014.

\bibitem[Li et~al.(2016)Li, Sun, and Yong]{li2016mean}
V.~Li, J.~Sun, and J.~Yong.
\newblock Mean--field stochastic linear quadratic optimal control problems:
  closed--loop solvability.
\newblock \emph{Probability, Uncertainty and Quantitative Risk}, 1\penalty0
  (2):\penalty0 1--24, 2016.

\bibitem[Li et~al.(2019)Li, Sun, and Xiong]{li2019linear}
X.~Li, J.~Sun, and J.~Xiong.
\newblock Linear quadratic optimal control problems for mean--field backward
  stochastic differential equations.
\newblock \emph{Applied Mathematics \& Optimization}, 80\penalty0 (1):\penalty0
  223--250, 2019.

\bibitem[Lions(2006--2012)]{lions2007theorie}
P.-L. Lions.
\newblock Th{\'e}orie des jeux de champ moyen et applications.
\newblock \emph{Cours du Coll\`ege de France.
  \url{http://www.college-de-france.fr/default/EN/all/equder/audiovideo.jsp}},
  2006--2012.

\bibitem[McKean(1969)]{mckean1969propagation}
H.~McKean.
\newblock Propagation of chaos for a class of non--linear parabolic equations.
\newblock In \emph{Lecture series on differential equations. Session 7.
  Stochastic differential equations}, pages 41--57. Fort Belvoir Defense
  Technical Information Center, 1969.

\bibitem[Neufeld and Nutz(2013)]{neufeld2013superreplication}
A.~Neufeld and M.~Nutz.
\newblock Superreplication under volatility uncertainty for measurable claims.
\newblock \emph{Electronic Journal of Probability}, 18\penalty0 (48):\penalty0
  1--14, 2013.

\bibitem[Neufeld and Nutz(2017)]{neufeld2016nonlinear}
A.~Neufeld and M.~Nutz.
\newblock Nonlinear {L}{\'e}vy processes and their characteristics.
\newblock \emph{Transactions of the American Mathematical Society},
  369\penalty0 (1):\penalty0 69--95, 2017.

\bibitem[Nutz and Soner(2012)]{nutz2012superhedging}
M.~Nutz and H.~Soner.
\newblock Superhedging and dynamic risk measures under volatility uncertainty.
\newblock \emph{SIAM Journal on Control and Optimization}, 50\penalty0
  (4):\penalty0 2065--2089, 2012.

\bibitem[Nutz and van Handel(2013)]{nutz2013constructing}
M.~Nutz and R.~van Handel.
\newblock Constructing sublinear expectations on path space.
\newblock \emph{Stochastic Processes and their Applications}, 123\penalty0
  (8):\penalty0 3100--3121, 2013.

\bibitem[Pham(2016)]{pham2016linear}
H.~Pham.
\newblock Linear quadratic optimal control of conditional {M}c{K}ean--{V}lasov
  equation with random coefficients and applications.
\newblock \emph{Probability, Uncertainty and Quantitative Risk}, 1\penalty0
  (7):\penalty0 1--26, 2016.

\bibitem[Pham and Wei(2017)]{pham2016dynamic}
H.~Pham and X.~Wei.
\newblock Dynamic programming for optimal control of stochastic
  {M}c{K}ean--{V}lasov dynamics.
\newblock \emph{SIAM Journal on Control and Optimization}, 55\penalty0
  (2):\penalty0 1069--1101, 2017.

\bibitem[Pham and Wei(2018)]{pham2018bellman}
H.~Pham and X.~Wei.
\newblock Bellman equation and viscosity solutions for mean--field stochastic
  control problem.
\newblock \emph{ESAIM: Control, Optimisation and Calculus of Variations},
  24\penalty0 (1):\penalty0 437--461, 2018.

\bibitem[Possama{\"\i} et~al.(2018)Possama{\"\i}, Tan, and
  Zhou]{possamai2015stochastic}
D.~Possama{\"\i}, X.~Tan, and C.~Zhou.
\newblock Stochastic control for a class of nonlinear kernels and applications.
\newblock \emph{The Annals of Probability}, 46\penalty0 (1):\penalty0 551--603,
  2018.

\bibitem[Shreve(1977)]{shreve1977dynamic}
S.~Shreve.
\newblock \emph{Dynamic programming in complete separable spaces}.
\newblock PhD thesis, University of Illinois at Urbana--Champaign, 1977.

\bibitem[Shreve(1978)]{shreve1978probability}
S.~Shreve.
\newblock Probability measures and the ${C}-$sets of {S}elivanovskij.
\newblock \emph{Pacific Journal of Mathematics}, 79\penalty0 (1):\penalty0
  189--196, 1978.

\bibitem[Shreve(1979)]{shreve1979resolution}
S.~Shreve.
\newblock Resolution of measurability problems in discrete--time stochastic
  control.
\newblock In M.~Kohlmann and W.~Vogel, editors, \emph{Stochastic control theory
  and stochastic differential systems. Proceedings of a workshop of the
  „Sonderforschungsbereich 72 der Deutschen Forschungsgemeinschaft and der
  Universit{\"a}t Bonn" which took place in January 1979 at Bad Honnef},
  volume~16 of \emph{Lecture notes in control and information sciences}, pages
  580--587. Springer Berlin Heidelberg, 1979.

\bibitem[Shreve and Bertsekas(1978{\natexlab{a}})]{shreve1978alternative}
S.~Shreve and D.~Bertsekas.
\newblock Alternative theoretical frameworks for finite horizon discrete--time
  stochastic optimal control.
\newblock \emph{SIAM Journal on Control and Optimization}, 16\penalty0
  (6):\penalty0 953--978, 1978{\natexlab{a}}.

\bibitem[Shreve and Bertsekas(1978{\natexlab{b}})]{shreve1978dynamic}
S.~Shreve and D.~Bertsekas.
\newblock Dynamic programming in {B}orel spaces.
\newblock In M.~Puterman, editor, \emph{Dynamic programming and its
  applications. Proceedings of the international conference on dynamic
  programming and its applications, university of British Columbia, Vancouver,
  British Columbia, Canada, April 14--16, 1977}, pages 115--130. Academic Press
  New York San Francisco London, 1978{\natexlab{b}}.

\bibitem[Shreve and Bertsekas(1979)]{shreve1979universally}
S.~Shreve and D.~Bertsekas.
\newblock Universally measurable policies in dynamic programming.
\newblock \emph{Mathematics of Operations Research}, 4\penalty0 (1):\penalty0
  15--30, 1979.

\bibitem[Snitzman(1991)]{snitzman1991topics}
A.-S. Snitzman.
\newblock Topics in propagation of chaos.
\newblock In P.~Hennequin, editor, \emph{\'Ecole d'\'et{\'e} de
  probabilit{\'e}s de Saint--Flour XIX -- 1989}, number 1464 in Lecture notes
  in mathematics, pages 165--251. Springer Berlin Heidelberg, 1991.

\bibitem[Soner and Touzi(2002)]{soner2002dynamic}
H.~Soner and N.~Touzi.
\newblock Dynamic programming for stochastic target problems and geometric
  flows.
\newblock \emph{Journal of the European Mathematical Society}, 4\penalty0
  (3):\penalty0 201--236, 2002.

\bibitem[Stroock and Varadhan(1997)]{stroock2007multidimensional}
D.~Stroock and S.~Varadhan.
\newblock \emph{Multidimensional diffusion processes}, volume 233 of
  \emph{Grundlehren der mathematischen Wissenschaften}.
\newblock Springer--Verlag Berlin Heidelberg, 1997.

\bibitem[Villani(2008)]{villani2008optimal}
C.~Villani.
\newblock \emph{Optimal transport: old and new}, volume 338 of
  \emph{Grundlehren der Mathematischen Wissenschafte}.
\newblock Springer, 2008.

\bibitem[Wu and Zhang(2018)]{wu2018viscosity}
C.~Wu and J.~Zhang.
\newblock Viscosity solutions to parabolic master equations and
  {M}c{K}ean--{V}lasov {SDE}s with closed--loop controls.
\newblock \emph{arXiv preprint arXiv:1805.02639}, 2018.

\bibitem[Yong(2013)]{yong2013linear}
J.~Yong.
\newblock Linear--quadratic optimal control problems for mean--field stochastic
  differential equations.
\newblock \emph{SIAM Journal on Control and Optimization}, 51\penalty0
  (4):\penalty0 2809--2838, 2013.

\bibitem[Yor(1977)]{yor1977sur}
M.~Yor.
\newblock Sur les th{\'e}ories du filtrage et de la pr{\'e}diction.
\newblock \emph{S{\'e}minaire de probabilit{\'e}s de Strasbourg}, XI:\penalty0
  257--297, 1977.

\bibitem[{\v{Z}}itkovi{\'c}(2014)]{zitkovic2014dynamic}
G.~{\v{Z}}itkovi{\'c}.
\newblock Dynamic programming for controlled {M}arkov families: abstractly and
  over martingale measures.
\newblock \emph{SIAM Journal on Control and Optimization}, 52\penalty0
  (3):\penalty0 1597--1621, 2014.

\end{thebibliography}
\end{document}